\theoremstyle{plain}
\newtheorem{theorem}[equation]{Theorem}
\newtheorem{thm}[equation]{Theorem}
\newtheorem{prop}[equation]{Proposition}
\newtheorem{cor}[equation]{Corollary}
\newtheorem{lemma}[equation]{Lemma}
\numberwithin{equation}{section}
\newcommand{\Z}{\mathbb Z}
\newcommand{\C}{\mathbb C}
\def\Hom{{\rm Hom}}
\def\Aut{{\rm Aut}}
\def\SL{{\rm SL}}
\def\GSp{{\rm GSp}}
\def\PGSp{{\rm PGSp}}
\def\PGSO{{\rm PGSO}}
\def\Sp{{\rm Sp}}
\def\Spin{{\rm Spin}}
\def\SU{{\rm SU}}
\def\GL{{\rm GL}}
\def\PGL{{\rm PGL}}
\def\SO{{\rm SO}}
\def\Ind{{\rm Ind}}
\def\Sp{{\rm Sp}}
\def\O{{\rm O}}
\def\Z{{\mathbb Z}}
\title[Howe Duality and Dichotomy] {Howe duality and Dichotomy \\ for exceptional theta correspondences}
\author{Wee Teck Gan and Gordan Savin}
\address{W.T.G.:   Department of Mathematics, National University of Singapore, 10 Lower Kent Ridge Road
Singapore 119076} \email{matgwt@nus.edu.sg}
\address{G. S.: Department of Mathematics, University of Utah, Salt Lake City, UT}\email{savin@math.utah.edu}
 \subjclass[2000]{11F27, 11F70,  22E50}
\begin{document}

\begin{abstract}
We study three exceptional theta correspondences for $p$-adic groups, where one member of the dual pair is the exceptional group $G_2$. 
We prove  the Howe duality conjecture for these dual pairs  and a dichotomy theorem, and  determine explicitly the theta lifts of all 
non-cuspidal representations.   
 \end{abstract}
 
 \maketitle

\section{\bf Introduction}

Let $F$ be a non-archimedean local field of characteristic $0$ and residue characteristic $p$. 
In this paper, we study the local theta correspondence furnished by the following diagram of dual pairs:
\vskip 5pt

\[
\xymatrix@R=2pt{
&&{\rm PGSp}_6\\
&G_2 \ar@{-}[ru]\ar@{-}[ld]\ar@{-}[rd]&\\
{\rm PD^{\times}}&&{\rm PGL_3\rtimes \Z/2\Z}
}
\]
where $D$ denotes a cubic division $F$-algebra, so that $PD^{\times}$ is the unique inner form of $\PGL_3$.  More precisely, one has the dual pairs
\[  \begin{cases} 
({\rm PGL}_3 \rtimes \Z/2\Z) \times G_2 \subset E_6 \rtimes \Z/2\Z  \\
PD^{\times} \times G_2 \subset E_6^D \\
G_2 \times \PGSp_6 \subset E_7 \end{cases} \]
where the exceptional groups of type $E$ are all of adjoint type.  In each of the three cases, the centralizer of $G_2$ is a group $H_J=\Aut(J)$, where 
$J$ is a Freudenthal-Jordan algebra of rank 3. 
One can thus consider the restriction of the minimal representation $\Pi$ \cite{GS05} of $E$ to the relevant dual pair and obtain a local theta correspondence.  
\vskip 5pt

More precisely, if  $\pi \in {\rm Irr}(G_2)$ is an irreducible smooth representation of $G_2$, then the maximal $\pi$-isotypic quotient of $\Pi$ 
\[ 
\Pi/\cap_{\phi\in {\rm Hom}_{G_2}(\Pi, \pi)} {\rm ker}(\phi). 
\] 
can be expressed as $\pi\otimes \Theta(\pi)$ for some smooth representation $\Theta(\pi)$ of $H_J$  \cite[Lemme III.4]{MVW}. 
The representation $\Theta(\pi)$ is called the big theta lift of $\pi$, and 
its maximal semi-simple quotient (cosocle) is denoted $\theta(\pi)$. We say that $\pi$ has nonzero theta lift to $H_J$ if $\Theta(\pi) \ne 0$, or equivalently $\Hom_{G_2}(\Pi, \pi) \ne 0$. 
Similarly, one can consider the theta lift from $H_J$ to $G_2$ and have the analogous notions. 

\vskip 5pt

The first main result of this paper is the following dichotomy theorem:
\vskip 5pt

\begin{thm}  \label{T:intro1}
Let $\pi \in {\rm Irr}(G_2(F))$. Then $\pi$ has nonzero theta lift to exactly one of $PD^{\times}$ or ${\rm PGSp}_6(F)$.
\end{thm}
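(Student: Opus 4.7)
The plan is to split Theorem \ref{T:intro1} into the two complementary assertions of \emph{existence} (at least one of $\Theta_{PD^{\times}}(\pi), \Theta_{\PGSp_6}(\pi)$ is nonzero) and \emph{disjointness} (at most one is nonzero). The organizing principle is the parabolic linkage among the three minimal representations $\Pi_{E_7}$, $\Pi_{E_6^{\mathrm{split}}}$ and $\Pi_{E_6^D}$: the Heisenberg parabolic of $E_7$ has Levi $\GL_1 \times E_6^{\mathrm{split}}$, so $\Pi_{E_7}$ admits a Jacquet filtration whose subquotients involve $\Pi_{E_6^{\mathrm{split}}}$, yielding see-saw identities that tie the three theta correspondences in the diagram together.

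For disjointness, I would first compute the Jacquet module of $\Theta_{\PGSp_6}(\pi)$ along a parabolic of $\PGSp_6$ whose unipotent radical is compatible with the Heisenberg unipotent of $E_7$. Via the filtration above, the result should be expressible as a parabolically induced piece built from $\Theta_{\PGL_3 \rtimes \Z/2\Z}(\pi)$. The group $PD^{\times}$ is anisotropic modulo center and so admits no nontrivial Jacquet modules; instead one matches $\Theta_{PD^{\times}}(\pi)$ with a component of $\Theta_{\PGL_3 \rtimes \Z/2\Z}(\pi)$ via the Jacquet--Langlands map ${\rm JL}:{\rm Irr}(PD^{\times}) \hookrightarrow {\rm Irr}(\PGL_3)$, guided by the analogous Jacquet filtration of $\Pi_{E_6^D}$ along a suitable parabolic. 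Both outer lifts thus become ``visible'' on the split middle correspondence, and their simultaneous non-vanishing would assign two incompatible representations there; a contradiction should then follow by comparing central characters, or infinitesimal characters, in the see-saw diagram.

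For existence, the goal is to show that every $\pi \in {\rm Irr}(G_2)$ arises as a quotient of $\Pi_{E_7} \oplus \Pi_{E_6^D}$. I would argue by contradiction: if $\pi$ lifts to neither, then $\pi$ is orthogonal to both minimal representations, and the parabolic filtration propagates this orthogonality to the Jacquet module of $\pi$ along a chosen parabolic of $G_2$. Combined with the explicit Schr\"odinger model of the minimal representations from \cite{GS05}, this ought to force $\pi$ itself to vanish, giving the desired contradiction.

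The main obstacle, I expect, is the disjointness statement for supercuspidal $\pi$. Since $E_6^D$ is not a Levi inside $E_7$, the link between the $PD^{\times}$ and $\PGSp_6$ correspondences is entirely indirect, routed through the split intermediate $\PGL_3 \rtimes \Z/2\Z$ case. Executing the bookkeeping across the three distinct exceptional ambient groups $E_7$, $E_6^{\mathrm{split}}$ and $E_6^D$ with their different parabolic structures, and reconciling the split and inner-form sides via Jacquet--Langlands, is where the technical heart of the argument will lie. For $\pi$ supercuspidal I suspect one must supplement the Jacquet module analysis with an argument of a different flavor, such as one exploiting a larger ambient pair (for example $G_2 \times G_2$ inside a suitable exceptional group) to reduce the disjointness to a vanishing identity that can be checked directly on matrix coefficients.
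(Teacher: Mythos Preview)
Your plan has a genuine gap, and it is precisely the one you flag yourself: for supercuspidal $\pi$, all ordinary Jacquet modules vanish, so the parabolic-filtration strategy you outline gives no information on either side. Routing the comparison through $\PGL_3 \rtimes \Z/2\Z$ via Jacquet--Langlands does not help: there is no mechanism that forces $\Theta_{PD^{\times}}(\pi) \ne 0$ to produce a specific quotient of $\Theta_{\PGL_3}(\pi)$, and in any case $\PGL_3$ is not used as an intermediary in the dichotomy at all. The suggestion of a $G_2 \times G_2$ ambient pair or a matrix-coefficient argument is not a known technique that applies here.

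What the paper actually does is replace ordinary Jacquet modules by \emph{twisted} Jacquet modules along the Heisenberg unipotent $N \subset G_2$, with respect to characters $\psi_E$ indexed by \'etale cubic $F$-algebras $E$. Every infinite-dimensional nongeneric $\pi$ (in particular every supercuspidal one) has $\pi_{N,\psi_E} \ne 0$ for some $E$, and these spaces are finite-dimensional. The identity $(\Pi_J)_{N,\psi_E} \cong {\rm ind}_{H_{J,E}}^{H_J}(1)$ yields a see-saw with $\Spin_8^E$, and the big theta lift $R_J(E)$ of the trivial representation of $H_{J,E}$ embeds in a degenerate principal series $I_E(1/2)$ of $\Spin_8^E$. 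The crucial structural fact is that, as a $G_2$-module, $I_E(1/2)$ has $R_D(E)$ as a quotient and $R_{H_3(M_2(F))}(E)$ as a submodule, with an ${\rm Ext}^1$-vanishing giving the exact sequence
\[
0 \to \Hom_{G_2}(R_D(E),\pi) \to \Hom_{G_2}(I_E(1/2),\pi) \to \Hom_{G_2}(R_{H_3(M_2(F))}(E),\pi) \to 0
\]
for tempered $\pi$. A Mackey-theory computation of $I_E(1/2)|_{G_2}$ shows $\dim \Hom_{G_2}(I_E(1/2),\pi) = \dim (\pi^{\vee})_{N,\psi_E}$. Existence now follows immediately from $\pi_{N,\psi_E} \ne 0$. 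For disjointness, a chain of containments (the ``ping-pong with periods'') shows that if $\pi$ lifts to $PD^{\times}$ then $\dim \Hom_{G_2}(R_D(E),\pi)$ already equals the full $\dim \Hom_{G_2}(I_E(1/2),\pi)$, and likewise for $\PGSp_6$; simultaneous nonvanishing would give $d = 2d$. The nontempered and generic cases are handled separately by direct computation and earlier results.
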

\vskip 5pt
The group ${\rm PGL}_3 \rtimes \Z/2\Z$ is not featured in the dichotomy theorem, but it is needed for some finer aspects of the theta correspondences. For example, 
every irreducible discrete series representation of $G_2$ lifts to a discrete series representation of precisely one of the three groups. 
After the above dichotomy theorem, we consider the problem of  understanding these theta correspondences more precisely.  
These local theta correspondences have all been studied to some extent by
Maggard-Savin \cite{MS}, Gross-Savin \cite{GrS}, Gan \cite{G}, Savin \cite{Sa}, Gan-Savin \cite{GS99, GS04}  and Savin-Weissman \cite{SWe}. 
Though various neat results were obtained in the various cases, they fall short of determining the theta correspondences completely. One of the main results of this paper is the completion of the analysis begun in these papers.  
\vskip 5pt

The main difficulty in studying these exceptional theta correspondences is that, unlike the classical theta correspondence, one does not know a priori the analog of the Howe duality conjecture.  Namely, one does not know that $\Theta(\pi)$ has finite length with unique irreducible quotient (that is,  $\theta(\pi)$ is irreducible if $\Theta(\pi)$ is nonzero). In this paper, we show that 
 the analog of the Howe duality conjecture holds for these dual pairs.  To summarize, we have: 
\vskip 5pt

 \begin{thm}  \label{T:intro2}
 The Howe duality conjecture holds for the three dual pairs considered here. Namely, for $\pi_1, \pi_2 \in {\rm Irr}(G_2(F))$, $\Theta(\pi_i)$ has finite length and 
 \[   \dim \Hom_{H_J}(\theta(\pi_1), \theta(\pi_2)) \leq \dim \Hom_{G_2}(\pi_1, \pi_2). \]
 Likewise, for $\tau \in {\rm Irr}(H_J)$, $\Theta(\tau)$ has finite length with unique irreducible quotient (if nonzero).
 \vskip 5pt
 
 More precisely, we have:
 \vskip 5pt
 
 \noindent (i)  The theta correspondence for $PD^{\times} \times G_2$ defines an injective map
 \[  \theta_D :  {\rm Irr}^{\heartsuit} (PD^{\times} ) \hookrightarrow {\rm Irr} (G_2(F)), \]
 where  ${\rm Irr}^{\heartsuit}( PD^{\times}) \subset  {\rm Irr}(PD^{\times})$ is the subset of representations which have nonzero theta lift to $G_2$.  
 If $p\ne 3$, then ${\rm Irr}^{\heartsuit} (PD^{\times}) = {\rm Irr}(PD^{\times})$, so that one has an injective map:
 \[  \theta_D :  {\rm Irr} (PD^{\times})  \hookrightarrow {\rm Irr} (G_2(F))  \]
 \vskip 5pt
 
 \noindent (ii)  The theta correspondence for $(\PGL_3(F) \rtimes \Z/2\Z) \times G_2$ defines an injective map
 \[  \theta_B :  {\rm Irr} ^{\heartsuit}(\PGL_3(F) \rtimes \Z/2\Z)  \hookrightarrow {\rm Irr} (G_2(F)), \]
 where  ${\rm Irr} ^{\heartsuit}(\PGL_3(F) \rtimes \Z/2\Z) \subset  {\rm Irr}(\PGL_3(F) \rtimes \Z/2\Z)$ is the subset of representations which have nonzero theta lift to $G_2$.
 Moreover, one can determine the subset $ {\rm Irr} ^{\heartsuit}(\PGL_3(F) \rtimes \Z/2\Z)$ explicitly, and 
the image of $\theta_B$ is disjoint from that of $\theta_D$ by the dichotomy theorem.  
\vskip 5pt
 
 \noindent (iii) The theta correspondence for $G_2 \times \PGSp_6$ defines an injection
 \[  \theta: {\rm Irr}(G_2(F)) \smallsetminus {\rm Im}( \theta_D)  \hookrightarrow  {\rm Irr} (\PGSp_6(F)). \]

 \end{thm}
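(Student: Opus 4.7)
The plan is to combine a Jacquet-module analysis of the minimal representation $\Pi$ with the dichotomy theorem (Theorem \ref{T:intro1}) and the partial results of \cite{MS, GrS, G, Sa, GS99, GS04, SWe}. The overall strategy parallels the classical Howe duality proof (due to Howe, Waldspurger, Gan-Takeda): reduce to the supercuspidal case via the Langlands classification, then handle the supercuspidal case by exploiting the explicit structure of $\Pi$.

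First I would collect the computations of the Jacquet modules of $\Pi$ along the unipotent radicals $N_\alpha$ and $N_\beta$ of the two maximal parabolics of $G_2$; these appear as extensions whose irreducible constituents are theta lifts for smaller dual pairs (typically involving $\GL_2$ on the $G_2$-side and a Levi of $H_J$ on the other). If $\pi \in {\rm Irr}(G_2)$ is non-supercuspidal, write it as a Langlands quotient of a standard module $I_P^{G_2}(\sigma)$ with $\sigma$ essentially tempered on a Levi of $G_2$. Frobenius reciprocity together with the geometric lemma then identifies $\Theta(\pi)$ as a quotient of $\Theta(I_P^{G_2}(\sigma))$, which by induction on the parabolic rank is of finite length with unique irreducible quotient. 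This reduces every assertion to the case where $\pi$ (or $\tau \in {\rm Irr}(H_J)$) is supercuspidal, and along the way produces the explicit non-cuspidal lifts needed to pin down ${\rm Irr}^\heartsuit(\PGL_3(F) \rtimes \Z/2\Z)$ in part (ii).

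For a supercuspidal $\pi$ with $\Theta(\pi) \neq 0$, cuspidality kills the relevant Jacquet modules and forces $\Theta(\pi)$ to be admissible of finite length, with cuspidal support controlled by the cuspidal exponents of $\Pi_{N_\alpha}$ and $\Pi_{N_\beta}$. The crucial remaining point, uniqueness of the irreducible quotient, is handled by playing the three dual pairs against each other: if $\Theta(\pi)$ admitted two distinct irreducible quotients, a see-saw argument combined with Theorem \ref{T:intro1} would produce two different supercuspidal representations of $G_2$ sitting in the same dichotomy column, contradicting the injectivity being established. The bound $\dim \Hom_{H_J}(\theta(\pi_1), \theta(\pi_2)) \leq \dim \Hom_{G_2}(\pi_1, \pi_2)$ then drops out of the see-saw for the dual pair $(G_2 \times G_2, H_J \times H_J) \subset E \times E$ together with the finite length of $\Theta(\pi)$.

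The injectivity of $\theta_D$, $\theta_B$, and $\theta$ is the case $\pi_1 \simeq \pi_2$ of the Hom bound; disjointness of ${\rm Im}(\theta_D)$ and ${\rm Im}(\theta_B)$, and the fact that part (iii) factors through ${\rm Irr}(G_2) \smallsetminus {\rm Im}(\theta_D)$, are immediate from Theorem \ref{T:intro1}. For part (i), the equality ${\rm Irr}^\heartsuit(PD^\times) = {\rm Irr}(PD^\times)$ when $p \neq 3$ follows from an explicit analysis of central and cubic characters of $\Pi$ restricted to $PD^\times$, using that the obstruction lives in the $3$-part of $F^\times$, which is tame when $p \neq 3$. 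The main obstacle is the supercuspidal case: the exceptional setting admits no analog of the classical doubling method, so bypassing it genuinely requires using the three dual pairs in concert together with the dichotomy theorem, rather than treating each correspondence in isolation.
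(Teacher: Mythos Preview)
Your reduction to the supercuspidal case via Jacquet modules and the Langlands classification is in the right spirit, and broadly matches what the paper does. But the heart of your argument for the supercuspidal (more precisely, tempered) case has a genuine gap.

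You propose that the Hom bound ``drops out of the see-saw for the dual pair $(G_2 \times G_2, H_J \times H_J) \subset E \times E$''. No such doubling see-saw exists for these exceptional dual pairs; the paper explicitly points this out and explains that its absence is tied to the sporadic nature of exceptional groups. There is no larger group containing $H_J \times H_J$ in which $G_2$ sits diagonally with a usable degenerate principal series, so the classical Howe--Waldspurger--Gan--Takeda mechanism is simply unavailable. Your uniqueness argument for irreducible quotients is also circular as written: you invoke injectivity (``contradicting the injectivity being established'') to prove uniqueness, which is what gives injectivity.

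What the paper actually does instead is a ``ping-pong with periods'' argument. For nongeneric tempered $\pi$, one uses the twisted Jacquet modules $\pi_{N,\psi_E}$ along the Heisenberg parabolic (indexed by \'etale cubic algebras $E$), the see-saw with $\Spin_8^E$, and the structure of the degenerate principal series $I_E(1/2)$ to build a chain of inclusions of finite-dimensional period spaces that closes up into a cycle of equalities (Lemma~\ref{L:curious}); this yields both uniqueness and injectivity simultaneously. For generic tempered $\pi$ in the $\PGSp_6$ case, a second such chain is built using Whittaker periods, a Shalika period on $\PGSp_6$, and a Fourier-Jacobi period on $G_2$ (Lemma~\ref{L:curious-2}). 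These period computations are the substantive new content, and your proposal does not supply a substitute for them.

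Finally, your explanation of ${\rm Irr}^{\heartsuit}(PD^\times) = {\rm Irr}(PD^\times)$ for $p \ne 3$ is off: the paper reduces this to the existence of nonzero $PE^\times$-invariants in every $\tau \in {\rm Irr}(PD^\times)$ for some cubic field $E$, which is a result of Lonka--Tandon on tori in division algebras, not a statement about the $3$-part of $F^\times$.
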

\vskip 10pt
 For an irreducible representation $\tau$ of $PD^{\times}$, the non-vanishing of $\theta_D(\tau)$
is equivalent to existence of non-zero vectors in $\tau$ fixed by a maximal torus in $PD^{\times}$.
The  existence of such vectors has been checked by Lonka and Tandon \cite{LT} in the tame case, where $p\neq 3$. Thus, if $p=3$, we do not know that all 
irreducible representations of $PD^{\times}$ lifts to $G_2$, but the lift is still one-to-one on the subset of those representations that have nonzero lift.

\vskip 5pt In fact,  for the three dual pairs, we determine the theta lift of \underline{all} non-supercuspidal representations of $G_2$, and the lift of supercuspidal representations 
whose lift is not supercuspidal.  The detailed statements are in the main text, and we simply state the following qualitative result here: 

\vskip 5pt 
\begin{thm} \label{T:intro3} The three theta correspondences satisfy the following properties: 
\vskip 5pt 
\noindent (i)  The correspondences preserve tempered representations. 

\vskip 5pt 
\noindent (ii)  Any discrete series representation of $G_2$ lifts to a discrete series representation of precisely one of the three groups. 
\vskip 5pt 

\noindent (iii) The correspondences are functorial for non-tempered representations.

\end{thm}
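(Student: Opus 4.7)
The overall plan is to reduce each of (i)--(iii) to explicit Jacquet-module computations for the minimal representation $\Pi$, combined with Casselman's criteria for temperedness and square-integrability. The first step is to establish, in each of the three dual-pair settings, a filtration of the Jacquet module $R_P(\Pi)$ along each maximal parabolic $P$ of $G_2$ (and of each $H_J$) whose successive quotients are of \emph{Kudla type}: tensor products of a representation of the Levi of $P$ with a representation parabolically induced from a smaller dual pair on the other side. Substantial pieces of this calculation already appear in \cite{MS,GrS,GS99,GS04,Sa,SWe}, and the first task is to assemble and extend these to obtain a uniform statement with all modular characters tracked.

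For (iii), given a non-tempered $\pi \in {\rm Irr}(G_2)$ written via Langlands as the unique irreducible quotient of $I_P^{G_2}(\sigma \otimes e^\chi)$ with $\sigma$ tempered on the Levi $M$ and $\chi$ in the open positive Weyl chamber, Frobenius reciprocity applied to the Kudla filtration couples $\sigma$ to a Levi contribution of $\Pi$ and produces a nonzero surjection from $\Theta(\pi)$ onto $I_Q^{H_J}(\Theta_M(\sigma) \otimes e^{\chi'})$, where the parabolic $Q \subset H_J$ and the shift $\chi'$ are read off explicitly from the filtration. Theorem~\ref{T:intro2} then identifies $\theta(\pi)$ with the Langlands quotient of $I_Q^{H_J}(\theta_M(\sigma) \otimes e^{\chi'})$, establishing the desired functoriality.

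For (i), Casselman's criterion asserts that $\pi$ is tempered iff every exponent of every $R_{P'}(\pi)$ lies in the closed anti-dominant cone. Running the Kudla filtration in the opposite direction expresses each exponent of $R_Q(\theta(\pi))$ as an exponent of some $R_{P'}(\pi)$ plus one of the canonical shifts appearing in Step~1; a direct check of these shifts against the relevant cones in $G_2$ and $H_J$ shows that anti-dominant exponents map to anti-dominant exponents. For (ii), one combines (i) with Theorem~\ref{T:intro1}: a discrete series $\pi$ of $G_2$ has a nonzero tempered lift to exactly one of $PD^\times$ or $\PGSp_6$. The $PD^\times$ case is automatic since $PD^\times$ is compact modulo its center. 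For the $\PGSp_6$ and auxiliary $\PGL_3 \rtimes \Z/2\Z$ cases one replaces closed cones by open cones in the argument for (i), proving that strict anti-dominance is also preserved; uniqueness of the discrete-series lift among the three groups then follows, since any tempered non-square-integrable lift would arise as a proper parabolic induction, and (iii) applied in the reverse direction would force $\pi$ itself to be non-discrete-series.

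The principal obstacle is Step~1. Unlike the classical case, the minimal representation of an exceptional group lacks a concrete oscillator-representation model, so the Kudla-type filtration must be extracted geometrically from the action on a suitable Heisenberg parabolic, and the various cuspidal-support strata must be controlled carefully in order to pin down the shifts $\chi'$ used in Step~2; the $p=3$ wild case and the boundary strata coming from the Weyl-group combinatorics of $E_6$, $E_6^D$, and $E_7$ are likely to require the most care. Once this geometric input is in hand, the Casselman-style arguments in the remaining steps are essentially formal.
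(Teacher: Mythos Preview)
Your strategy for (iii) is essentially correct and matches what the paper does: the Jacquet-module filtrations of $\Pi$ (Propositions \ref{P:jf_P}, \ref{P:jf_Q}, \ref{P:jf_p3}, \ref{P:jf_p1}, \ref{P:jf_p2}) combined with Frobenius reciprocity yield Lemmas \ref{L:non_tempered} and \ref{L:non_tempered2}, which pin down the lifts of all Langlands quotients. For (i), your Casselman-exponent idea is in the right spirit, but the paper does not carry it out that way: rather than tracking cones abstractly, it proves directly that non-tempered $\sigma\in{\rm Irr}(\PGSp_6)$ either has $\Theta(\sigma)=0$ or is of the specific form $J_2(\tau\otimes\tau)$ or $J_{13}(\tau\otimes 1)$, whence tempered $\Leftrightarrow$ tempered follows by exhaustion (Theorem \ref{T:nont}). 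A pure exponent-shift argument would have to cope with the ``top'' pieces of the filtration, which involve minimal representations of smaller groups and therefore require an inductive theta-lift input you do not mention.

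Your argument for (ii) has a genuine gap. You write that ``any tempered non-square-integrable lift would arise as a proper parabolic induction, and (iii) applied in the reverse direction would force $\pi$ itself to be non-discrete-series.'' But (iii) concerns only \emph{non-tempered} representations; it says nothing about tempered representations that happen to sit inside a unitary parabolic induction. In fact the situation you must rule out actually occurs: if $\pi$ is a discrete series of $G_2$ arising as $\theta_B(\tau^{\epsilon})$ from a discrete series $\tau$ of $\PGL_3$, then $\pi$ \emph{does} lift to $\PGSp_6$, and its lift is (a summand of) the tempered principal series $I_3(\tau)$, which is not discrete series (Theorem \ref{T:pgl3}). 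No cone argument detects this; the paper obtains it by explicitly computing $\theta(\pi)$ case by case for every non-supercuspidal discrete series and for the relevant supercuspidal families (Theorems \ref{T:pgl3}--15.4, Proposition 15.7, and the final Corollary of \S\ref{S:howegeneral}). Likewise, the ``exactly one'' assertion also needs the disjointness of the images of $\theta_D$ and $\theta_B$ (Theorem \ref{T:intro2}(ii)), which you do not invoke. In short, (ii) in this paper is a consequence of the complete explicit determination of the correspondences on discrete series, not of an abstract exponent argument; your proposed shortcut does not close.
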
 
\vskip 10pt 
As an illustration of our results, let $\pi_{\rm gen}$ be a Whittaker generic (henceforth simply generic) supercuspidal representation of $G_2$. Then we have the following 
description of $\Theta(\pi_{\rm gen})$, the big theta lift to $\PGSp_6$,  completing the results of \cite{SWe}: 
\vskip 5pt

\noindent (a) If $\pi_{\rm gen}$ is not a theta  lift from $\PGL_3(F) \rtimes \Z/2\Z$, then $\Theta(\pi_{\rm gen})$ is an irreducible  generic supercuspidal  representation of 
$\PGSp_6$.

\vskip 5pt

\noindent (b)  Otherwise, $\pi_{\rm gen}$ is a lift of a supercuspidal representation of $\PGL_3(F) \rtimes \Z/2\Z$. Now the irreducible representations of 
$\PGL_3(F) \rtimes \Z/2\Z$ can be described in terms of those of $\PGL_3$ as follows: 

\vskip 5pt 
\begin{itemize} 
\item ${\rm Ind}_{\PGL_3}^{\PGL_3 \rtimes \Z/2\Z} (\tau) \cong {\rm Ind}_{\PGL_3}^{\PGL_3 \rtimes \Z/2\Z} (\tau^{\vee}) $ where $\tau\ncong \tau^{\vee} \in {\rm Irr}(\PGL_3)$;  
\vskip 5pt 
\item $\tau^+ \ncong \tau^{-}$, two extensions of a self dual ($\tau\cong \tau^{\vee}$)  representation of $\PGL_3$. 
\end{itemize} 

\vskip 10pt 
Accordingly, we have the two following two cases: 
\vskip 5pt

\noindent (b$_1$) If $\theta_B( {\rm Ind}_{\PGL_3}^{\PGL_3 \rtimes \Z/2\Z} (\tau) )=\pi_{\rm gen}$, 
then 
\[ 
\Theta(\pi_{\rm gen})\cong  {\rm Ind}_{P_3}^{\PGSp_6} (\tau) \cong {\rm Ind}_{P_3}^{\PGSp_6} (\tau^{\vee}) 
\] 
where $P_3$ is a maximal parabolic of $\PGSp_6$ with Levi factor $\GL_3$. 
\vskip 5pt

\noindent (b$_2$) If $\theta_B(\tau^+) = \pi_{\rm gen}$,  then 
 $\theta_B(\tau^-)=\pi_{\rm deg}$ is an irreducible degenerate 
(i.e. nongeneric)   supercuspidal representation of $G_2$. Since $\tau$ is self-dual, 
the induced representation ${\rm Ind}_{P_3}^{\PGSp_6} (\tau)$ is reducible. It is a direct sum
\[ 
 {\rm Ind}_{P_3}^{\PGSp_6} (\tau)=  {\rm Ind}_{P_3}^{\PGSp_6} (\tau)_{\rm gen} \oplus  {\rm Ind}_{P_3}^{\PGSp_6} (\tau)_{\rm deg} 
 \] 
 of two summands, with the first summand  generic and  the second  degenerate. We have: 
\[ 
\Theta(\pi_{\rm gen})\cong {\rm Ind}_{P_3}^{\PGSp_6} (\tau)_{\rm gen} \text{ and }  \Theta(\pi_{\rm deg})\cong {\rm Ind}_{P_3}^{\PGSp_6} (\tau)_{\rm deg}. 
\] 
 \vskip 10pt

To conclude the introduction, we would like to explain the general idea and strategy for proving the Howe duality theorem.
We begin with a discussion of the statement:
\begin{itemize}
\item[(a)] $\Theta(\pi)$ has finite length.
\end{itemize}
 This finiteness result is fundamental and it was shown by Kudla \cite{K} for the classical theta correspondence. The main tools used are his computation of the Jacquet modules of the Weil representation (relative to maximal parabolic subgroups of the two members of the dual pair) and his exploitation of the doubling see-saw identity. One key consequence of the finite length of $\Theta(\pi)$ is  \vskip 5pt
\begin{itemize}
 \item[(b)]   If $\Theta(\pi) \ne 0$, then it has an irreducible quotient.
\end{itemize}
 For the dual pairs considered in this paper, we will in fact first prove statement (b) and then use it with other inputs to show (a).
 \vskip 5pt
 
 Let us elaborate on this slightly subtle point and our strategy of proof. By Bernstein's decomposition, we may decompose
 \[  \Theta(\pi) = \Theta(\pi)_{c} \oplus \Theta(\pi)_{nc} \]
 as the sum of its cuspidal part and non-cuspidal part.  If $\Theta(\pi)_c$ is nonzero, then it certainly has an irreducible quotient, since it is semisimple.  On the other hand, 
 we shall show using Jacquet module computations that 
 \begin{itemize}
 \item[(c)]  $\Theta(\pi)_{nc}$ has finite length and hence has an irreducible quotient if it is nonzero.
 \end{itemize}
 The necessary Jacquet module computations are already available in the literature \cite{MS, Sa} when $H_J = PD^{\times}$ or $\PGL_3$ and are partially available \cite{MS, GrS} for $H_J = \PGSp_6$. In \S \ref{S:jacquet}, we complete the remaining Jacquet module computations. We stress that the material in \S \ref{S:jacquet} is independent of the rest of the paper and could have been discussed earlier in the paper; we have refrained from doing so, as the computations are rather technical. Consequences of the results of \S \ref{S:jacquet} are then discussed in \S \ref{S:conseq}.
 \vskip 5pt
 
 In any case,  we show statement (b) by showing (c) via Jacquet modules; the proof is written in \S \ref{S:finite}. The statement (b) is used in the proof of the dichotomy theorem (i.e. Theorem \ref{T:intro1}) for tempered representations in \S \ref{S:dichotomy}. For nontempered representations, the Jacquet module computations (of \cite{MS, GrS} and \S \ref{S:jacquet}-\ref{S:conseq}) will tell us everything about their theta lifts. 
 \vskip 5pt
 
 For the statement (a) (finite length of $\Theta(\pi)$), it remains to show that $\Theta(\pi)_c$ is of finite length. We shall show this together with the Howe duality conjecture, by showing that $\Theta(\pi)_c$ is either irreducible or $0$. This part of the argument may be considered the analog of the doubling see-saw argument, though one would legitimately question  what that  means in the setting of exceptional dual pairs. 
 
 \vskip 5pt
 
  It will  be instructive to first recall the argument for 
a classical dual pair $\Sp(W) \times \O(V)$, where $W$ is a symplectic space and $V$ a quadratic space. The Howe duality theorem was shown by examining the so-called doubling see-saw diagram:
 \[
 \xymatrix{
  \O(V^{\square})  \ar@{-}[dr] \ar@{-}[d] & \Sp(W) \times \Sp(W)   \ar@{-}[d] \\
  \O(V) \times \O(V)  \ar@{-}[ur] & \Sp(W)^{\Delta}}
\]  
where $V^{\square} = V + V^-$ is the doubled quadratic space. Starting from $\pi , \pi' \in {\rm Irr}(\O(V))$, the resulting see-saw identity gives
\[   \dim \Hom_{\Sp(W)}(\theta(\pi'), \theta(\pi))  \leq  \dim \Hom_{\O(V) \times \O(V)} ( \Theta(1), \pi' \otimes \pi^{\vee}).   \]
where $\Theta(1)$ is the big theta lift of the trivial representation of $\Sp(W)^{\Delta}$ to $\O(V^{\square})$.  By the local analog of the Siegel-Weil formula, one identifies $\Theta(1)$ with a submodule of a certain degenerate principal series representation $I$ on $\O(V^{\square})$. This implies that, for $\pi$ outside a small family of representations, 
\[ \dim \Hom_{\O(V) \times \O(V)} ( \Theta(1), \pi' \otimes \pi^{\vee}) \subset \dim \Hom_{\O(V) \times \O(V)} ( I , \pi' \otimes \pi^{\vee}). \]
Using Mackey theory, one can analyze the latter space and show that, for $\pi$ outside another small family of representations, 
\[  \dim \Hom_{\O(V) \times \O(V)} ( I , \pi' \otimes \pi^{\vee})  \leq \dim \Hom_{\O(V)}(\pi', \pi). \]
Taken together, one obtains the desired inequality
\[  \dim \Hom_{\Sp(W)}(\theta(\pi'), \theta(\pi))  \leq   \dim \Hom_{\O(V)}(\pi', \pi) \]
for $\pi$ outside a small family of representations. For this small family of representations, one needs to do a separate argument. 
\vskip 5pt

Now for the  exceptional dual pairs $G \times H$ studied in this paper, there is no analog of the doubling see-saw; this is ultimately tied to the sporadic nature of the geometry underlying exceptional groups. There is thus no direct analog of the above argument. However, the above argument is a particular manifestation of a general principle: 
\[
\text{{\it Theta correspondence typically relates or  transfers  a period on $G$ to a period on $H$}.}
\]

More precisely, given a subgroup $H_1 \subset H$ and  $\tau\in {\rm Irr}(H)$, we may consider the space $\Hom_{H_1}(\tau, \chi)$ for some  one-dimensional character $\chi$ of $H_1$, typically trivial; let us call this Hom space the $H_1$-period for $\tau$. 
Now, given
\begin{itemize}
\item[-]  $\pi \in {\rm Irr}(G)$, with big theta lift $\Theta(\pi)$ on $H$,  
\item[-] $\tau \in {\rm Irr}(H)$  an irreducible quotient of $\Theta(\pi)$, 
%\item[-]  a period $Q_1$ for the group  $H$ (associated to a subgroup $H_1$ of $H$),
\end{itemize}
one typically obtains a statement of the form
\[  \text{$H_1$-period of $\tau$} \subset \text{$H_1$-period of $\Theta(\pi)$} \cong \text{$G_1$-period of $\pi$} \]
for  some subgroup $G_1$ of $G$. 
%Here, by $Q_1$-period of $\tau$, we mean the space $\Hom_{H_1}(\tau, \chi)$ for some one-dimensional character $\chi$ of $H_1$, typically trivial.
\vskip 5pt

 Now one can turn the table around. For   an irreducible quotient $\tau$ of $\Theta(\pi)$, one can consider the $G_1$-period of $\Theta(\tau)$, which has $\pi$ as an irreducible quotient. One typically gets a statement
\[   \text{$G_1$-period of $\pi$}  \subset  \text{$G_1$-period of $\Theta(\tau)$}  \cong \text{$H_2$-period of $\tau$} \]
for some subgroup $H_2$ on $H$. 
\vskip 5pt

Iterating this process, one obtains a family of periods relative to subgroups $G_i$ on $G$ and   $H_i$ on $H$ such that
\[ \text{$H_i$-period of $\tau$ }    \subset  \text{$H_i$-period of $\Theta(\pi)$ }   \cong \text{$G_i$-period of $\pi$} \]
and
\[  \text{$G_i$-period of $\pi$ }    \subset  \text{$G_i$-period of $\Theta(\tau)$ }   \cong  \text{$H_{i+1}$-period of $\tau$}, \]
leading to a chain of containment of periods of $\pi$ and $\tau$.   One may call this a game of ping-pong with periods. Now an (empirical) observation is that the subgroups  $G_i$ and $H_i$ become more and more reductive (as $i$ increases) and one ultimately obtains a reductive period. When that happens, the next iteration will result in a seesaw diagram analogous to that in the classical case above and the consideration of an appropriate  degenerate principal series representation.  
 \vskip 5pt
 
 Now the miracle is that a Mackey theory argument with this degenerate principal series representation then returns us the initial $H_1$-period! In other words, for some $i > 1$, one has $H_i = H_1$, and this allows one to complete the chain of containment of periods   into a cycle. In particular, if one of these period spaces is finite-dimensional, then this cycle of containment is a cycle of equalities. This is the key step in our proof of the Howe duality theorem for the dual pairs treated here. We shall play this game of ping-pong with periods on two occasions, in \S \ref{S:dichotomy} and \S \ref{S:howetemp}. This seems to us to be a rather robust method for proving the Howe duality conjecture and should be applicable to other exceptional dual pairs, though the precise details will undoubtedly be different in each case.

\vskip 5pt

To be honest, just as in the classical case, this last step will hold for representations of $G$ outside a small family, which needs to be treated separately. 
One can characterize this exceptional family precisely, but we prefer not to do it, and simply observe that tempered irreducible representations of $G$ do not lie in this exceptional family.   As mentioned earlier, the  theta lifts of nontempered representations can be explicitly determined using the Jacquet module computations of \cite{MS, GrS} and \S \ref{S:jacquet}-\ref{S:conseq}.
 \vskip 5pt
 
 The main motivation for showing the results of this paper is the application to the local Langlands correspondence for the exceptional group $G_2$. 
  We will treat this application in a sequel to this paper. 
  
\vskip 15pt
\section{\bf The Group $G_2$}  \label{S:G2}
We begin by introducing the algebraic group $G_2$ over $F$.
\vskip 5pt

\subsection{\bf Octonion algebra.} 
Let $\mathbb{O}$ be the split octonion algebra over $F$. Thus, $\mathbb{O}$ is an $8$-dimensional 
non-associative and non-commutative $F$-algebra. It comes equipped with a conjugation map $x \mapsto \overline{x}$ with associated norm $N(x)  = x \cdot \overline{x} = \overline{x} \cdot x$ and trace $Tr(x)  = x + \overline{x}$. Moreover, $N : \mathbb{O}  \rightarrow F$ is a nondegenerate quadratic form.   
\vskip 5pt

Every element $x$ of $\mathbb{O}$ satisfies its characteristic polynomial $t^2 - Tr(x) t + N(x)$.  
A nonzero element $x \in \mathbb{O}$ is said to be of rank $1$ if $N(x)  = 0$. Otherwise it is of rank $2$, in which case the subalgebra $F[x]$ of $\mathbb{O}$ generated by $x$ over $F$ is isomorphic to the separable quadratic $F$-algebra $F[t]/ (t^2 - Tr(x) t +N(x))$. We  denote by $\mathbb{O}_0$ the 7-dimensional subspace of trace $0$ elements in $\mathbb{O}$.
\vskip 5pt

\subsection{\bf Automorphism group.}
The group $G_2$ is the automorphism group of the $F$-algebra $\mathbb{O}$. It is a split simple linear algebraic group of rank $2$ which is both simply connected and adjoint. If we fix a maximal torus $T$ contained in a Borel subgroup $B$, then we obtain a system of simple roots $\{ \alpha,  \beta\}$ of $G_2$ relative to $(T,B)$, with $\alpha$ short and $\beta$ long. 
The resulting root system is given by the following diagram.
\vskip 5pt

 \begin{picture}(300,200)(-100,0)

\put(100,100){\vector(1,0){44}}
\put(100,100){\vector(-1,0){44}}
\put(100,100){\vector(0,1){76}}
\put(100,100){\vector(0,-1){76}}
\put(100,100){\vector(3,2){60}}
\put(100,100){\vector(-3,2){60}}
\put(100,100){\vector(-3,-2){60}}
\put(100,100){\vector(3,-2){60}}
\put(100,100){\vector(2,-3){26}}
\put(100,100){\vector(2,3){26}}
\put(100,100){\vector(-2,3){26}}
\put(100,100){\vector(-2,-3){26}}
\put(42,98){$\alpha$} 
\put(170,140){$\beta$}
\end{picture}

\noindent  The highest root is $\beta_0 = 3 \alpha + 2\beta$.

\vskip 10pt
\subsection{\bf Maximal torus.}  \label{SS:torus}   Following Mui\'c, we will fix the isomorphism $T \cong \mathbb{G}_m^2$ by
\[  t \mapsto   ((2 \alpha + \beta)(t)  ,  (\alpha+\beta)(t)). \]
Any pair of  characters $(\chi_1, \chi_2)$ of $F^{\times}$ thus define a character $\chi_1 \times \chi_2$ of $T$ by composition with the above isomorphism.
\vskip 5pt

\subsection{\bf Parabolic subgroups.}
Up to conjugation, $G_2$ has 2 maximal parabolic subgroups which may be described as follows. 
Let $V_1 \subset V_2 \subset \mathbb{O}_0$ be subspaces of dimension $1$ and $2$ respectively on which the octonion multiplication is identically zero. Let $P$ and $Q$ be the stabiliser of $V_2$ and $V_1$ respectively. Then $P = MN$ and $Q = LU$ are the two maximal parabolic subgroups of $G_2$.  Moreover, their intersection 
$B = P \cap Q$ is a Borel subgroup of $G_2$. 
\vskip 5pt

The Levi factor $M$ of $P$ is given by
\[  M \cong \GL(V_2) \cong \GL_2 . \]
The isomorphism of $M$ with $\GL_2$ can be fixed so that the modulus character of $M$ is 
$\delta_P  = \det^3$.
Its unipotent radical $N$ is a 5-dimensional Heisenberg group with 1-dimensional center $Z =  U_{\beta_0}$. 
The action of $M$ on $N/Z$   is isomorphic to $Sym^3 (F^2) \otimes \det^{-1}$.  Moreover, the generic $M(F)$-orbits on $N(F)/Z(F)$ is naturally parametrized by the set of isomorphism classes of separable cubic $F$-algebras.
\vskip 5pt

The Levi factor $L$ of $Q$ is given by
\[  L \cong \GL(V_3/V_1)  \cong \GL_2 \]
where 
\[  V_3 = \{ x \in \mathbb{O}_0: x \cdot y = 0 \, \, \text{for all $y \in V_1$} \}. \]
The isomorphism of $L$ with $\GL_2$ can be fixed so that the modulus character of $L$ is $\delta_Q = \det^5$.
The unipotent radical $U$ is a 5-dimensional 3-step nilpotent group:
\[   U =U_0 \supset U_1 \supset U_2 \supset U_3 = \{1 \}, \]
such that
\[  U_0/U_1  = U_{\alpha} \times U_{\alpha + \beta} , \quad  U_1/U_2 \cong U_{2\alpha + \beta}  \quad 
U_2/U_3 = U_{\beta_0} \times U_{\beta_0 - \beta}. \]
As representation of $L$, one has
\[  U_0/U_1 \cong F^2, \quad U_1/U_2 \cong \det \quad U_2/U_3 \cong F^2 \otimes \det. \]

\vskip 10pt

\subsection{\bf The subgroup $\SL_3$.}

The subgroup of $G_2$ generated by the long root subgroups is isomorphic to $\SL_3$.  The normaliser of $\SL_3$ in $G_2$ is a semidirect product
$\SL_3 \rtimes \Z/2\Z$, with the nontrivial element of $\Z/2\Z$ acting on $\SL_3$ as a pinned outer automorphism. The subgroup $\SL_3$ is the pointwise stabilizer of a quadratic subalgebra of $\mathbb O$ which is isomorphic to $F \times F$, whereas the setwise stabilizer of such a subalgebra is $\SL_3 \rtimes \Z/2\Z$.  
\vskip 5pt

More generally, given a subalgebra of $\mathbb O$ which is isomorphic to a quadratic field extension of $F$, the pointwise stabilizer of this subalgebra is isomorphic to the quasi-split special unitary group $\SU_3^K$; the setwise stabilizer of this subalgebra is $\SU_3^K  \rtimes \Z/2\Z$.

\vskip 10pt

\subsection{\bf The dual group.}  
The Langlands dual group of $G_2$ is the complex Lie group $G_2(\C)$. 
In particular, one has the subgroups
\[  \SO_3(\mathbb C) \subset \SL_3(\C) \subset \SL_3(\C) \rtimes \Z/2\Z \subset G_2(\C). \]
The centralizer of $\SL_3(\mathbb C)$ in $G_2(\mathbb C)$ is $\mu_3$, and the centralizer of $\SO_3(\mathbb C)$ in $G_2(\mathbb C)$ is $S_3 = \mu_3 \rtimes \Z/2\Z$.  
Let $\SL_{2,l}(\mathbb C)$ be a long root $\SL_2$. Then the centralizer of $\SL_{2,l}(\mathbb C)$ in $G_2(\mathbb C)$  is  
$\SL_{2,s}(\mathbb C)$, a short root $\SL_2$, and vice versa. Thus we also have the subgroup 
\[ 
\SL_{2,l}(\mathbb C) \times_{\mu_2} \SL_{2,r}(\mathbb C) \cong \SO_4(\mathbb C) \subset G_2(\mathbb C). 
\]

\section{\bf Representations of $G_2$} \label{S:repG2}
In this section, we introduce some notations for the representations of $G_2(F)$. In particular, we shall describe all non-supercuspidal representations. The results in this section are 
almost entirely due to Mui\'c \cite{Mu}. 

\vskip 10pt

\subsection{\bf Principal series representations for $P$. }
We first consider the principal series representations for the Heisenberg parabolic subgroup $P = MN$, where $M \cong \GL_2$.
 Let $\tau$ be an irreducible  representation of $M$ with central character $\omega_{\tau}$ and set
\[ 
I_P(\tau)= {\rm Ind}_P^{G_2} \tau  \text{ and } 
 I_P(s, \tau)  = {\rm Ind}_P^{G_2}( |\det|^s  \cdot \tau)  \]
if we need to consider a family of induced representations. If $I_P(s,\tau)$ is a standard module, we will denote its unique Langlands quotient by $J_P(s,\tau)$.
Now  we have:
\vskip 5pt

\begin{prop} \label{P:PSP}
(i) If $\tau$ is a unitary supercuspidal representation, then $I_P(s, \tau)$ is reducible if and only if  $\tau^{\vee} \cong \tau$ (so $\omega_{\tau}^2=1$) and one of the following holds:
\vskip 5pt

\begin{itemize}
\item $\omega_{\tau}  =1$ and $s = 1/2$, in which case there is a non-split short exact sequence of length $2$, 
\[  \begin{CD}
0 @>>>  \delta_P(\tau)  @>>>  I_P(1/2, \tau)  @>>> J_P(1/2,\tau) @>>> 0, 
\end{CD} \]
where $\delta_P(\tau)$ is a generic discrete series representation. 
\vskip 5pt

\item $\omega_{\tau}  \ne 1$ and $s = 0$, in which case
\[  I_P(0,\tau)  = I_P(\tau)_{\mathrm {gen}}  \oplus I_{P}(\tau)_{\mathrm {deg}} \]
where $I_P(\tau)_{\mathrm {gen}}$ is generic. 
\end{itemize}
\vskip 5pt

\noindent (ii)  If $\tau = {\rm st}_{\chi}$ is a twisted Steinberg representation, then $I_P(s, \tau)$ is irreducible except for the following cases:
\vskip 5pt

\begin{itemize}
\item $\chi =1$ and $s = \pm 3/2$ or $\pm 1/2$, in which case one has:
\[  \begin{CD}
0 @>>> {\rm St}_{G_2} @>>> I_P(3/2, {\rm st}) @>>>  J_P(3/2, {\rm st})@>>>0,\end{CD} \]
with ${\rm St}_{G_2}$ the Steinberg representation. On the other hand,
 $I_P(1/2, {\rm st})$ has length $3$, with a unique irreducible submodule $\pi_{\mathrm {gen}}[1]$ which is a generic discrete series representation, a unique irreducible Langlands quotient $J_P(1/2, {\rm st})$ and a subquotient  $J_Q(1/2, {\rm st})$.
 
\vskip 5pt

\item$\chi^2  =1$ but $\chi \ne 1$ and $s  = \pm 1/2$, in which case one has:
\[  \begin{CD} 
0@>>> \pi_{\mathrm {gen}}[\chi] @>>>  I_P(1/2, {\rm st}_{\chi}) @>>>  J_P(1/2, {\rm st}_{\chi}) @>>> 0 \end{CD} \]
where $ \pi_{\mathrm {gen}}[\chi]$  is a generic discrete series representation. 
 \vskip 5pt

\item $\chi^3  =1$ but $\chi \ne 1$ and $s = \pm 1/2$, in which case one has:
\[  \begin{CD} 
0 @>>> \pi_{\mathrm {gen}}[\chi] @>>>  I_P(1/2, {\rm st}_{\chi}) @>>>  J_P(1/2, {\rm st}_{\chi}) @>>> 0. \end{CD} \]
where $\pi_{\mathrm {gen}}[\chi]\cong\pi_{\mathrm {gen}}[\chi^{-1}]$ is a generic discrete series representation.\end{itemize}
\vskip 5pt

\noindent (iii)  If $\tau = \chi$ is 1-dimensional unitary, then $I_P(s, \tau)$ is irreducible except in the following cases:
\begin{itemize}
\item $\chi =1$ and $s = \pm 1/2$ or $\pm 3/2$, in which case one has:
\[  \begin{CD}
0 @>>> J_Q(5/2, {\rm st}) @>>> I_P(3/2, 1) @>>>  1_{G_2} @>>> 0, \end{CD} \]
whereas $I_P(1/2, 1)$ is of length $3$, with a unique irreducible submodule $\pi_{\mathrm {deg}}[1]$ which is a nongeneric discrete series representation, a unique irreducible quotient $J_Q(1, \pi(1,1))$ and a subquotient  $J_Q(1/2, {\rm st})$.
 \vskip 5pt

\item $\chi^2=1$ but $\chi \ne 1$ and $s = \pm 1/2$, in which case one has:
\[  \begin{CD} 
0@>>> J_Q(1/2, {\rm st}_{\chi}) @>>>  I_P(1/2, \chi) @>>>  J_Q(1/2, \pi(1, \chi)) @>>> 0. \end{CD} \]
\vskip 5pt

\item  $\chi^3  =1$ but $\chi \ne 1$ and $s = \pm 1/2$, in which case one has:
\[  \begin{CD} 
0 @>>> J_P(1/2, {\rm st}_{\chi^{-1}}) @>>>  I_P(1/2, {\rm st}_{\chi}) @>>>  J_Q(1/2,  \pi(\chi, \chi^{-1})) @>>> 0. \end{CD} \]
\end{itemize}

\end{prop}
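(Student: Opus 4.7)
The plan is to follow Mui\'c's original analysis, combining the Langlands--Shahidi theory of intertwining operators with induction in stages from the Borel subgroup and Casselman's criterion for square integrability. The argument naturally splits along the three cases of the proposition.

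For case (i), the reducibility of $I_P(s,\tau)$ with $\tau$ a unitary supercuspidal representation of $M\cong \GL_2$ is controlled by the poles of Shahidi's Plancherel measure. The adjoint representation of $M$ on the Lie algebra of $N$ decomposes as $r_1\oplus r_2$, where $r_1\cong \Sym^3\otimes \det^{-1}$ acts on $N/Z$ and $r_2\cong \det$ acts on the one-dimensional center $Z$. The associated $\gamma$-factors $\gamma(s,\tau,r_1)$ and $\gamma(s,\omega_\tau)$ pin down where the normalised intertwining operator has zeros or poles. Standard Shahidi theory then yields reducibility precisely at $s=1/2$ when $\omega_\tau=1$ (a pole coming from the $r_2$ factor $L(s,\omega_\tau)$) and at $s=0$ when $\omega_\tau$ is nontrivial quadratic (the pole coming from $r_1$, i.e.\ the symmetric cube $L$-function). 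In the second situation the normalised operator is a nontrivial involution and the Knapp--Stein $R$-group has order two, which forces the direct sum decomposition $I_P(\tau)_{\mathrm{gen}}\oplus I_P(\tau)_{\mathrm{deg}}$.

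For cases (ii) and (iii), I would use induction in stages. The twisted Steinberg ${\rm st}_\chi$ of $\GL_2$ embeds as the unique irreducible submodule of $\Ind_{B_M}^M(\chi|\cdot|^{1/2}\boxtimes \chi|\cdot|^{-1/2})$, while the character $\chi\circ \det$ is the unique irreducible quotient. Hence both $I_P(s,{\rm st}_\chi)$ and $I_P(s,\chi)$ appear as composition factors of the single principal series $\Ind_B^{G_2}(\chi|\cdot|^{s+1/2}\times \chi|\cdot|^{s-1/2})$ (with $T\cong \mathbb G_m^2$ identified as in \S\ref{SS:torus}). The reducibility of such principal series of $G_2$ is governed by the standard hyperplane conditions, one per positive root; a direct enumeration yields exactly the reducibility points listed in the proposition. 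To identify the composition factors and the submodule lattice, I would then compute the Jacquet modules $r_P$ and $r_Q$ via the Bernstein--Zelevinsky geometric lemma, invoke the Langlands classification to locate $J_P(s,\tau)$ and $J_Q(s,{\rm st})$, and apply Casselman's square-integrability criterion to certify the discrete series constituents $\delta_P(\tau)$, $\pi_{\mathrm{gen}}[\chi]$ and $\pi_{\mathrm{deg}}[1]$. Genericity of the labelled constituents follows from the Casselman--Shahidi uniqueness theorem for Whittaker models together with Rodier's detection of Whittaker functionals on Jacquet modules along the Borel.

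The main obstacle will be the bookkeeping at the two length-three reducibility points, namely $I_P(1/2,{\rm st})$ and $I_P(1/2,1)$. Here one must correctly locate the middle subquotient $J_Q(1/2,{\rm st})$ and pin down both the socle and the cosocle filtration: this requires computing Jacquet modules along $P$ and along $Q$ and cross-comparing them, since neither parabolic alone determines the full submodule lattice. A secondary subtlety lies in case (i) with $\omega_\tau\ne 1$, where the decomposition into a direct sum of two summands is an $R$-group phenomenon rather than a length-two filtration and must be established by a separate computation with the normalised intertwining operator.
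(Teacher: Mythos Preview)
The paper does not supply its own proof of this proposition: at the start of \S\ref{S:repG2} it states that ``the results in this section are almost entirely due to Mui\'c \cite{Mu}'' and simply records the outcome. Your outline is precisely the method Mui\'c uses---Langlands--Shahidi reducibility for the supercuspidal inducing data, induction in stages and Bernstein--Zelevinsky Jacquet-module computations for the Steinberg and one-dimensional inducing data, and Casselman's criterion to certify the discrete series constituents---so your approach is consistent with what the paper invokes.

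One small inaccuracy in your case (i): you write that reducibility at $s=0$ occurs ``when $\omega_\tau$ is nontrivial quadratic,'' but the correct condition is $\tau^\vee\cong\tau$ with $\omega_\tau\ne 1$. For supercuspidal $\tau$ on $\GL_2$ one has $\tau^\vee\cong\tau\otimes\omega_\tau^{-1}$, so $\omega_\tau^2=1$ is necessary but not sufficient; one also needs the self-twist $\tau\cong\tau\otimes\omega_\tau$, i.e.\ $\tau$ dihedral with respect to the quadratic extension cut out by $\omega_\tau$. This is exactly the precondition $\tau^\vee\cong\tau$ that the proposition states up front, and it enters the Shahidi analysis because the nontrivial Weyl element for $P$ sends $\tau$ to $\tau^\vee$, so no intertwining-operator argument even begins unless $\tau$ is self-dual. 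With that correction your plan is sound.
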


\vskip 10pt
\subsection{\bf Principal series representations for $Q$. }  \label{S:PSQ}
Now we consider the principal series representations for the 3-step parabolic subgroup $Q = LU$, where $L \cong \GL_2$. 
Let $\tau$ be an irreducible unitary representation of $L$ with L-parameter $\phi_{\tau}$ and set
\[  I_Q(\tau)= {\rm Ind}_Q^{G_2} \tau  \text{ and } 
 I_Q(s, \tau)  = {\rm Ind}_Q^{G_2} |\det|^s  \cdot \tau \]
if we need to consider a family of induced representations. As before, we let $J_Q(s,\tau)$ denote the unique Langlands quotient of $I_Q(s,\tau)$ if the latter is a standard module. Then we have:
\vskip 5pt

\begin{prop}  \label{P:PSQ}
(i)  If $\tau$ is unitary supercuspidal, then $I_Q(s, \tau)$ is reducible if and only if  $\tau^{\vee} \cong \tau$ (so $\omega_{\tau}^2=1$) and one of the following holds:
\vskip 5pt
\begin{itemize}
\item $\omega_{\tau} =1$ and $s = \pm 1/2$, in which case one has: 
\[  \begin{CD}
0 @>>>  \delta_Q(\tau) @>>> I_Q(1/2, \tau) @>>>  J_Q(1/2, \tau) @>>> 0, \end{CD} \]
where $\delta_Q(\tau)$ is a generic discrete series representation. 
\vskip 5pt 

 \item $\omega_{\tau}  \ne 1$ (so $\tau$ is dihedral),  ${\rm Im}(\phi_{\tau}) = S_3$  and $s=\pm 1$,in which case one has: 
   \[  \begin{CD}
0 @>>>  \pi_{\mathrm {gen}}[\tau] @>>> I_Q(1, \tau) @>>>  J_Q(1, \tau) @>>> 0, \end{CD} \]
where $\pi_{\mathrm {gen}}[\tau] $ is a generic discrete series representation.

\vskip 5pt

\item $\omega_{\tau} \ne 1$, ${\rm Im}(\phi_{\tau})  \ne S_3$  and $s= 0$, in which case one has:

\[   I_Q(0,\tau)  = I_Q(\tau)_{\mathrm {gen}}  \oplus I_{Q}(\tau)_{\mathrm {deg}} \]
where $I_Q(\tau)_{\mathrm {gen}}$ is generic. 

\end{itemize}

\vskip 5pt

\noindent (ii) If $\tau = {\rm st}_{\chi}$ is a twisted Steinberg representation, the $I_Q(s,\tau)$ is irreducible except for the following cases:
\vskip 5pt
\begin{itemize}
\item $\chi=1$ and $s = \pm 5/2$ or $\pm 1/2$, in which case one has
\[  \begin{CD}
0 @>>> {\rm St}_{G_2} @>>>  I_Q(5/2, {\rm st}) @>>>  J_Q(5/2,  {\rm st}) @>>> 0, \end{CD} \]
and
\[  \begin{CD}
0 @>>>  \pi_{\mathrm {gen}}[1]  \oplus \pi_{\mathrm {deg}}[1]  @>>> I_Q(1/2, {\rm st}) @>>>  J_Q(1/2, {\rm st}) @>>> 0. \end{CD} \]
Here $ \pi_{\mathrm {gen}}[1]$ is the generic discrete series representation already defined in Proposition \ref{P:PSP}(ii) (first bullet point) and $\pi_{\mathrm {deg}}[1]$   is the nongeneric discrete series representation already defined in Proposition \ref{P:PSP}(iii) (first bullet point).
\vskip 5pt

\item $\chi^2=1$ but $\chi \ne 1$ and $s = \pm 1/2$, in which case one has:
 \[  \begin{CD} 
0@>>> \pi_{\mathrm {gen}}[\chi] @>>>  I_Q(1/2, {\rm st}_{\chi}) @>>>  J_Q(1/2, {\rm st}_{\chi}) @>>> 0. \end{CD} \]
Here, $ \pi_{\mathrm {gen}}[\chi] $ is the generic discrete series representation defined in Proposition \ref{P:PSP}(ii) (second bullet point).
\end{itemize}
\vskip 5pt

\noindent (iii)  If $\tau = \chi$ is 1-dimensional unitary, then $I_Q(s, \tau)$ is irreducible except in the following cases:
\begin{itemize}
\item $\chi =1$ and $s = \pm 1/2$ or $\pm 5/2$, in which case one has:
\[  \begin{CD}
0 @>>> J_P(3/2, {\rm st}) @>>> I_Q(5/2, 1) @>>>  {1}_{G_2} @>>>  0 , \end{CD} \]
whereas $ I_Q(1/2, 1)$ is of length $3$, with unique irreducible submodule $J_Q(1/2, {\rm st})$, a unique irreducible quotient $J_Q(1, \pi(1,1))$ and subquotient $ J_P(1/2, {\rm st})$.
 \vskip 5pt

\item $\chi^2=1$ but $\chi \ne 1$ and $s = \pm 1/2$, in which case one has:
\[  \begin{CD} 
0@>>> J_P(1/2, {\rm st}_{\chi}) @>>>  I_Q(1/2, \chi) @>>>  J_Q(1/2, \pi(1, \chi)) @>>> 0 . \end{CD} \]
 \end{itemize}

\end{prop}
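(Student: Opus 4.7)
\noindent\emph{Proof plan.} The statement has the same shape as Proposition \ref{P:PSP} and is due to Mui\'c \cite{Mu}; the plan is to follow his strategy, adapted to the 3-step parabolic $Q = LU$ with Levi $L \cong \GL_2$.

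For part (i), I first invoke the standard results of Harish-Chandra and Silberger to reduce the analysis of $I_Q(s,\tau)$ for $\tau$ unitary supercuspidal to the case $s \in \R$ and $\tau \cong \tau^{\vee}$. The remaining reducibility points are then determined by the Plancherel measure, which via Shahidi's theory is expressed in terms of local $L$-factors attached to the adjoint action of $L$ on the Lie algebra $\mathfrak{u}$. Using the composition series $U \supset U_1 \supset U_2 \supset U_3$ with $L$-module quotients $F^2$, $\det$, and $F^2 \otimes \det$, one reads off two relevant $L$-factors whose poles contribute the reducibility points $s_0 = 1/2$ when $\omega_\tau = 1$, $s_0 = 1$ when $\tau$ is dihedral with Galois image $S_3$ (the pole coming from an $L$-factor that detects a cubic self-twist of $\tau$), and $s_0 = 0$ in the remaining self-dual case with nontrivial central character but non-$S_3$ image. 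That the subrepresentation at $s_0 = 1/2$ or $s_0 = 1$ is a generic discrete series representation follows from Casselman's square-integrability criterion applied to $r_Q$, combined with uniqueness of the Whittaker model; that the induction at $s_0 = 0$ splits as a direct sum of a generic and a non-generic tempered piece reflects an $R$-group of order $2$ in that situation.

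For parts (ii) and (iii), $\tau$ is either a twisted Steinberg or a unitary character, and in each case $I_Q(s, \tau)$ sits naturally inside (or maps onto) a minimal principal series $I_B(\chi_1 \times \chi_2)$ whose composition factors are controlled by the Langlands classification and the Kazhdan--Lusztig data for $G_2$. The plan is to list, at each candidate reducibility value of $s$, the ambient minimal principal series, compute its Jacquet modules along both $P$ and $Q$ via the geometric lemma of Bernstein--Zelevinsky, and then match each irreducible subquotient to its unique Langlands datum. A key consistency check is that the discrete series representations $\pi_{\mathrm{gen}}[1]$, $\pi_{\mathrm{deg}}[1]$ and $\pi_{\mathrm{gen}}[\chi]$ that appear in items (ii) and (iii) must coincide with those already constructed in Proposition \ref{P:PSP}; this is verified by comparing their Jacquet modules along $P$ and $Q$ and observing that they carry the same Langlands data.

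The main obstacle is the bookkeeping for the length-three composition series in (ii) and (iii) (first bullets), where a generic and a non-generic discrete series representation must be separated inside the same principal series. This requires combining Casselman's square-integrability criterion, non-vanishing of suitable intertwining operators, and uniqueness of the Whittaker model to identify which subquotient is which. Once these delicate identifications are made, the rest is a careful but routine enumeration using the geometric lemma.
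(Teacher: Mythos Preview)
The paper does not supply its own proof of this proposition: it is stated as a summary of results due to Mui\'c \cite{Mu}, with no argument given in the text. Your outline is precisely the standard route (Shahidi's $L$-function analysis for the supercuspidal inducing data, geometric-lemma bookkeeping and Langlands classification for the Steinberg and character cases), and is in line with how Mui\'c establishes these results; so there is nothing to compare against and your plan is appropriate.
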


\vskip 10pt
\subsection{\bf Principal series representations for $B$. }  \label{SS:PSB}
We now consider the principal series representations induced from the Borel subgroup $B$. More precisely, suppose that $\pi$ is a Langlands quotient of a standard module
\[  I(s_1, s_2,  \chi_1, \chi_2)  \twoheadrightarrow \pi \]
with
\[  s_1 \geq s_2 \geq 0 \]
and $\chi_i$ unitary characters of $F^{\times}$.  Here, recall the convention about characters of $T$ which we have fixed in \S \ref{SS:torus}.
Then
\[  \pi  \hookrightarrow I(-s_1, -s_2, \chi_1^{-1}, \chi_2^{-1})  = I_P( \pi(\chi_1^{-1} |-|^{-s_1} , \chi_2^{-1} |-|^{-s_2})). \]
Now the representation $\pi(\chi_1^{-1} |-|^{-s_1} , \chi_2^{-1} |-|^{-s_2})$ of $M \cong \GL_2$ is reducible if and only if  
\[ \chi_2/\chi_1   \cdot |-|^{s_2-s_1}  =  | -| ^{-1}, \quad \text{i.e. $\chi_1 = \chi_2$ and  $s_1 = s_2 +1 \geq 1$,}\]
in which case one has
\[  \pi  \hookrightarrow I_P( -s + \frac{1}{2} ,  \chi_1^{-1}) ,  \quad \text{with $s \geq 1$}. \]

\vskip 5pt 
There is another, convenient, way to bookkeep the principal series ${\rm Ind}_B^{G_2}(\chi)$. Let $\beta_1, \beta_2, \beta_3$ be three long roots 
such that $\beta_1 + \beta_2 + \beta_3=0$.  This triple is unique up the  action of the Weyl group of $G_2$. 
Then the corresponding co-roots $\beta_i^{\vee} : F^{\times} \rightarrow T$ generate $T$, in particular, 
the character $\chi$ defines three characters of $F^{\times}$ by $\chi_i= \chi\circ \beta_i^{\vee}$ (and is determined by them). Clearly, these characters satisfy 
$\chi_1 \cdot \chi_2 \cdot \chi_3=1$. 

\begin{prop} 
 The induced representation ${\rm Ind}_B^{G_2}(\chi)$  
 is irreducible unless one of the following two conditions hold: 
\begin{itemize} 
\item $\chi_i= |\cdot| ^{\pm 1}$ for some $i$  or $\chi_i/\chi_j=|\cdot |^{\pm 1}$ for a pair $i\neq j$. 

\vskip 5pt 
\item The three characters $\chi_i$ are quadratic, non-trivial and pairwise different. Then 
\[ 
{\rm Ind}_B^{G_2}(\chi)= {\rm Ind}_B^{G_2}(\chi)_{\rm gen} \oplus  {\rm Ind}_B^{G_2}(\chi)_{\rm deg}
\] 
where ${\rm Ind}_B^{G_2}(\chi)_{\rm gen}$ is generic.  
\end{itemize} 
\end{prop}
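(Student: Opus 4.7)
The plan is to deduce the proposition by combining an inducing-in-stages analysis with the Knapp-Stein R-group theory. Write ${\rm Ind}_B^{G_2}(\chi) = {\rm Ind}_P^{G_2}(\sigma)$ where $\sigma = {\rm Ind}_{B \cap M}^{M}(\chi|_T)$ is the corresponding principal series of the Levi $M \cong \GL_2$ of $P$. The representation $\sigma$ is reducible iff its two inducing characters differ by $|\cdot|^{\pm 1}$, and one verifies (via the pinned isomorphism $M \cong \GL_2$ with $\delta_P = \det^3$) that this corresponds to exactly one of the relations $\chi_i/\chi_j = |\cdot|^{\pm 1}$ in the symmetric parameterization of the proposition.

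For the reducibility analysis in general, I first compute the value of $\chi$ at the coroot of each positive root of $G_2$. A short calculation, using the dictionary from \S \ref{SS:torus} together with an explicit choice of three long roots $\beta_1, \beta_2, \beta_3$ summing to zero, shows that for each short positive root $\gamma$ one has $\chi \circ \gamma^\vee = \chi_i/\chi_j$ for some distinct pair $i \neq j$, while for each long positive root one has $\chi \circ \gamma^\vee = \chi_i$ for some $i$. Consequently, the intertwining-operator reducibility hyperplanes $\chi \circ \gamma^\vee = |\cdot|^{\pm 1}$ are exactly the relations in the first bullet. At characters lying on one of these walls, the reducibility pattern can be read off from Propositions \ref{P:PSP} and \ref{P:PSQ} after inducing in stages through $P$ or $Q$; the redundancy across the two inductions is organized by the $S_3$-symmetry of the long-root $A_2$ subsystem (realized inside the Weyl group of $G_2$ via the normalizer $\SL_3 \rtimes \Z/2\Z$ of \S \ref{S:G2}), which permutes the $\chi_i$.

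For a character $\chi$ lying off every wall, no intertwining operator degenerates and the reducibility of ${\rm Ind}_B^{G_2}(\chi)$ is controlled by the Knapp-Stein R-group, which in this situation coincides with the full Weyl stabilizer $W_\chi$. A direct inspection shows that $W_\chi$ is trivial unless the three $\chi_i$ are distinct non-trivial quadratic characters, in which case $W_\chi = \{1, -1\}$: the element $-1 \in W(G_2)$ sends each $\chi_i$ to $\chi_i^{-1} = \chi_i$, while the $S_3$-action on the distinct $\chi_i$ is free. An R-group of order $2$ then yields the direct sum decomposition of the second bullet, and the generic summand is singled out by the existence of a Whittaker functional constructed via the standard Jacquet integral on ${\rm Ind}_B^{G_2}(\chi)$.

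The main obstacle will be the parameter bookkeeping: matching reducibility walls coming from the $(P, Q)$-inducing analyses with the $S_3$-symmetric relations on $(\chi_1, \chi_2, \chi_3)$ without duplication, and consistently identifying the generic summand in the quadratic case. Once this dictionary is in place, the rest is a case-by-case verification against Propositions \ref{P:PSP} and \ref{P:PSQ}.
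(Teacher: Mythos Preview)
The paper does not actually prove this proposition; it is stated as a known result attributed to Mui\'c \cite{Mu} (see the opening of \S\ref{S:repG2}). So there is no ``paper's proof'' to compare against, and your outline is a reasonable sketch of how such a result is established in the literature.

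That said, there is a genuine gap in your R-group step. You assert that off the reducibility walls the Knapp--Stein R-group ``coincides with the full Weyl stabilizer $W_\chi$,'' and then claim $W_\chi$ is trivial unless the $\chi_i$ are distinct nontrivial quadratic. Both statements are false as written. The R-group is $R_\chi = W_\chi / W'_\chi$, where $W'_\chi$ is generated by reflections $s_\gamma$ with $\chi\circ\gamma^\vee$ trivial; and $W_\chi$ itself is certainly nontrivial in many other cases (for example if $\chi_1=\chi_2$, the transposition $(12)$, realized by a short-root reflection, lies in $W_\chi$). What is true is that in all such cases $W'_\chi$ absorbs the extra symmetry and $R_\chi$ collapses to the trivial group; only in the distinct-nontrivial-quadratic case does one get $W'_\chi=1$ and $W_\chi=\{1,-1\}$, hence $R_\chi\cong\Z/2\Z$. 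Your argument needs this quotient, not $W_\chi$ alone, and the case analysis (trivial $\chi_i$, repeated $\chi_i$, cubic $\chi_i$, etc.) must be carried out explicitly.

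There is also a smaller gap for non-unitary $\chi$. Knapp--Stein R-group theory governs \emph{unitary} principal series; for $\chi$ off all walls but with nontrivial absolute value, you cannot invoke it directly. The standard fix is to reduce via the Langlands classification (or equivalently argue that all standard intertwining operators are isomorphisms, so the Jacquet module is multiplicity-free as a $T$-module and irreducibility follows). You gesture at this with ``no intertwining operator degenerates,'' but that phrase needs to be made precise to close the argument.
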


\vskip 10pt

\subsection{\bf Conjectural L-packets of $G_2$.}   \label{SS:partial-LLC}
The above results allow one to give an enumeration of the non-cuspidal representations of $G_2$.  Using the desiderata of the conjectural local Langlands correspondence  (LLC) for $G_2$, we explain how one can assign L-parameters to the noncuspidal representations of $G_2$, and hence partition them into L-packets. Recall that an $L$-parameter of $G_2$ is 
an admissible  homomorphism 
\[ 
\varphi : WD_F \longrightarrow G_2^{\vee} = G_2(\mathbb C) 
\] 
of the Weil-Deligne group $WD=W_F \times \SL_2(\mathbb C)$ to the dual group $G_2(\C)$, taken up to conjugacy by $G_2(\C)$. Let 
\[  A_{\varphi} = \pi_0(Z_{G_2}(\varphi)) \]
be the associated component group of $\varphi$. Then one expects that there should be an   L-packet 
\[ 
\Pi_{\varphi} = \{ \pi(\rho) : \rho \in \hat{A}_{\varphi} \} \subset {\rm Irr}(G_2) \] 
associated to each $\varphi$,  whose members are indexed by the characters of $A_{\varphi}$, such  that
\[  {\rm Irr}(G_2) = \bigcup_{\varphi} \Pi_{\varphi}. \]

\vskip 5pt

The non-tempered irreducible representations of  $G_2$  are uniquely realized as Langlands quotients of standard modules, so have the form $J_P(\tau)$, $J_Q(\tau)$ or $J_B(\chi)$.  The Levi factors of the parabolic subgroups $P$, $Q$ aand $B$ are isomorphic to $\GL_2$ and $\GL_1 \times \GL_1$. Since the LLC for these groups are known, one can assign L-parameters to the nontempered representations.  For example, if $\pi = J_P(\tau)$, and $\varphi_{\tau} : WD_F \longrightarrow M^{\vee} = \GL_2(\C)$ is the L-parameter of $\tau$, then the L-parametrer of $\pi = J_P(\tau)$ is the composite
\[   \varphi_{\pi} : WD_F \longrightarrow M^{\vee} \hookrightarrow G_2^{\vee} = G_2(\C). \]
Since the L-packets on the Levi subgroups are singletons, we see also that the nontempered L-packets of $G_2$ are singletons, and $A_{\varphi_{\pi}}$ is correspondingly trivial. 

\vskip 5pt
In other words, the non-tempered irreducible representations of $G_2$ are naturally parametrized by the nontempered L-parameters of $G_2$; these are the L-parameters $\varphi$ such that $\varphi(W_F)$ is unbounded.  
In the following, we will use this partial LLC to describe the effect of the various theta correspondences on nontempered representations.

 \vskip 5pt

By the same token, since irreducible tempered representations which are not square-integrable are uniquely realized as summands of principal series representations induced from   unitary square-integrable representations of Levi factors, one can attach L-parameters to these tempered (but not square-integrable) representations of $G_2$. The resulting L-parameters $\varphi$ have the property that $\varphi(W_F)$ is bounded but  $\varphi(WD_F)$ is contained in a proper Levi subgroup. 
The size of such a tempered L-packet now depends on the number of irreducible summands in the corresponding parabolically induced representations. From the results recalled in this section, one sees  that the size of a tempered L-packet $\Pi_{\varphi}$ is $1$ or $2$.  One can verify that this is the same as the size of $A_{\varphi}$. 
Moreover, in each tempered L-packet, there is a unique generic representation, and this is assigned to the trivial character of $A_{\varphi}$. 
Thus, the LLC for tempered non-discrete series representations of $G_2$ is also known, and we may refer to this partial LLC for describing these representations.
\vskip 5pt

Hence, the main issue with the LLC for $G_2$ comes down to the classification of the square-integrable or discrete series representations by discrete series L-parameters; these are the L-parameters $\varphi$ which do not factor through any proper Levi subgroup, or equivalently whose centralizer $C_{\varphi} = Z_{G_2}(\varphi)$ is finite. Guided by the desiderata of the LLC, we can now describe  the various families of discrete series L-parameters, according to $\varphi(\SL_2)$, and list all non-supercuspidal members. 
\vskip 5pt
\begin{enumerate}
\item $\varphi(\SL_2)$ is the principal $\SL_2$. Then $A_{\varphi}=1$ and the packet consists of the Steinberg representation: 
\[ 
\Pi_{\varphi} = \{ {\rm St}_{G_2}  \} 
\] 

\vskip 5pt 
\item $\varphi(\SL_2)= \SO_3 \subseteq \SL_3 \subseteq G_2$; this is the subregular $\SL_2$. The centralizer of $\SO_3$ in $G_2$ is the finite symmetric group $S_3$, so that  $\varphi$ gives by restriction a map   $\phi: W_F \rightarrow S_3$.  There are four cases to discuss: 
\vskip 5pt

\begin{itemize} 
\item $\phi(W_F)= 1$. Then $A_{\varphi}=S_3$. Let $1,r, \epsilon$ be the three irreducible representations of $S_3$: the trivial, 2-dimensional and  
 the sign character respectively. Then 
\[
\Pi_{\varphi} =\{ \pi(1)=\pi_{\rm gen} [1],\, \pi(r)=\pi_{\rm deg}[1], \, \pi(\epsilon)= \pi_{\mathrm{sc}}[1] \}
\] 
where $\pi_{\rm gen}[1]$ is defined in Proposition \ref{P:PSP}(ii) (first bullet point) and $\pi_{\rm deg}[1]$ is given in Proposition \ref{P:PSP}(iii) (first b.p.).
The representation $\pi(\epsilon)$ is a depth 0 supercuspidal representation induced from a
cuspidal unipotent representation of $G_2(\mathbb F_q)$, inflated to a hyperspecial maximal compact group. The cuspidal unipotent representation is denoted in the literature by 
$G_2[1]$ and hence our notation $\pi_{\mathrm{sc}}[1]$.  

\vskip 5pt 
\item $\phi(W_F)= \mu_2$. 
Then, by the local class field theory, $\phi$ defines a quadratic character $\chi$ of $F^{\times}$. 
Let $1$ and $-1$ denote the trivial and non-trivial characters of $A_{\varphi}=\mu_2$. Then 
\[
\Pi_{\varphi} =\{ \pi(1)=\pi_{\rm gen} [\chi], \, \pi(-1)\}, 
\] 
where $\pi_{\rm gen}[\chi]$ is as defined in Proposition \ref{P:PSP}(ii) (second b.p.). If the character $\chi$ is unramified,  then $\pi(-1)=\pi_{\rm sc}[-1]$  is a depth 0 supercuspidal representation. It is induced from a 
cuspidal unipotent representation of $G_2(\mathbb F_q)$, denoted by $G_2[-1]$, inflated to a hyperspecial maximal compact group.

\vskip 5pt 
\item $\phi(W_F)= \mu_3$. 
Then, by  local class field theory, $\phi$ defines a cubic character $\chi$ of $F^{\times}$. 
Let $1$, $\omega$ and $\omega^2$ denote the characters of $A_{\varphi}=\mu_3$. Then 
\[
\Pi_{\varphi} =\{ \pi(1)=\pi_{\rm gen} [\chi], \, \pi(\omega), \, \pi(\omega^2)\},
\] 
where $\pi_{\rm gen} [\chi]$ is as defined in Proposition \ref{P:PSP}(ii) (third b.p.).
If the character $\chi$ is unramified,  then $\pi(\omega)=\pi_{\rm sc}[\omega]$ and  $\pi(\omega^2)=\pi_{\rm sc}[\omega^2]$ are induced from a 
cuspidal unipotent representations of $G_2(\mathbb F_q)$, denoted by $G_2[\omega]$ and $G_2[\omega^2]$, inflated to a hyperspecial maximal compact group.
\vskip 5pt 
\item $\phi(W_F)= S_3$. Then $r \circ \phi$  corresponds to a supercuspidal representation $\tau$ of $\GL_2$ (where we recall that  $r$ denotes the two-dimensional irreducible representation of $S_3$). In this case $A_{\varphi}$ is trivial and 
\[
\Pi_{\varphi} =\{ \pi_{\rm gen}[\tau]\},  \]
where  $\pi_{\rm gen}[\tau]$ is as defined in Proposition \ref{P:PSQ}(i) (second b.p.).
\end{itemize} 

\vskip 5pt 
\item $\varphi(\SL_2)=\SL_{2,s}$, a short root $\SL_2$. The centralizer of $\SL_{2,s}$ in $G_2$ is $\SL_{2,l}$, a long root $\SL_2$. Then $\varphi$ gives, 
 by restriction, a map form 
the Weil group $\phi: W_F \rightarrow \SL_{2,l}$, that corresponds to supercuspidal representation $\tau$ of $\GL_2$ with the trivial central character 
(and hence $\tau\cong \tau^{\vee}$). In this case $A_{\varphi}=\mu_2$, and 
\[
\Pi_{\varphi} =\{ \pi(1)= \delta_P(\tau), \, \pi(-1)\},
\] 
where $ \delta_P(\tau)$ is as defined in Proposition \ref{P:PSP}(i) (first b.p.) and $\pi(-1)$ is supercuspidal.
\vskip 5pt 

\item $\varphi(\SL_2)=\SL_{2,l}$, a long root $\SL_2$. The centralizer of $\SL_{2,l}$ in $G_2$ is $\SL_{2,s}$, a long root $\SL_2$. Then $\varphi$ gives, 
 by restriction, a map form 
the Weil group $\phi: W_F \rightarrow \SL_{2,s}$, that corresponds to supercuspidal representation $\tau$ of $\GL_2$ with the trivial central character 
(and hence $\tau\cong \tau^{\vee}$). In this case $A_{\varphi}=\mu_2$, and 
\[
\Pi_{\varphi} =\{ \pi(1)= \delta_Q(\tau), \, \pi(-1)\},
\] 
where $ \delta_Q(\tau)$ is as defined in Proposition \ref{P:PSQ}(i) (first b.p.)  and $\pi(-1)$ is supercuspidal.
\vskip 5pt 
\item $\varphi(SL_2)= 1$. Then $\varphi: W_F \rightarrow G_2(\mathbb C)$ gives rise to an L-packet consisting entirely of supercuspidal representations of $G_2$.
\end{enumerate} 
\vskip 5pt 

There has been some work towards the above conjectural LLC for $G_2$, most notably \cite{SWe} and \cite{HKT}.  At the moment, we simply wish to point out that all the noncuspidal discrete series representations are fully accounted for by the above classification scheme.

\vskip 10pt

\subsection{\bf Local Fourier coefficients.}
It will be useful to consider the twisted Jacquet modules of a representation $\pi$ of $G_2$ along the unipotent radical $N$ of $P$.  
The $M$-orbits of 1-dimensional characters of $N$ are naturally indexed by cubic $F$-algebras, with the generic orbits corresponding to \'etale cubic $F$-algebras.  
For any such cubic $F$-algebra $E$, we shall write $\psi_E$ for a character of $N$ in the corresponding $M$-orbit.  Then one may consider $\pi_{N, \psi_E}$.  
In particular, we note: 
\vskip 5pt

\begin{lemma}
For any irreducible, nongeneric, infinite dimensional  representation $\pi$ of $G_2$, there exists an  \'etale cubic $F$-algebra $E$ such that $\pi_{N,\psi_E}  \ne 0$.  
Moreover,  $\pi_{N,\psi_E}$ is finite-dimensional for any \'etale $E$.  
\end{lemma}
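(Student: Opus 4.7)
My plan is to treat the finite-dimensionality assertion quickly, then prove the existence statement by contradiction via an orbit analysis of $\widehat{N/Z}$.

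For finite-dimensionality: given an \'etale cubic $F$-algebra $E$, the stabilizer of $\psi_E$ in $M\cong\GL_2$ is the finite group $\Aut(E)\subseteq S_3$. Admissibility of $\pi$ makes the twisted Jacquet module $\pi_{N,\psi_E}$ admissible over this finite stabilizer, hence finite-dimensional.

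For existence, suppose for contradiction that $\pi$ is irreducible, infinite-dimensional, nongeneric, and $\pi_{N,\psi_E}=0$ for every \'etale $E$. The Delone--Faddeev correspondence identifies the $M$-orbits on $\widehat{N/Z}\cong\Sym^3(F^2)\otimes\det^{-1}$ with isomorphism classes of cubic $F$-algebras: the open orbits correspond to \'etale algebras $E$, the codimension-one orbits to $F\times F[t]/(t^2)$, the codimension-two orbit to $F[t]/(t^3)$, and the zero orbit to the square-zero cubic algebra. A weight computation in the coordinates of Section \ref{SS:torus} shows that the $N$-restriction of a generic character of $U_B$ lies in the $F[t]/(t^3)$ (triple-root) orbit and has stabilizer a Borel of $M$; in particular, nongenericity forces the further generic $U_\alpha$-Jacquet of the corresponding $\pi_{N,\psi}$ to vanish. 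I would then analyze the $M\ltimes(N/Z)$-module $\pi_Z$ via the Bernstein--Zelevinsky filtration indexed by the $M$-orbit stratification: the contradiction hypothesis eliminates the contributions from the open strata, and using the explicit $\GL_2$-degeneration $x^2 y\rightsquigarrow x^3$ together with upper-semicontinuity of the spectral support of $\pi_Z$ along closure relations of $M$-orbits, vanishing on the \'etale stratum propagates down to vanishing on the double+simple stratum; combined with the triple-root information, $\pi_Z$ is forced to be supported only at the zero character, so $\pi_Z=\pi_N$.

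To close, I would split into two cases. If $\pi_N\neq 0$, Frobenius reciprocity embeds $\pi\hookrightarrow I_P(\pi_N)$; a Mackey/Bruhat computation shows $I_P(\sigma)_{N,\psi_E}\neq 0$ for every nonzero finite-length $\sigma$ on $M$ and every \'etale $E$, and a composition-series chase on $I_P(\pi_N)$ (using exactness of the Jacquet functor) contradicts $\pi_{N,\psi_E}=0$. If instead $\pi_N=0$, then $\pi$ is supercuspidal and $\pi_Z=0$, so by Stone--von Neumann $\pi|_P\cong\rho_\chi\otimes\sigma$ for a nontrivial central character $\chi$ of $Z$; the rigidity of extending such a $P$-structure to an irreducible $G_2$-module restricts the possibilities to minimal-type representations, which are known to be generic, providing the contradiction. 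The main obstacle in this plan is the closure/semicontinuity step propagating vanishing from the \'etale and triple-root strata to the double+simple stratum, since nongenericity does not directly annihilate $\pi_{N,\psi}$ on the triple-root orbit but only kills a further generic $U_\alpha$-Jacquet of it; this will require a careful two-stage argument using both the Bernstein--Zelevinsky filtration of $\pi_Z$ and the subfiltration given by the stabilizer Borel of $M$ acting on the triple-root stratum.
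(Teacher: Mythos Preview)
Your finite-dimensionality argument is on the right track: the $M$-orbit of $\psi_E$ is open with finite stabiliser, and from this one can deduce that $\pi_{N,\psi_E}$ is finite-dimensional for admissible $\pi$ (though the one-line justification ``admissible over a finite group'' hides a small argument using the Bernstein--Zelevinsky filtration or, more directly, is part of the Moeglin--Waldspurger package \cite{MW}).

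The existence argument, however, has a genuine gap at precisely the point you flag as the ``main obstacle'', and it is not a matter of extra care but of direction. In the orbit filtration of $\pi_Z$ as an $M\ltimes(N/Z)$-module, the open (\'etale) strata contribute \emph{submodules} and the smaller strata contribute successive \emph{quotients}. Consequently, vanishing of the \'etale pieces says only that a certain submodule of $\pi_Z$ is zero; it places no constraint whatsoever on the quotients coming from the $F\times F[\epsilon]$ or $F[t]/(t^3)$ strata. There is no upper-semicontinuity principle that pushes vanishing from an open stratum \emph{down} to its boundary --- semicontinuity of support runs the other way. (The trivial representation, excluded here only by the infinite-dimensionality hypothesis, already illustrates that $\pi_{N,\psi_E}=0$ for every \'etale $E$ is compatible with $\pi_N\neq 0$.) Since your subsequent dichotomy ($\pi_N\neq 0$ versus $\pi_Z=0$) rests on having first forced $\pi_Z=\pi_N$, the whole chain collapses; in particular the supercuspidal branch never gets off the ground, as $\pi_Z=0$ is not established.

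The paper takes an entirely different route and does not attempt an orbit-by-orbit reduction. It invokes the theory of wave-front sets: because $\pi$ is nongeneric and infinite-dimensional, the maximal orbits in its wave-front set are subregular, and for $G_2$ these rational subregular orbits are parametrised exactly by \'etale cubic $F$-algebras (see \cite{HMS}, \cite{LS}). The nonvanishing and finite-dimensionality of the associated degenerate Whittaker spaces $\pi_{N,\psi_E}$ is then the main theorem of Moeglin--Waldspurger \cite{MW} (extended to residue characteristic $2$ by \cite{Va}). What your plan is really attempting is a bare-hands proof of a special case of \cite{MW}; the missing ingredient is not a filtration trick but the link, supplied by the local character expansion, between nilpotent orbits in the wave-front set and nonvanishing of the corresponding degenerate Whittaker models.
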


\begin{proof} Since $\pi$ is not Whittaker generic, and not the trivial representation, 
its wave-front set is supported on subregular nilpotent orbits, which are paramterized by cubic \'etale algebras, see \cite{HMS} and \cite{LS}. The non-vanishing of
$\pi_{N,\psi_E}$ for some $E$ and the finite-dimensionality of these spaces is the main result of \cite{MW} and \cite{Va}. 
\end{proof}

\vskip 10pt

\section{\bf Exceptional Dual Pairs}
In this section, we briefly describe the dual pairs  which intervene in this paper and some structural results which will be important in the study of the associated theta correspondences.

 \subsection{\bf The group $M_J$.} 
 Let $J$ be a Freudenthal-Jordan $F$-algebra \cite{KMRT}. 
 The algebra $J$  comes equipped with a cubic norm form $N_J$, and we let
 \[  M_J = \{ g \in \GL(J):   N_J \circ g  = N_J \}.   \]
  It contains the automorphism group $\Aut(J)$ as a subgroup. 
   Now we consider the $F$-vector space
 \[ \mathfrak{g}_J =   \mathfrak{sl}_3 \oplus Lie(M_J) \oplus (F^3 \otimes J) \oplus (F^3 \otimes J)^* \]
 Then $\mathfrak{g}_J$ can be given the structure of a simple  exceptional Lie algebra (see, for example, \cite{GS05}). We have the following cases of interest: 
 \vskip 5pt

\begin{center}
\begin{tabular}{|c|c|c|c|c|}
\hline  
$\dim J$  & $1$  & $3$ & $9$ & $15$\\
\hline 
$\mathfrak{g}_J$ & $G_2$ & $D_4$  & $E_6$ & $E_7$ \\
\hline
\end{tabular}
\end{center}
\vskip 10pt
 
 We observe:
 \vskip 5pt
 
 \begin{itemize}
 \item  If $\dim J=3$, then $J$ is a cubic \' etale $F$-algebra $E$.
 \vskip 5pt
 
 \item If $\dim J=9$ ,then $J$ corresponds to a pair $(B_K, \tau)$ where $B_K$ is a central simple algebra over an \' etale quadratic $F$-algebra $K$ 
and $\iota$ is an involution of the second kind. Thus, $J =  B_K^{\iota}$ is the subspace of $\iota$-symmetric elements. The split version of this algebra is 
$M_3(F^2)=M_3(F)\oplus M_3(F)$ where $\iota$ permutes two factors. Thus $J=M_3(F)$, and $\Aut(M_3) = \PGL_3 \rtimes \Z/2\Z$. 

\vskip 5pt

\item If $\dim J=15$, then $J$ is $H_3(B_F)$  is the space of all 
$3\times 3$ hermitian-symmetric matrices, where $B_F$ is a quaternion algebra over $F$. The split version is when $B_F=M_2(F)$.  
\end{itemize}

\vskip 5pt 
Let $G_J$ be the identity component of $\Aut(\mathfrak g_J)$. If $\dim J=9$ then 
 \[  \begin{CD}
 1 @>>> G_J @>>> \Aut(\mathfrak g_J) @>>> \Z/2\Z @>>> 1. \end{CD} \]
 This short exact sequence need not split  in general.
 \vskip 5pt

 \subsection{\bf Dual pair $G_2 \times \Aut(J)$.}
 We can now describe some dual pairs in $G_J$ or rather in $\Aut(\mathfrak g_J)$. It will be easier to do this on the level of Lie algebras. 
 \vskip 5pt
 
 The centralizer of $\Aut(J)$ in $\mathfrak g_J$ is 
 \[  \mathfrak{sl}_3 \oplus F^3 \otimes 1_J \oplus F^3 \otimes 1_J \]
 which one recognizes to be $\mathfrak g_F$ (i.e. taking $J=F$). Thus this is a Lie subalgebra of type $G_2$, and  we have a dual pair
 \[  G_2 \times \Aut(J) \subset \Aut(G_J).  \]
If $\dim J=9$,  we recall that $\Aut(J)$ sits in a short exact sequence
\[  \begin{CD}
1 @>>> \Aut(J)^0 @>>>\Aut(J) @>>> \Z/2\Z @>>> 1. \end{CD} \]
If $J$ is associated to a pair $(B_K, \tau)$, then $\Aut(J)^0 = PGU(B_K, \tau)$ is an adjoint group of type $A_2$. 
\vskip 5pt

\subsection{\bf Dual pair $\Aut(i:E \rightarrow J)  \times G_E$.}

Now we fix an embedding $i: E \longrightarrow J$, where $E$ is a cubic \'etale $F$-algebra. 
 We have the subgroup
\[   \Aut(i: E \rightarrow J) \subset \Aut(J).  \]
If $\dim J=9$ a detailed description of this group is in \cite{GS14}. Its identity component is a 2-dimensional torus. 
 The centralizer of  $\Aut(i: E \rightarrow J)$ in $\mathfrak g_J$ contains 
\[  \mathfrak{g}_E = \mathfrak{sl}_3 \oplus  \mathfrak{t}_E
\oplus F^3 \otimes E \oplus (F^3 \otimes E)^* \]
where $E \hookrightarrow J$ via $i$ and $\mathfrak t_E \cong E^0$ is the toral Lie subalgebra of trace 0 elements in $E$. 
 This Lie algebra is isomorphic to $Lie(G_E)$ (where $G_E$ is the simply connected quasi-split group $Spin_8^E$), and we have
the dual pair 
 \[ \Aut(i: E \rightarrow J) \times  G_E \longrightarrow \Aut(G_J).\]
 Note that this map is not injective.
 
 \vskip 5pt
 
 \subsection{\bf A see-saw diagram} 
  The two dual pairs we described above fit together into a see-saw diagram:
 \vskip 10pt

\begin{equation} \label{D:seesaw}
 \xymatrix{
 G_E:=  \Spin_4^E  \ar@{-}[dr] \ar@{-}[d] & \Aut(J) =: H_J
     \ar@{-}[d] \\
  G_2 \ar@{-}[ur] &   \Aut(i: E \rightarrow J) =: H_{J,E}}
\end{equation}
in ${\rm Aut}(G_J)$.  The various $J$'s of interest in this paper, and the corresponding groups  $H_J = \Aut(J)$ and $H_{J,E} = {\rm Aut}(i: E \rightarrow J)$ are given in the table below.
 
\begin{center}
\begin{tabular}{|c|c|c|c|}
\hline  
$J$  & $D^+$  & $M_3(F)^+$ & $H_3(M_2(F))$ \\
\hline 
$H_J$ & $PD^{\times}$  & $\PGL_3 \rtimes \Z/2\Z$ & $\PGSp_6$ \\
\hline
$H_{J,E}$ & $PE^{\times}$ & $PE^{\times} \rtimes \Z/2\Z$  & $\SL_2(E)/ \mu_2$ \\
\hline
\end{tabular}
\end{center}
\vskip 10pt
Here, note that $D^+$ denotes the Jordan algebra associated to a cubic division $F$-algebra $D$, in which case $E$ is necessarily a field.

\vskip 10pt

\section{\bf The See-Saw Argument}
In this section, we shall consider the see-saw identity arising from the seesaw diagram (\ref{D:seesaw}) and pursue some of its consequences.

\vskip 5pt

\subsection{\bf See-saw identity.} 
Suppose that $\pi \in {\rm Irr}(G_2)$. Then we have the see-saw identity associated with the seesaw (\ref{D:seesaw}):
\begin{equation} \label{E:seesaw}
  \Hom_{H_{J,E}}(\Theta(\pi), \C)  \cong \Hom_{G_2} (R_J(E),   \pi) \end{equation}
where 
\[  R_J(E)  :=  \Theta(1)  \]
is the big theta lift of the trivial representation of $H_{J,E}$.  To make use of this see-saw identity, we need to understand the representation $R_J(E)$ of $\Spin_8^E$. This has been studied in \cite{GS15} and we recall the relevant results there.
\vskip 5pt

\subsection{\bf Degenerate principal series of $\Spin_8^E$.} 
 Let $P_E = M_E \cdot N_E \subset \Spin_8^E$ be the Heisenberg parabolic subgroup, so that its Levi factor is
 \[  M_E  \cong  \GL_2(E)^0. \]
 Then the determinant map defines an algebraic  character  $M_E \rightarrow \mathbb{G}_m$ which is a basis element of $\Hom(M_E, \mathbb{G}_m)$. We may consider the degenerate principal series representation
 \[  I_E(s)  =  {\rm Ind}_{P_E}^{\Spin_8^E}   |\det|^s.  \]
 In \cite{GS15, S},   the module structure of this family of degenerate principal series representation has been determined. In particular, we have:
 \vskip 5pt
 
 \begin{prop} \label{P:deg-ps}
 \[  R_J(E)  \hookrightarrow I_E (s_J)  \]
 where  
 \[  s_J = \begin{cases}
 -1/2, \text{  if $J= D^+$ or $M_3(F)^+$;} \\
 1/2, \text{  if $J = H_3(M_2(F))$.} \end{cases} \]
 The representation $I_E(1/2)$ has length $3$ when $E$ is a field and has length $2$ otherwise.
 More precisely, it has a unique irreducible submodule $V$ with quotient isomorphic to $ R_{M_3(F)}(E) \oplus R_D(E)$ (where $R_D(E)$ is interpreted to be $0$ when $E$ is not a field). Indeed, one has the short exact sequence:
  \[  \begin{CD}
 0 @>>> R_{H_3(M_2(F))}(E)  @>>> I_E(1/2) @>>>  R_D(E) @>>> 0. \end{CD} \]
 and
 \[ \begin{CD}
 0 @>>> V @>>>   R_{H_3(M_2(F))}(E)   @>>> R_{M_3(F)}(E)  @>>> 0. \end{CD} \]
 In particular, when $E$ is not a field, $I_E(1/2) =  R_{H_3(M_2(F))}(E)$.
 \end{prop}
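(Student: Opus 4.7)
\medskip

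The plan is to produce the embedding $R_J(E) \hookrightarrow I_E(s_J)$ from a Jacquet-module computation on the minimal representation $\Pi$ of $G_J$, and then to pin down the composition structure of $I_E(1/2)$ by combining the results of \cite{GS15, S} on degenerate principal series of $\Spin_8^E$ with the three such embeddings corresponding to the three Jordan algebras $J$.

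First I would compute the Jacquet module $\Pi_{N_E}$ of the minimal representation along the unipotent radical $N_E$ of the Heisenberg parabolic $P_E\subset \Spin_8^E$. The $M_E \times H_J$-orbit structure on the characters of $N_E/[N_E,N_E]$ (via the Lie-algebra model $\gg_J=\mathfrak{sl}_3\oplus \mathrm{Lie}(M_J) \oplus F^3\otimes J \oplus (F^3 \otimes J)^*$) exhibits $\Pi_{N_E}$ as an extension whose top piece is a twist by $|\det|^{s_J+\rho_{P_E}}$ of a representation of $M_E\times H_J$. By Frobenius reciprocity, taking $H_{J,E}$-invariants of this top piece and applying the see-saw identity \eqref{E:seesaw} (for the trivial representation of $H_{J,E}$) produces a nonzero map $R_J(E)\to I_E(s_J)$, whose injectivity comes from irreducibility of the quotient (the bottom filtration term vanishes after taking $H_{J,E}$-coinvariants). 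The sign $s_J=\mp 1/2$ reflects a standard $\rho$-shift computation on the relevant Heisenberg parabolic, which accounts for the cases $J=D^+$ or $M_3(F)^+$ versus $J=H_3(M_2(F))$.

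Next I would invoke the composition series of $I_E(s)$ worked out in \cite{GS15, S}. At $s=1/2$ the intertwining operator $I_E(-1/2)\to I_E(1/2)$ has image of length $2$ (resp.\ $1$) when $E$ is a field (resp.\ not), and the cokernel of this operator is the representation $R_D(E)$ (interpreted as $0$ if $E$ is not a field). This yields immediately the short exact sequence
\[
0\longrightarrow R_{H_3(M_2(F))}(E) \longrightarrow I_E(1/2) \longrightarrow R_D(E) \longrightarrow 0,
\]
where $R_{H_3(M_2(F))}(E)$ is identified as the kernel via the embedding of Step~1 together with the fact that this kernel is the unique nontrivial proper submodule in the non-field case. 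When $E$ is a field, the remaining length-two sub $R_{H_3(M_2(F))}(E)$ is filtered with irreducible cosocle identified with $R_{M_3(F)}(E)$: one has the embedding $R_{M_3(F)}(E)\hookrightarrow I_E(-1/2)$ by Step~1, and its image under the intertwiner lands inside $R_{H_3(M_2(F))}(E)$ as its maximal proper quotient. Defining $V$ to be the kernel of the surjection $R_{H_3(M_2(F))}(E)\twoheadrightarrow R_{M_3(F)}(E)$ gives the second short exact sequence of the statement; that $V$ is irreducible (and the unique irreducible submodule of $I_E(1/2)$) is the length-$3$ assertion extracted from \cite{GS15, S}.

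The main obstacle I expect is the precise identification of the middle constituent of $I_E(1/2)$ with $R_{M_3(F)}(E)$ (and not merely with some unspecified irreducible piece): this requires matching the image of $R_{M_3(F)}(E)\hookrightarrow I_E(-1/2)$ under the standard intertwiner with the corresponding quotient in $I_E(1/2)$, which in turn rests on knowing that the normalized intertwining operator is nonzero on this constituent. This non-vanishing can either be extracted from the explicit leading-term computation in \cite{GS15}, or verified by restricting to $H_{J,E}$ and exploiting that the trivial $H_{J,E}$-representation occurs in $R_{M_3(F)}(E)$ with the expected multiplicity. A secondary bookkeeping point is that when $E$ is not a field, the Heisenberg Levi $M_E \cong \GL_2(E)^0$ becomes reducible at the relevant point and so one must separately check that the constituent $R_D(E)$ genuinely vanishes, which is where the condition that $E$ be a field enters in the very definition of $R_D(E)$.
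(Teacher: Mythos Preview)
The paper does not actually prove this proposition: it is stated as a summary of results established elsewhere, with the sentence ``In \cite{GS15, S}, the module structure of this family of degenerate principal series representation has been determined. In particular, we have:'' preceding the statement, and no proof following it. So there is nothing to compare your argument against in this paper; the content is imported wholesale from \cite{GS15, S}.

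Your sketch is in the right spirit --- Jacquet modules of the minimal representation along $N_E$ plus Frobenius reciprocity is indeed how one produces nonzero intertwining maps $R_J(E)\to I_E(s_J)$, and the structure of $I_E(1/2)$ is then read off from the degenerate principal series analysis in the cited references. However, your justification of \emph{injectivity} is not convincing as written. You claim it follows from ``irreducibility of the quotient (the bottom filtration term vanishes after taking $H_{J,E}$-coinvariants)'', but $R_J(E)$ is not irreducible in general (for $J=H_3(M_2(F))$ it has length~$2$ by the very proposition you are proving), so a nonzero map need not be injective for that reason. One really needs either an a priori bound on the length of $R_J(E)$ combined with knowledge of the constituents of $I_E(s_J)$, or a direct argument that the kernel of the constructed map has no $H_{J,E}$-invariant functional surviving. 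In practice this is exactly what \cite{GS15} does by analyzing both sides simultaneously, so your plan to lean on those references for the structure of $I_E(1/2)$ is correct, but you should not pretend that the embedding step is independent of that analysis.
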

 \vskip 5pt
  As a consequence of the above discussion, we see that it is useful to understand the Hom space
  \[  \Hom_{G_2}(I_E(s), \pi)  \quad \text{ for $\pi \in {\rm Irr}(G_2)$.} \]
We shall study this in two ways.
\vskip 10pt

 \subsection{\bf Vanishing of an ${\rm Ext}^1$.} 
 In view of the proposition, we see that there is an exact sequence
 \[  \begin{CD}
 0 @>>>  \Hom_{G_2}( R_D(E), \pi)  @>>>  \Hom_{G_2}(I_E(1/2), \pi) @>>>  \Hom_{G_2}( R_{H_3(M_2(F))}(E), \pi) \\
  @. @.   @. @VVV  \\
 @. @. @.   {\rm Ext}^1_{G_2}( R_D(E),  \pi).
 \end{CD} \]
 Now we have the following useful fact:
 \vskip 5pt
 
 \begin{prop}  \label{P:temp-sum}
 Assume that $E$ is a field. 
 If $\pi \in {\rm Irr}(G_2)$ is tempered or has cuspidal support different from  $\pi_{\rm deg}[1]$, then 
 \[   {\rm Ext}^1_{G_2}( R_D(E),  \pi)  = 0, \]
 so that one has a short exact sequence
  \[  \begin{CD}
 0 \rightarrow  \Hom_{G_2}( R_D(E), \pi)  @>>>  \Hom_{G_2}(I_E(1/2), \pi)  @>>> \Hom_{G_2}( R_{H_3(M_2(F))}(E), \pi) \rightarrow 0. \end{CD} \]
% In particular, the only $\pi$'s which are left out are the nontempered constituents of $I_P(1/2, 1)$ and $I_P(1/2, {\rm st})$, namely the three representations $J_P(1/2, {\rm st})$, $J_Q(1/2, {\rm st})$ and $J_Q(1, \pi(1,1))$. 
 \end{prop}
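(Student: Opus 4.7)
The plan is to deduce the vanishing of $\mathrm{Ext}^1_{G_2}(R_D(E),\pi)$ from an analysis of $R_D(E)|_{G_2}$ as a $G_2$-module, together with the Bernstein decomposition and second adjunction. The starting point is the embedding $R_D(E) \hookrightarrow I_E(-1/2)$ given by Proposition \ref{P:deg-ps} (since $s_J = -1/2$ for $J = D^+$).

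First I would compute $I_E(s)|_{G_2}$ by the geometric lemma applied to the $G_2$-orbit decomposition of $\Spin_8^E/P_E$. This produces a finite $G_2$-stable filtration of $I_E(s)|_{G_2}$ whose graded pieces are parabolically induced $G_2$-representations from characters or $\GL_2$-representations on the Levi factors of $B$, $P$, and $Q$, with inducing data dictated by the orbit stabilisers. Intersecting with $R_D(E) \subset I_E(-1/2)$ yields a compatible filtration of $R_D(E)|_{G_2}$ whose subquotients can be read off explicitly from the Jacquet module computations of Sections \ref{S:jacquet}--\ref{S:conseq}.

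Next I would apply the Bernstein decomposition: $\mathrm{Ext}^1_{G_2}(R_D(E), \pi)$ is supported on the unique Bernstein component of $R_D(E)$ sharing the inertial cuspidal support of $\pi$, so only finitely many pieces of the filtration contribute. Whenever the relevant graded piece is \emph{fully} induced $\Ind_P^{G_2}(\sigma)$ from a cuspidal representation $\sigma$ of a Levi $L$, second adjunction yields
\[
\mathrm{Ext}^1_{G_2}\bigl(\Ind_P^{G_2}(\sigma),\pi\bigr) \;\cong\; \mathrm{Ext}^1_L\bigl(\sigma, \mathrm{Jac}_{\bar P}(\pi)\bigr) \;=\; 0,
\]
since $\sigma$ is projective within its cuspidal Bernstein block on $L$. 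Propagating through the long exact sequences for each short-exact subquotient of the filtration gives $\mathrm{Ext}^1$-vanishing for $\pi$ in every Bernstein block except possibly the exceptional ones where a graded piece is a proper (non-induced) subquotient of a reducible principal series.

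The hard part will be to isolate the unique Bernstein block where a graded piece of $R_D(E)$ is such a proper subquotient. Comparing the Jacquet module data to Proposition \ref{P:PSP}(iii), this exceptional block is expected to be precisely the one containing $\pi_{\rm deg}[1]$, with the relevant piece of $R_D(E)$ matching a sub- or quotient-module of the length-$3$ standard module $I_P(1/2,1)$. Within this block, $\mathrm{Ext}^1$ against tempered $\pi$---namely the discrete series constituents $\pi_{\rm gen}[1]$ and $\pi_{\rm deg}[1]$---must vanish because tempered representations occur only in the socle of their standard module and any extension by such a representation splits by the Langlands classification. For non-tempered $\pi$ in this same block, the $\mathrm{Ext}^1$ can genuinely fail to vanish, which accounts for the exclusion of such $\pi$ from the hypothesis.
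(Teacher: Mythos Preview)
Your approach is far more involved than the paper's and carries real gaps. The paper's argument is almost immediate once one imports a structural fact from \cite{Sa}: as a $G_2$-module, $R_D(E)$ decomposes as a direct sum of a (possibly infinite-length) \emph{supercuspidal} representation and the single irreducible discrete series $\pi_{\rm deg}[1]$. Supercuspidals are projective in the full smooth category, so the supercuspidal summand contributes no $\mathrm{Ext}^1$ whatsoever. For the $\pi_{\rm deg}[1]$ summand, if $\pi$ has a different cuspidal support then $\mathrm{Ext}^1$ vanishes by Bernstein decomposition; if $\pi$ is tempered, one invokes the fact that discrete series representations are projective in the category of tempered representations.

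By contrast, your route through the Mackey filtration of $I_E(-1/2)|_{G_2}$ has two concrete problems. First, intersecting a filtration of $I_E(-1/2)$ with the submodule $R_D(E)$ only tells you that the graded pieces of $R_D(E)$ are \emph{submodules} of the corresponding $J_i$; you cannot ``read off'' which submodules they are without already knowing the structure of $R_D(E)$---which is exactly the input from \cite{Sa} you are trying to avoid. (Sections \ref{S:jacquet}--\ref{S:conseq} treat the $E_7$ minimal representation and $\PGSp_6$, not the $E_6^D$ case, so they do not supply this.) Second, the pieces $J_1, J_4$ from Proposition \ref{P:Homspace} are induced from $C_c^\infty(\PGL_2)$ and from a character of $\GL_2$, neither of which is cuspidal on the Levi, so your projectivity-via-second-adjunction step does not apply as stated. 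Finally, your handling of the tempered case via ``extensions split by Langlands classification'' is not the correct mechanism: the paper uses projectivity of discrete series among tempered representations, which is what actually forces $\mathrm{Ext}^1_{G_2}(\pi_{\rm deg}[1],\pi)=0$ for tempered $\pi$.
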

 \vskip 5pt
 
 \begin{proof}
 One needs to understand $R_D(E)$ as a representation of $G_2$, and this is essentially done in \cite{Sa}, where the dual pair correspondence for $PD^{\times} \times G_2$ was studied.  We shall recall the results of \cite{Sa} in greater detail later on. At this point, we simply note that 
 as a representation of $G_2$, $R_D(E)$ is the direct sum of a supercuspidal representation (of infinite length) and the irreducible discrete series representation
  $\pi_{\rm deg}[1]$, which is a constituent of $I_Q(1/2,{\rm st})$.  
 From this, the vanishing of   ${\rm Ext}^1_{G_2}( R_D(E),  \pi)$ for those $\pi$ with different cuspidal support from  $\pi_{\rm deg}[1]$ follows immediately. On the other hand, if $\pi$ is tempered, then one also has  ${\rm Ext}^1(\pi_{\rm deg}[1], \pi) =0$ since discrete series representations are projective in the category of  tempered representations.
 \end{proof}

 \vskip 5pt
 
 \subsection{\bf $I_E(s)$ as $G_2$-module.}
 On the other hand, we may understand the restriction of $I_E(s)$ to $G_2$ using Mackey theory.
   The following is a key technical result:
 \vskip 5pt
 
 \begin{prop}  \label{P:Homspace}
 As a representation of $G_2$, $I_E(s)$ admits an equivariant filtration
 \[  0  \subset I_0 \subset I_1 \subset I_2 \subset I_3 \subset I_4  \]
 with successive quotients described as follows:
 \vskip 5pt
 
 \begin{itemize}
 \item $I_0  \cong  {\rm ind}_N^{G_2}  \bar\psi_E $; 
 \vskip 5pt
 
 \item $J_1 := I_1/I_0 \cong   I_P(\frac{s}{2} + \frac{1}{4}, C^{\infty}_c(\PGL_2))$.
 \vskip 5pt
 
 \item $J_2:= I_2/I_1  \cong  m_E \cdot I_P(\frac{s}{2} + \frac{1}{4}, {\rm ind}_N^{\PGL_2} \psi)$
 
 \vskip 5pt
 
 \item $J_3:= I_3/I_2  \cong  m_E \cdot  I_Q(s+1)$.
 
 \vskip 5pt
 
 \item $J_4:= I_4/  I_3  \cong I_P(s+1)$.  
 \end{itemize}
 Here,
 \[  m_E  = \begin{cases} 
 3, \text{  if $E = F^3$;} \\
 1, \text{  if $E= F \times K$;} \\
 0, \text{  if $E$ is a field.} \end{cases} \]
 \end{prop}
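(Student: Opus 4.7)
The strategy is to apply the geometric lemma (i.e.\ Mackey theory) to the restriction of the induced representation $I_E(s) = {\rm Ind}_{P_E}^{\Spin_8^E} |\det|^s$ along the inclusion $G_2 \hookrightarrow \Spin_8^E$. This yields a $G_2$-equivariant filtration of $I_E(s)$ whose successive quotients are indexed by the $G_2$-orbits on the flag variety $X = \Spin_8^E / P_E$, arranged by orbit closure so that open orbits contribute submodules and closed orbits contribute quotients. Each layer is a compact induction $\operatorname{ind}_{S_{\mathcal O}}^{G_2} \chi_{\mathcal O}$, where $S_{\mathcal O} = G_2 \cap x_{\mathcal O}^{-1} P_E x_{\mathcal O}$ is the stabilizer in $G_2$ of a representative $x_{\mathcal O} \in \mathcal O$, and $\chi_{\mathcal O}$ is the character obtained by pulling back $|\det|^s$ and adjusting by the ratio of modular characters.

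The first main step is to enumerate the $G_2$-orbits on $X$. Geometrically, $X$ may be identified with the projectivization of the minimal nilpotent orbit in $\operatorname{Lie}(\Spin_8^E)$; the $G_2$-orbit structure is governed by the embedding $\mathfrak g_F \hookrightarrow \mathfrak g_E$ from Section~4. I expect exactly five orbits over $\bar F$: an open orbit whose $G_2$-stabilizer reduces to the Heisenberg radical $N \subset P$ (acting through a character $\bar\psi_E$ determined by the cubic algebra $E$), two intermediate orbits whose stabilizers lie inside $P$, one intermediate orbit whose stabilizer lies inside $Q$, and the closed orbit with stabilizer equal to $P$. Over $F$, one of the intermediate orbits further stratifies according to how $E$ splits into $F$-points; this accounts for the multiplicity $m_E$ (namely $3$, $1$, or $0$ as $E \cong F^3$, $F \times K$, or a field).

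The second step is to reassemble each intermediate layer in the form claimed. For the orbits whose stabilizer lies in $P$, a second application of the geometric lemma on the Levi $M \cong \GL_2$ identifies the $\GL_2$-representation sitting over the $P$-orbit: the larger orbit yields $C_c^\infty(\PGL_2)$ (as a bi-$\PGL_2$-module this is just the regular representation of $\PGL_2$ coming from the open Bruhat cell), while the smaller orbit contributes the Whittaker-type module $\operatorname{ind}_N^{\PGL_2} \psi$. Matching the inducing character $|\det_{P_E}|^s$ against the $G_2$-modular characters $\delta_P^{1/2} = |\det|^{3/2}$ and $\delta_Q^{1/2} = |\det|^{5/2}$ then produces the shifts $s/2 + 1/4$ on the $P$-layers and $s+1$ on the $Q$- and $P$-layers of $J_3$ and $J_4$, after the appropriate induction-by-stages.

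The main obstacle will be the orbit analysis and bookkeeping of modular characters. In particular: (a) checking that there are exactly five $\bar F$-orbits (and no additional hidden orbits) by a direct computation inside $\mathfrak g_E$; (b) pinning down which orbits rationalize into several $F$-orbits and matching this with the explicit recipe for $m_E$ in terms of the cubic algebra $E$; and (c) computing the ratio $(|\det_{P_E}|^s \delta_{P_E}^{1/2}) / \delta_{S_{\mathcal O}}^{1/2}$ on each stabilizer carefully enough to obtain the exact exponents $s/2+1/4$ and $s+1$. The identification of the $J_2$-layer is the most delicate, since it combines an intermediate $G_2$-orbit with a subsidiary $\PGL_2$-orbit analysis producing a Whittaker induction; it is here that the calculation of $S_{\mathcal O}$ inside $P$ and of the induced character must be carried out most carefully.
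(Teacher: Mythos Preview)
The paper does not actually supply a proof of this proposition; it is stated as ``a key technical result'' and then immediately used. So there is no argument in the paper to compare against. Your Mackey-theory approach---filter $I_E(s)|_{G_2}$ by the $G_2(F)$-orbits on $\Spin_8^E/P_E$, identify each stabilizer, and compute the inducing character---is exactly the standard way such filtrations are obtained, and is certainly what the authors have in mind (the analogous computations for Jacquet modules of the minimal representation in \S\ref{S:jacquet} follow the same pattern).

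One point in your outline needs correcting. You say there are ``exactly five orbits over $\bar F$'' and that ``one of the intermediate orbits further stratifies'' to account for $m_E$. But the statement shows that \emph{two} of the layers, $J_2$ and $J_3$, carry the multiplicity $m_E$; and since $m_{F^3}=3$, over $\bar F$ (where $E\otimes\bar F\cong\bar F^3$) each of these contributes three orbits, not one. The correct count over $\bar F$ is therefore $1+1+3+3+1=9$ geometric orbits, with the two triples of orbits naturally indexed by the three $F$-algebra maps $E\to\bar F$. When $E$ is a field, Galois permutes each triple transitively and the corresponding strata have no $F$-points, giving $m_E=0$; when $E=F\times K$, only the orbit attached to the projection $E\to F$ is Galois-stable, giving $m_E=1$. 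So your mechanism is right but the bookkeeping (five versus nine, one stratifying layer versus two) needs adjustment. Apart from this, the plan is sound; the genuine work, as you note, lies in pinning down the stabilizers precisely and tracking the modular-character ratios to get the exponents $\tfrac{s}{2}+\tfrac14$ and $s+1$.
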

 
 \vskip 5pt
The proposition implies that one has a short exact sequence
 \[
\begin{CD}
0 @>>>  {\rm ind}_N^{G_2}  \bar\psi_E @>>> I_E(s) @>>> \Sigma_E(s) @>>> 0. \end{CD}, \]
from which one deduces an exact sequence:
\[  \begin{CD}
0 \rightarrow  \Hom_{G_2}(\Sigma_E(s), \pi)  \rightarrow \Hom_{G_2}(I_E(s), \pi) \rightarrow \Hom_N(\pi^{\vee}, \psi_E) \rightarrow  {\rm Ext}^1_{G_2}(\Sigma_E(s),  \pi).
\end{CD} \]

 \vskip 5pt
 We now specialize to $s=1/2$, where we need to be more precise.

 \vskip 5pt
 
  \begin{prop}  \label{P:bij}
Suppose that $\pi \in {\rm Irr}(G_2)$ is tempered or has cuspidal support along $Q$. Then
 \[   \Hom_{G_2}(I_E(1/2), \pi)   \cong \Hom_N(\pi^{\vee}, \psi_E). \]
 \vskip 5pt
  \end{prop}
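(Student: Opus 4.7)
The plan is to invoke the exact sequence displayed immediately before the statement; it suffices to show that both $\Hom_{G_2}(\Sigma_E(1/2),\pi)$ and ${\rm Ext}^1_{G_2}(\Sigma_E(1/2),\pi)$ vanish when $\pi$ is tempered or has cuspidal support along $Q$, where $\Sigma_E(1/2) = I_E(1/2)/I_0$. By the filtration of Proposition \ref{P:Homspace}, $\Sigma_E(1/2)$ is a successive extension of $J_1,J_2,J_3,J_4$; a long-exact-sequence argument then reduces the problem to establishing both vanishings for each $J_i$ separately.

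Each $J_i$ is a parabolic induction ${\rm Ind}_R^{G_2}(\tau_i)$ from either $R=P$ or $R=Q$, with inducing datum $\tau_i$ a representation of the $\GL_2$-Levi $M_R$ of $R$. Since parabolic induction is exact and its right adjoint (the Jacquet functor $r_{\bar R}$ along the opposite parabolic) is also exact and preserves injectives, Bernstein's second adjointness upgrades to the derived level:
\[ {\rm Ext}^j_{G_2}(J_i,\pi) \;=\; {\rm Ext}^j_{M_R}(\tau_i,\, r_{\bar R}\pi), \quad j=0,1. \]
For $J_3=m_E\cdot I_Q(3/2)$ and $J_4=I_P(3/2)$ the datum $\tau_i$ is the one-dimensional character $|\det|^{3/2}$. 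In the tempered case, Casselman's criterion forbids the strictly positive central exponent $|\det|^{3/2}$ from occurring in $r_{\bar R}\pi$, so $\tau_i$ and $r_{\bar R}\pi$ lie in disjoint Bernstein components and all ${\rm Ext}^j$ vanish. In the case of cuspidal support $(L,\sigma)$ along $Q$, every irreducible subquotient of $r_{\bar R}\pi$ has cuspidal support conjugate to $(L,\sigma)$ with $\sigma$ supercuspidal, while $|\det|^{3/2}$ has Borel cuspidal support; again the Bernstein components are disjoint and both Ext groups vanish. For $J_1=I_P(\tfrac12,C_c^\infty(\PGL_2))$ and $J_2=m_E\cdot I_P(\tfrac12,{\rm ind}_N^{\PGL_2}\psi)$ the inducing data are infinite-dimensional but still have central character $|\det|^{1/2}$ on $Z(M)$; the tempered case is again handled by Casselman with exponent $1/2>0$, while in the $Q$-cuspidal case one decomposes the inducing datum into Bernstein components of $\PGL_2$ and notes that each such component, after inducing from $P$ to $G_2$, lies in Bernstein components of $G_2$ attached to the parabolic $P$, which are disjoint from the $Q$-attached component containing $\pi$ since $P$ and $Q$ are non-associate maximal parabolics of $G_2$.

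The main technical subtlety is the infinite-length inducing data in $J_1$ and $J_2$. One has to justify the decomposition of $C_c^\infty(\PGL_2)$ and ${\rm ind}_N^{\PGL_2}\psi$ into Bernstein components of $\PGL_2$ and the compatibility of the derived second-adjointness computation with this (possibly infinite) direct-sum decomposition. Both $\Hom$ and ${\rm Ext}^1$ turn a direct sum in the first argument into a direct product, so the component-by-component vanishing above immediately yields the vanishing of $\Hom_{G_2}(J_i,\pi)$ and ${\rm Ext}^1_{G_2}(J_i,\pi)$ for $i=1,2$. Piecing the four cases together in the long exact sequence yields the desired isomorphism.
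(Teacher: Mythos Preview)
Your proposal is correct and follows essentially the same approach as the paper: reduce to the vanishing of ${\rm Ext}^j_{G_2}(J_i,\pi)$ for each subquotient $J_i$ of $\Sigma_E(1/2)$, then apply (derived) second adjointness to pass to ${\rm Ext}^j_{M_R}(\tau_i, r_{\bar R}\pi)$ and kill this by a central-character/exponent mismatch.

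Two minor remarks. First, the paper's proof is terser: it only treats the tempered case explicitly (noting that $Z(M)$ acts on the inducing datum by $|z|^t$ with $t>0$ but on $r_{\bar R}(\pi)$ by $|z|^t$ with $t\le 0$) and dismisses the remaining cases with ``dealt with in the same way.'' Your more detailed treatment of the $Q$-cuspidal case via Bernstein components is correct, but can be streamlined: for $J_1,J_2,J_4$ (all induced from $P$) one has $r_{\bar P}(\pi)=0$ outright when $\pi$ has cuspidal support $(L,\sigma)$ along $Q$, since the Levi subgroups $M$ and $L$ are non-conjugate in $G_2$; this makes the decomposition of the infinite-length inducing data unnecessary. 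Second, the ``technical subtlety'' you flag about infinite-length $\tau_i$ is a non-issue once one observes (as the paper implicitly does) that $\tau_i$ has a \emph{single} central character on $Z(M_R)$, so the usual spectral decomposition of the finite-length module $r_{\bar R}(\pi)$ by generalized $Z(M_R)$-eigenvalues already forces the Ext groups to vanish without any Bernstein decomposition of $\tau_i$.
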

   \begin{proof}
   We need to prove 
   \[ \Hom_{G_2}(\Sigma_E(1/2), \pi) = 0 = {\rm Ext}^1_{G_2}(\Sigma_E(1/2), \pi).\] 
  To that end, it suffices to prove the following lemma: 
   \vskip 5pt
   
 \begin{lemma}
For all $i$ and $j$,   ${\rm Ext}^i_{G_2}(J_j, \pi)  = 0$, with $\pi$ as in Proposition \ref{P:bij}.
 \end{lemma}
 
 \begin{proof}
 Consider $J_1$ firstly. By the Frobenius reciprocity, 
 \[  {\rm Ext}^i_{G_2}(J_1, \pi) = {\rm Ext}^i_M( |\det|^{1/2}\cdot  C^{\infty}_c(\PGL_2),  r_{\overline{P}}(\pi)).   \]
 Since $\pi$ is tempered, the center of $M=\GL_2$ acts on  $R_{\overline{P}}(\pi))$ by characters $\chi$ such that 
 $|\chi(z)| =|z|^t$ where $t\leq 0$. On the other hand, the center of $M$ acts on $|\det|^{1/2}\cdot  C^{\infty}_c(\PGL_2)$ by 
 $|z|$. Thus the right hand side is 0. The other cases are dealt with in the same way.
 
 \end{proof}
 \vskip 5pt
 
 This completes the proof of Proposition \ref{P:bij}.
 \end{proof}
 \vskip 10pt
 
 \section{\bf Dichotomy}  \label{S:dichotomy}
The goal of this section is to prove the following theorem:
\vskip 5pt
  \begin{thm}  \label{T:dicho}
 For any  representation $\pi \in {\rm Irr}(G_2)$,   $\pi$ has nonzero theta lift to exactly one of $PD^{\times}$ or $\PGSp_6$.  
 \end{thm}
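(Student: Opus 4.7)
The plan is to relate the two theta lifts by simultaneously applying the see-saw identity (\ref{E:seesaw}) at $J = D^+$ and $J = H_3(M_2(F))$, and exploiting the short exact sequence
\[
0 \longrightarrow R_{H_3(M_2(F))}(E) \longrightarrow I_E(1/2) \longrightarrow R_D(E) \longrightarrow 0
\]
of Proposition \ref{P:deg-ps} for $E$ a cubic field as the bridge between them. I would first reduce to the tempered case, handling non-tempered representations by direct Jacquet module computation using the results of \cite{MS, GrS} and \S\ref{S:jacquet}--\S\ref{S:conseq}, since every non-tempered irreducible representation of $G_2$ is a Langlands quotient of a standard module whose theta lifts can be read off from the filtrations available there.

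For tempered $\pi$, apply $\Hom_{G_2}(-,\pi)$ to the above sequence. Proposition \ref{P:temp-sum} supplies the vanishing of ${\rm Ext}^1_{G_2}(R_D(E),\pi)$, so the long exact sequence collapses into
\[
0 \to \Hom_{G_2}(R_D(E),\pi) \to \Hom_{G_2}(I_E(1/2),\pi) \to \Hom_{G_2}(R_{H_3(M_2(F))}(E),\pi) \to 0.
\]
Combining see-saw (\ref{E:seesaw}) on the two outer terms with Proposition \ref{P:bij} on the middle term yields the central numerical identity
\[
\dim \Hom_{PE^\times}(\Theta_D(\pi),\C) + \dim \Hom_{\SL_2(E)/\mu_2}(\Theta_{\PGSp_6}(\pi),\C) = \dim \Hom_N(\pi^\vee, \psi_E)
\]
for each cubic field $E$; when $E$ is not a field, Proposition \ref{P:deg-ps} gives $I_E(1/2)=R_{H_3(M_2(F))}(E)$, so only the $\PGSp_6$-term survives on the left. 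Non-vanishing of at least one lift then follows by exhibiting a cubic \'etale $E$ with $\Hom_N(\pi^\vee,\psi_E)\ne 0$: the last lemma of \S\ref{S:repG2} provides such an $E$ for nongeneric infinite-dimensional $\pi$, while the Whittaker model and a standard wavefront-set argument provide one for generic $\pi$.

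The harder direction is the disjointness. If both lifts were simultaneously nonzero, I would pick irreducible quotients $\tau_D$ of $\Theta_D(\pi)$ and $\tau_{\Sp}$ of $\Theta_{\PGSp_6}(\pi)$, using statement (b) of \S1 (proved in \S\ref{S:finite}), and play the ping-pong game with their $H_{J,E}$-periods described in the introduction: transferring each period back to a $G_2$-period on $\pi$ via see-saw should force nonzero contributions to \emph{both} summands on the left-hand side above at a \emph{common} cubic field $E$, hence $\dim\Hom_N(\pi^\vee,\psi_E)\ge 2$; a multiplicity-one estimate for the relevant subregular Fourier coefficient then delivers the contradiction. The main obstacle will be to close the ping-pong cycle uniformly in $\pi$ and to establish this multiplicity-one bound; as in the classical doubling argument, a small exceptional family of $\pi$ is expected to escape the reach of this method and to require separate ad-hoc treatment, but tempered $\pi$ will be shown to avoid that family.
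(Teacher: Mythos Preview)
Your setup is exactly the paper's: the reduction to the tempered case, the short exact sequence from Proposition~\ref{P:deg-ps}, the vanishing of ${\rm Ext}^1$ from Proposition~\ref{P:temp-sum}, the see-saw identity on the two outer terms, and Proposition~\ref{P:bij} on the middle term. The weak dichotomy (at least one lift nonzero) follows exactly as you say.

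There is, however, a genuine gap in your disjointness argument. You propose to show that if both lifts are nonzero then both summands in your numerical identity are nonzero, hence the right-hand side $\dim\Hom_N(\pi^\vee,\psi_E)\ge 2$, and then to contradict this by a ``multiplicity-one estimate for the relevant subregular Fourier coefficient.'' No such multiplicity-one result is available: for nongeneric $\pi$ the twisted Jacquet module $\pi_{N,\psi_E}$ is only known to be finite-dimensional (the lemma at the end of \S\ref{S:repG2}), and its dimension can well exceed~$1$. The paper's disjointness proof does not use multiplicity one at all. What the ping-pong (Lemma~\ref{L:curious}) actually delivers is stronger than nonvanishing: once you fix an irreducible quotient $\tau$ of $\Theta_J(\pi)$ and run the chain of containments
\[
\Hom_N(\pi,\psi_E)\subseteq\Hom_N(\Theta(\tau),\psi_E)\cong\Hom_{H_{J,E}}(\tau^\vee,\C)\subseteq\Hom_{H_{J,E}}(\Theta(\pi^\vee),\C)\cong\Hom_{G_2}(R_J(E),\pi^\vee),
\]
the finite-dimensionality of the first space forces all inclusions to be equalities (via the inequality~(\ref{E:chain1}) coming back from Propositions~\ref{P:temp-sum} and~\ref{P:bij}). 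Hence if $\pi$ lifts to $PD^\times$, then $\dim\Hom_{G_2}(R_D(E),\pi^\vee)=d$ where $d:=\dim\Hom_N(\pi,\psi_E)$; and if $\pi$ also lifts to $\PGSp_6$, the same argument gives $\dim\Hom_{G_2}(R_{H_3(M_2(F))}(E),\pi^\vee)=d$. Your short exact sequence then reads $d=d+d$, which forces $d=0$, contradicting the choice of $E$. So the contradiction is $d=2d$, not $2\le 1$.

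One further point: the paper treats generic tempered $\pi$ separately, citing \cite{Sa} (generic representations have zero lift to $PD^\times$) and \cite[Cor.~20]{GS04} (generic representations have nonzero lift to $\PGSp_6$), rather than via the $\psi_E$ argument. Your proposed treatment of the generic case via ``the Whittaker model and a wavefront-set argument'' would not feed into the mechanism above, since for generic $\pi$ the relevant $\psi_E$-spaces are not finite-dimensional and Lemma~\ref{L:curious} cannot close its cycle.
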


\vskip 5pt

To prove this dichotomy theorem, we need some  preliminary results which are consequences of the computation of the Jacquet modules of the minimal representation $\Pi_J$ with respect to the various maximal parabolic subgroups of $G_2$ and $H_J$. The required Jacquet module computations were carried out in \cite{Sa} when $H_J = PD^{\times}$ and in \cite{MS, GS04} when $H_J = \PGL_3$. For $H_J = \PGSp_6$, the Jacquet module computations for some parabolic subgroups were carried out in \cite{MS}. The remaining ones will be done in \S \ref{S:jacquet} and some implications of these computations are discussed in \S \ref{S:conseq}. We note that \S \ref{S:jacquet} is a self-contained section independent of the rest of this paper. Hence, we first record some results
from \S \ref{S:jacquet}-\ref{S:conseq} and the earlier references  \cite{Sa, MS, GS04} that we will use.

\vskip 5pt

\subsection{\bf Consequences of Jacquet module computations.}
We first note:

\vskip 5pt 
\begin{lemma} \label{L:basic}  
Consider the theta correspondence for $G_2 \times H_J$ for the 3 cases of $J$. 
\vskip 5pt

(i) Let $\pi \in {\rm Irr}(G_2)$ and write
\[  \Theta_J(\pi) =  \Theta_J(\pi)_{c} \oplus \Theta_J(\pi)_{nc} \]
as a sum of its cuspidal and noncuspidal components. Then $\Theta_J(\pi)_{nc}$ has finite length. In particular, if $\Theta_J(\pi) \ne 0$, then it has an irreducible quotient.
 
\vskip 5pt

(ii) Likewise, let $\tau \in {\rm Irr}(H_J)$ and write 
\[  \Theta_J(\tau) = \Theta_J(\tau)_c  \oplus \Theta_J(\tau)_{nc}.  \]
Then $\Theta_J(\tau)_{nc}$ has finite length. In particular, if $\Theta_J(\tau) \ne 0$, then it has an irreducible quotient.
\end{lemma}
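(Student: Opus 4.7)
The plan is to deduce both parts of the lemma from the Jacquet module computations of the minimal representation $\Pi_J$, combining the earlier work of \cite{Sa, MS, GrS, GS04} with the new computations of \S \ref{S:jacquet} (this is statement (c) in the strategy laid out in the introduction). First, I would reduce the finite-length statement to an assertion about Jacquet modules. An irreducible admissible representation of $H_J$ is non-cuspidal precisely when its Jacquet module along some maximal parabolic $P_J = M_J N_J$ is nonzero, and up to $H_J$-conjugacy there are only finitely many such $P_J$; so to prove (i) it suffices, via Bernstein's theorem, to show that $(\Theta_J(\pi))_{N_J}$ has finite length as an $M_J$-module for each maximal $P_J$, together with the fact that $\Theta_J(\pi)$ is finitely generated as a quotient of the minimal representation.

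Next, the Jacquet functor $(-)_{N_J}$ is exact and commutes with the $G_2$-action, so applying it to the MVW surjection $\Pi_J \twoheadrightarrow \pi \boxtimes \Theta_J(\pi)$ realises $\pi \boxtimes (\Theta_J(\pi))_{N_J}$ as the maximal $\pi$-isotypic $G_2$-quotient of $(\Pi_J)_{N_J}$. From the Jacquet module computations, $(\Pi_J)_{N_J}$ carries a finite filtration by $(G_2 \times M_J)$-modules whose successive quotients take one of two shapes: either a $G_2$-parabolic induction tensored with a finite-length $M_J$-module, or a compact induction along a single $(G_2 \times M_J)$-orbit with a reductive stabiliser. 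Frobenius reciprocity and the admissibility of $\pi$ together imply that the maximal $\pi$-isotypic quotient of each successive quotient has the shape $\pi \boxtimes W_i$ with $W_i$ of finite length over $M_J$. Summing over $i$ bounds the length of $(\Theta_J(\pi))_{N_J}$, and hence of $\Theta_J(\pi)_{nc}$.

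Since a finite-length module with nonzero support has an irreducible quotient, and the cuspidal part $\Theta_J(\pi)_c$ is semisimple and thus possesses an irreducible quotient whenever nonzero, $\Theta_J(\pi)$ has an irreducible quotient whenever it does not vanish, establishing (i). For (ii) I would repeat exactly the same argument with the roles of $G_2$ and $H_J$ swapped, using instead the Jacquet modules of $\Pi_J$ along the two maximal parabolics $P$ and $Q$ of $G_2$. The chief obstacle is carrying out the Jacquet module computations themselves: the cases $H_J = PD^\times$ and $H_J = \PGL_3 \rtimes \Z/2\Z$ are already handled in \cite{Sa, MS, GS04}, while for $H_J = \PGSp_6$ only partial information is available from \cite{MS, GrS}, so the remaining, more delicate, pieces are precisely what must be supplied in \S \ref{S:jacquet}; everything else in the proof is a formal consequence once those pieces are in hand.
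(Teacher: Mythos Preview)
Your overall strategy matches the paper's: reduce finiteness of $\Theta_J(\pi)_{nc}$ to finite length of each Jacquet module $r_{P_i}(\Theta(\pi))$, realise this as the $\pi$-isotypic quotient of $r_{P_i}(\Pi_J)$, and use the finite $(G_2\times M_i)$-equivariant filtration of $r_{P_i}(\Pi_J)$ to bound the length piece by piece. The cases $H_J=PD^\times$ and $\PGL_3\rtimes\Z/2\Z$ are indeed already in \cite{Sa, GS04}, and the new work for $\PGSp_6$ is exactly what \S\ref{S:jacquet}--\S\ref{S:conseq} supply.

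There is, however, a genuine gap in your description of the successive quotients. You say each piece is either a $G_2$-parabolic induction tensored with a finite-length $M_J$-module, or a compact induction along a single orbit with reductive stabiliser, and that ``Frobenius reciprocity and the admissibility of $\pi$'' then give the finiteness formally. This is correct for the lower strata of the filtrations in Propositions~\ref{P:jf_p3}, \ref{P:jf_p1}, \ref{P:jf_p2} (and \ref{P:jf_P}, \ref{P:jf_Q}), but it fails for the \emph{top} piece of each filtration. The top piece is the restriction to $G_2\times M_i$ of a minimal (or related) representation of the larger Levi $\mathcal M_i$ --- e.g.\ $\Pi_{E_6}$, $\Pi_{D_6}$, $\Pi_{D_5}$, $\Pi_{A_5}$ --- and fits neither of your two shapes. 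Extracting the $\pi$-isotypic quotient here amounts to a lower-rank theta correspondence ($G_2\times\PGL_3\subset E_6$, $G_2\times\SO_3\subset\SO_{10}$, $G_2\times\SO_5\subset\SO_{12}$, $\Sp_6\times\SL_2\subset\SO_{12}$, etc.), and finite length in those cases is \emph{not} a formal consequence of admissibility: the paper establishes it separately in Proposition~\ref{P:Hn} and \S\ref{S:finite2}, via seesaw arguments that reduce to the classical Howe duality for $\widetilde{\SL}_2\times H_n$ together with the results of \cite{GG06} on $\widetilde{\SL}_2\times G_2$. Your claim that ``everything else in the proof is a formal consequence'' once the Jacquet-module filtrations are in hand is therefore too optimistic; the top pieces require this additional, non-trivial input.
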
 

\begin{proof}  
 The case of $J=D^+$ follows from results of \cite{Sa}, whereas that for $J= M_3(F)^+$ follows from \cite{GS04}. 
The case of $J = H_3(M_2(F))$ is shown in \S \ref{S:conseq}, based on the results of \S \ref{S:jacquet}. 
  \end{proof}
\vskip 5pt

In fact, the Jacquet module computations allow one to determine the theta lift of non-tempered representations explicitly (see Theorem \ref{T:nont}). We  simply note the following here:
\vskip 5pt

\begin{lemma} \label{L:basic2}
(i)  Let $\pi \in {\rm Irr}(G_2)$ and $\tau \in {\rm Irr}(H_J)$  be  such that $\pi \otimes \tau$ is a quotient of $\Pi_J$. Then
\[  \text{$\pi$ is tempered} \Longleftrightarrow \text{$\tau$ is tempered}. \]

\vskip 5pt

(ii) Let $\pi \in{\rm Irr}(G_2)$ be non-tempered. Then $\pi$ has nonzero theta lifting to $\PGSp_6$.
\end{lemma}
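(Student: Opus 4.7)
The plan is to deduce both parts from the explicit formulas for the Jacquet modules of $\Pi_J$ along the unipotent radicals of parabolic subgroups of $G_2$ and of $H_J$, which are tabulated in \cite{Sa, MS, GS04} and, for $H_J = \PGSp_6$, completed in \S \ref{S:jacquet} and organized in \S \ref{S:conseq}. The basic principle is that non-temperedness of an irreducible representation $\rho$ is detected by the appearance of a central exponent lying strictly outside the closed negative Weyl chamber in some normalized Jacquet module $r_R(\rho)$, and the exactness of the Jacquet functor transports such exponents across any surjection $\Pi_J \twoheadrightarrow \pi \otimes \tau$.

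For part (i), suppose $\pi$ is non-tempered. By the Langlands classification one has $\pi \hookrightarrow I_{\bar R}(\sigma)$ for some standard parabolic $R \in \{P, Q, B\}$ of $G_2$ and some irreducible $\sigma$ of the Levi factor $M_R$ whose central exponent lies strictly in the positive cone. Applying second adjunction to the surjection $\Pi_J \twoheadrightarrow \pi \otimes \tau$ yields a surjection of $M_R \times H_J$-modules
\[
(\Pi_J)_{\bar N_R} \twoheadrightarrow \sigma^\vee \otimes \tau.
\]
Inspecting the bimodule structure of $(\Pi_J)_{\bar N_R}$ in the references above, each $M_R$-isotypic component which can support $\sigma^\vee$ carries an $H_J$-factor that is itself parabolically induced from a proper parabolic of $H_J$ by a representation whose central character has real part in the positive cone (with an explicit shift determined by the dual-pair normalization). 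Consequently $\tau$ is a non-tempered Langlands quotient, proving one implication. The reverse implication follows by the symmetric argument, interchanging the roles of $G_2$ and $H_J$ and invoking the corresponding Jacquet module computation along parabolics of $H_J$.

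For part (ii), the plan is to proceed case by case through the classification of non-tempered irreducibles $\pi$ of $G_2$ given in Propositions \ref{P:PSP}, \ref{P:PSQ} and in \S \ref{SS:PSB}, namely the Langlands quotients $J_P(s,\tau)$, $J_Q(s,\tau)$ and the non-tempered Langlands quotients induced from $B$. Writing $\pi \hookrightarrow I_{\bar R}(\sigma)$ as above and applying the explicit formula for $(\Pi_{\PGSp_6})_{\bar N_R}$ from \S \ref{S:jacquet}--\ref{S:conseq}, one exhibits in each case a non-zero $\PGSp_6$-quotient of the $\sigma^\vee$-isotypic component of $(\Pi_{\PGSp_6})_{\bar N_R}$, which by Frobenius reciprocity produces a non-zero element of $\Hom_{G_2}(\Pi_{\PGSp_6}, \pi) \otimes \tau^{\vee}$, hence the non-vanishing $\Theta_{\PGSp_6}(\pi) \neq 0$. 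The main obstacle in this step is verifying that the $\sigma^\vee$-isotypic component never collapses, especially in the borderline cases where $\sigma$ involves characters of $F^\times$ of small order (quadratic or cubic) or the reducibility points flagged in Propositions \ref{P:PSP}--\ref{P:PSQ}; the saving feature is that $\PGSp_6$ has the largest rank among the three candidate groups and the richest exponent spectrum in its minimal representation, which ensures the required non-vanishing in each of the remaining sub-cases.
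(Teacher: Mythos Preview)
Your overall strategy coincides with the paper's: both parts are deduced from the explicit Jacquet-module filtrations of $\Pi_J$ in \cite{Sa, MS, GS04} and \S\S\ref{S:jacquet}--\ref{S:conseq}, and the paper simply cites Theorem~\ref{T:nont} (which packages Lemmas~\ref{L:non_tempered} and~\ref{L:non_tempered2}) for the $\PGSp_6$ case.

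One technical point deserves correction. Your sentence ``applying second adjunction to the surjection $\Pi_J \twoheadrightarrow \pi \otimes \tau$ yields a surjection $(\Pi_J)_{\bar N_R} \twoheadrightarrow \sigma^\vee \otimes \tau$'' is not how the argument actually runs, and as written it does not follow: exactness of the Jacquet functor gives $r_R(\Pi_J)\twoheadrightarrow r_R(\pi)\otimes\tau$, but $\sigma^\vee$ need not be a quotient of $r_R(\pi)$, and second adjunction relates $\Hom$-spaces rather than producing such a surjection. What the paper does (see the proof of Lemma~\ref{L:non_tempered}) is use the embedding $\pi\hookrightarrow I_R(\sigma^\vee)$ together with \emph{first} adjunction to obtain
\[
\Theta(\pi)^* \;=\; \Hom_{G_2}(\Pi_J,\pi)\;\hookrightarrow\;\Hom_{G_2}(\Pi_J, I_R(\sigma^\vee))\;\cong\;\Hom_{M_R}\bigl(r_R(\Pi_J),\sigma^\vee\bigr),
\]
and then walk through the filtration of $r_R(\Pi_J)$, using central-character and exponent arguments to kill the higher graded pieces (including the necessary ${\rm Ext}^1$-vanishing), so that only the bottom piece survives and identifies the right-hand side with the linear dual of an explicit standard module on $H_J$. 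The non-vanishing in part~(ii) then drops out of the same computation, since one sees in the process that the relevant standard module on $H_J$ is genuinely a quotient of $\Pi_J$. Your sketch captures the spirit of this, but the mechanism you describe should be replaced by the $\Hom$-space embedding above.
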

\vskip 5pt

\begin{proof}
For $H_J = PD^{\times}$ or $\PGL_3 \rtimes \Z/2\Z$, the desired results have been verified in \cite{Sa} and \cite{GS04} respectively. For the case 
when $H_J = \PGSp_6$, this is shown in Theorem \ref{T:nont} in \S \ref{S:conseq}. 
\end{proof}

 \vskip 5pt
 
 \subsection{\bf Reduction to non-generic tempered case.}
With the above inputs in place,  we can now begin the proof of the dichotomy theorem.
 We note:
 \vskip 5pt
 
 \begin{itemize}
 \item The dichotmy theorem holds for nontempered $\pi$. Indeed, if $\pi$ is non-tempered, then Lemma \ref{L:basic2}(ii) says that $\pi$ has nonzero theta lift to $\PGSp_6$, whereas \cite{Sa} shows that $\pi$ has zero theta lift to $PD^{\times}$.
 \vskip 5pt
 
 \item The dichotmy theorem holds for generic $\pi$.  Indeed,  it was shown in \cite[Cor. 20]{GS04} that a generic $\pi$ has nonzero theta lift to $\PGSp_6$  (see also Cor. \ref{C:GS-Whit} below) and it was shown in \cite{Sa} that $\pi$ has  zero theta lift to $PD^{\times}$. 
 \end{itemize}

 Thus, to prove the dichotomy theorem,  it remains to deal with non-generic tempered $\pi$.   
 
   \vskip 5pt

 \subsection{\bf Weak dichotomy.}
  We first prove that  a non-generic tempered $\pi$ has nonzero theta lift to one of $PD^{\times}$ or $\PGSp_6$.
   Since $\pi$ is non-generic and infinite-dimensional,  
    there exists an \'etale cubic $F$-algebra $E$ such that $\Hom_N(\pi^{\vee}, \psi_E) \ne 0$.  
    By Proposition \ref{P:bij}, we have an isomorphism 
   \[  \Hom_{G_2}(I_E(1/2), \pi) \twoheadrightarrow \Hom_N(\pi^{\vee}, \psi_E) \ \ne 0.  \]
 This implies, by Proposition \ref{P:temp-sum}, 
  \[  \Hom_{G_2}(R_D(E), \pi) \ne 0 \quad \text{or} \quad \Hom_{G_2}(R_{H_3(M_2(F)}(E)  , \pi) \ne 0. \]
 By the see-saw identity  (\ref{E:seesaw}), we deduce that 
 \[     \Hom_{H_E}( \Theta_J(\pi),  \C) \ne 0 \]
 for $J = D^+$ or $H_3(M_2(F))$. In particular, $\Theta_J(\pi) \ne 0$ for  $J = D^+$ or $H_3(M_2(F))$.
We have thus verified that $\pi$ has nonzero theta lift to at least one of $PD^{\times}$ or $\PGSp_6$.

\vskip 10pt

\subsection{\bf Curious chain of containments.}
 It remains to show that a nongeneric tempered $\pi$ cannot lift to both $PD^{\times}$ and $\PGSp_6$.
  Let $\bar \pi\cong \pi \otimes_{\mathbb C} \mathbb C$ be the complex conjugate of $\pi$.  If $\pi$ is unitarizable (e.g. if $\pi$ is tempered), then $\bar\pi \cong \pi^{\vee}$. 
 Thus 
 \[ 
 \pi_{\bar \psi_E} \cong \bar \pi_{ \psi_E} \cong( \pi^{\vee})_{\psi_E} 
 \] 
 where, in the second isomorphism, we assume that $\pi$ is unitarizable. 
 Since the minimal representation $\Pi_J$ used in this paper is defined over $\mathbb R$, 
 we have a canonical isomorphism $\bar \Pi_J \cong \Pi_J$. It follows that $\Theta_J(\bar \pi)$ is the complex conjugate of $\Theta_J(\pi)$.

 \vskip 5pt
 
We shall make use of the curious chain of containment given in the following lemma; this is the first instance of the game of ping-pong with periods discussed at the end of the introduction.
\vskip 5pt

\begin{lemma}  \label{L:curious}
Let $\pi \in {\rm Irr}(G_2)$ be tempered. 
% If $E$ is a field,  assume that $\pi$ is  different from the 3 exceptional representations in Proposition \ref{P:temp-sum}.
For $J = D^+$, $M_3(F)$  or $H_3(M_2(F))$, let  $\tau \in {\rm Irr}(H_J)$  be  tempered and such that 
\[  \Hom_{G_2 \times H_J}(\Pi_J,   \pi \boxtimes \tau) \ne 0. \]
Then we have the following natural inclusions 
 \begin{align} \label{E:chain}
  \Hom_N(\pi, \psi_E)  \subseteq \Hom_N(\Theta(\tau), \psi_E)  \cong \Hom_{H_{J,E}}(\tau^{\vee},\C)  \subseteq
 \Hom_{H_{J,E}} (\Theta(\pi^{\vee}), \C)   \cong  \Hom_{G_2}(R_J(E),  \pi^{\vee} ). \notag 
 \end{align}
 If one of these spaces is finite-dimensional, then all inclusions are isomorphisms.
 \end{lemma}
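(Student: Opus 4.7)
The plan is to establish the two natural inclusions and the two natural isomorphisms in the chain, and then to close the chain into a cycle by invoking the earlier propositions; a finite-dimensionality hypothesis on any single term will then pinch the entire cycle and force all set-theoretic inclusions to be isomorphisms. The two inclusions arise as follows. The hypothesis $\Hom_{G_2 \times H_J}(\Pi_J, \pi \boxtimes \tau) \ne 0$ together with irreducibility exhibits $\pi$ as an irreducible quotient of the big theta lift $\Theta(\tau)$, and applying the left-exact functor $\Hom_N(-, \psi_E)$ to the surjection $\Theta(\tau) \twoheadrightarrow \pi$ yields the first inclusion. Since $\pi$ and $\tau$ are tempered and $\Pi_J$ is defined over $\R$, complex conjugation converts the parallel surjection $\Theta(\pi) \twoheadrightarrow \tau$ into $\Theta(\pi^\vee) \twoheadrightarrow \tau^\vee$; applying $\Hom_{H_{J,E}}(-, \C)$ yields the second inclusion. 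The final isomorphism in the chain is the see-saw identity (\ref{E:seesaw}) applied with $\pi^\vee$ in place of $\pi$.

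The main technical step is the first isomorphism, which I would obtain by a ``period-matching'' calculation. Using the defining universal property of $\Theta(\tau)$ together with Frobenius reciprocity for $N \subset G_2$, one rewrites
\[
  \Hom_N(\Theta(\tau), \psi_E) \cong \Hom_{N \times H_J}(\Pi_J, \psi_E \boxtimes \tau) \cong \Hom_{H_J}\bigl((\Pi_J)_{N, \psi_E}, \tau\bigr).
\]
The remaining task is to identify the twisted Jacquet module $(\Pi_J)_{N, \psi_E}$, as an $H_J$-module, with a representation compactly induced from a character of $H_{J,E}$. A second application of Frobenius reciprocity then converts the right-hand side into $\Hom_{H_{J,E}}(\tau^\vee, \C)$, after the standard unwinding of the duality between $H_{J,E}$-invariants of $\tau$ and the $H_{J,E}$-period of $\tau^\vee$. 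This identification realises the principle articulated in the introduction: an $(N, \psi_E)$-period on $G_2$ is transferred by the minimal representation to an $H_{J,E}$-period on $H_J$.

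Finally, to close the cycle I would use Propositions \ref{P:deg-ps}, \ref{P:temp-sum} and \ref{P:bij} to produce, for each of the three cases $J = D^+,\,M_3(F)^+,\,H_3(M_2(F))$, a dimension bound
\[
  \dim \Hom_{G_2}(R_J(E), \pi^\vee) \le \dim \Hom_{G_2}(I_E(1/2), \pi^\vee) = \dim \Hom_N(\pi, \psi_E).
\]
Here Proposition \ref{P:deg-ps} locates $R_J(E)$ in the length-three filtration of $I_E(1/2)$ (as a sub, a quotient, or a subquotient according to $J$); Proposition \ref{P:temp-sum} gives the exactness of the associated $\Hom$ sequences for tempered $\pi$; and Proposition \ref{P:bij}, applied to the tempered representation $\pi^\vee$, identifies $\Hom_{G_2}(I_E(1/2), \pi^\vee)$ with $\Hom_N(\pi, \psi_E)$. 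Combining with the chain above, we obtain a string of comparisons $\dim A \le \dim B = \dim C \le \dim D = \dim E \le \dim A$, so that finite-dimensionality of any term pins the whole cycle to one common finite dimension and all the inclusions become equalities. The main obstacle is the explicit identification of $(\Pi_J)_{N, \psi_E}$ with an $H_J$-module induced from $H_{J,E}$: this is a computation of the minimal representation along the non-reductive subgroup $N \subset G_2$, which goes beyond the maximal-parabolic Jacquet module computations treated in \S \ref{S:jacquet}.
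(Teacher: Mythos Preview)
Your proposal is correct and follows essentially the same route as the paper's proof: the four links in the chain are obtained exactly as you describe, and the cycle is closed via the dimension bound $\dim \Hom_{G_2}(R_J(E),\pi^\vee)\le \dim \Hom_{G_2}(I_E(1/2),\pi^\vee)=\dim \Hom_N(\pi,\psi_E)$ coming from Propositions~\ref{P:temp-sum} and~\ref{P:bij}. The one input you flag as the ``main obstacle''---the identification $(\Pi_J)_{N,\psi_E}\cong \mathrm{ind}_{H_{J,E}}^{H_J}(1)$---is not new work but is taken from the literature (Lemma~2.9 on p.~213 of \cite{GrS}), so the first isomorphism in the chain is immediate once that is quoted.
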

 \vskip 5pt
 \begin{proof} The first inclusion arises from $\Theta(\tau) \twoheadrightarrow \pi$. 
 The second follows from
 \[ 
  \Hom_N(\Theta(\tau), \psi_E)  \cong \Hom_{H_{J}}((\Pi_{J})_{N,\psi_E},  \tau) 
 \] 
 combined with (see Lemma 2.9 on page 213 in \cite{GrS}) 
 \[ 
 (\Pi_{J})_{N,\psi_E} \cong \mathrm{ind}_{H_{J,E}}^{H_J} (1)
 \] 
  and the Frobenius reciprocity. For the third, observe that $ \Theta_J(\bar \pi)$ is the complex conjugate of $\Theta_J(\pi)$. 
 Since $\bar\pi \cong \pi^{\vee}$ and $\bar \tau\cong \tau^{\vee}$ and we have 
$\Theta(\pi^{\vee}) \twoheadrightarrow \tau^{\vee}$. The fourth is the see-saw identity (\ref{E:seesaw}).

\vskip 5pt

 If any of the spaces is finite-dimensional,  then $ \Hom_N(\pi, \psi_E)$ is finite-dimensional. 
If this space is finite dimensional then,  since $\pi$ is tempered, by Propositions  \ref{P:temp-sum}  and \ref{P:bij}, one has
\begin{equation} \label{E:chain1}
 \dim  \Hom_{G_2}(R_J(E),  \pi^{\vee}) \leq \dim \Hom_{G_2}(I_E(1/2), \pi^{\vee} ) = \dim \Hom_N(\pi, \psi_E). \end{equation}
 It follows that all spaces have the same dimension and the lemma is proved. 
 \end{proof}
\vskip 5pt

\subsection{\bf Conclusion of proof.}
Using the lemma, we can now conclude the proof of Theorem \ref{T:dicho}. 
\vskip 5pt

Assume $\pi$ is tempered nongeneric and has nonzero theta lift to $PD^{\times}$.   Since $PD^{\times}$ is compact, 
one can find $\tau \in {\rm Irr}(PD^{\times})$ such that $\tau$ is an irreducible quotient of $\Theta_D(\pi)$. Moreover $\tau$ is tempered.
Choose $E$ so that $\Hom_N(\pi, \psi_E) \ne 0$. We may now apply Lemma \ref{L:curious} with the chosen $\tau$ and $E$ to deduce  that
\[  d: = \dim \Hom_{G_2}(I_E(1/2), \pi) = \dim \Hom_{G_2}(R_D(E), \pi) = \dim \Hom_N(\pi, \psi_E) \ne 0 \]
\vskip 5pt

Similarly, if $\pi$ has nonzero theta lift to $\PGSp_6$,  then  we may find a tempered $\tau \in {\rm Irr}(\PGSp_6)$ such that $\tau$ is an irreducible quotient of  $\Theta(\pi)$ (by Lemma \ref{L:basic} and Lemma \ref{L:basic2}(i)). With $E$ as above, an application of Lemma \ref{L:curious} shows that
\[   d = \dim \Hom_{G_2}(I_E(1/2), \pi) = \dim \Hom_{G_2}(R_{H_3(M_2(F))}(E), \pi) = \dim \Hom_N(\pi, \psi_E) \ne 0 \]
 Moreover, since all these dimensions are finite, one deduces by Proposition \ref{P:temp-sum} that
 \[ d=  \dim \Hom_{G_2}(I_E(1/2), \pi) =  \dim \Hom_{G_2}(R_D(E), \pi) +  \dim \Hom_{G_2}(R_{H_3(M_2(F))}(E), \pi)  =2d. \]
This gives the desired contradiction and completes the proof of Theorem \ref{T:dicho}.
\vskip 10pt

\subsection{\bf Uniqueness results}
As further applications of Lemma \ref{L:curious}, we may now derive two multiplicity one statements which will play a key role in the reminder of the paper. These statements are the first steps towards the proof of the Howe duality theorem.

\vskip 5pt 

\begin{prop} \label{P:one-to-one I} 
Let $\tau \in {\rm Irr}(H_J)$ be tempered. Let $\pi \in {\rm Irr}(G_2)$ be a tempered, non-generic quotient of $\Theta_J(\tau)$.  Then $\pi$ is the unique 
irreducible tempered quotient of $\Theta_J(\tau)$. 
\end{prop}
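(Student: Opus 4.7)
The plan is to combine the chain of Lemma~\ref{L:curious} with the see-saw identity~(\ref{E:seesaw}), using the non-genericity of~$\pi$ to activate the chain with a nonzero seed. Since $\pi$ is tempered and non-generic (hence infinite-dimensional), the Fourier-coefficient lemma at the end of Section~\ref{S:repG2} produces an \'etale cubic $F$-algebra~$E$ such that $\Hom_N(\pi,\psi_E) \neq 0$, and this space is finite-dimensional. Applying Lemma~\ref{L:curious} to the pair $(\pi,E)$: since~$\pi$ is tempered and some term of the chain is finite-dimensional, all inclusions are equalities, yielding
\[
d := \dim \Hom_N(\pi,\psi_E) = \dim \Hom_N(\Theta_J(\tau),\psi_E) = \dim \Hom_{H_{J,E}}(\tau^\vee,\C) > 0.
\]

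Now suppose toward contradiction that $\pi' \in {\rm Irr}(G_2)$ is another tempered irreducible quotient of $\Theta_J(\tau)$ with $\pi' \not\cong \pi$. Then $\pi \oplus \pi'$ is a quotient of $\Theta_J(\tau)$, and applying $\Hom_N(-,\psi_E)$ yields an injection
\[
\Hom_N(\pi,\psi_E) \oplus \Hom_N(\pi',\psi_E) \hookrightarrow \Hom_N(\Theta_J(\tau),\psi_E),
\]
whose right-hand side has dimension~$d$. Hence $\Hom_N(\pi',\psi_E) = 0$. Proposition~\ref{P:bij} applied to the tempered representation~$(\pi')^\vee$ then gives $\Hom_{G_2}(I_E(1/2),(\pi')^\vee) \cong \Hom_N(\pi',\psi_E) = 0$, and Proposition~\ref{P:temp-sum} forces both $\Hom_{G_2}(R_D(E),(\pi')^\vee) = 0$ and $\Hom_{G_2}(R_{H_3(M_2(F))}(E),(\pi')^\vee) = 0$; combining with the embedding in Proposition~\ref{P:deg-ps} also yields $\Hom_{G_2}(R_{M_3(F)}(E),(\pi')^\vee) = 0$, so $\Hom_{G_2}(R_J(E),(\pi')^\vee) = 0$ for each of the three relevant~$J$.

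Finally, invoke the see-saw identity~(\ref{E:seesaw}) for~$(\pi')^\vee$:
\[
\Hom_{H_{J,E}}(\Theta_J((\pi')^\vee),\C) \cong \Hom_{G_2}(R_J(E),(\pi')^\vee) = 0.
\]
On the other hand, since~$\pi'$ is an irreducible quotient of~$\Theta_J(\tau)$, the standard symmetry of the theta correspondence shows~$\tau$ is an irreducible quotient of~$\Theta_J(\pi')$; taking complex conjugates (using that~$\Pi_J$ is defined over~$\R$ and that~$\pi',\tau$ are unitarizable) shows~$\tau^\vee$ is a quotient of~$\Theta_J((\pi')^\vee)$. Hence
\[
\Hom_{H_{J,E}}(\tau^\vee,\C) \hookrightarrow \Hom_{H_{J,E}}(\Theta_J((\pi')^\vee),\C) = 0,
\]
contradicting $d > 0$, so no such~$\pi'$ can exist. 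The main technical subtlety lies in maintaining consistent duality conventions when activating the see-saw from the~$(\pi')^\vee$ side; the essential role of non-genericity is that it provides the opening nonzero period required to start the chain of Lemma~\ref{L:curious}.
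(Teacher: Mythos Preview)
Your proof is correct and rests on the same circle of ideas as the paper, namely the chain of Lemma~\ref{L:curious} together with Propositions~\ref{P:bij}, \ref{P:temp-sum} and the see-saw identity~(\ref{E:seesaw}). The difference is purely one of packaging. After establishing $d=\dim\Hom_N(\Theta_J(\tau),\psi_E)$ via the first application of Lemma~\ref{L:curious}, the paper simply invokes Lemma~\ref{L:curious} a \emph{second} time for the pair $(\pi',\tau)$: since the middle term $\Hom_N(\Theta_J(\tau),\psi_E)$ in that chain is already known to have dimension~$d$, the lemma forces $\dim\Hom_N(\pi',\psi_E)=d$ directly, and then the surjection $\Theta_J(\tau)\twoheadrightarrow\pi\oplus\pi'$ gives $d\geq d+d$, an immediate contradiction. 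Your argument instead deduces $\Hom_N(\pi',\psi_E)=0$ from the dimension count and then retraces the chain backward by hand through Propositions~\ref{P:bij}, \ref{P:temp-sum}, \ref{P:deg-ps} and the see-saw to arrive at $\Hom_{H_{J,E}}(\tau^\vee,\C)=0$. This is exactly what a second call to Lemma~\ref{L:curious} encapsulates, so your route is a valid but longer unpacking of the same mechanism; it does not buy any extra generality.
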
  
\begin{proof} Since $\pi$ is non-generic, there exists $E$ such that $d= \dim\Hom_N(\pi, \psi_E)$ is finite and non-zero. By Lemma \ref{L:curious}, 
$\dim \Hom_N(\Theta(\tau), \psi_E) =d$. Let $\pi'$ be another tempered  irreducible   quotient of $\Theta_J(\tau)$. We apply Lemma \ref{L:curious} to 
$\pi'$  and $\tau$ to deduce that $\dim \Hom_N(\pi', \psi_E)=d$. Since $\pi\oplus \pi'$ is a quotient of $\Theta_J(\tau)$, 
\[ 
d= \dim \Hom_N(\Theta_J(\tau), \psi_E) \geq \dim \Hom_N(\pi, \psi_E) + \dim \Hom_N(\pi', \psi_E) =2d, 
\] 
a contradiction. 
\end{proof} 
\vskip 5pt

%We note that by Lemma \ref{L:basic2}(i), we know that, in the context of the proposition,  any irreducible quotient of $\Theta_J(\tau)$ is in fact tempered.   The following is a companion statement.
\vskip 5pt 

\begin{prop} \label{P:one-to-one II}  
Let $\pi \in {\rm Irr}(G_2)$ be tempered and non-generic. 
Then $\Theta_J(\pi)$  cannot have two tempered irreducible quotients. In particular, the cuspidal representation $\Theta_J(\pi)_c$ is irreducible or $0$.
\end{prop}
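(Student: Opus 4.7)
The plan is to mirror the argument for Proposition \ref{P:one-to-one I} with the roles of $\pi$ and $\tau$ interchanged, pivoting on the ping-pong chain of Lemma \ref{L:curious}. First, I would use the non-genericity of $\pi$ — combined with the fact that a tempered irreducible representation of the semisimple group $G_2$ is automatically infinite-dimensional — to invoke the local Fourier coefficient lemma of \S \ref{S:repG2} and pick an \'etale cubic $F$-algebra $E$ for which $d := \dim \Hom_N(\pi, \psi_E)$ is finite and non-zero. This $E$ will anchor every application of Lemma \ref{L:curious}.

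Next, I would suppose for contradiction that $\tau_1 \ncong \tau_2$ are two irreducible tempered quotients of $\Theta_J(\pi)$. Since $\tau_i$ being a quotient of $\Theta_J(\pi)$ is equivalent (via $\Hom_{G_2 \times H_J}(\Pi_J, \pi \boxtimes \tau_i) \ne 0$) to $\pi$ being a quotient of $\Theta_J(\tau_i)$, each pair $(\pi, \tau_i)$ satisfies the hypotheses of Lemma \ref{L:curious}. Because $\Hom_N(\pi, \psi_E)$ is finite-dimensional of dimension $d$, every term in the chain for $(\pi, \tau_i)$ collapses to the common dimension $d$; in particular $\dim \Hom_{H_{J,E}}(\tau_i^\vee, \C) = d$ for $i=1,2$ while $\dim \Hom_{H_{J,E}}(\Theta_J(\pi^\vee), \C) = d$. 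Since $\Theta_J(\pi^\vee) \twoheadrightarrow \tau_1^\vee \oplus \tau_2^\vee$ (using that complex conjugation intertwines theta lifting, as spelled out in the proof of Lemma \ref{L:curious}), applying $\Hom_{H_{J,E}}(-, \C)$ will produce $2d \le d$, the desired contradiction.

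The tail claim about $\Theta_J(\pi)_c$ follows from the same counting argument. The cuspidal component $\Theta_J(\pi)_c$ is semisimple, being supported on cuspidal Bernstein components; each irreducible constituent, being also a quotient of $\Theta_J(\pi)$, is tempered by Lemma \ref{L:basic2}(i). The first half of the proposition then forces a unique isomorphism class $\tau$ of constituent, and replacing $\tau_1 \oplus \tau_2$ by $\tau^{\oplus m}$ in the contradiction above gives $md \le d$, so $m \le 1$. I do not anticipate any serious obstacle: the whole proof is essentially a transcription of Proposition \ref{P:one-to-one I} with the roles of the two members of the dual pair swapped. The only delicate point is ensuring one can find $E$ with $d \ne 0$, and this is exactly where the non-genericity of $\pi$ enters.
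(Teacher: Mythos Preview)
Your proposal is correct and follows essentially the same argument as the paper's proof: both pivot on Lemma \ref{L:curious}, use non-genericity to fix an \'etale cubic $E$ with $d = \dim \Hom_N(\pi,\psi_E)$ finite and nonzero, and derive the contradiction $2d \le d$ from the surjection onto $\tau_1 \oplus \tau_2$. The only cosmetic difference is that the paper applies Lemma \ref{L:curious} to the pair $(\pi^\vee,\tau_i^\vee)$ (so the inequality is read off directly from $\Theta_J(\pi) \twoheadrightarrow \tau_1 \oplus \tau_2$), whereas you apply it to $(\pi,\tau_i)$ and then pass to $\Theta_J(\pi^\vee) \twoheadrightarrow \tau_1^\vee \oplus \tau_2^\vee$ via complex conjugation; your separate treatment of the multiplicity case for $\Theta_J(\pi)_c$ is a harmless refinement of the same counting.
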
  
\begin{proof}  Let $\tau_1, \tau_2  \in {\rm Irr}(H_J)$, irreducible tempered,  such that $\Theta_J(\pi) \twoheadrightarrow \tau_1 \oplus \tau_2$. 
  Since $\pi$ is non-generic there exists $E$ such that $d= \dim\Hom_N(\pi^{\vee}, \psi_E)$ is finite and non-zero. By Lemma \ref{L:curious}, applied to 
$\pi^{\vee}$, $\tau_1^{\vee}$ and then to $\pi^{\vee}$, $\tau_2^{\vee}$, 
\[ 
d= \dim \Hom_{H_{J,E}}(\tau_1, \mathbb C) =\dim \Hom_{H_{J,E}}(\Theta_J(\pi), \mathbb C) =\dim \Hom_{H_{J,E}}(\tau_2, \mathbb C). 
\] 
 Since $\tau_1\oplus \tau_2$ is a quotient of $\Theta(\pi)$, 
\[ 
d= \dim \Hom_{H_{J,E}}(\Theta_J(\pi), \mathbb C) \geq \dim \Hom_{H_{J,E}}(\tau_1, \mathbb C) + \dim \Hom_{H_{J,E}}(\tau_2, \mathbb C) =2d, 
\] 
a contradiction. 
\end{proof} 
%Again, by Lemma \ref{L:basic2}(i), we know that, in the context of the proposition, any irreducible quotient of $\Theta_J(\pi)$ is tempered.
\vskip 5pt

Combining Proposition \ref{P:one-to-one I} and \ref{P:one-to-one II} with Lemmas \ref{L:basic}(i) and \ref{L:basic2}(i), we deduce the following corollary which may be considered as a first step towards the Howe duality theorem.
\vskip 5pt

\begin{cor}  \label{C:howe}
Let $\pi \in {\rm Irr}(G_2)$ be tempered and non-generic. Then $\Theta_J(\pi)$ has finite length. If $\Theta_J(\pi) \ne 0$, then it has a unique irreducible quotient $\theta(\pi)$ and $\theta(\pi)$ is tempered. Moreover, for $\pi_1,  \pi_2 \in {\rm Irr}(G_2)$ tempered and non-generic,
\[  \theta(\pi_1) \cong \theta(\pi_2) \Longrightarrow \pi_1 \cong \pi_2.\] 
\end{cor}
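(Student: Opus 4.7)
The corollary is essentially a formal packaging of the inputs already assembled: Lemmas \ref{L:basic} and \ref{L:basic2} together with Propositions \ref{P:one-to-one I} and \ref{P:one-to-one II}. I indicate the bookkeeping rather than redo the real work.

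First I would establish finite length. Using Bernstein's decomposition, split
\[ \Theta_J(\pi) = \Theta_J(\pi)_c \oplus \Theta_J(\pi)_{nc}. \]
Lemma \ref{L:basic}(i) gives that $\Theta_J(\pi)_{nc}$ has finite length. The cuspidal part $\Theta_J(\pi)_c$ is semisimple, and any irreducible summand $\tau$ satisfies $\Pi_J \twoheadrightarrow \pi \otimes \tau$, hence is tempered by Lemma \ref{L:basic2}(i). But Proposition \ref{P:one-to-one II} forbids two tempered irreducible quotients of $\Theta_J(\pi)$, so $\Theta_J(\pi)_c$ is either irreducible or zero. Combined, $\Theta_J(\pi)$ has finite length.

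Next I would read off the uniqueness of the irreducible quotient. If $\Theta_J(\pi) \ne 0$, finite length guarantees at least one irreducible quotient $\tau$; Lemma \ref{L:basic2}(i) shows any such $\tau$ is tempered, and Proposition \ref{P:one-to-one II} shows there is at most one such tempered quotient. Hence $\theta(\pi)$ is well-defined and tempered.

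For injectivity of $\pi \mapsto \theta(\pi)$, suppose $\theta(\pi_1) \cong \theta(\pi_2) =: \tau$ for tempered non-generic $\pi_1, \pi_2$. Each surjection $\Theta_J(\pi_i) \twoheadrightarrow \tau$ yields a surjection $\Pi_J \twoheadrightarrow \pi_i \otimes \tau$, which by the symmetric role of the two factors of the dual pair exhibits $\pi_i$ as an irreducible tempered non-generic quotient of $\Theta_J(\tau)$ for $i=1,2$. Proposition \ref{P:one-to-one I}, applied to the tempered $\tau$, pins down such a quotient uniquely, so $\pi_1 \cong \pi_2$.

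The main obstacle is really not in this corollary: the substance lies upstream in the period ping-pong of Lemma \ref{L:curious} and its uniqueness consequences, Propositions \ref{P:one-to-one I}--\ref{P:one-to-one II}. The only subtle point in the packaging itself is verifying that every irreducible quotient under consideration is tempered, so that those uniqueness statements actually apply — this is exactly where Lemma \ref{L:basic2}(i) is indispensable.
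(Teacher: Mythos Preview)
Your proof is correct and follows essentially the same route as the paper: decompose $\Theta_J(\pi)$ into its cuspidal and noncuspidal parts, invoke Lemma~\ref{L:basic}(i) and Proposition~\ref{P:one-to-one II} for finite length, then Lemma~\ref{L:basic2}(i) and Proposition~\ref{P:one-to-one II} for the unique tempered quotient, and finally Proposition~\ref{P:one-to-one I} for injectivity. The only cosmetic difference is that you justify the irreducibility of $\Theta_J(\pi)_c$ by first appealing to Lemma~\ref{L:basic2}(i) to see its summands are tempered, whereas the paper simply quotes the ``in particular'' clause of Proposition~\ref{P:one-to-one II} (which ultimately rests on supercuspidals being tempered); both are fine.
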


\begin{proof}
Writing $\Theta_J(\pi) = \Theta_J(\pi)_c \oplus \Theta_J(\pi)_{nc}$, Proposition \ref{P:one-to-one II} tells us that $\Theta_J(\pi)_c$ is irreducible or $0$, whereas Lemma \ref{L:basic}(i) 
tells us that $\Theta_J(\pi)_{nc}$ has finite length. Hence $\Theta_J(\pi)$ has finite length, so that its cosocle $\theta_J(\pi)$ is a finite sum of irreducible representations. Moreover, Lemma \ref{L:basic2}(i) says that $\theta_J(\pi)$ is tempered, and Proposition \ref{P:one-to-one II} then shows the irreducibility of $\theta_J(\pi)$ if it is nonzero.
The final implication now follows by Proposition \ref{P:one-to-one I}.
\end{proof}
\vskip 5pt

In the rest of the paper, we shall examine each of the 3 dual pairs $G_2 \times H_J$ in turn and complete the proof of the Howe duality conjecture.

\vskip 10pt

\section{\bf  Theta Correspondence for $PD^{\times} \times G_2$}
In this section, we discuss the theta correspondence for the dual pair $PD^{\times} \times G_2$. 
A preliminary study of this dual pair correspondence has been carried out by the second author in \cite{Sa}. We first recall the results established there.

 \vskip 5pt
 
Let $\Pi_D$ be the minimal representation of $PD^{\times} \times G_2$. Then we have
\[  \Pi_D  = \bigoplus_{\tau \in {\rm Irr}(PD^{\times})} \tau \boxtimes \Theta(\tau). \]
The following was shown in \cite{Sa}:
\vskip 5pt

\begin{prop}
(i) If $\tau = 1$ is the trivial representation of $PD^{\times}$, then 
\[ 
\Theta(1)  = \pi_{\rm deg}[1],
\] the unipotent discrete series representation introduced in Proposition \ref{P:PSP}(iii) (first bullet point).
\vskip 5pt

\noindent (ii) If $\tau$ is not the trivial representation, then $\Theta(\tau)$ is nongeneric supercuspidal of finite length (possibly zero).
\vskip 5pt

\noindent (iii) If $\tau=\chi$ is a nontrivial  unramified cubic character,    then 
\[ 
\Theta(\chi)=\pi_{\rm sc}[\chi] \text{ and }  \Theta(\chi^2)=\pi_{\rm sc}[\chi^2]
\]  
 the two depth 0 supercuspidal representations  introduced in \S \ref{SS:partial-LLC} (2) (third bullet point).
\vskip 5pt

\noindent (iv)  For each cubic field extension $E/F$, 
\[  \Hom_N(\Theta(\tau), \psi_E)  \cong \Hom_{PE^{\times}}(\tau, \C).\]
\end{prop}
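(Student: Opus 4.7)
The plan is to recall that this proposition is essentially a compilation of results of \cite{Sa}; the approach is uniform, combining Frobenius reciprocity applied to the minimal representation $\Pi_D$ with explicit computations of the Jacquet modules of $\Pi_D$ along the unipotent radicals $N$ (of $P$) and $U$ (of $Q$), together with the twisted Jacquet modules $(\Pi_D)_{N,\psi_E}$ for \'etale cubic $F$-algebras $E$.

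Part (iv) is the easiest and comes first in my plan. Using the identification
\[
(\Pi_D)_{N,\psi_E} \cong \mathrm{ind}_{PE^{\times}}^{PD^{\times}}(1),
\]
which is exactly the input invoked in the proof of Lemma \ref{L:curious} via \cite[Lemma 2.9]{GrS}, one gets by a two-line Frobenius reciprocity
\[
\Hom_N(\Theta(\tau), \psi_E) \cong \Hom_{PD^{\times}}\bigl(\mathrm{ind}_{PE^{\times}}^{PD^{\times}}(1),\tau\bigr) \cong \Hom_{PE^{\times}}(1, \tau|_{PE^{\times}}) = \Hom_{PE^{\times}}(\tau, \C),
\]
the last step using finite-dimensionality of $\tau$.

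For (ii), I would first establish non-genericity. This amounts to showing that $(\Pi_D)_{N,\psi}$ vanishes on the $\tau$-isotypic component whenever $\psi$ is generic; in the same spirit as (iv), this twisted Jacquet module is identifiable as an induction from a split maximal torus in $PD^{\times}$, and the invariants of a nontrivial $\tau$ under such a torus vanish since $PD^{\times}$ is anisotropic modulo center and $\tau \ne 1$. Supercuspidality of $\Theta(\tau)$ for $\tau \ne 1$ then follows from the computation of the full Jacquet modules $(\Pi_D)_N$ as an $M \times PD^{\times}$-module and $(\Pi_D)_U$ as an $L \times PD^{\times}$-module given in \cite{Sa}: one reads off that the $\tau$-isotypic part of each is zero. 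Finite length of the resulting supercuspidal $\Theta(\tau)$ then follows from Lemma \ref{L:basic}(ii).

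For (i) and (iii), the strategy is to use these same Jacquet module calculations and match against the enumeration of non-supercuspidal discrete series of $G_2$ in \S\ref{SS:partial-LLC}. In (i), the trivial $PD^{\times}$-isotypic components of $(\Pi_D)_N$ and $(\Pi_D)_U$ are nonzero and can be identified explicitly with Jacquet modules of subquotients of $I_P(1/2,1)$ and $I_Q(1/2,\mathrm{st})$; the unique common discrete series subquotient of these two standard modules is $\pi_{\rm deg}[1]$, which pins $\Theta(1)$ down. The main obstacle lies in (iii): matching $\Theta(\chi)$ for an unramified cubic $\chi$ with the specific depth-zero supercuspidal $\pi_{\rm sc}[\chi]$ attached to the cuspidal unipotent $G_2[\omega]$ of $G_2(\mathbb{F}_q)$. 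My plan here is to reduce, via restriction to a hyperspecial parahoric, to a finite-field theta correspondence for the dual pair arising from the $\mathbb{Z}/3\mathbb{Z}$-grading on the Lie algebra of $E_6^D$ underlying $\Pi_D$; the subtle step is recognizing $G_2[\omega]$ as the target of this finite lift of the cubic character of $\mu_3$, which requires a type-theoretic argument rather than a Jacquet module one.
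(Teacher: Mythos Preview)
The paper does not give a proof; the proposition is stated as a summary of results from \cite{Sa}. Your sketch for (iv) via $(\Pi_D)_{N,\psi_E}\cong\mathrm{ind}_{PE^\times}^{PD^\times}(1)$ and Frobenius reciprocity is correct and is exactly the argument, and your outlines for (i) and (iii) (Jacquet-module matching against the list in \S\ref{SS:partial-LLC}, and reduction to a finite-field correspondence) are in the spirit of \cite{Sa} and \cite{HMS}.

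There are, however, two concrete problems in your treatment of (ii). First, the non-genericity argument is confused. Non-genericity means $\Hom_V(\Theta(\tau),\psi_V)=0$ for the Whittaker datum $(V,\psi_V)$ with $V$ the unipotent radical of the \emph{Borel}; you instead argue about $(\Pi_D)_{N,\psi}$ for ``generic'' characters of $N$, which is a different object. Your invocation of a ``split maximal torus in $PD^\times$'' is also off: no such torus exists, precisely because the split cubic algebra $F^3$ does not embed in the division algebra $D$. The salvageable kernel of your idea is that $(\Pi_D)_{N,\psi_{F^3}}=0$ since $F^3\not\hookrightarrow D$; but you would then still need a separate step linking vanishing of the $\psi_{F^3}$-model on $N$ to Whittaker non-genericity, or else compute $(\Pi_D)_{V,\psi_V}$ directly. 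Note too that your clause ``invariants of a nontrivial $\tau$ under such a torus vanish'' contradicts the very content of (iv): for $E$ a cubic field, $\Hom_{PE^\times}(\tau,\C)$ is typically \emph{nonzero}, and this is exactly how one sees $\Theta(\tau)\ne 0$.

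Second, your citation of Lemma~\ref{L:basic}(ii) for finite length is a non-sequitur: that lemma only bounds the non-cuspidal part $\Theta(\tau)_{nc}$, which is zero once you have shown $\Theta(\tau)$ is supercuspidal. Finite length of the cuspidal part does not follow from Jacquet-module bookkeeping; in this paper it is obtained afterwards (indeed as irreducibility) via the period ping-pong of Propositions~\ref{P:one-to-one I} and~\ref{P:one-to-one II}.
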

\vskip 5pt

We can now easily extend the above results. More precisely, 
\vskip 5pt

\begin{thm}

\noindent 
(i) For any $\tau \in {\rm Irr}(PD^{\times})$, $\Theta(\tau)$ is an irreducible representation of $G_2$ if it is nonzero. 
\vskip 5pt

\noindent 
(ii) If $\tau_1,\tau_2 \in {\rm Irr}(PD^{\times})$ are such that $\Theta(\tau_1)\cong\Theta(\tau_2) \neq 0$, then $\tau_1\cong\tau_2$. 
\vskip 5pt

\noindent  (ii) If $p\neq 3$, then the map $\tau \mapsto \Theta(\tau)$ defines an injection
\[  {\rm Irr}(PD^{\times}) \hookrightarrow {\rm Irr}(G_2). \]
\vskip 5pt

Hence, the Howe duality theorem holds for $PD^{\times} \times G_2$, so that
\[\dim \Hom_{G_2}(\theta(\tau_1) , \theta(\tau_2)) \leq   \dim \Hom_{PD^{\times}}(\tau_1, \tau_2) \]
for any $\tau_1,\tau_2 \in {\rm Irr}(PD^{\times})$. In particular, for any $\pi \in {\rm Irr}(G_2)$, the representation $\Theta(\pi)$  of $PD^{\times}$ is irreducible or zero. 

\end{thm}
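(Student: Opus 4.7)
The essential feature to exploit is that $PD^{\times}$ is compact modulo its center, so every $\tau \in {\rm Irr}(PD^{\times})$ is finite-dimensional and tempered, every smooth representation of $PD^{\times}$ is cuspidal and semisimple, and consequently for any $\pi \in {\rm Irr}(G_2)$ one has $\Theta_D(\pi) = \Theta_D(\pi)_c$. Combined with the proposition from \cite{Sa} recalled above, this means that for $\tau \neq 1$ the representation $\Theta_D(\tau)$ is nongeneric supercuspidal of finite length (hence semisimple), while $\Theta_D(1) = \pi_{\rm deg}[1]$ is already irreducible.

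For (i) I may assume $\tau \neq 1$ and $\Theta_D(\tau) \neq 0$. If $\pi$ is any irreducible constituent of $\Theta_D(\tau)$, then $\pi$ is tempered, nongeneric, and (by semisimplicity) a quotient of $\Theta_D(\tau)$, so Proposition \ref{P:one-to-one I} identifies $\pi$ as the unique irreducible tempered quotient. Semisimplicity then forces $\Theta_D(\tau) \cong \pi^{\oplus m}$ for a single isomorphism class and some $m \geq 1$. To promote this to multiplicity one, I pick an \'etale cubic $E$ such that $d := \dim \Hom_N(\pi, \psi_E)$ is finite and nonzero (available since $\pi$ is infinite-dimensional and nongeneric, by the local Fourier coefficient lemma at the end of Section \ref{S:repG2}) and read the chain in Lemma \ref{L:curious}, applied to the tempered pair $(\pi, \tau)$, quantitatively:
\[
m \cdot d \;=\; \dim \Hom_N(\Theta_D(\tau), \psi_E) \;=\; d,
\]
whence $m = 1$.

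For (ii), if $\Theta_D(\tau_1) \cong \Theta_D(\tau_2) = \pi \neq 0$, then $\pi$ is tempered and nongeneric by (i) and the recalled description, and both $\tau_1$ and $\tau_2$ are irreducible quotients of $\Theta_D(\pi) = \Theta_D(\pi)_c$, which by Proposition \ref{P:one-to-one II} is irreducible (being nonzero). Hence $\tau_1 \cong \tau_2$. For (iii), when $p \neq 3$ the result of Lonka--Tandon \cite{LT} produces, for each $\tau \in {\rm Irr}(PD^{\times})$, some maximal torus $PE^{\times}$ and nonzero $\tau^{PE^{\times}}$; via part (iv) of the recalled proposition this yields $\Hom_N(\Theta_D(\tau), \psi_E) \neq 0$, so $\Theta_D(\tau) \neq 0$, and (i)--(ii) give the desired injection. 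Finally, the Howe duality assertion is immediate: for $\pi \in {\rm Irr}(G_2)$ either nontempered or tempered generic, $\Theta_D(\pi) = 0$ by Lemma \ref{L:basic2}(ii) together with the generic case already handled in the reduction in Section \ref{S:dichotomy}; otherwise $\pi$ is tempered nongeneric and $\Theta_D(\pi) = \Theta_D(\pi)_c$ is irreducible or zero by Proposition \ref{P:one-to-one II}. The Hom inequality then collapses to the observation that each side lies in $\{0, 1\}$ and, by (ii), both are simultaneously $1$ exactly when $\tau_1 \cong \tau_2$.

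The only genuinely subtle step is the multiplicity-one refinement in (i): Proposition \ref{P:one-to-one I} only excludes two \emph{distinct} irreducible tempered quotients, so one has to read Lemma \ref{L:curious} as an equality of finite dimensions rather than merely a chain of inclusions in order to eliminate higher multiplicity within a single isotypic component. Everything else is organization of results already in place.
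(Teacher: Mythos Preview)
Your proof is correct and follows essentially the same route as the paper: parts (i) and (ii) are deduced from Propositions \ref{P:one-to-one I} and \ref{P:one-to-one II}, and part (iii) from the Lonka--Tandon result via $\Hom_N(\Theta(\tau),\psi_E) \cong \Hom_{PE^{\times}}(\tau,\C)$. The one place you add detail is the multiplicity-one step in (i): you worry that Proposition \ref{P:one-to-one I}, as \emph{stated}, only pins down the isomorphism class of the irreducible tempered quotient, and so you redo the dimension count from Lemma \ref{L:curious} to rule out $\Theta_D(\tau)\cong \pi^{\oplus m}$ with $m\ge 2$. This is a fair reading of the statement, but note that the \emph{proof} of Proposition \ref{P:one-to-one I} already covers this case (the argument derives a contradiction from any surjection $\Theta_J(\tau)\twoheadrightarrow \pi\oplus\pi'$, and nothing there requires $\pi'\not\cong\pi$); the paper simply invokes the proposition without comment. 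Your explicit version is not wrong, just slightly redundant with what is already inside that proof.
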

\begin{proof} The first two parts follow from Propositions \ref{P:one-to-one I} and \ref{P:one-to-one II}. As for (iii) we use 
\[  \Hom_N(\Theta(\tau), \psi_E)  \cong \Hom_{PE^{\times}}(\tau, \C), \]
so it suffices to show that there exists a field $E$ such that $\Hom_{PE^{\times}}(\tau, \C)\neq 0$. If $p\neq 3$, this was proved for  
all irreducible $\tau$ by \cite{LT}, Theorem 2.4. 

\end{proof} 

\vskip 5pt

%\noindent{\bf Remark}: Observe that all $\pi \in {\rm Irr}(G_2)$ which participate in the theta correspondence with $PD^{\times}$ are discrete series representations, and are in particular tempered.  Hence to extend Theorem \ref{T:dicho} to all representations, it suffices to show that all non tempered $\pi \in {\rm Irr}(G_2)$ has nonzero theta lift to $\PGSp_6$. 
%This will be accomplished in Section \ref{S:jacquet}. 

\vskip 10pt

\section{\bf Theta Correspondence for $(\PGL_3\rtimes \Z/2\Z) \times G_2$}

In this section, we consider the theta correspondence for dual pair  $(\PGL_3\rtimes \Z/2\Z) \times G_2$ and prove various results
analogous to those in the last section. In fact, the theta correspondence for $\PGL_3 \times G_2$ was almost completely studied in \cite{GS04}. 
But the treatment there ignores the outer automorphism group of $\PGL_3$; this is akin to working with special orthogonal groups instead of orthogonal groups in classical theta correspondence and is of course undesirable. Thus, we shall complete the results of \cite{GS04} in their natural setting here. We extend the minimal representation of 
$E_6$ to $E_6  \rtimes \Z/2\Z$ so that $\Z/2\Z$ fixes the spherical vector. 

\vskip 5pt

\subsection{\bf Representations of $H = \PGL_3 \rtimes \Z/2\Z$.}  We realize $\Z/2\Z$, acting on $\GL_3$ as a pinned automorphism preserving the standard pinning, i.e. acting via  
\[  A \mapsto w_0 \cdot {^t}A^{-1} \cdot w_0^{-1} \quad \text{with} \quad 
\left( \begin{array}{ccc}
 & & 1\\
 & -1&  \\
 1 & & 
\end{array} \right)
\]
Let $U \subset \GL_3$ be the maximal unipotent subgroup of upper triangular matrices and let $\psi$ be a $\Z/2\Z$-invariant Whittaker character of $U$. 
Then $\psi$ extends to two characters of $U\rtimes \Z/2\Z$. Let 
$\psi\otimes 1$ be the extension such that $\Z/2\Z$ acts trivially, and let $\psi\otimes {\rm sign}$ be the other extension. 
If $\tau \in {\rm Irr}(\PGL_3)$, then there are two possibilities:
\vskip 5pt

\begin{itemize}
\item if $\tau^{\vee} \ncong \tau$, then 
\[  \tau^+  := {\rm Ind}_{\PGL_3}^{H}\tau \cong  {\rm Ind}_{\PGL_3}^{H} \tau^{\vee}  \]
is irreducible. If $\tau$ is generic then 
\[  \dim\Hom_{U \rtimes \Z/2\Z}(\tau^+,   \psi \otimes 1) = \dim \Hom _{U \rtimes \Z/2\Z}(\tau^+,   \psi \otimes {\rm sign}) =1. \]

\vskip 5pt

\item if $\tau \cong \tau^{\vee}$, then $\tau$ has two extensions to $H$, which differ from each other by twisting with the unique quadratic character 
${\rm sign}: H \longrightarrow \langle \pm 1 \rangle$ of $H$. When $\tau$ is generic (for example when $\tau$ is tempered), we let $\tau^+$ denote the unique extension of $\tau$ such that
\[  \dim\Hom_{U \rtimes \Z/2\Z}(\tau^+,   \psi \otimes 1)   =1, \]
and let $\tau^-$ denote the other extension. 
\vskip 5pt
The only nongeneric and self-dual representations of $\PGL_3$ are the 
Langlands quotients $J_B(\mu)$, where $B=TU$ is the normalizer of $U$ and 
\[ \mu = \chi|-|^{1/2} \otimes 1 \otimes \chi|-|^{-1/2} 
\] 
is a character of $T$ such that $\chi^2=1$.  In this case, $\theta(\tau)$ is irreducible by \cite{GS04}, and we  define $\tau^+$ by setting 
$\theta(\tau^+)=\theta(\tau)$ and  $\theta(\tau^-)=0$. 
\end{itemize}
It follows from the above discussion that any irreducible representation of $H$ is self-dual.

\vskip 10pt

\subsection{\bf Whittaker models.}
The following lemma summarizes some basic computations.
\vskip 5pt

\begin{lemma} \label{L:twisted_W} 
Let $\Pi$ be the minimal representation of  split $E_6 \rtimes \Z/2\Z$. 
\vskip 5pt
\noindent (i)  Let $\psi_V : V \rightarrow \mathbb C^{\times}$  be a Whitaker character for $G_2$ (so $V$ is a maximal unipotent subgroup of $G_2$). Then
\[  \Pi_{V, \psi_V}  \cong {\rm ind}_{U \rtimes \Z/2\Z}^H \psi \otimes 1. \]
In particular, for any $\tau^{\epsilon} \in {\rm Irr}(H)$, 
\[  \Hom_{V}(\Theta(\tau^{\epsilon}),  \psi_V)  \cong \Hom_{U \rtimes \Z/2\Z}( \tau^{\epsilon}, \psi \otimes 1) 
%= \begin{cases} 
%\C, \text{ if $\epsilon = +$} \\
%0, \text{  if $\epsilon = -$.} \end{cases} 
\] 
\vskip 5pt

\noindent (ii) For any \'etale cubic $F$-algebra, we have:
\[  \Hom_N(\Theta(\tau^{\epsilon}), \psi_E) \cong \Hom_{PE^{\times} \rtimes \Z/2\Z}( \tau^{\epsilon}, \C). \]
\end{lemma}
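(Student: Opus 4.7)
The plan is to treat the two parts in a uniform way: both reduce to (1) a computation of a twisted Jacquet module of $\Pi$ as an $H$-module, (2) Frobenius reciprocity, and (3) the self-duality of every irreducible representation of $H$ noted just above.  In each case the twisted Jacquet module is easy to identify as a $\PGL_3$-module from the known structure of the minimal representation of $E_6$, so the substantive work lies in pinning down the action of the outer involution $\theta$, and hence the correct $H$-module structure.

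Part (ii) goes through most directly.  I would invoke \cite[Lemma 2.9]{GrS}, which in our setting $J = M_3(F)^+$ gives
\[ \Pi_{N, \psi_E} \cong {\rm ind}_{H_{J,E}}^{H_J}(1) = {\rm ind}_{PE^{\times}\rtimes \Z/2\Z}^{H}(1) \]
as an $H$-module.  Exactness of twisted Jacquet modules and the defining property of the big theta lift then yield a natural isomorphism $\Hom_N(\Theta(\tau^\epsilon), \psi_E) \cong \Hom_H(\Pi_{N,\psi_E}, \tau^\epsilon)^{*}$, and Frobenius reciprocity together with the self-duality of $\tau^\epsilon$ convert the right-hand side into $\Hom_{PE^{\times}\rtimes\Z/2\Z}(\tau^\epsilon, \C)$.

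For part (i), I would first recall (by the same methods as in \cite{GS04}, applied to the minimal representation of $E_6$) that as a $\PGL_3$-module
\[ \Pi_{V,\psi_V}|_{\PGL_3} \cong {\rm ind}_U^{\PGL_3} \psi. \]
Since $\theta$ acts trivially on the $G_2$ factor and preserves the Whittaker character $\psi_V$, it acts on $\Pi_{V,\psi_V}$ and extends this $\PGL_3$-structure to an $H$-module.  A short Mackey argument (using $H = \PGL_3 \cdot (U \rtimes \Z/2\Z)$ with $\PGL_3 \cap (U \rtimes \Z/2\Z) = U$) shows that the only two $H$-extensions of ${\rm ind}_U^{\PGL_3}\psi$ are ${\rm ind}_{U \rtimes \Z/2\Z}^{H}(\psi\otimes 1)$ and ${\rm ind}_{U \rtimes \Z/2\Z}^{H}(\psi\otimes {\rm sign})$, so the task is to decide which one occurs.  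To distinguish them I would test against an unramified generic representation $\tau$ of $\PGL_3$: by the normalisation of the ``$+$'' extension, $\tau^+$ is the unramified extension, characterised by $\Hom_{U \rtimes \Z/2\Z}(\tau^+, \psi \otimes 1) \ne 0$ and $\Hom_{U \rtimes \Z/2\Z}(\tau^+, \psi \otimes {\rm sign}) = 0$.  Since the spherical vector of $\Pi$ is by hypothesis fixed by $\theta$, its image in the maximal $\tau^+$-isotypic quotient is a nonzero $\theta$-fixed spherical vector of $\Theta(\tau^+)$; thus $\Theta(\tau^+)$ is the unramified (and hence Whittaker-generic) representation of $G_2$ matching $\tau$ via Satake, so that $\dim \Hom_V(\Theta(\tau^+), \psi_V) \ne 0$.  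Applying Frobenius reciprocity under the two possible extensions then forces the correct extension to be $\psi \otimes 1$.

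The main technical obstacle is this extension question in (i); once it is settled, the Hom formula follows by another application of Frobenius reciprocity and self-duality of $\tau^\epsilon$.  An alternative route would be a direct computation of $\theta$ on a distinguished vector of $\Pi_{V,\psi_V}$ via an explicit model of the minimal representation, but the spherical-test argument above seems to require the least calculation.
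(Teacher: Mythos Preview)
The paper gives no proof of this lemma (it is offered as ``summarizing some basic computations''), so there is nothing to compare against directly; your outline is essentially the natural argument and almost certainly what the authors have in mind, using the identification $(\Pi_J)_{N,\psi_E}\cong{\rm ind}_{H_{J,E}}^{H_J}(1)$ exactly as in the proof of Lemma~\ref{L:curious}.  Two points deserve tightening.

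First, in part~(ii) the displayed isomorphism $\Hom_N(\Theta(\tau^\epsilon),\psi_E)\cong\Hom_H(\Pi_{N,\psi_E},\tau^\epsilon)^*$ should not carry the dual: since $(N,\psi_E)$-coinvariants and maximal $\tau^\epsilon$-isotypic quotient commute, one gets $\Hom_N(\Theta(\tau^\epsilon),\psi_E)\cong\Hom_H(\Pi_{N,\psi_E},\tau^\epsilon)$ on the nose.  The Frobenius step then reads $\Hom_H({\rm ind}_K^H 1,\tau^\epsilon)\cong\Hom_K((\tau^\epsilon)^\vee,\C)$ (via $({\rm ind}_K^H 1)^\vee\cong{\rm Ind}_K^H 1$ and ordinary Frobenius), and self-duality of $\tau^\epsilon$ finishes.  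Your version with the extra dual still lands on the right answer because everything in sight is self-dual, but the intermediate identification is off.

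Second, and more substantively, in part~(i) you assert that for a self-dual unramified generic $\tau$ the extension $\tau^+$ defined by the Whittaker condition coincides with the extension in which $\theta$ fixes the spherical vector.  This is true, but it is not a definition---the paper defines $\tau^+$ purely via $\Hom_{U\rtimes\Z/2\Z}(\tau^+,\psi\otimes 1)\ne 0$---and it is precisely the content needed to pin down the sign.  One way to justify it is to note that for an unramified principal series the Whittaker functional is given by the Jacquet integral over $\bar U$, which is visibly intertwined with the pinned automorphism $\theta$ (since $\theta$ preserves $\bar U$, the inducing character, and the spherical vector), so $\theta$ acts by $+1$ on the Whittaker functional; equivalently one can appeal to the Casselman--Shalika formula.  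Without this step your test does not distinguish the two candidate extensions.  Once this is in place, your spherical-vector argument goes through cleanly.
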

\vskip 10pt

\subsection{\bf Our earlier results.}
The following is a simple combination of results  \cite{GS04} and the previous discussion: 
\vskip 5pt

\begin{thm} \label{T:fromGS} 
For $\tau \in {\rm Irr}(\PGL_3)$, let $\phi_{\tau}: WD_F \longrightarrow \SL_3(\C)$ denote the L-parameter of $\tau$. 
If $\tau$ is non-supercuspidal, then $\Theta(\tau)$ has finite length. If $\tau \ncong \tau^{\vee}$ is supercuspidal, then $\Theta(\tau)$ is irreducible supercuspidal.
(This covers all $\tau \in {\rm Irr}(\PGL_3)$ if $ p \ne 2$). In these cases, set $\theta(\tau^{\epsilon})$  to be the maximal semisimple quotient of $\Theta(\tau^{\epsilon})$ for $\epsilon = \pm$.
\vskip 5pt

More precisely,  we have:
\vskip 5pt

\noindent (i)  If $\tau \ne \tau^{\vee}$, then $\theta (\tau^+)$ is irreducible and nonzero. 
If $\tau$ is generic, or supercuspidal, or a discrete series representation, or tempered, so is $\theta(\tau^+)$. 
When $\tau$ is not supercuspidal, then $\theta(\tau^+)$ is not supercuspidal and its L-parameter   is obtained by composing $\phi_{\tau}$ with the inclusion 
$\SL_3(\mathbb C) \subset G_2(\mathbb C)$. 
\vskip 5pt

\noindent (ii)  If  $\tau = \tau^{\vee}$ and the parameter $\phi_{\tau}$ contains the trivial representation, then $\theta(\tau^-)=0$ and 
$\theta (\tau^+)$ is nonzero irreducible, non-discrete-series and its L-parameter is obtained by composing $\phi_{\tau}$ with the inclusion 
$\SL_3(\mathbb C) \subset G_2(\mathbb C)$. 

\vskip 5pt

\noindent(iii) If  $\tau = \tau^{\vee}$ and the parameter $\phi_{\tau}$ does not contain the trivial representation,  then we have the following cases: 
\begin{itemize} 
\item  $\tau = {\rm St}$, the Steinberg representation. Then 
\[  \theta({\rm St}^+) \oplus \theta({\rm St}^-)  = \pi_{\rm gen}[1] \oplus \pi_{\mathrm sc}[1] \]
where $\pi_{gen}[1]$ is the generic discrete series representation introduced in Proposition \ref{P:PSP}(ii) and $\pi_{sc}[1]$ is the depth 0 supercuspidal representation introduced in \S \ref{SS:partial-LLC} (2).
\vskip 5pt 
\item $\tau$ is a tempered representation induced from a supercuspidal representation  $\sigma\cong \sigma^{\vee}$  of 
$\GL_2$ with a non-trivial central character. Then 
\[
  \theta(\tau^+) \oplus \theta(\tau^-)  = {\rm Ind}_{P}^{G_2}(\sigma) =  {\rm Ind}_{P}^{G_2}(\sigma)_{\rm gen} \oplus {\rm Ind}_{P}^{G_2}(\sigma)_{\rm deg} 
  \]
\vskip 5pt 
\item $\tau$ is a tempered principal series induced from a triple of  non-trivial quadratic characters $(\chi_1, \chi_2,  \chi_3)$ such that 
$\chi_1 \cdot \chi_2 \cdot \chi_3=1$ then 
\[
  \theta(\tau^+) \oplus \theta(\tau^-)  = {\rm Ind}_B^{G_2}(\chi)= {\rm Ind}_B^{G_2}(\chi)_{\rm gen} \oplus  {\rm Ind}_B^{G_2}(\chi)_{\rm deg} 
 \]
 where $\chi$ is the quadratic character of $T$ determined by $(\chi_1,\chi_2,\chi_3)$ as in \S \ref{SS:PSB}.   
\vskip 5pt 
\item  $\tau$ is a self-dual supercuspidal representation (so $p=2$). Then  $\Theta(\tau^{\epsilon})$ is supercuspidal and 
\[
  \Theta(\tau^+) \oplus \Theta(\tau^-)  = \pi_{\rm gen} \oplus \pi_{\rm deg} 
  \]
where $\pi_{\rm gen}$ is a generic irreducible supercuspidal representation, while $\pi_{\rm deg}$ is a non-generic supercuspidal representation of unknown length. 
 \end{itemize} 
\end{thm}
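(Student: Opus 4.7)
The plan is to bootstrap from the $\PGL_3 \times G_2$ correspondence of \cite{GS04} by tracking the outer $\Z/2\Z$ action through Whittaker models. The key structural observation is that the minimal representation $\Pi$ of $E_6 \rtimes \Z/2\Z$, restricted to $\PGL_3 \times G_2$, recovers the minimal representation used in \cite{GS04}. Consequently, writing $\Theta_H$ and $\Theta_{\PGL_3}$ for the big theta lifts from $H = \PGL_3 \rtimes \Z/2\Z$ and $\PGL_3$ respectively, one obtains for $\tau \in {\rm Irr}(\PGL_3)$ an isomorphism of $G_2$-modules
\[ \Theta_H(\tau^+) \oplus \Theta_H(\tau^-) \cong \Theta_{\PGL_3}(\tau) \]
when $\tau \cong \tau^{\vee}$, and $\Theta_H(\tau^+)|_{G_2} \cong \Theta_{\PGL_3}(\tau) \cong \Theta_{\PGL_3}(\tau^{\vee})$ when $\tau \ncong \tau^{\vee}$ (the latter isomorphism coming from the outer automorphism of $\PGL_3$ that swaps $\tau$ and $\tau^{\vee}$ while acting trivially on $G_2$ and fixing $\Pi$). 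Combined with Lemma \ref{L:twisted_W}, which identifies the $\psi \otimes 1$ Whittaker isotypic piece as the generic component of the cosocle, this transports the finite-length, irreducibility, temperedness and L-parameter information of \cite{GS04} into the present setting.

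With this decomposition in hand, part (i) and the finiteness assertion follow directly from \cite{GS04}: $\theta_H(\tau^+)$ must match the unique irreducible quotient of $\Theta_{\PGL_3}(\tau)$ as a $G_2$-module, and the preservation of the generic, supercuspidal, discrete-series and tempered properties, as well as the computation of the L-parameter via the inclusion $\SL_3(\mathbb C) \subset G_2(\mathbb C)$, is already established there. For part (ii), when $\phi_\tau$ contains the trivial representation, \cite{GS04} shows that $\Theta_{\PGL_3}(\tau)$ is irreducible, non-discrete-series and generic; since $\tau^-$ carries no $\psi \otimes 1$ Whittaker functional, Lemma \ref{L:twisted_W} forces $\theta_H(\tau^-) = 0$, whence the entire contribution lies in $\theta_H(\tau^+)$. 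For the first three subcases of (iii), \cite{GS04} produces an explicit splitting $\Theta_{\PGL_3}(\tau) = A \oplus B$ with $A$ generic and $B$ non-generic; Lemma \ref{L:twisted_W} then forces $\theta_H(\tau^+) = A$ and $\theta_H(\tau^-) = B$, where the reducibility of the induced representations on the $G_2$ side is supplied by Propositions \ref{P:PSP} and the results of \S \ref{SS:PSB}.

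The main obstacle is the final subcase of (iii), where $\tau$ is a self-dual supercuspidal (forcing $p=2$). Here \cite{GS04} shows that $\Theta_{\PGL_3}(\tau)$ is supercuspidal and decomposes as $\pi_{\rm gen} \oplus \pi_{\rm deg}$, but the length of $\pi_{\rm deg}$ is not controlled. The theorem makes the weaker assertion that $\Theta_H(\tau^+) = \pi_{\rm gen}$ and $\Theta_H(\tau^-) = \pi_{\rm deg}$: the first equality is forced by the uniqueness of the generic summand via Lemma \ref{L:twisted_W} together with Proposition \ref{P:one-to-one II}, while the second is simply the complementary summand in the above decomposition. No claim of irreducibility is made for $\pi_{\rm deg}$ in this case, consistent with the limits of the available analysis on the $\PGL_3$ side, and we do not attempt to improve on this in the present proof.
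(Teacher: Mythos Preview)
Your overall strategy is sound and matches the paper's intent: the theorem is presented there without proof, as ``a simple combination of results \cite{GS04} and the previous discussion,'' and the combination is precisely the decomposition $\Theta_H(\tau^+)\oplus\Theta_H(\tau^-)\cong\Theta_{\PGL_3}(\tau)$ (resp.\ $\Theta_H(\tau^+)\cong\Theta_{\PGL_3}(\tau)$ for $\tau\not\cong\tau^\vee$) together with the results of \cite{GS04} on $\Theta_{\PGL_3}(\tau)$. Parts (i), the finite-length statement, and the direct-sum identities in (iii) all follow exactly as you outline.

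However, you overclaim in two places. First, in part (iii) you assert that Lemma~\ref{L:twisted_W} \emph{forces} $\theta_H(\tau^+)=A$ (generic) and $\theta_H(\tau^-)=B$ (nongeneric). The theorem does \emph{not} claim this: it only asserts the equality $\theta(\tau^+)\oplus\theta(\tau^-)=A\oplus B$ as a direct sum, without specifying which summand sits where. Lemma~\ref{L:twisted_W} tells you $A$ must appear in $\theta_H(\tau^+)$, but by itself does not exclude the possibility that $B$ also lies in $\theta_H(\tau^+)$ (so $\theta_H(\tau^-)=0$). The specific assignment is exactly what the \emph{next} theorem (Theorem~\ref{T:PGL3}) establishes, and there the paper uses Proposition~\ref{P:one-to-one I} (not \ref{P:one-to-one II}, which goes in the wrong direction) to rule out the nongeneric piece from $\theta_H(\tau^+)$. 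So drop the assignment claim here; it is neither needed nor yet provable.

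Second, in part (ii) your Whittaker argument (``$\tau^-$ carries no $\psi\otimes 1$ Whittaker functional'') only applies to \emph{generic} self-dual $\tau$ with trivial summand. For the nongeneric self-dual $\tau$ (the Langlands quotients $J_B(\mu)$ with $\mu$ as in the paper's discussion), neither $\tau^+$ nor $\tau^-$ has a Whittaker functional, so Lemma~\ref{L:twisted_W} gives no information. In that case the conclusion $\theta(\tau^-)=0$ holds \emph{by definition} of $\tau^\pm$: the paper sets $\theta(\tau^+):=\theta(\tau)$ and $\theta(\tau^-):=0$ precisely for these representations. You should separate the two cases.
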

\vskip 5pt

Observe that  the only case for which we do not know that $\Theta(\tau^{\epsilon})$ has finite length (and hence $\theta(\tau^{\epsilon})$ is defined) is when $\tau$ is a self-dual supercuspidal representation (so $p=2$). In this case,  however, the last bullet point states that $\Theta(\tau^{\epsilon})$ is supercuspidal and hence semisimple. Hence, even in this exceptional case, we may set $\theta(\tau^{\epsilon}) = \Theta(\tau^{\epsilon})$. Moreover, observe that if $\tau^{\epsilon}$ is nontempered, then $\theta(\tau^{\epsilon})$ is irreducible nontempered and is completely determined by Theorem \ref{T:fromGS}.  On the other hand, when $\tau^{\epsilon}$ is tempered, then so is every irreducible summand of 
$\theta(\tau^{\epsilon})$. In particular, the results highlighted in Lemma \ref{L:basic} and \ref{L:basic2} hold in this case.
\vskip 5pt

In the rest of the section, we shall complete the results above by completing the unresolved parts of the theorem.

\vskip 5pt

\subsection{\bf A miracle of Oberwolfach}  \label{SS:ober}
 Let $\tau \in {\rm Irr}(\PGL_3)$ be a self-dual supercuspidal  representation. 
The goal here is to show that $\Theta(\tau^-) \neq 0$.  Let $Q=LU$ be the 3-step maximal parabolic subgroup of $G_2$. 
Recall that the group $U$ has the  3-step filtration 
\[ 
U \supset [U,U] \supset Z_U 
\] 
where $Z_U $ is the 2-dimensional center of $U$ and $U/Z_U$ is a 3-dimensional Heisenberg group with the center $[U,U]/Z_U$. 
 Let $\psi$  be a non-trivial character of $[U,U]$, trivial on $Z_U$.  Then  $\Pi_{[U,U], \psi}$ is naturally a $(\PGL_3\rtimes \Z/2\Z) \times \SL_2$-module, 
 where $\SL_2= [L,L]$. In order to describe $\Pi_{[U,U], \psi}$,  we need some additional notation. 
 \vskip 5pt
 
 Consider the action of the group $\GL_2\rtimes \Z/2\Z$  on $M_2(F)$, with elements in $\GL_2(F)$ acting by conjugation and the nontrivial element of $\Z/2\Z$ acting via:
 \[  X \mapsto  \left( \begin{array}{cc}
 0 & 1 \\
 1 & 0 \end{array} \right) \cdot X^t \cdot \left( \begin{array}{cc}
 0 & 1 \\
 1 & 0 \end{array} \right).  \]
  This action  preserves the determinant (quadratic) form on $M_2(F)$ and descends 
to the quotient group 
\[  \PGL_2(F) \times \Z/2\Z  \cong  \{\pm 1\} \times \mathrm {SO}_3= \mathrm O_3. \]   
On the space $C_c(M_2(F))$, we have a Weil representation of $\mathrm O_3 \times {\SL}_2$, which we may regard as a representation of $\GL_2(F) \rtimes \Z/2\Z$. Then 
the following lemma follows by a standard computation:
\vskip 5pt

\begin{lemma}
We have an isomorphism of $(\PGL_3 \rtimes \Z/2\Z) \times \SL_2$-modules:
\[ 
\Pi_{[U,U], \psi}\cong \mathrm{ind}_{\GL_2\rtimes \Z/2\Z}^{\PGL_3\rtimes \Z/2\Z} (C_c(M_2(F))
\] 
where $\GL_2$ is embedded in $\PGL_3$ via
\[  \left( \begin{array}{cc}
 a & b \\
 c & d \end{array} \right)  \mapsto \left( \begin{array}{ccc}
a & & b \\
&1 &  \\
c & & d \end{array} \right). \]
\end{lemma}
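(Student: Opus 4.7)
The plan is to leverage existing Jacquet module technology for the minimal representation of $E_6$, combined with a careful tracking of the outer involution. First, I would describe $\Pi_{[U,U],\psi}$ as a $\PGL_3 \times \SL_2$-module by applying the standard Fourier--Jacobi-type analysis available in \cite{MS, GrS, GS04}. The character $\psi$ of $[U,U]/Z_U$ has, as stabilizer in $\PGL_3 \times L$, the subgroup $\GL_2 \times [L,L] = \GL_2 \times \SL_2$, with $\GL_2 \hookrightarrow \PGL_3$ embedded as in the statement of the lemma. The action of this stabilizer on $\Pi_{[U,U],\psi}$ is identified, via the Stone--von Neumann / Weil representation machinery for the Heisenberg subquotient residing inside the minimal representation, with the Weil representation on $C_c(M_2(F))$ for the dual pair $\SO_3 \times \SL_2$, where $\SO_3 \cong \PGL_2$. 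Mackey theory (or equivalently, identifying the orbits of $\PGL_3$ on the characters of $[U,U]/Z_U$) then yields
\[
\Pi_{[U,U],\psi} \cong \mathrm{ind}_{\GL_2}^{\PGL_3} C_c(M_2(F))
\]
as $\PGL_3 \times \SL_2$-modules.

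Second, I would enlarge the action to include the outer $\Z/2\Z$. The pinned involution of $E_6 \rtimes \Z/2\Z$ acts on the unipotent subgroup $[U,U]/Z_U$ and permutes its characters; since it fixes the spherical vector of $\Pi$, a standard argument shows it stabilizes (up to conjugation by the stabilizer of $\psi$) the chosen character $\psi$, with resulting stabilizer $\GL_2 \rtimes \Z/2\Z \subset \PGL_3 \rtimes \Z/2\Z$. The resulting action on $C_c(M_2(F))$ extends the $\PGL_2 = \SO_3$ action to the full orthogonal group $\mathrm{O}_3 = \PGL_2 \times \Z/2\Z$, with the nontrivial element of $\Z/2\Z$ acting via the prescribed transpose-conjugation on $M_2(F)$. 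Applying Mackey theory with this enlarged stabilizer then produces the stated isomorphism of $(\PGL_3 \rtimes \Z/2\Z) \times \SL_2$-modules.

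The main obstacle is precisely this compatibility check: one must verify that the pinned involution of $E_6$, descended to the subquotient of unipotent radicals relevant to $\psi$, acts on the Levi $\GL_2 \subset \PGL_3$ as the pinned outer automorphism $A \mapsto w_0 \, {}^{t}\!A^{-1} w_0^{-1}$, and that this intertwines the Weil representation in the manner of the $\mathrm{O}_3 \times \SL_2$ dual pair action on $C_c(M_2(F))$. This amounts to an explicit computation with root vectors inside $E_6 \rtimes \Z/2\Z$, tracing how the pinned involution of $E_6$ interacts with the Heisenberg structure on $[U,U]/Z_U$ and the resulting small dual pair decomposition; the non-equivariant version of the lemma is a routine adaptation of \cite{MS, GS04}, so the new content is entirely in this verification.
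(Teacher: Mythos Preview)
Your proposal is correct in spirit and matches the paper's treatment, which literally says only that ``the following lemma follows by a standard computation'' and gives no further details. Your two-step plan --- first establishing the $\PGL_3 \times \SL_2$-module structure by the Fourier--Jacobi / Stone--von Neumann analysis familiar from \cite{MS, GrS, GS04}, and then extending to the outer $\Z/2\Z$ --- is exactly the standard computation intended, and your identification of the $\Z/2\Z$-compatibility check as the only genuinely new content is accurate.

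One minor imprecision worth flagging: you describe $\GL_2 \times [L,L]$ as ``the stabilizer of $\psi$ in $\PGL_3 \times L$,'' but $\PGL_3$ centralizes $[U,U]$ (being the other member of the dual pair in $E_6$), so all of $\PGL_3$ stabilizes $\psi$. The $\GL_2$ arises instead from the orbit analysis one layer up: the character $\psi$ of $[U,U]/Z_U$ indexes a family of characters of the relevant unipotent subquotient inside $E_6$, and $\GL_2 \subset \PGL_3$ is the stabilizer of a representative in the resulting $\PGL_3$-orbit. This is presumably what you meant, and it is indeed routine from the references you cite; just be careful with the phrasing when you write it out.
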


\vskip 5pt
Using the lemma, we can now prove:
\vskip 5pt

\begin{prop} 
Let $\tau \in {\rm Irr}(\PGL_3)$ be a self-dual supercuspidal  representation. 
Then  $\Theta(\tau^-) \neq 0$.
\end{prop}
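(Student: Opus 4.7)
The plan is to argue by contradiction, using the lemma in combination with Theorem \ref{T:fromGS}. Since that theorem gives the identity $\Theta(\tau^+) \oplus \Theta(\tau^-) = \pi_{\rm gen} \oplus \pi_{\rm deg}$, the vanishing of $\Theta(\tau^-)$ would force $\pi_{\rm deg}$ to be a summand of $\Theta(\tau^+)$. The idea is then to confront a nonzero twisted Jacquet module of $\pi_{\rm deg}$ with the fact that the center $Z(\GL_2)\subset \PGL_3$ acts trivially on the Weil representation $C_c(M_2(F))$ but cannot have nonzero invariants on any supercuspidal representation of $\PGL_3$.

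The first step will be to establish that $(\pi_{\rm deg})_{[U,U],\psi}\neq 0$ as an $\SL_2$-module. Because $\pi_{\rm deg}$ is non-generic and infinite-dimensional, the lemma at the end of \S \ref{S:repG2} provides some \'etale cubic $E$ with $(\pi_{\rm deg})_{N,\psi_E}\neq 0$, so the wave-front set of $\pi_{\rm deg}$ contains the subregular nilpotent orbit $G_2(a_1)$, and a fortiori the short minimal orbit $A_1^s$ in its closure. A direct check using the $\mathfrak{sl}_2$-triple through the short root vector $X_{2\alpha+\beta}$ shows that $U$ is exactly the exponential of the weight $\geq 1$ subspace and $[U,U]$ that of the weight $\geq 2$ subspace, so the character $\psi$ corresponds to a functional of type $A_1^s$; then the M\oe glin--Waldspurger--Varma framework (cited in \S \ref{S:repG2}) yields $(\pi_{\rm deg})_{[U,U],\psi}\neq 0$.

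Granting this, the contradiction proceeds as follows. The surjection $\Pi \twoheadrightarrow \tau^+\boxtimes(\pi_{\rm gen}\oplus \pi_{\rm deg})$ yields, after taking $[U,U],\psi$-coinvariants on the $G_2$-factor, a nonzero quotient $\tau^+\boxtimes(\pi_{\rm deg})_{[U,U],\psi}$ of $\Pi_{[U,U],\psi}$, so
\[
\Hom_{\PGL_3\rtimes \Z/2\Z}(\Pi_{[U,U],\psi},\tau^+)\neq 0,
\]
and the isomorphism of the lemma combined with Frobenius reciprocity translates this to $\Hom_{\GL_2\rtimes \Z/2\Z}(C_c(M_2(F)),\tau^+)\neq 0$. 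But the $\GL_2$-action on $M_2(F)$ is by conjugation, so it factors through $\PGL_2$; in particular $Z(\GL_2)$ acts trivially on $C_c(M_2(F))$, and the image of any such equivariant map lands in $(\tau^+)^{Z(\GL_2)}$. The image of $Z(\GL_2)$ in the adjoint group $\PGL_3$ is a noncompact one-dimensional torus, and since the matrix coefficients of the supercuspidal representation $\tau$ are compactly supported on $\PGL_3$ one has $\tau^{Z(\GL_2)}=0$. This contradicts the nonvanishing above, proving $\Theta(\tau^-)\neq 0$.

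I expect the main obstacle to be the nonvanishing $(\pi_{\rm deg})_{[U,U],\psi}\neq 0$ in the first step: the lemma of \S \ref{S:repG2} only supplies a Fourier coefficient attached to some cubic algebra, and passing from this to the specific twisted Jacquet module associated with the short minimal orbit requires the general comparison between generalized Whittaker models and wave-front sets. Once that input is secured, the remaining ingredients---the isomorphism of the lemma, Frobenius reciprocity, the factoring of the conjugation action through $\PGL_2$, and the vanishing of matrix coefficients at infinity on the adjoint group---combine in a routine way.
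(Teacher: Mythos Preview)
There is a genuine gap in your Frobenius reciprocity step, and it is fatal rather than cosmetic.

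You pass from $\Hom_{\PGL_3\rtimes \Z/2\Z}\bigl(\mathrm{ind}_{\GL_2\rtimes\Z/2\Z}^{\PGL_3\rtimes\Z/2\Z} C_c(M_2(F)),\,\tau^+\bigr)\neq 0$ to $\Hom_{\GL_2\rtimes\Z/2\Z}\bigl(C_c(M_2(F)),\,\tau^+\bigr)\neq 0$. This would require compact induction $\mathrm{ind}_H^G$ to be left adjoint to restriction, which holds only when $H$ is \emph{open} in $G$; here $\GL_2\rtimes\Z/2\Z$ is a proper closed subgroup of $\PGL_3\rtimes\Z/2\Z$ of positive codimension, and no such adjunction is available. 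In fact your argument, if valid, would prove too much: the paper's own computation shows unconditionally that $\tau^{\pm}$ is a quotient of $\mathrm{ind}_{\GL_2\rtimes\Z/2\Z}^{\PGL_3\rtimes\Z/2\Z}(\mathrm{st}^{\pm})$, hence of $\mathrm{ind}_{\GL_2\rtimes\Z/2\Z}^{\PGL_3\rtimes\Z/2\Z} C_c(M_2(F))$, so the first Hom space is nonzero without any hypothesis on $\Theta(\tau^-)$. Combined with your (correct) vanishing $\Hom_{\GL_2\rtimes\Z/2\Z}(C_c(M_2(F)),\tau^+)=0$ via $Z(\GL_2)$-invariants, this would yield a contradiction in general.

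The correct duality for compact induction (via $(\mathrm{ind}_H^G\sigma)^\vee\cong \Ind_H^G\sigma^\vee$ for unimodular $H,G$ and admissible $\tau$) gives instead an injection into $\Hom_{\GL_2\rtimes\Z/2\Z}\bigl((\tau^+)^\vee,\,C_c(M_2(F))^\vee\bigr)$. For this to vanish you would need the $Z(\GL_2)$-\emph{coinvariants} of $\tau^\vee$ to be zero, and that is false: integrating a smooth matrix coefficient of the supercuspidal $\tau^\vee$ over the noncompact torus $Z(\GL_2)\subset\PGL_3$ converges (by compact support of matrix coefficients) and produces nonzero $Z(\GL_2)$-invariant functionals. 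So the contradiction evaporates. The paper's proof avoids this by working in the other direction: it exhibits $\tau^-$ directly as a quotient of the $\SL_2$-coinvariants of $\Pi_{[U,U],\psi}$, using that $\mathrm{st}$ is a quotient of $\tau|_{\GL_2}$ and then applying Frobenius reciprocity in its valid form $\Hom_H(\tau^-|_H,\mathrm{st}^{\pm})\cong\Hom_G(\tau^-,\Ind_H^G\mathrm{st}^{\pm})$.
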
  
\begin{proof}  It suffices to show that $\tau^-$ is a quotient of $\Pi_{[U,U], \psi}$, in fact we shall show that $\tau^-$  is a 
quotient of $\SL_2$-coinvaraints of $\Pi_{[U,U], \psi}$. 
Decompose  $M_2(F) = M^{\circ}(F) \oplus F$, where $M_2(F)^{\circ}$ is the subspace of trace 0 elements and $F$ is the center. 
Accordingly, the Weil representation of $\mathrm{O}_3  \times \widetilde{\SL}_2$ on $C_c(M_2(F))$ decomposes as a tensor product 
\[ 
C_c(M_2(F))=C_c(M^{\circ}_2(F)) \otimes C_c(F)
\] 
where $\mathrm O_3$ acts trivially on $C_c(F)$ and $\tilde{\SL}_2$ acts by the Weil representations $\rho_{\psi}$.
Recall that   as an $\widetilde{\SL}_2$-module, $\rho_{\psi}$ decomposes as a sum 
\[
\rho_{\psi} = \rho_{\psi}^+ \oplus \rho_{\psi}^- 
\] 
of even and odd Weil representations.  Let $\Theta(\rho_{\bar\psi}^+)$ and $\Theta(\rho_{\bar\psi}^-)$ be the theta lifts of their contragredients to $\mathrm O_3$, via  the Weil representation on $C_c(M^{\circ}_2(F))$ with respect to $\psi$. 
Thus the  $\SL_2$-coinvariant  of $\Pi_{[U,U], \psi}$  is given by 
\[ 
\mathrm{ind}_{\GL_2\rtimes \Z/2\Z}^{\PGL_3\rtimes \Z/2\Z} (\Theta(\rho_{\bar\psi}^+)) 
\oplus 
\mathrm{ind}_{\GL_2\rtimes \Z/2\Z}^{\PGL_3\rtimes \Z/2\Z} (\Theta(\rho_{\bar\psi}^-)). 
\] 
Let $\mathrm{st}$ be the Steinberg representation of $\SO(3)\cong \PGL_2$. We extend $\mathrm{st}$ to two representations $\mathrm{st}^+$ and 
$\mathrm{st}^-$ of $\mathrm O(3)$ by letting $-1\in \mathrm O(3)$ act by $1$ and $-1$ respectively. Then $\Theta(\rho_{\psi}^-)\cong \mathrm{st}^-$ 
while $\Theta(\rho_{\psi}^+)$ is the principal series representation with the trivial representation as a quotient and $\mathrm{st}^+$ as  a submodule. 
Since $\tau^-$ is a supercuspidal representation, it suffices to show that it is a quotient of 
\[ 
\mathrm{ind}_{\GL_2\rtimes \Z/2\Z}^{\PGL_3\rtimes \Z/2\Z} (\mathrm{st}^+) 
\oplus 
\mathrm{ind}_{\GL_2\rtimes \Z/2\Z}^{\PGL_3\rtimes \Z/2\Z} (\mathrm{st}^-). 
\] 
It is known that any generic representation of $\GL_2$, in particular $\mathrm{st}$, is a quotient of $\tau$. Hence either $\mathrm{st}^+$ or 
$\mathrm{st}^-$ is a quotient of $\tau^{-}$. Now the proposition follows from Frobenius reciprocity.

\end{proof}

\vskip 5pt 

\subsection{\bf Main result.}
 
We shall now strengthen Theorem \ref{T:fromGS}. 

\vskip 5pt

\begin{thm} \label{T:PGL3}
For any $\tau \in {\rm Irr}(\PGL_3)$, let $\phi_{\tau}: WD_F \longrightarrow \SL_3(\C)$ denote the L-parameter of $\tau$.    
\vskip 5pt

(i) The representation $\Theta(\tau^{\epsilon})$ is zero if and only if $\phi_{\tau}$ contains the trivial representation (so $\tau \cong \tau^{\vee}$) and $\epsilon = -$.

\vskip 5pt
(ii) For any $\epsilon = \pm$,  $\Theta(\tau^{\epsilon})$ has finite length with unique irreducible quotient $\theta(\tau^{\epsilon})$ (if it is nonzero).
\vskip 5pt

\noindent (iii)  $\theta(\tau^{\epsilon})$  is generic if and only if $\tau$ is generic and $\epsilon  = +$.  \vskip 5pt

\vskip 5pt

\noindent (iv)  Suppose that $\theta(\tau^{\epsilon}) \ne 0$. If $\tau$ is a discrete series (resp. tempered) representation, so is $\theta(\tau^{\epsilon})$. 
Moreover, $\theta(\tau^{\epsilon})$  is supercuspidal if and only if $\tau$ is supercuspidal  or $\tau^{\epsilon} = {\rm St}^-$. 
\vskip 5pt

\noindent (v) If  $\theta(\tau_1^{\epsilon_1}) \cong \theta(\tau_2^{\epsilon_2}) \ne 0$, then $\tau_1^{\epsilon_1} = \tau_2^{\epsilon_2}$. 
\vskip 5pt

In particular, the Howe duality theorem holds for the dual pair $(\PGL_3 \rtimes \Z/2\Z) \times G_2$:  
\[  \dim \Hom_{G_2}(\theta(\tau_1^{\epsilon_1}), \theta(\tau_2^{\epsilon_2})) \leq \dim \Hom_{\PGL_3 \rtimes \Z/2\Z} (\tau_1^{\epsilon_1}, \tau_2^{\epsilon_2}) \]
for any $\tau_1^{\epsilon_1}, \tau_2^{\epsilon_2} \in {\rm Irr}(\PGL_3 \rtimes \Z/2\Z)$. Moreover, for $\pi \in {\rm Irr}(G_2)$, $\Theta(\pi)$ is a finite length representation of $\PGL_3$ with a unique irreducible quotient (if nonzero).
\end{thm}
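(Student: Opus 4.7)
The plan is to upgrade Theorem \ref{T:fromGS} to the sharper statements of Theorem \ref{T:PGL3} by combining it with three further inputs: the Whittaker identity of Lemma \ref{L:twisted_W}(i); the miracle of \S\ref{SS:ober}, which furnishes the non-vanishing of $\Theta(\tau^-)$ in the self-dual supercuspidal case; and the uniqueness-of-tempered-quotient statements of Propositions \ref{P:one-to-one I} and \ref{P:one-to-one II}. For $\tau$ non-tempered, or supercuspidal with $\tau \ncong \tau^{\vee}$, Theorem \ref{T:fromGS}(i)--(ii) already produces an irreducible nonzero $\theta(\tau^+)$ whose $L$-parameter is the composite of $\phi_{\tau}$ with $\SL_3(\mathbb C) \hookrightarrow G_2(\mathbb C)$, together with the vanishing of $\Theta(\tau^-)$ precisely when $\phi_{\tau}$ contains the trivial representation; parts (i)--(iv) can simply be read off in these cases.

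For tempered self-dual $\tau$ whose parameter does not contain the trivial representation, Theorem \ref{T:fromGS}(iii) already expresses $\theta(\tau^+) \oplus \theta(\tau^-)$ as a direct sum of a generic and a non-generic constituent. Lemma \ref{L:twisted_W}(i) gives the identification
\[ \Hom_V(\Theta(\tau^{\epsilon}), \psi_V) \;\cong\; \Hom_{U \rtimes \Z/2\Z}(\tau^{\epsilon}, \psi \otimes 1), \]
so the very convention defining the ``$+$'' extension forces $\theta(\tau^+)$ to be the generic summand in the non-supercuspidal subcases (bullets 1--3 of \ref{T:fromGS}(iii)). The \emph{main obstacle} is the self-dual supercuspidal case (bullet 4, which requires $p=2$), where Theorem \ref{T:fromGS} only asserts $\Theta(\tau^+) \oplus \Theta(\tau^-) = \pi_{\rm gen} \oplus \pi_{\rm deg}$ with $\pi_{\rm deg}$ of \emph{a priori} unknown length. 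I would resolve it by combining the miracle of \S\ref{SS:ober} (which gives $\Theta(\tau^-) \neq 0$) with the Whittaker dichotomy above (forcing $\pi_{\rm gen} \subseteq \Theta(\tau^+)$ and $\Theta(\tau^-) \subseteq \pi_{\rm deg}$), and then invoking the uniqueness results: any irreducible constituent $\sigma$ of $\pi_{\rm deg}$ is tempered non-generic, so Proposition \ref{P:one-to-one II} forces $\Theta(\sigma)$ to be $\tau^{\epsilon}$-isotypic for a single $\epsilon$ (which by Whittaker must be $\epsilon = -$), and Proposition \ref{P:one-to-one I} applied to $\tau^-$ then pins $\Theta(\tau^-) = \pi_{\rm deg}$ to be irreducible with multiplicity one, whence $\Theta(\tau^+) = \pi_{\rm gen}$ by cancellation.

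Injectivity (v) then follows by case analysis: in the non-supercuspidal and non-selfdual supercuspidal ranges the composite $G_2$-parameter recovers $\phi_{\tau}$ and hence determines $\tau^+$, while for the two extensions $\tau^+$, $\tau^-$ of the same self-dual $\tau$, their lifts are separated by whether they are generic, via Lemma \ref{L:twisted_W}(i). The Howe duality inequality is then formal from (ii) and (v), since the right hand side is at most one. Finally, the statement that $\Theta(\pi)$ has finite length with a unique irreducible quotient for $\pi \in {\rm Irr}(G_2)$ follows by combining Lemma \ref{L:basic}(ii) (finite length of the non-cuspidal part) with the irreducibility of the cuspidal part, which is now a consequence of the Howe duality inequality just established.
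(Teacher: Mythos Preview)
Your treatment of (i)--(iv) is close to the paper's, though your route through the self-dual supercuspidal case is unnecessarily indirect. You invoke Proposition~\ref{P:one-to-one II} on $\Theta(\sigma)$ and then claim ``by Whittaker must be $\epsilon = -$''; this last step needs the \emph{reverse} direction of Lemma~\ref{L:twisted_W}(i) (that a non-generic $\sigma \in {\rm Irr}(G_2)$ produces a $(\psi\otimes 1)$-non-generic $\Theta(\sigma)$ on $H$), which is not stated in the paper. The paper's argument is cleaner: apply Proposition~\ref{P:one-to-one I} directly to $\Theta(\tau^+)$. If a non-generic $\sigma$ were a summand of $\Theta(\tau^+)$, it would be its \emph{unique} tempered quotient, contradicting the presence of $\pi_{\rm gen}$. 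Hence $\Theta(\tau^+)=\pi_{\rm gen}$, and $\Theta(\tau^-)=\pi_{\rm deg}$ by complement; irreducibility of $\pi_{\rm deg}$ is then another application of Proposition~\ref{P:one-to-one I} to $\tau^-$.

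The genuine gap is in (v). Your argument recovers $\phi_\tau$ from the $G_2$-parameter of $\theta(\tau^\epsilon)$, but this only makes sense when $\theta(\tau^\epsilon)$ is \emph{non}-supercuspidal: Theorem~\ref{T:fromGS}(i) computes the $L$-parameter of $\theta(\tau^+)$ only when $\tau$ is not supercuspidal, and the local Langlands correspondence for supercuspidal representations of $G_2$ is precisely what this paper is building towards --- it is not available as an input. So for supercuspidal $\tau$ (self-dual or not), and for $\tau^\epsilon={\rm St}^-$, you have no $L$-parameter to appeal to. The paper handles exactly these cases by a detour through the $G_2\times\PGSp_6$ correspondence: using the Jacquet module of the $E_7$ minimal representation along the Siegel parabolic $P_3$, one sees that $\pi=\theta(\tau_i^{\epsilon_i})$ lifts to a summand of $I_3(\tau_i)$ on $\PGSp_6$. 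For generic $\pi$ one then invokes the uniqueness of the generic lift to $\PGSp_6$ from \cite{SWe}; for non-generic $\pi$ one invokes Proposition~\ref{P:one-to-one II} (now with $J=H_3(M_2(F))$). Either way one concludes that the relevant summands of $I_3(\tau_1)$ and $I_3(\tau_2)$ coincide, and then a Jacquet module comparison along $P_3$ forces $\tau_1\cong\tau_2$ or $\tau_2^\vee$. This passage through $\PGSp_6$ is the missing idea in your proof of (v).
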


\vskip 5pt

\begin{proof} 

\noindent (i) From Theorem \ref{T:fromGS}, it remains to show that $\Theta(\tau^{\epsilon})$ is nonzero for those representations $\tau$ as in Theorem \ref{T:fromGS}(iii) and any $\epsilon = \pm$. Consider first the Steinberg representation.
 Recall that $\pi_{\rm gen}[1]$ is generic while $\pi_{\rm sc}[1]$ is not. 
It follows, from Lemma \ref{L:twisted_W} part (ii),  that  $\pi_{\rm gen}[1]$ is a summand of $\theta(\rm St^+)$.  Furthermore, by Proposition \ref{P:one-to-one I}, 
$\pi_{\rm sc}[1]$ cannot be a summand of $\theta(\rm St^+)$. 
Hence 
\[ \theta(\rm St^+)= \pi_{\rm gen}[1] \text{ and } \theta(\rm St^-)= \pi_{\rm sc}[1].
\] 
 The same argument works in the other three cases  to show that $\theta(\tau^+)$ is the generic $G_2$ summand and $\theta(\tau^-)$ is the degenerate summand. Moreover, in 
 the last case of Theorem \ref{T:fromGS}, where $\tau$ is a self-dual supercuspidal representation (so $p=2$), we deduce by Proposition \ref{P:one-to-one I} again that  $\pi_{\rm deg}$ is nonzero irreducible. 

\vskip 5pt

\noindent (ii) This follows from Theorem \ref{T:fromGS} and the irreducibility of $\pi_{\rm deg}$ in the proof of (i) above.
\vskip 5pt

\noindent (iii) and (iv): These summarize what we already know from Theorem \ref{T:fromGS}.
\vskip 5pt

\noindent (v) Suppose that 
\[ \pi:= \theta(\tau_1^{\epsilon_1}) \cong \theta(\tau_2^{\epsilon_2}) \ne 0. \]
If $\pi$ is non-supercuspidal, then $\tau_1$ and $\tau_2$ are both non-supercuspidal. The desired equality $\tau_1^{\epsilon_1} \cong \tau_2^{\epsilon_2}$ follows from the results of \cite{GS04} and our new understanding in (i) (which determines $\theta(\tau^{\epsilon})$ for those $\tau$ in Theorem \ref{T:fromGS}(iii))
\vskip 5pt

Suppose  that $\pi$ is supercuspidal. Then $\tau_i^{\epsilon_i}$ is either supercuspidal or $St^-$, in which case both $\tau_1$ and $\tau_2$ are generic discrete series representations. By (iii), we deduce that $\epsilon_1 = \epsilon_2$. Hence, it remains to show that $\tau_1 = \tau_2$ or $\tau_2^{\vee}$.
We now consider the following two cases:
\vskip 5pt

\begin{itemize}
\item[(a)] Suppose $\tau_1 \ncong \tau_1^{\vee}$ and $\tau_2 \ncong \tau_2^{\vee}$. Then $\epsilon_1 = \epsilon_2 = +$ and $\pi$ is an irreducible generic supercuspidal representation.
\vskip 5pt

Consider, for $i =1$ or $2$,  the induced representation ${\rm Ind}_{P_3}^{\PGSp_6} (\tau_i)$, where $P_3$ is the Siegel parabolic subgroup. 
Its normalized Jacquet functor with respect to $P_3$ is $\tau_i \oplus \tau_i^{\vee}$. 
Since  $\tau_i \ne \tau_i^{\vee}$, it follows that ${\rm Ind}_{P_3}^{\PGSp_6} (\tau_i)$ is an irreducible generic tempered representation. 
\vskip 5pt

By the computation of the Jacquet module of the minimal representation $\Pi'$ of $G_2 \times \PGSp_6$ along $P_3$ given in \cite{MS}, we deduce that
 $\pi\otimes {\rm Ind}_{P_3}^{\PGSp_6} (\tau_i)$  is an irreducible quotient of $\Pi'$.  By \cite{SWe},  a generic representation cannot lift to two different generic representaitons of $\PGSp_6$. Hence, we must have
 \[  {\rm Ind}_{P_3}^{\PGSp_6} (\tau_1) \cong {\rm Ind}_{P_3}^{\PGSp_6} (\tau_2).\]
By consideration of the Jacquet modules with respect to $P_3$,  we see  that $\tau_2 \cong  \tau_1$ or $\tau_1^{\vee}$, as desired.
   
 \vskip 5pt

\item[(b)] Assume now that $\tau_1 = \tau_1^{\vee}$. In this case,  we know that 
\[  \theta(\tau_1^+) = \pi_{gen} \quad \text{and} \quad  \theta(\tau_1^-) = \pi_{deg}. \]
Moreover,  the tempered representation
  ${\rm Ind}_{P_3}^{\PGSp_6} (\tau_1)$ is the sum of two representations, one of which is generic and the other degenerate (see Proposition \ref{P:P3}(i)). 
 By the Jacquet module of $\Pi'$ again,  we see that both $\pi_{gen}$ and $\pi_{deg}$ lifts to irreducible summands of ${\rm Ind}_{P_3}^{\PGSp_6} (\tau_1)$.  
  Moreover, $\pi_{deg}$ cannot lift to a generic representation of $\PGSp_6$ and hence must lift to the nondegenerate summand \cite{SWe}.
  By Proposition \ref{P:one-to-one I},  it follows that $\pi_{gen}$ cannot lift to the degenerate summand and thus must lift to the generic summand. 
  \vskip 5pt
  
  Now suppose that $\epsilon_1= \epsilon_2 = +$, so that $\pi = \theta(\tau_1^+) = \theta(\tau_2^+)$ is generic. 
  Then as before, we see that $\pi$  lifts to the generic summand of ${\rm Ind}_{P_3}^{\PGSp_6} (\tau_i)$ (regardless of whether $\tau_2$ is self-dual or not). By Jacquet module consideration, we see that $\tau_1 \cong \tau_2$. 
  On the other hand, if $\epsilon_1 = \epsilon_2 = -$, so that $\pi = \theta(\tau_1^-) = \theta(\tau_2^-)$ is nongeneric, then Proposition \ref{P:one-to-one II} implies that the nongeneric summand of  ${\rm Ind}_{P_3}^{\PGSp_6} (\tau_1)$ is contained in ${\rm Ind}_{P_3}^{\PGSp_6} (\tau_2)$. Again, Jacquet module considerations shows that $\tau_1 \cong \tau_2$.
    \end{itemize} 
 \vskip 5pt
 
 The inequality at the end of the theorem is simply a restatement of (v). Finally, given $\pi \in {\rm Irr}(G_2)$, we write
 \[   \Theta(\pi) = \Theta(\pi)_c \oplus \Theta(\pi)_{nc} \]
 as a sum of its cuspidal and noncuspidal component. As we noted in Lemma \ref{L:basic}, the results of \cite{GS04} imply that $\Theta(\pi)_{nc}$ has finite length. The result in (v) shows that $\Theta(\pi)$ has a unique irreducible quotient if it is nonzero, implying in particular that $\Theta(\pi)_c$ is either $0$ or irreducible, and hence $\Theta(\pi)$ has finite length. 
 \end{proof} 

\vskip 10pt

\section{\bf The group $\PGSp_6$}
Before discussing the last dual pair $G_2 \times \PGSp_6$, we need to devote the next few sections to a discussion of the structure and representations of $\PGSp_6$, as well as certain particular periods on $G_2$ and $\PGSp_6$. 
\vskip 5pt

Let $e_1, \ldots, e_6$ be the standard basis of $F^6$, where we have a symplectic form defined by  
\[ 
\omega(e_1,e_6)=\omega(e_2, e_5)= \omega(e_3,e_4)=1 
\] 
and  all other $\omega(e_i, e_j) =0$ with $i < j$ .  Let $\GSp_6$ be the group of linear transformations $g$ of $F^6$, such that for 
some $\nu(g)\in F^{\times}$ 
\[ 
\omega(gv,gw)= \nu(g)\cdot  \omega (v,w) 
\] 
for all $v,w\in F^6$. Then $\nu: \GSp_6\rightarrow F^{\times}$ is the similitude character.  
\vskip 5pt

Let $\tilde P_1$, $\tilde P_2$ and $\tilde P_3$ be maximal 
parabolic subgroups of $\GSp_6$ defined as the stabilizers of subspaces 
\[ 
\langle e_1\rangle \subset \langle e_1, e_2 \rangle \subset \langle e_1, e_2, e_3 \rangle 
\] 
respectively. For $i=1,2,3$, let $P_i \subseteq \PGSp_6$ be the quotient of $\tilde P_i$ by the center of $\GSp_6$.   
The group $\PGSp_6$ acts faithfully on $J= \wedge^2 F^6 \otimes i^{-1}$, and we shall (partially) describe how the parabolic subgroups act on this module.  
\vskip 5pt

The group $\PGSp_6$ can be explicitly described in terms of its action on $J$ as follows. Let $x_{ij} = e_i \wedge e_j  \in J$ for $i\neq j$.  
On $J$,  we have a natural trilinear form $(x,y,z)$
\[ 
\wedge^2 F^6  \times \wedge^2 F^6 \times \wedge^2 F^6  \rightarrow \wedge^6 F^6 \cong F. 
\] 
The group of linear transformations of $J$ preserving this form is $\SL_6/\mu_2$ and $\PGSp_6 =\Sp_6/\mu_2$ is the subgroup fixing 
\[ 
e=x_{16} + x_{25} + x_{34}. 
\] 
The Levi factor $M_3$ of $P_3$, as an algebraic group, is isomorphic to $\GL_3/\mu_2$.  Observe that group acts faithfully on $\wedge^2 F^3$, and since the latter 
is a three dimensional vector space, this action gives an isomorphism $\GL_3/\mu_2\cong \GL_3$. Thus we have an identification  
\[ 
M_3 = \GL(\langle x_{12}, x_{13}, x_{23}  \rangle). 
\] 
Under this identification, the maximal torus is given by diagonal matrices $(t_1, t_2, t_3)$. The three simple co-roots of $\PGSp_6$ are, respectively, 
\[ 
\alpha_1^{\vee}(t)=(1,t,t^{-1}), \hskip 5pt  \alpha_2^{\vee}(t)=(t,t^{-1},1), \hskip 5pt \alpha_3^{\vee}(t)=(1,t,t). 
\] 
An unramified character $\chi$ of the maximal torus is given by a triple of complex numbers $(s_1, s_2, s_3)$  
\[ 
\chi(t_1,t_2,t_3)= |t_1|^{s_1} |t_2|^{s_2} |t_3|^{s_3}. 
\] 
The Weyl group action on the characters is somewhat different in this picture. The simple reflections corresponding to the first two roots 
$\alpha_1$ and $\alpha_2$ are 
the usual permutations of entries of $(s_1, s_2, s_3)$, however, the simple reflection corresponding to the third simple root $\alpha_3$ is given by 
\[ 
(s_1, s_2, s_3) \mapsto (s_1+s_2+s_3, -s_3, -s_2). 
\] 
Thus the root hyperplanes are $s_i-s_j=0$  and $s_i+s_j=0$ for short and long roots, respectively. This looks like a $D_3$ root system; however,  
the Weyl-invariant quadratic form in this case is 
\[ 
q(s_1, s_2, s_3) = s_1^2 + s_2^2 + s_3^3 -\frac{1}{4}(s_1 + s_2 +s_3)^2 
\] 
rather than the usual dot product, and with this form, we have a realization of the $C_3$ root system with simple roots 
\[ 
\alpha_1=(0,1,-1), \hskip 5pt \alpha_2=(1,-1,0), \hskip 5pt \alpha_3=(0,2,2). 
\] 
This somewhat unconventional description of the $C_3$ root system is a source of potential confusion, as one has the tendency to confound it with the more familiar description of the root system of $\Sp_6$, but what we have done here is definitely the natural way to set things up for $\PGSp_6$. 

\vskip 5pt
 
The character $\chi$ is in the positive chamber if for every positive root $\alpha$, $\chi(\alpha^{\vee} (t))=|t|^{s}$ for some $s\in \mathbb C$ such that  
$\Re(s)>0$ (the real part).  One checks that $\chi$ is positive if 
\[ 
\Re(s_1) > \Re(s_2) > |\Re(s_3)|. 
\] 

The modulus character of $M_3\cong \GL_3$ is 
\[ 
\delta_{P_3}(m) = |\det(m)|^2 .   
\] 
It follows that the Levi factor $M_{13}$ of $P_{13} = P_1 \cap P_3$ is 
\[ 
M_{13}=  \GL(\langle x_{12}, x_{13}\rangle) \times \GL(\langle x_{23}\rangle). 
 \] 
The group $P_{13}$ is the stabilizer of the amber space $V_2=\langle x_{12}, x_{13}\rangle$.  
\vskip 5pt

Consider now the group $P_2$ and its Levi factor $M_2$.  The standard Levi factor of $\tilde P_2$ is $\GL_2 \times \GL_2$ where the first 
$\GL_2$ acts on $\langle e_1, e_2\rangle $ in the standard way, fixes $\langle e_3, e_4\rangle $ and 
acts by  transpose-inverse on $\langle e_5, e_6\rangle $. The second $\GL_2$ acts on $\langle e_3, e_4\rangle $ in the standard way, by $\det$ on $\langle e_1, e_2\rangle $ and 
fixes  $\langle e_5, e_6\rangle $.  
The group $P_2$ is the stabilizer of the singular line $V_1= \langle x_{12} \rangle$, and the Levi factor $M_2$ acts faithfully on the 4-dimensional subspace 
\[ 
V_4= \langle x_{13}, x_{23}, x_{14}, x_{24}  \rangle
\] 
preserving the quadratic form $x\mapsto (x,x, x_{56})$.  If we identify $x=ax_{14} + b x_{13} + cx_{24} + d_{23} $  with the matrix 
\[ 
\left( \begin{array}{cc} 
a & b \\ 
c & d \end{array} 
\right) 
\] 
then $(x,x,x_{56})=2\det(x)$. Thus, with $V_4$ identified with the set of $2\times 2$ matrices, we have 
\[  M_2  \cong \GL_2 \times  \GL_2  / \GL_1^{\nabla} \quad \text{where $\GL_1^{\nabla} = \{ (t, t^{-1}): t \in \GL_1 \}$}, \]
 so that $(\alpha,\beta) \in M_2$ acts on $x\in V_4$ by $x\mapsto \alpha x\bar \beta$ where  
$\bar \beta$ is the transpose of $\beta$.  
The element  $(\alpha,\beta)$  acts on the line $\langle x_{12} \rangle$ by $ \det (\alpha\beta)$.  The modulus character is 
\[ 
\delta_{P_2}((\alpha,\beta)) = |\det(\alpha\beta)|^5. 
\] 
This sets up the necessary notation to discuss the representations of $\PGSp_6$.
 \vskip 15pt

\section{\bf Representations of $\rm PGSp_6$}
In this section we list some irreducible non-supercuspidal representations of $\PGSp_6(F)$, relevant to this work. 
Observe that Langlands parameterization of irreducible representations is known for all 
of Levi factors of parabolic subgroups of $\PGSp_6(F)$  (by Gan and Takeda \cite{GT} for $M_1\cong \GSp_4$). Thus, following Shahidi \cite{Sh}, reducibility points of generalized principal series 
can be computed using $L$-functions of Langlands parameters. 

\vskip 10pt
\subsection{\bf Principal series representations for $P_2$. }
We first consider certain principal series representations for the parabolic subgroup $P_2 = M_2N_2$, where $M_2 \cong \GL_2\times \GL_2 / \GL_1^{\nabla}$. 
Let $\tau$ be an irreducible representation of $\GL_2$ with L-parameter $\phi_{\tau}$ and the central character $\omega_{\tau}$. Set 
\[  I_2(\tau\otimes \tau)= {\rm Ind}_{P_2}^{{\rm PGSp}_6} \tau \otimes \tau   \text{ and } 
 I_2(s, \tau\otimes \tau)  = {\rm Ind}_{P_2}^{G_2}( |\det|^s  \tau) \otimes (|\det|^s \tau) \]
if we need to consider a family of induced representations. Then we have:
\vskip 5pt

\begin{prop}  \label{P:P2}
(i)  If $\tau$ is unitary supercuspidal, then $I_2(s, \tau\otimes \tau)$ is reducible if and only if  $\tau^{\vee} \cong \tau$ (so $\omega_{\tau}^2=1$) and one of the following holds:
\vskip 5pt
\begin{itemize}
\item $\omega_{\tau} =1$ and $s = \pm 1/2$, in which case one has: 
\[  \begin{CD}
0 @>>>  \delta_2(\tau) @>>> I_2(1/2, \tau\otimes \tau) @>>>  J_2(1/2, \tau\otimes \tau) @>>> 0, \end{CD} \]
where $\delta_2(\tau)$ is a generic discrete series representation. 
\vskip 5pt 

 \item $\omega_{\tau}  \ne 1$ (so $\tau$ is dihedral),  ${\rm Im}(\phi_{\tau}) = S_3$  and $s=\pm 1$,in which case one has: 
   \[  \begin{CD}
0 @>>>  \sigma_{\mathrm {gen}}[\tau] @>>> I_2(1, \tau\otimes \tau) @>>>  J_2(1, \tau\otimes \tau) @>>> 0, \end{CD} \]
where $\sigma_{\mathrm {gen}}[\tau] $ is a generic discrete series representation.

\vskip 5pt

\item $\omega_{\tau} \ne 1$, ${\rm Im}(\phi_{\tau})  \ne S_3$  and $s= 0$, in which case one has:

\[   I_2(0,\tau\otimes \tau)  = I_2(\tau\otimes \tau)_{\mathrm {gen}}  \oplus I_{2}(\tau\otimes \tau)_{\mathrm {deg}} \]
where $I_2(\tau\otimes \tau)_{\mathrm {gen}}$ is generic. 

\end{itemize}

\vskip 5pt

\noindent (ii) If $\tau = {\rm st}_{\chi}$ is a twisted Steinberg representation, then $I_2(s,\tau\otimes \tau)$ is irreducible except for the following cases:
\vskip 5pt
\begin{itemize}
\item $\chi=1$ and $s = \pm 5/2$ or $\pm 1/2$, in which case one has
\[  \begin{CD}
0 @>>> {\rm St}_{{\rm PGSp}_6} @>>>  I_2(5/2, {\rm st}\otimes {\rm st}) @>>>  J_2(5/2,  {\rm st}\otimes {\rm st}) @>>> 0, \end{CD} \]
and
\[  \begin{CD}
0 @>>>   {\rm Ind}_{P_3}^{{\rm PGSp}_6}({\rm St})_{\rm gen}  @>>> I_2(1/2, {\rm st}\otimes {\rm st}) @>>>  J_2(1/2, {\rm st}\otimes {\rm st}) @>>> 0. \end{CD} \]
\vskip 5pt

\item $\chi^2=1$ but $\chi \ne 1$ and $s = \pm 1/2$, in which case one has:
 \[  \begin{CD} 
0@>>> \sigma_{\mathrm {gen}}[\chi] @>>>  I_2(1/2, {\rm st}_{\chi}\otimes {\rm st}_{\chi}) @>>>  J_2(1/2, {\rm st}_{\chi}\otimes {\rm st}_{\chi}) @>>> 0,  \end{CD} \]
where $\sigma_{\mathrm {gen}}[\chi] $ is a generic discrete series representation. 
\end{itemize}
\vskip 5pt

\end{prop}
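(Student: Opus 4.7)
The proof is an application of Shahidi's general framework for reducibility of generalized principal series induced from generic representations of Levi subgroups, combined with explicit Jacquet module analysis for identification of subquotients. Recall that $\hat{\PGSp}_6 = \Spin_7(\mathbb{C})$ and the Levi $M_2 \cong (\GL_2 \times \GL_2)/\GL_1^\nabla$ has Langlands dual a Levi $\hat M_2 \subset \Spin_7(\mathbb{C})$. Decomposing the adjoint action of $\hat M_2$ on the Lie algebra $\hat{\mathfrak n}_2^-$ of the opposite unipotent radical into irreducible pieces $r = \oplus_i r_i$ leads, on the spectral side, to a product of L-functions $\prod_i L(s, \tau \otimes \tau, r_i)$ involving Rankin-Selberg and character factors. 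By \cite{Sh}, for $\tau$ generic unitary discrete series of $\GL_2$, the induced representation $I_2(s, \tau \otimes \tau)$ is reducible at $s_0 > 0$ precisely at the pole of the Plancherel measure associated with these L-functions.

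\textbf{Locating reducibility.} The L-function computation uses the standard decomposition of $\phi_\tau \otimes \phi_\tau$ into symmetric and exterior square pieces, together with the standard pole structure of Tate L-functions attached to the central character $\omega_\tau$. This will produce exactly the three subcases of part (i): reducibility at $s = 1/2$ precisely when $\omega_\tau = 1$ (a pole coming from the triviality of a $\wedge^2$-type factor), at $s = 1$ in the dihedral case with $\mathrm{Im}(\phi_\tau) = S_3$ (a pole coming from an induced cubic character L-function), and on the unitary axis $s = 0$ in the remaining self-dual case (tempered reducibility). At each reducibility point $s_0 > 0$, the standard module $I_2(s_0, \tau \otimes \tau)$ has unique Langlands quotient $J_2(s_0, \tau \otimes \tau)$ and a complementary tempered submodule, which we identify with $\delta_2(\tau)$ or $\sigma_{\rm gen}[\tau]$. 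Rodier's uniqueness theorem for Whittaker functionals on standard modules forces this submodule to be the generic constituent, and the Casselman criterion applied to its Jacquet module along the Borel $B$ (computable via the geometric lemma) confirms square-integrability. For the tempered reducibility at $s = 0$ in the third bullet, Keys-Shahidi R-group theory with $R \cong \mathbb{Z}/2\mathbb{Z}$ produces the decomposition into one generic and one degenerate summand.

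\textbf{Steinberg case and main obstacle.} For part (ii), one may either apply the same L-function formalism (now with $\tau = {\rm st}_\chi$ carrying an explicit parameter) or embed $I_2(s, {\rm st}_\chi \otimes {\rm st}_\chi)$ into a principal series from $B$ via Langlands' embedding theorem and read reducibilities off from the standard $B$-principal series structure of $\PGSp_6$. The chief technical obstacle is identifying the submodule of $I_2(1/2, {\rm st} \otimes {\rm st})$ as the generic summand ${\rm Ind}_{P_3}^{\PGSp_6}({\rm St})_{\rm gen}$ — a matching of two distinct parabolic realizations of the same representation. The natural approach is to compute the Jacquet modules of both $I_2(1/2, {\rm st} \otimes {\rm st})$ and ${\rm Ind}_{P_3}^{\PGSp_6}({\rm St})$ along $B$ using the geometric lemma of Bernstein-Zelevinsky, verify that their semisimplifications agree, and then use Rodier's theorem to pin down the unique generic constituent on each side. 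A parallel Jacquet module argument identifies ${\rm St}_{\PGSp_6}$ as the submodule at $s = 5/2$, and the twisted Steinberg case ($\chi^2 = 1$, $\chi \ne 1$) at $s = 1/2$ follows by an analogous computation, with the discrete series $\sigma_{\rm gen}[\chi]$ characterized by its Jacquet module and genericity.
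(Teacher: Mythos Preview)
Your proposal is correct and follows exactly the approach the paper itself indicates: the paper does not give a detailed proof of this proposition but simply states (in the preamble to the section) that ``following Shahidi \cite{Sh}, reducibility points of generalized principal series can be computed using $L$-functions of Langlands parameters,'' since the LLC is known for all Levi factors of $\PGSp_6$. Your outline via the Langlands--Shahidi $L$-functions, Casselman's criterion, Rodier's theorem, and the geometric lemma is precisely the standard unpacking of this sentence, and in fact supplies considerably more detail than the paper does.
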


\vskip 10pt
\subsection{\bf Principal series representations for $P_{13}$. }
Now we consider certain principal series representations for the  parabolic subgroup $P_{13} = M_{13}N_{13}$, where $M_{13} \cong \GL_2\times {\GL_1}$. 
Let $\tau$ be an irreducible  representation of $\GL_2$ with the central character $\omega_{\tau}$. Set 
\[  I_{13}(\tau\otimes 1)= {\rm Ind}_{P_{13}}^{{\rm PGSp}_6} \tau \otimes 1   \text{ and } 
 I_{13}(s, \tau\otimes 1)  = {\rm Ind}_{P_{13}}^{G_2}  (|\det|^s \tau) \otimes 1 \]
if we need to consider a family of induced representations.  In the more familiar language of representations of $\Sp_6$, the  restriction of $I_{13}(s,\tau\otimes 1)$ 
to $\Sp_6$ is a principal series induced from $|\cdot |^{2s} \omega_{\tau} \otimes |\det|^s \tau$. In particular, if $\tau$ is unitary tempered and $s>0$, this is a standard module. 
We have:
\vskip 5pt

\begin{prop}  \label{P:P13}
  If $\tau$ is unitary supercuspidal, then $I_{13}(s, \tau\otimes 1)$ is reducible if and only if  $\tau^{\vee} \cong \tau$ (so $\omega_{\tau}^2=1$) and one of the following holds:
\vskip 5pt
\begin{itemize}
\item $\omega_{\tau} =1$ and $s = \pm 1/2$, in which case  $I_{13}(1/2,\tau\otimes 1)$ has length 4 and has a unique irreducible submodule $\delta_{13}(\tau)$, which is 
 a generic discrete series representation. 

\vskip 5pt

\item $\omega_{\tau} \ne 1$, ${\rm Im}(\phi_{\tau})  \ne S_3$  and $s= 0$, in which case one has:

\[   I_{13}(0,\tau\otimes 1)  = I_{13}(\tau\otimes 1)_{\mathrm {gen}}  \oplus I_{13}(\tau\otimes 1)_{\mathrm {deg}} \]
where $I_{13}(\tau\otimes 1)_{\mathrm {gen}}$ is generic. 

\end{itemize}

\end{prop}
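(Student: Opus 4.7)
The plan is to pull back the question along the central isogeny $\Sp_6 \twoheadrightarrow \PGSp_6$ and then apply Shahidi's theory of local coefficients together with a Jacquet module analysis via the Bernstein-Zelevinsky geometric lemma. As already noted in the statement, restriction identifies $I_{13}(s,\tau\otimes 1)$ with the $\Sp_6$-principal series $\Ind_{P'}^{\Sp_6}(|\cdot|^{2s}\omega_\tau \otimes |\det|^s\tau)$ from the Levi $M' = \GL_1\times\GL_2$ of the corresponding parabolic $P' \subset \Sp_6$, so the composition series of the two induced modules correspond.

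For the reducibility points, since $\tau$ is supercuspidal the inducing datum is generic on $M'$ and Shahidi's criterion applies. The Lie algebra of the unipotent radical of the dual parabolic decomposes as an $M'^\vee$-module into two irreducibles, giving L-factors $L(s,\tau\otimes\omega_\tau)$ (from the abelian quotient) and $L(s,\tau,\Sym^2)$ (from the derived subalgebra), once the $\GL_1$-character is absorbed into the exponential twist. Locating the positive real poles of these L-functions, together with the Shahidi-Goldberg classification of orthogonal versus symplectic types of generic supercuspidals of $\GL_2$, forces the unique positive reducibility point to be $s=1/2$, occurring exactly when $\tau\cong\tau^\vee$ and $\omega_\tau=1$ (so that $\tau$ is of orthogonal functorial type and $L(s,\tau,\Sym^2)$ has its pole at $s=0$). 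The $s=0$ reducibility is governed by the Knapp-Stein/Goldberg R-group: it occurs precisely when the inducing datum is stabilized by a nontrivial element of the relative Weyl group $W(M')$. Using the explicit Weyl group action described in Section 9, the stabilizer is nontrivial exactly when $\tau\cong\tau^\vee$ and $\omega_\tau\ne 1$; the further constraint ${\rm Im}(\phi_\tau)\ne S_3$ ensures that the R-group acts nontrivially, as in the excluded $S_3$ case the Weyl conjugate datum matches via an intertwiner acting as a scalar and no direct sum decomposition occurs.

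For the length-four structure at $s=1/2$, the strategy is to compute the $P_{13}$-Jacquet module of $I_{13}(1/2,\tau\otimes 1)$ via the geometric lemma; by supercuspidality of $\tau$ the result is semisimple with explicitly computable constituents, from which one obtains an upper bound of four for the length. The unique socle $\delta_{13}(\tau)$ is produced via Frobenius reciprocity from the lowest Weyl translate of the inducing datum; its square-integrability is verified by Casselman's criterion on the exponents appearing in its $B$-Jacquet module. Genericity of $\delta_{13}(\tau)$ then follows from Rodier's heredity theorem, since $I_{13}(1/2,\tau\otimes 1)$ admits a one-dimensional Whittaker space that must be carried by the unique irreducible submodule. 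The main obstacle is to pin down the length as exactly four and to identify all four Jordan-Hölder constituents; I plan to accomplish this by induction in stages, realising $I_{13}(1/2,\tau\otimes 1)$ as a subquotient of a principal series of the form $\Ind_{P_3}^{\PGSp_6}(\pi_3)$ for a standard module $\pi_3$ on $\GL_3$ whose composition series is determined classically by Bernstein-Zelevinsky theory, and matching constituents across the two realizations using Mackey theory in parallel with the computations already carried out for Proposition \ref{P:P2}.
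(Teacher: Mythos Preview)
The paper does not give a proof of this proposition; it is stated as a direct application of Shahidi's theory of local coefficients (the sentence preceding Proposition~\ref{P:P2} says ``following Shahidi \cite{Sh}, reducibility points of generalized principal series can be computed using $L$-functions of Langlands parameters''). Your proposal follows exactly this route, supplying the details the paper omits, so the approach is the same.

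That said, two points in your write-up deserve more care. First, $P_{13}$ is \emph{not} a maximal parabolic, so Shahidi's criterion in its basic form does not apply directly; the unipotent radical of the dual Levi does not decompose into merely two irreducible pieces as you assert. You should carry out the reducibility analysis via induction in stages (through either $P_1$ or $P_3$), applying Shahidi's theory to each maximal step separately, rather than attempting a single-step analysis. The specific $L$-factors you name ($L(s,\tau\otimes\omega_\tau)$ and $L(s,\tau,\Sym^2)$) should be rechecked once you fix a particular stage; for instance, inducing through the Siegel Levi $\GL_2\subset\Sp_4$ brings in $L(s,\tau,\wedge^2)=L(s,\omega_\tau)$ rather than the symmetric square. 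Second, your explanation of why the $S_3$ case is excluded at $s=0$ (``the Weyl conjugate datum matches via an intertwiner acting as a scalar'') is not quite the mechanism: what actually happens is that for such $\tau$ the Plancherel measure picks up an extra zero coming from an $L$-factor pole, which shifts the reducibility point away from $0$ (compare with Proposition~\ref{P:P2}, where the $S_3$ case produces reducibility at $s=\pm 1$ rather than $s=0$). You will need to trace this shift explicitly through your induction in stages.
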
 

\vskip 10pt
\subsection{\bf Principal series representations for $P_{3}$. }
Now we consider certain principal series representations for the  parabolic subgroup $P_{3} = M_{3}N_{3}$, where $M_{3} \cong \GL_3$. 
Let $\tau$ be an irreducible  representation of $\GL_3$. We set 
\[  I_{3}(\tau)= {\rm Ind}_{P_{3}}^{{\rm PGSp}_6} \tau.  
\] 
Then we have \cite[Example 7.7 and Theorem 7.9:]{Ta}:

\begin{prop}  \label{P:P3}
 (i) Assume that $\tau$ is discrete series representation with trivial central character. Then we have two cases: 
\begin{itemize} 
\item If $\tau \neq \tau^{\vee}$ then 
\[ 
I_3(\tau) \cong I_3(\tau^{\vee}) 
\] 
is irreducible. 
\vskip 5pt 
\item If $\tau\cong \tau^{\vee}$ then 
\[ 
I_3(\tau)  = I_3(\tau)_{\rm gen} \oplus I_3(\tau)_{\rm deg}
\] 
where $I_3(\tau)_{\rm gen}$  is generic.  

\end{itemize} 

\vskip 5pt 
(ii) Let $\chi_1, \chi_2, \chi_3$ be three characters of $F^{\times}$ such that $\chi_1 \cdot \chi_2 \cdot \chi_3=1$, and let $\tau=\tau(\chi_1,\chi_2, \chi_3)$ be the 
associated principal series representation of $\GL_3(F)$ (which is possibly reducible). 
Then the induced representation $I_3(\tau)$ is irreducible unless one of the following two conditions hold: 
\begin{itemize} 
\item $\chi_i= |\cdot| ^{\pm 1}$ for some $i$  or $\chi_i/\chi_j=|\cdot |^{\pm 1}$ for a pair $i\neq j$. 
\vskip 5pt 
\item The three characters $\chi_i$ are quadratic, non-trivial and pairwise different. Then 
\[ 
I_3(\tau)  = I_3(\tau)_{\rm gen} \oplus I_3(\tau)_{\rm deg}
\] 
where $I_3(\tau)_{\rm gen}$  is generic.  
\end{itemize} 
\end{prop}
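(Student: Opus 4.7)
The plan is to reduce the proposition to Shahidi's local reducibility criterion combined with the Knapp--Stein $R$-group analysis, adapted to the similitude group $\PGSp_6$. The underlying observation is that the nontrivial element of the relative Weyl group $W(M_3,\PGSp_6)$ acts on $M_3 \cong \GL_3$ by $m \mapsto (m^{-1})^t$ twisted by a similitude factor; on representations with trivial central character this sends $\tau$ to $\tau^\vee$. Consequently $I_3(\tau) \cong I_3(\tau^\vee)$, and the standard intertwining operator $M(s,\tau) \colon I_3(s,\tau) \to I_3(-s,\tau^\vee)$ assembles into a rational family whose poles and zeros at $s=0$ detect reducibility.

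For part (i), I would use that the adjoint action of the Levi $\GL_3$ on the Lie algebra of $N_3$ is $\Sym^2$ of the standard representation, so that Shahidi's Plancherel formula gives, up to nonvanishing $\epsilon$-factors,
\[
\mu(s,\tau) \;\sim\; \frac{L(1-2s,\tau^\vee,\Sym^2)\, L(1+2s,\tau,\Sym^2)}{L(2s,\tau,\Sym^2)\, L(-2s,\tau^\vee,\Sym^2)}.
\]
For $\tau$ a discrete series of $\GL_3$ with trivial central character, $L(s,\tau,\Sym^2)$ has a pole at $s=1$ iff $\tau \cong \tau^\vee$, since the Kim symmetric square functorial lift to $\GL_6$ is cuspidal (and hence non-polar) precisely in the non-self-dual case. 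This gives irreducibility of $I_3(\tau)$ exactly when $\tau \not\cong \tau^\vee$. When $\tau \cong \tau^\vee$, the Knapp--Stein $R$-group equals $\Z/2\Z$, yielding two irreducible summands; Shahidi's uniqueness theorem for generic constituents of tempered $L$-packets then singles out the generic one, which we denote $I_3(\tau)_{\rm gen}$.

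For part (ii), I would proceed by induction in stages: writing $\tau = \tau(\chi_1,\chi_2,\chi_3)$ with $\chi_1 \chi_2 \chi_3 = 1$ (the trivial-central-character condition needed for descent to $\PGSp_6$), one has $I_3(\tau) \cong \Ind_B^{\PGSp_6}(\chi)$ for a suitable character $\chi$ of the torus. Reducibility of this Borel-induced principal series is then governed by the classical criterion: either $\chi \circ \alpha^\vee = |\cdot|^{\pm 1}$ for some root $\alpha$ (a pole of a Harish-Chandra $c$-function), or $\chi \circ \alpha^\vee$ is a nontrivial quadratic character for a family of co-roots whose Weyl stabilizer enlarges the $R$-group. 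Translating via the explicit co-root formulas $\alpha_1^\vee(t)=(1,t,t^{-1})$, $\alpha_2^\vee(t)=(t,t^{-1},1)$, $\alpha_3^\vee(t)=(1,t,t)$ from the preceding section, a direct root-by-root check yields precisely the two listed bullets; in the distinct-quadratic-character case, the $R$-group is again $\Z/2\Z$, giving the direct-sum decomposition, and Rodier's theorem identifies the generic summand.

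The main obstacle I expect is the bookkeeping at the boundary of the reducibility locus in (ii) --- ensuring that the two listed conditions are the only sources of reducibility and that no further splitting occurs within them. This is a careful root-system computation in the non-standard $C_3$ coordinates introduced earlier, where one must keep track of the unusual Weyl-action on the $s_i$ coming from the similitude quotient. Fortunately, exactly this classification has been carried out in complete generality by Tadi\'c \cite{Ta}, Example 7.7 and Theorem 7.9, and our role is to transcribe that classification into the $\PGSp_6$ language used here.
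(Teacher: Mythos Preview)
The paper gives no proof of this proposition at all: the sentence immediately preceding it reads ``Then we have \cite[Example 7.7 and Theorem 7.9:]{Ta}:'', and the proposition is simply lifted from Tadi\'c. Your proposal lands on exactly the same citation, so in that sense you match the paper precisely.

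The Shahidi/$R$-group sketch you supply is extra content the paper does not offer, and it is essentially correct. One small point worth tightening if you keep it: the statement that ``$L(s,\tau,\Sym^2)$ has a pole at $s=1$ iff $\tau\cong\tau^\vee$'' is really the observation that for $\GL_3$ any self-dual discrete series is of orthogonal type (there being no symplectic form in odd dimension), so the $\Sym^2$ $L$-function carries the Rankin--Selberg pole; this is what forces the reducibility to occur at $s=0$ rather than at $s=1/2$, and hence produces the $R$-group $\Z/2\Z$ splitting asserted in the proposition. With that clarification your outline is a faithful summary of how Tadi\'c's result is actually proved.
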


\vskip 10pt
\subsection{\bf Principal series representations for $P_{1}$. }
Now we consider certain principal series representations for the parabolic subgroup $P_{1} = M_{1}N_{1}$, where $M_{1} \cong \GSp_4$. 
Let $\tau$ be an irreducible  representation of $\GSp_4$. We set 
\[ 
 I_{1}(\tau)= {\rm Ind}_{P_{1}}^{{\rm PGSp}_6} \tau \text { and }  I_{1}(s,\tau)= {\rm Ind}_{P_{1}}^{{\rm PGSp}_6} |\nu|^s \tau 
\] 
where $\nu$ is the similitude character of $\GSp_4$.  Let $\tau$ be an irreducible supercuspidal representation of $\GSp_4(F)$ with  trivial central character.  
Let $\varphi_{\tau} : WD_F \rightarrow \Spin_5 \cong \Sp(4)$ be its Langlands parameter \cite{GT}.  

\begin{prop}  \label{P:P1}
 Assume that $\tau$ is a supercuspidal representation of $\GSp_4(F)$ with  trivial central character such that the parameter ${\rm std} \circ \varphi_{\tau}$ contains the trivial representation, where ${\rm std}$ denotes the 5-dimensional standard representation  of $\Spin_5$. 
 Then $I_1(s, \tau)$ is reducible if and only if 
 $s = \pm 1/2$, in which case one has: 
\[  \begin{CD}
0 @>>>  \delta_1(\tau) @>>> I_1(1/2, \tau) @>>>  J_1(1/2, \tau) @>>> 0, \end{CD} \]
where $\delta_1(\tau)$ is a discrete series representation. 
\end{prop}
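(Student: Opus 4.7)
The plan is to apply Shahidi's reducibility criterion for induced representations from a maximal parabolic subgroup, identifying the relevant $L$-function naturally appearing in the Plancherel measure as $L(s,\mathrm{std}\circ\varphi_\tau)$. The paper already signals this strategy in the opening of Section 10 (``following Shahidi, reducibility points of generalized principal series can be computed using $L$-functions of Langlands parameters'').

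First I would identify the adjoint action of the dual Levi $\widehat{M_1}\cong \GL_1(\mathbb C)\times \Spin_5(\mathbb C)\subset \Spin_7(\mathbb C)=\widehat{\PGSp_6}$ on the dual nilpotent radical. Under the $\GL_1$-weight grading coming from the simple coroot that is removed, this decomposes into the $5$-dimensional standard representation $\mathrm{std}$ of $\Spin_5$ in one weight and, because of the Heisenberg-type structure of $N_1$, a one-dimensional trivial piece in another weight. Shahidi's theorem then reads off the positive reducibility points of $I_1(s,\tau)$ from the poles of $L(s,\mathrm{std}\circ\varphi_\tau)$. The hypothesis that $\mathrm{std}\circ\varphi_\tau$ contains the trivial representation yields a simple pole at $s=0$, and since $\tau$ is supercuspidal all remaining constituents of $\mathrm{std}\circ\varphi_\tau$ contribute trivial $L$-factors. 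Hence $s=0$ is the only such pole; after translating to the $|\nu|^s$-normalization of the proposition, this gives reducibility at precisely $s=\pm 1/2$ and irreducibility for all other $s$.

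At the point $s_0=1/2$ the induced representation $I_1(1/2,\tau)$ is a standard module, so it has a unique Langlands quotient $J_1(1/2,\tau)$ and, dually, a unique irreducible submodule $\delta_1(\tau)$, giving exactly the claimed short exact sequence of length $2$. To show that $\delta_1(\tau)$ is a discrete series representation, I would apply Casselman's square-integrability criterion: using the geometric lemma to compute the Jacquet modules of $I_1(1/2,\tau)$ and exploiting the supercuspidality of $\tau$ to simplify the exponent analysis, one checks that every exponent of $\delta_1(\tau)$ lies strictly inside the open negative Weyl chamber.

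The main obstacle is the genericity hypothesis implicit in Shahidi's framework: not every supercuspidal representation of $\GSp_4$ is $\psi$-generic. I would circumvent this by invoking the local Langlands correspondence for $\GSp_4$ of Gan-Takeda, which guarantees that members of a common $L$-packet share the same parabolic-induction reducibility behavior; one may therefore pass within the $L$-packet of $\tau$ to a generic representative, for which Shahidi's theorem applies directly, the resulting reducibility points and discrete series constituents being determined by the $L$-parameter $\varphi_\tau$ alone.
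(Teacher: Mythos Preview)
Your approach is exactly what the paper intends: Proposition~\ref{P:P1} is stated without proof, the preamble to the section having already announced that all such reducibility statements are to be extracted from Shahidi's theory together with the Gan--Takeda local Langlands correspondence for $\GSp_4$. So strategically you and the paper agree.

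There is, however, a concrete error in your identification of the adjoint representation. You infer a two-step structure for the dual nilradical from the Heisenberg structure of $N_1$, but the filtration on the group side does not transfer to the dual side. The dual group of $\PGSp_6$ is $\Spin_7$ (type $B_3$), and the parabolic dual to $P_1$ is obtained by deleting the long simple root at the end of the $B_3$ diagram; its Levi is $\GL_1(\C)\times\Spin_5(\C)$ and its nilpotent radical is \emph{abelian}, a single $5$-dimensional piece carrying the standard representation of $\Spin_5$ (the relevant roots are $e_1\pm e_2,\,e_1\pm e_3,\,e_1$, all of height one with respect to the deleted coroot). There is therefore only one Shahidi $L$-function in play, namely $L(s,\tau,\mathrm{std})$, and no additional trivial piece. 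Fortunately this only simplifies matters and your conclusion survives intact: the trivial summand in $\mathrm{std}\circ\varphi_\tau$ produces the pole that forces reducibility at $s=1/2$, and uniqueness of the reducibility point for supercuspidal inducing data gives irreducibility elsewhere.

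Your treatment of the genericity obstacle via passage to the generic member of the $L$-packet is also what the paper is implicitly relying on when it couples Shahidi with Gan--Takeda; one needs that the Plancherel measure (hence the reducibility point) depends only on the $L$-parameter, which is part of the Gan--Takeda package.
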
 
\vskip 5pt

\section{\bf Fourier-Jacobi and Shalika periods}  
In this section, we introduce and study a Fourier-Jacobi-type model for the group $G_2$ and a Shalika period for $\PGSp_6$. These are some of the periods that will appear when we consider a game of ping-pong with periods for the dual pair $G_2 \times \PGSp_6$, as discussed at the end of the introduction. 
\vskip 5pt

\subsection{\bf Whittaker periods}
We begin by recalling the following results about Whittaker periods from \cite[Prop. 19 and Cor. 20]{GS04}, see also the appendix of \cite{HKT}.  
\vskip 5pt

\begin{prop} \label{P:GS-Whit}
Let $\Pi$ be the minimal representation of $E_7$ and let $(V', \psi_{V'})$ be a Whittaker datum for $\PGSp_6$ (so $V'$ is a maximal unipotent subgroup and $\psi_{V'}$ a generic character of $V'$). Then we have an isomorphism of $G_2$-modules:
\[  \Pi_{V', \psi'} \cong {\rm ind}_V^{G_2} \psi_V \]
where $(V, \psi_V)$ is a Whittaker datum for $G_2$. 
\end{prop}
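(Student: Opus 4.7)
The plan is to compute the twisted Jacquet module $\Pi_{V',\psi_{V'}}$ directly via a step-by-step filtration argument. The key input is that the minimal representation $\Pi$ of $E_7$ has a very small wavefront set (supported on the closure of the minimal nilpotent orbit), which strongly constrains its twisted Jacquet modules along any unipotent subgroup. Since the target ${\rm ind}_V^{G_2} \psi_V$ is of a Mackey-induced form, it suffices to match the residual data at the end of the filtration with a Whittaker datum on $G_2$.

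First I would fix a filtration
\[
V' = V'_0 \supset V'_1 \supset \cdots \supset V'_k = 1
\]
by normal subgroups with abelian successive quotients, chosen compatibly with the embedding $G_2 \times \PGSp_6 \hookrightarrow E_7$. The natural starting point is the unipotent radical of the Heisenberg parabolic $P_1$ of $\PGSp_6$, whose Jacquet module on $\Pi$ is already known from \cite{MS}. From there, one peels off successive abelian layers of $V'$; at each stage, computing the twisted Jacquet module amounts to extracting a piece supported on a single coadjoint orbit, and the smallness of the wavefront set of $\Pi$ forces all but a unique generic orbit to contribute.

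Next I would track the $G_2$-action on the intermediate Jacquet modules. The claim is that the stabilizer of the partial character in $G_2$ shrinks, step by step, to a maximal unipotent subgroup $V$ of $G_2$, and the residual character it carries is a generic Whittaker character $\psi_V$. Frobenius reciprocity then delivers the desired identification $\Pi_{V',\psi_{V'}} \cong {\rm ind}_V^{G_2} \psi_V$.

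The main obstacle is of a combinatorial nature: one must verify, by a root-theoretic computation inside $E_7$, that the $G_2$-stabilizer of the extended character at the relevant step of the filtration is precisely $V$, and that the restricted character is indeed Whittaker. This compatibility is the content of \cite[Prop.~19, Cor.~20]{GS04} (see also the appendix of \cite{HKT}), and while conceptually clear, the verification requires a careful case analysis of the root combinatorics of $E_7$ relative to the dual pair $G_2 \times \PGSp_6$ and the structure of the generic character orbits described in Section~\ref{S:G2}.
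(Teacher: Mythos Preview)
Your proposal is correct and matches the approach of the cited references; note that the paper itself does not give a proof of this proposition but simply quotes it from \cite[Prop.~19 and Cor.~20]{GS04} (with an alternative argument in the appendix of \cite{HKT}). The filtration-by-unipotent-layers argument you outline, using the minimality of $\Pi$ to cut down to a single generic orbit at each step and tracking the $G_2$-stabilizer down to a maximal unipotent subgroup carrying a Whittaker character, is precisely the method of those references.
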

\vskip 5pt

\begin{cor}  \label{C:GS-Whit}
(i) If $\pi \in {\rm Irr}(G_2)$ is generic and $\Theta(\pi)$ is its big theta lift to $\PGSp_6$, then 
\[  \dim \Hom_{V'}(\Theta(\pi),\psi) = 1 \]
so that $\Theta(\pi)$ contains a unique irreducible generic subquotient and  thus is nonzero.
\vskip 5pt

(ii) If $\pi \in {\rm Irr}(G_2)$ is non-generic and $\tau \in {\rm Irr}(\PGSp_6)$ is generic, then 
\[  \Hom_{G_2 \times \PGSp_6}(\Pi, \pi \otimes \tau) = 0. \]
\end{cor}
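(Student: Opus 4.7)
The plan is to derive both statements directly from Proposition \ref{P:GS-Whit}, which computes the Whittaker coinvariants of the minimal representation $\Pi$ as a $G_2$-module. The main observation I would pursue is that for any $\pi\in\mathrm{Irr}(G_2)$, one can identify $\dim\Hom_{V'}(\Theta(\pi),\psi_{V'})$ with the dimension of the Whittaker space of $\pi$, and then both parts drop out.

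First I would use the defining property of $\Theta(\pi)$ --- namely that $\pi\boxtimes\Theta(\pi)$ is the maximal $\pi$-isotypic quotient of $\Pi$ as a $G_2\times\PGSp_6$-module --- to reduce the computation to a chain of Hom identifications. Concretely, any $G_2\times V'$-equivariant map $\Pi\to\pi\boxtimes\psi_{V'}$ (viewing $V'\subset\PGSp_6$) factors through the maximal $\pi$-isotypic $G_2$-quotient of $\Pi$, so $\Hom_{V'}(\Theta(\pi),\psi_{V'})\cong\Hom_{G_2\times V'}(\Pi,\pi\boxtimes\psi_{V'})$. The right-hand side is $\Hom_{G_2}(\Pi_{V',\psi_{V'}},\pi)$ by the standard adjunction between twisted coinvariants and restriction. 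Proposition \ref{P:GS-Whit} then replaces $\Pi_{V',\psi_{V'}}$ by $\mathrm{ind}_V^{G_2}\psi_V$, and Frobenius reciprocity identifies the result with $\Hom_V(\psi_V,\pi|_V)=\pi^{V,\psi_V}$, the space of Whittaker vectors in $\pi$. By uniqueness of Whittaker models for the quasi-split group $G_2$, this has dimension $1$ when $\pi$ is generic and $0$ otherwise.

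Given this identification, part (i) is immediate: if $\pi$ is generic, then $\dim\Hom_{V'}(\Theta(\pi),\psi_{V'})=1$, so in particular $\Theta(\pi)\ne 0$, and the existence of a nonzero Whittaker functional combined with uniqueness of Whittaker models for irreducible representations of the quasi-split group $\PGSp_6$ shows that $\Theta(\pi)$ has a unique generic irreducible subquotient. For part (ii), I would argue by contradiction: if $\Hom_{G_2\times\PGSp_6}(\Pi,\pi\boxtimes\tau)$ were nonzero, then $\tau$ would be a $\PGSp_6$-quotient of $\Theta(\pi)$, so composing any Whittaker functional on $\tau$ (nonzero since $\tau$ is generic) with the surjection $\Theta(\pi)\twoheadrightarrow\tau$ would produce a nonzero element of $\Hom_{V'}(\Theta(\pi),\psi_{V'})$; but the displayed identification forces this space to vanish since $\pi$ is non-generic.

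The only mildly delicate step --- and essentially the single place where care is needed --- is the initial reduction $\Hom_{V'}(\Theta(\pi),\sigma)\cong\Hom_{G_2\times V'}(\Pi,\pi\boxtimes\sigma)$, which relies on the fact that every $G_2$-equivariant map from $\Pi$ into a $\pi$-isotypic target factors through the maximal $\pi$-isotypic quotient. Once this factorisation is in place, both (i) and (ii) follow mechanically from Proposition \ref{P:GS-Whit} together with uniqueness of Whittaker models on $G_2$ and $\PGSp_6$; no Jacquet-module input beyond Proposition \ref{P:GS-Whit} is required.
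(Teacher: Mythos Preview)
Your overall strategy is exactly the intended one --- derive both parts from Proposition~\ref{P:GS-Whit} via the chain
\[
\Hom_{V'}(\Theta(\pi),\psi_{V'}) \cong \Hom_{G_2}(\Pi_{V',\psi_{V'}},\pi) \cong \Hom_{G_2}(\mathrm{ind}_V^{G_2}\psi_V,\pi),
\]
and then identify the last space. However, the Frobenius step you invoke is wrong. Compact induction $\mathrm{ind}_H^G$ is left adjoint to restriction only when $H$ is \emph{open} in $G$, which the unipotent radical $V$ is not. Worse, the space $\Hom_V(\psi_V,\pi|_V)=\pi^{V,\psi_V}$ of $(V,\psi_V)$-eigenvectors is actually \emph{zero} for any infinite-dimensional smooth $\pi$: already for $\GL_2$ in the Kirillov model, such a vector would have to be a locally constant function supported at a single point. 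So your displayed identification $\Hom_{G_2}(\mathrm{ind}_V^{G_2}\psi_V,\pi)\cong\pi^{V,\psi_V}$ cannot hold, and ``uniqueness of Whittaker models'' says nothing about $\pi^{V,\psi_V}$.

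The correct identification --- the one the paper uses in the parallel computations in Lemmas~\ref{L:curious} and~\ref{L:curious-2} --- goes via duality: the smooth contragredient of $\mathrm{ind}_V^{G_2}\psi_V$ is $\mathrm{Ind}_V^{G_2}\bar\psi_V$, so a nonzero map $\mathrm{ind}_V^{G_2}\psi_V\to\pi$ (surjective since $\pi$ is irreducible) dualizes to $\pi^\vee\hookrightarrow\mathrm{Ind}_V^{G_2}\bar\psi_V$, which by the genuine Frobenius reciprocity corresponds to an element of $\Hom_V(\pi^\vee,\bar\psi_V)$. Thus $\Hom_{G_2}(\mathrm{ind}_V^{G_2}\psi_V,\pi)$ embeds in the space of Whittaker \emph{functionals} on $\pi^\vee$, which is one-dimensional iff $\pi$ is generic; equality (i.e.\ that every generic $\pi$ is a quotient of the Gelfand--Graev representation) is the classical companion fact. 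With this repair in place, your arguments for (i) and (ii) go through verbatim.
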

\vskip 5pt

\subsection{\bf Fourier-Jacobi period of $G_2$}  \label{SS:FJ} 
 
Let $Q=LU$ be the 3-step maximal parabolic subgroup of $G_2$. Recall that $[L,L]\cong \SL_2$ corresponds to the long simple root $\beta$. 
 Thus $V=U_{\beta} U$ is the unipotent radical of the standard Borel subgroup of $G_2$.  If we set
$J=[L,L]U$, then the quotient of $J$ by the center of $U$ is the Jacobi group. Let $\rho_{\psi}$ be the unique irreducible representation 
of the 2-fold cover  $\tilde J$, such that the center of $U/[U,U]\cong U_{2\alpha+\beta}$ acts by $\psi$. 
If $\sigma$ is a genuine representation of $ \widetilde{\SL}_2$, we have a representation of $J$ on $\sigma\otimes \rho_{\psi}$.  
For $\pi \in {\rm Irr}(G_2)$ and genuine  $\sigma \in {\rm Irr}(\tilde{\SL_2})$, the Fourier-Jacobi period of $\pi$ with respect to $\sigma$ is the space
\[  \Hom_J(\pi, \sigma \otimes \rho_{\psi}) \cong \Hom_{G_2}(\pi, {\rm Ind}_J^{G_2} \sigma \otimes \rho_{\psi}). \] 
\vskip 5pt

Recall that $\widetilde{\SL}_2$ has a family of principal series representations $I_{\psi}(s)$  such that when $s = 1/2$, one has a short exact sequence:
\[ 
0 \rightarrow \mathrm {St}_{\psi} \rightarrow I_{\psi}(1/2)\rightarrow \rho^{+}_{\psi} \rightarrow 0. 
\] 
 Moreover, the contragredient of $I_{\psi}(1/2)$ is $I_{\bar \psi}(-1/2)$. 
 
 \vskip 5pt 

We shall use the following enhanced uniqueness of the Fourier-Jacobi model (we say enhanced since the principal series is not assumed irreducible):

\begin{lemma} \label{L:FJ} 
 If  $\pi \in {\rm Irr}(G_2)$ is generic and tempered (or has cuspidal support on $Q$),  then 
\[ 
{\rm Hom}_{G_2} ( {\rm ind}^{G_2}_{J} (I_{ \psi}(-s) \otimes \rho_{\bar\psi}), \pi^{\vee}) \cong 
{\rm Hom}_{G_2} (\pi,  {\rm Ind}^{G_2}_{J} (I_{\bar \psi}(s) \otimes \rho_{ \psi}))
 \cong \mathbb C. 
\] 
for all $s<1/2$. 
\end{lemma}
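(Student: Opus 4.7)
\medskip

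\noindent\textbf{Proof plan.} The plan is to split the lemma into two separate assertions: a formal duality giving the first isomorphism, and a multiplicity-one statement giving the second. The first is routine; the second is the substantive content.

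\medskip

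\noindent\emph{Duality (first isomorphism).} As smooth $\tilde J$-modules, $I_\psi(-s)\otimes\rho_{\bar\psi}$ and $I_{\bar\psi}(s)\otimes\rho_\psi$ are smooth contragredients of one another: $I_\psi(-s)^\vee\cong I_{\bar\psi}(s)$ as genuine principal series of $\widetilde{\SL}_2$, and $\rho_{\bar\psi}^\vee\cong\rho_\psi$ by the standard duality of the Weil representation. Moreover, $J=[L,L]\cdot U$ is unimodular: the $\SL_2$-action on the graded pieces of $U$ is by copies of the standard representation and a trivial representation, so the determinant is trivial. Hence the usual duality
$({\rm ind}^{G_2}_J W)^\vee\cong{\rm Ind}^{G_2}_J W^\vee$ holds without modular character corrections. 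Combining this with Frobenius reciprocity on both Hom spaces and the natural isomorphism $\Hom(A,B^\vee)\cong\Hom(B,A^\vee)$ for admissible representations yields the first isomorphism in the lemma and reduces us to showing
\[
\dim\Hom_J(\pi|_J,\,I_{\bar\psi}(s)\otimes\rho_\psi)\,=\,1.
\]

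\medskip

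\noindent\emph{Multiplicity one via comparison with the Whittaker model.} My plan is to relate this Fourier--Jacobi period to the Whittaker period of $\pi$, which is one-dimensional by the genericity hypothesis. Introduce the Fourier--Jacobi functor
$\Theta_\psi(\pi):=(\pi\otimes\rho_{\bar\psi})_U$, a smooth genuine $\widetilde{\SL}_2$-module obtained as the $U$-coinvariants with $U$ acting diagonally; note $U_{2\alpha+\beta}\subset U$ acts trivially after this twist, so the construction is well defined. By Stone--von Neumann,
\[
\Hom_J(\pi|_J,\,I_{\bar\psi}(s)\otimes\rho_\psi)\,\cong\,\Hom_{\widetilde{\SL}_2}(\Theta_\psi(\pi),\,I_{\bar\psi}(s)).
\]
For every $s$ (including reducibility points), $I_{\bar\psi}(s)$ admits a unique $U_\beta$-Whittaker functional. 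Composition with this functional gives a linear map
\[
\Phi:\Hom_{\widetilde{\SL}_2}(\Theta_\psi(\pi),I_{\bar\psi}(s))\,\longrightarrow\,\Hom_V(\pi,\psi_V),
\]
where the Stone--von Neumann identification turns the Whittaker functional on $\widetilde{\SL}_2$ into a generic functional on $V=U_\beta U\subset J$. Since $\pi$ is generic, the target space is one-dimensional, so the proof reduces to showing that $\Phi$ is an isomorphism.

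\medskip

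\noindent\emph{The comparison map $\Phi$.} Injectivity is the easier direction: any element in the kernel of $\Phi$ factors through a proper $\widetilde{\SL}_2$-submodule of $I_{\bar\psi}(s)$ that is killed by the unique Whittaker functional; for $s<1/2$ the principal series $I_{\bar\psi}(s)$ is irreducible, so no such proper submodule exists and the element must vanish. Surjectivity is to be proved by constructing, from a given Whittaker functional $\lambda$ on $\pi$, a corresponding Fourier--Jacobi functional by integration of the Whittaker function of $\pi$ along $V\backslash J$ against a chosen Schr\"odinger vector of $\rho_\psi$. The temperedness of $\pi$ (or its cuspidal support on $Q$) is what makes this integral converge: these conditions control the asymptotic behavior of the Whittaker function along the $[L,L]$-direction in $J$, ruling out the Jacquet-module contributions that would otherwise obstruct convergence.

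\medskip

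\noindent\emph{Main obstacle.} The technical heart of the argument is the surjectivity statement in the preceding step: converting the abstract Whittaker functional on $\pi$ into an explicit Fourier--Jacobi functional in a way that recovers $\lambda$ under $\Phi$. This requires handling a non-compact integral over $V\backslash J$, and it is precisely here that the temperedness or $Q$-cuspidal-support hypothesis is essential. The strict inequality $s<1/2$ enters in two places: it guarantees that $I_{\bar\psi}(s)$ is irreducible (needed for injectivity of $\Phi$), and it ensures the resulting integral defining a Fourier--Jacobi functional converges in the positive chamber (needed for surjectivity). The rest of the argument is then formal.
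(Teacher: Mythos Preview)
Your duality argument for the first isomorphism is fine, but the multiplicity-one argument has a genuine gap.

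The claim that $I_{\bar\psi}(s)$ is irreducible for all $s<1/2$ is false: the genuine principal series of $\widetilde{\SL}_2$ is reducible at $s=-1/2$ as well as at $s=1/2$, and $s=-1/2$ is precisely the value at which the lemma is applied later (in Lemma~\ref{L:curious-2}). At $s=-1/2$ the unique irreducible submodule of $I_{\bar\psi}(-1/2)$ is an even Weil representation, and your injectivity argument for $\Phi$ collapses there: a nonzero map $\Theta_\psi(\pi)\to I_{\bar\psi}(-1/2)$ could land in that submodule and be annihilated by the Whittaker functional without being zero. More seriously, the surjectivity of $\Phi$ is where the entire analytic content of the lemma lives, and you have only a heuristic: ``integrate the Whittaker function along $V\backslash J$ against a Schr\"odinger vector.'' This integral is over a copy of $\SL_2$, and controlling its convergence for arbitrary tempered $\pi$ is not a matter of invoking temperedness in one line --- it would require knowing the asymptotics of matrix coefficients of $\Theta_\psi(\pi)$, which in turn requires understanding $\Theta_\psi(\pi)$ as an $\widetilde{\SL}_2$-module (admissibility, exponents of Jacquet modules). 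None of this is addressed.

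The paper takes a completely different and more elementary route. It first recognizes, via induction in stages, that as a $J$-module
\[
I_{\bar\psi}(s)\otimes\rho_\psi \;\cong\; {\rm ind}^{J}_{T_\beta N}\bigl(|\cdot|^{s+3/2}\cdot\psi_{2\alpha+\beta}\bigr),
\]
so Frobenius reciprocity reduces the question to computing $\Hom_{T_\beta N}(\pi,\,|\cdot|^{s+3/2}\cdot\psi_{2\alpha+\beta})$. This is then handled by two purely algebraic moves: (i) analyzing the twisted Jacquet module $\pi_{U,\psi_\alpha}$ as a module for the mirabolic subgroup $T_\gamma U_\beta$ of $L$, where Whittaker-genericity forces a Kirillov-type exact sequence
\[
0 \longrightarrow C_c(T_\gamma) \longrightarrow \pi_{U,\psi_\alpha} \longrightarrow \pi_{V,\psi_\alpha} \longrightarrow 0,
\]
and temperedness (or $Q$-cuspidal support) pins down the exponents of the quotient; and (ii) a \emph{root exchange} argument --- an isomorphism of twisted coinvariants coming from a Heisenberg group generated by $U_{-\beta}$ and $U_{\alpha+\beta}$ --- that transports the resulting one-dimensionality from the pair $(T_\gamma U,\psi_\alpha)$ to the desired pair $(T_\beta N,\psi_{2\alpha+\beta})$. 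No convergence of integrals over non-compact reductive pieces is needed, and the argument works uniformly for all $s<1/2$, including the reducible point $s=-1/2$.
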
 
\begin{proof} 
Let $B=T_{\beta} U_{\beta} \subset [L,L]\cong \SL_2$ be the Borel subgroup, where $T_{\beta}$ is the one-dimensional torus
generated by the image of  the simple coroot $\beta^{\vee}: \GL_1\longrightarrow T$.  
 Observe that  for any genuine character $\chi$ of $\widetilde B$, we have an isomorphism of $J$-modules 
\[ 
 {\rm Ind}_{\widetilde B}^{\widetilde \SL_2} (\chi) \otimes \rho_{\psi} \cong {\rm Ind}_{ B U}^{J} (\chi \cdot \rho_{\psi}) 
\] 
where $f \otimes v $ is mapped to a function on $\SL_2$ given by $g\mapsto f(g) \cdot \rho_{\psi}(g)(v)$. Since $\rho_{\psi}$ restricted to $\widetilde BU$ is induced, 
using the transitivity of induction, it is easy to check that 
\[ 
I_{\bar \psi}(s) \otimes \rho_{ \psi} \cong {\rm ind}_{T_{\beta} N}^J( |\cdot|^{s+3/2} \cdot \psi_{2\alpha+\beta}) , 
\] 
where $N$ is the unipotent radical of the maximal parabolic $P$ (so $V=U_{\alpha}N$) and $\psi_{2\alpha+\beta}$ is a character of $N$ nontrivial only on the 
root space $U_{2\alpha+\beta}\subset N$ as the subscript indicates. Here the induction is not normalized. 
\vskip 5pt

Thus, it follows by Frobenius reciprocity that
\[ {\rm Hom}_{G_2} (\pi,  {\rm Ind}^{G_2}_{J} (I_{\bar \psi}(s) \otimes \rho_{ \psi})) \cong 
  \Hom_{T_{\beta} N}(\pi, |\cdot|^{s+3/2} \cdot \psi_{2\alpha+\beta}) \]
and to prove the lemma, it suffices to show that
\[  \dim \Hom_{T_{\beta} N}(\pi, |\cdot|^{s+3/2} \cdot \psi_{2\alpha+\beta})  =1 \quad \text{  if $s <1/2$.} \]  
We shall prove this in a somewhat roundabout fashion. 

\vskip 5pt

Let $\psi_{\alpha}$ be the character of 
$U$ obtained by restricting a Whittaker character of $V$. As the notation indicates, $\psi_{\alpha}$ is only supported on the root space $U_{\alpha} \subset U$.  
Let $\gamma=3\alpha+2\beta$ be the highest root. Observe that $\gamma$ is perpendicular to $\alpha$ and therefore, $\pi_{U,\psi_{\alpha}}$  is naturally a 
module for $T_{\gamma} U_{\beta}$, a mirabolic subgroup of $L\cong \GL_2$. Recall that the mirabolic subgroup has a unique irreducible infinite-dimensional (generic) representation, and  all  others are one-dimensional with the trivial action of $U_{\beta}$. The infinite-dimensional representation is realized on $C_c(T_{\gamma})$, so it is a regular representation 
of $T_{\gamma}\cong \GL_1$.  Since $\pi$ is Whittaker generic, $C_c(T_{\gamma})$ has to appear with multiplicity one in $\pi_{U,\psi_{\alpha}}$. Thus we have an exact 
sequence  of $T_{\gamma} U_{\beta}$-modules
\[ 
0\rightarrow C_c(T_{\gamma})\rightarrow \pi_{U,\psi_{\alpha}} \rightarrow \pi_{V,\psi_{\alpha}} \rightarrow 0 
\] 
where the third term is the $U_{\beta}$-coinvariants.
Next, observe that $ \pi_{V,\psi_{\alpha}}$ is a quotient of $\pi_{N}$. Since we have assumed that $\pi$ is tempered, the group $T_{\gamma}$ acts on $\pi_{V,\psi_{\alpha}}$  by 
characters which are, after taking the absolute value, equal to $|\cdot |^{t+3}$ where $t\geq 0$. It follows that 
\[  \dim \Hom_{T_{\gamma} U}(\pi, |\cdot |^{s+3} \cdot \psi_{\alpha}) =1 \quad \text{  if $s<0$.} \]
 
\vskip 5pt

However, we are still not done since the group $T_{\gamma} U$ is not even conjugate to $T_{\beta}N$. The last step requires the technique of root exchange. 

\vskip 5pt

Let  $N'$ be obtained by adding $U_{-\beta}$ to $U$ and removing $U_{\alpha+\beta}$, so that $N'$ is conjugate to $N$ (by the simple Weyl reflection $w_{\beta}$).  Now we claim that
there is an isomorphism
\[  \Hom_{T_{\gamma} U}( \pi, |-|^{s+3} \cdot \psi_{\alpha})  \cong \Hom_{T_{\gamma} N'} (\pi, |-|^{s+2} \cdot \psi_{\alpha}). \]
which sends $\ell$ on the LHS to an element $\ell'$ on the RHS defined by  the convergent integral
\[ 
\ell'(v)= \int_{U_{-\beta}}\ell( \pi(u) \cdot v ) ~du
\] 
Conversely, we can recover  $\ell$ from $\ell'$ by integrating over $U_{\alpha+\beta}$.   Assuming the claim, 
we complete the proof of the lemma by observing that  the pair $(T_{\beta}N, |-|^{s+2} \cdot\psi_{2\alpha+ \beta})$ is conjugate to the pair $(T_{\gamma} N', |-|^{s+2} \cdot \psi_{\alpha})$ (by the Weyl element $w_{\alpha} \cdot w_{\beta}$).  
\vskip 5pt

To justify the root exchange argument  in the claim, we observe that $U_{-\beta}$ and $U_{\alpha+\beta}$  generate a Heisenberg group with 
center $U_{\alpha}$, modulo higher order commutators. More precisely, consider the group
\[  V' = U \cdot U_{\beta} =  N' \cdot U_{\alpha+\beta}, \]
which is a maximal unipotent subgroup of $G_2$ and hence conjugate to $V$ (by the simple reflection $w_{\beta}$). If we consider the lower central series of the unipotent group $V'$:
\[  V' \supset [V',V'] = V'_1 \supset V'_2 = [V', V'_1] \supset V'_3 \supset \{1 \}, \] 
then $V / V'_2$ is   the Heisenberg group in question with center $V'_1/ V'_2 = U_{\alpha}$. Note moreover that the elements $\ell$ and $\ell'$ in the two Hom spaces in the claim both factors through $\pi_{V'_2}$ (which is a module for the Heisenberg group $V'/V'_2$). With this observation, the justification of the claim  is given by the following lemma, included as a convenience to the reader.   
\end{proof} 
\vskip 5pt

\begin{lemma} Let $H$ be a Heisenberg group.  Let $\pi$ be a smooth $H$-module.  Let $X$ and $Y$ be two maximal abelian subgroups of $H$. Let 
$\psi_X$ and $\psi_Y$ be characters of $X$ and $Y$, agreeing on the intersection $X\cap Y$, and non-trivial on the center 
of $H$. Then we have an isomorphism $\Hom_X(\pi, \psi_X)\cong  \Hom_Y(\pi , \psi_Y)$,  $\ell\mapsto \ell'$, 
defined by  
\[ 
\ell'(v) = \int_{Y/X\cap Y} \ell(\pi(y) v) ~ dy. 
\]  
\end{lemma}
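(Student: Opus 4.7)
The plan is to treat this as a standard root exchange lemma. The key algebraic input is that if $X, Y \subseteq H$ are maximal abelian subgroups of a Heisenberg group $H$ with center $Z$, then $X/Z$ and $Y/Z$ are Lagrangian subspaces of the symplectic space $H/Z$, so that $(X/Z \cap Y/Z)^{\perp} = X/Z + Y/Z$, and the commutator map $(x,y) \mapsto [x,y] \in Z$ descends to a nondegenerate bilinear pairing $X/(X \cap Y) \times Y/(X \cap Y) \to Z$. Composing with the non-trivial character $\psi := \psi_X|_Z = \psi_Y|_Z$ yields a perfect Pontryagin duality between the two locally compact abelian quotients $X/(X \cap Y)$ and $Y/(X \cap Y)$.

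First I would verify that the integrand is well-defined on $Y/(X \cap Y)$. For $k \in X \cap Y$ and $y \in Y$, abelianity of $Y$ together with the hypothesis $\psi_X = \psi_Y$ on $X \cap Y$ gives $\ell(\pi(yk)v) = \psi_X(k)\ell(\pi(y)v) = \psi_Y(k)\ell(\pi(y)v)$, so after the natural twist by $\psi_Y^{-1}$ (implicit in the statement, and needed to make the integrand transform trivially under $X \cap Y$) it descends to a function on $Y/(X \cap Y)$.

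The main obstacle is showing this function has compact support. By smoothness of $\pi$ and continuity of $\psi_X$, I would choose a compact open subgroup $K_X \subseteq X$ fixing $v$ with $\psi_X|_{K_X} = 1$. For any $k \in K_X$ and $y \in Y$, the Heisenberg identity $yk = [y,k] \cdot ky$ (with $[y,k] \in Z$), together with $\pi(k)v = v$ and $\ell \circ \pi(x) = \psi_X(x)\ell$ for $x \in X$, yields $\ell(\pi(y)v) = \psi([y,k]) \, \ell(\pi(y)v)$. Hence $\ell(\pi(y)v)$ vanishes unless the character $k \mapsto \psi([y,k])$ of $K_X$ is trivial. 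By the perfect duality above, the set of such $y \in Y/(X \cap Y)$ is exactly the annihilator of the image of $K_X$ in $X/(X \cap Y)$, which is a compact open subgroup; so the integrand has compact support on $Y/(X \cap Y)$ and the integral converges.

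Then the $Y$-equivariance $\ell'(\pi(y_0)v) = \psi_Y(y_0) \ell'(v)$ follows from a direct change of variable $y \mapsto y y_0^{-1}$ in the integral, with the factor $\psi_Y(y_0)$ emerging from the implicit $\psi_Y^{-1}$-twist. For the inverse, I would define $\ell''(v) := \int_{X/(X \cap Y)} \ell'(\pi(x)v) \, dx$ by the symmetric construction. The composition $\ell \mapsto \ell' \mapsto \ell''$ can then be shown to equal $\ell$ (up to a scalar absorbed into the normalization of Haar measures on the two quotients, chosen dual under the pairing) via a Fubini computation: using $\pi(y)\pi(x) = \pi([y,x])\pi(x)\pi(y)$ and the $X$-equivariance of $\ell$, the double integral collapses to the inner integral $\int_{X/(X \cap Y)} \psi([y,x]) \, dx$, which by Pontryagin duality with respect to the perfect pairing is a Dirac measure concentrated at the identity of $Y/(X \cap Y)$, forcing $\ell'' = c \cdot \ell$.
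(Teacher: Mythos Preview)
Your proof is correct and complete, but it takes a different route from the paper's. The paper argues in two lines: by Frobenius reciprocity one has $\Hom_X(\pi,\psi_X)\cong\Hom_H(\pi,\Ind_X^H\psi_X)$ and similarly for $Y$, and then one invokes the standard intertwining isomorphism $\Ind_X^H\psi_X\cong\Ind_Y^H\psi_Y$ between two Schr\"odinger models of the Heisenberg representation, given by essentially the same integral formula $f'(h)=\int_{Y/X\cap Y} f(yh)\,\bar\psi_Y(y)\,dy$. Composing these yields the stated map on $\Hom$ spaces.

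Your approach unpacks this: rather than passing through the induced representations, you verify directly that the integral converges, lands in $\Hom_Y(\pi,\psi_Y)$, and has an explicit inverse. The compact-support argument via the perfect pairing $X/(X\cap Y)\times Y/(X\cap Y)\to Z$ and the Fubini/Dirac-mass computation for the inverse are exactly what one needs to justify the intertwining operator itself, so in a sense you are supplying the details that the paper's proof takes for granted. The paper's version is cleaner conceptually (both sides are identified with $\Hom_H(\pi,\rho_\psi)$ for the unique irreducible $\rho_\psi$ with central character $\psi$), while yours is self-contained and makes the analytic content explicit. Your observation that the formula as written requires an implicit $\bar\psi_Y$-twist to make the integrand genuinely descend to $Y/(X\cap Y)$ is also correct and worth noting; the paper's intertwining formula includes this twist explicitly.
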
 
\begin{proof}  By the Frobenius reciprocity, we have 
\[ 
\Hom_X(\pi, \psi_X)\cong \Hom_H(\pi, \Ind_X^H \psi_X) \text { and } \Hom_Y(\pi, \psi_Y)\cong \Hom_H(\pi, \Ind_Y^H \psi_Y). 
\] 
We also have an isomorphism $\Ind_X^H \psi_X\cong \Ind_Y^H \psi_Y$, where every $f\in \Ind_X^H \psi_X$ goes to 
 $f'\in \Ind_Y^H \psi_Y$ defined by 
 \[ 
 f'(h)= \int_{Y/X\cap Y} f(yh) \bar \psi_Y(y)  ~ dy. 
 \] 
 The lemma follows by combining this isomorphism with the two Frobenius reciprocity isomorphisms. 
\end{proof} 
\vskip 5pt

\subsection{\bf Shalika period on $\PGSp_6$.}   
We shall now discuss a Shalika period on $\PGSp_6$.
\vskip 5pt

Recall the maximal parabolic subgroup $P_2 = M_2N_2$ of $\PGSp_6$ with Levi factor $M_2 \cong (\GL_2 \times \GL_2) / \GL_1^{\nabla}$ and unipotent radical $N_2$.
 Let $\psi_2$ be a generic character of $N_2(F)$.   The stabilizer of $\psi_2$ in the Levi group $M_2$ 
is the diagonally embedded $\PGL_2^{\Delta}$. The Shalika subgroup of $\PGSp_6$ is the semi-direct product
\[   S = PGL_2^{\Delta} \ltimes N_2 \]
and the Shalika character $\psi_S$ is the character $\psi_2$ extended to $S(F)$ (trivially on $\PGL_2(F)$).
For any smooth representation $\pi$ of $\PGSp_6(F)$, the Shalika period of $\pi$ is the coinvariant space $\pi_{S, \psi_S}$.
\vskip 5pt

This Shalikla period has already been exploited in \cite{SWe}. Indeed, the following was shown in \cite[Lemma 4.5]{SWe}:
\vskip 5pt

\begin{prop} \label{P:SWe-Shalika}
Let $\Pi$ be the minimal representation of $E_7$ and $(V, \psi_V)$ a Whittaker datum for $G_2$. Then
\[  \Pi_{V, \psi_V} \cong {\rm ind}_S^{\PGSp_6} \psi_S \]
as $\PGSp_6$-modules.
\end{prop}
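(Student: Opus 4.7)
The plan is to compute the twisted Jacquet module $\Pi_{V,\psi_V}$ by a two-step procedure, exploiting the fact that $V$ is the unipotent radical of the Borel subgroup $B \subset G_2$ and that $V = U_{\alpha} \cdot N$, where $N$ is the unipotent radical of the Heisenberg parabolic $P = MN$ of $G_2$ with center $Z = U_{\beta_0}$. The character $\psi_V$ is generic and, in particular, restricts to a generic character $\psi_N$ of $N$ whose $M$-orbit corresponds to the split \'etale algebra $E = F \times F \times F$.

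First, I would compute $\Pi_{Z, \psi_Z}$, where $\psi_Z$ is the restriction of $\psi_V$ to $Z$. Inside $E_7$, the root subgroup $Z = U_{\beta_0}$ is the centre of the Heisenberg unipotent radical $N_{E_7}$ of the Heisenberg parabolic of $E_7$, and the minimal representation has a well-known model on which $Z$ acts by a one-parameter family of characters, each with open orbit under the Levi of $E_7$. This reduces $\Pi_{Z, \psi_Z}$ to a representation of the stabiliser of $\psi_Z$ in $G_2 \times \PGSp_6$, which is (a finite cover of) $\SL_3 \times \PGL_3$, sitting inside a common $E_6$-Levi; the resulting module is essentially the minimal representation of this $E_6$.

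Next, I would take the further twisted Jacquet module along the quotient $V/Z$ with respect to the non-degenerate part $\psi_V|_{V/Z}$. Since $V/Z$ lies in the $\SL_3$-factor, this step amounts to taking a generic Whittaker coefficient of the minimal representation of $E_6$ along the maximal unipotent of $\SL_3$. Using the explicit model of the minimal representation of $E_6$ restricted to the dual pair $\SL_3 \times \PGL_3$, this Whittaker coefficient collapses the $\SL_3$-factor and produces a $\PGL_3$-module. The resulting object must then be reassembled into a $\PGSp_6$-module, since only a Levi subgroup of $\PGSp_6$ was used up to this point; this is accomplished by Mackey theory applied to the embedding of the relevant parabolic of $\PGSp_6$, together with the action of the unipotent radical $N_2$ of $P_2$ by the character $\psi_2 = \psi_S|_{N_2}$.

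The final step is to identify the stabiliser in $\PGSp_6$ of the data obtained, and to check that the resulting compactly supported induced module is precisely $\mathrm{ind}_S^{\PGSp_6} \psi_S$. The key compatibility to verify is that the diagonal $\PGL_2^{\Delta} \subset M_2$ stabilising $\psi_2$ is exactly the copy of $\PGL_3$-stabiliser that emerges from the $\SL_3$-Whittaker coefficient computation, so that the semidirect product structure $S = \PGL_2^{\Delta} \ltimes N_2$ with character $\psi_S$ matches. The main obstacle will be the careful bookkeeping of characters and stabilisers when transitioning from the $E_6$-picture (where $\PGL_3$ is the natural group) to the $\PGSp_6$-picture (where $S$ is the natural period subgroup); this is precisely the content of \cite[Lemma 4.5]{SWe}, on which we rely for the precise compatibility.
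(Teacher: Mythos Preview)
The paper does not give its own proof of this proposition; it simply records it as \cite[Lemma 4.5]{SWe}. Your proposal ultimately defers to that same reference for the ``precise compatibility,'' so in the end you are citing the same source. However, the two-step sketch you give en route contains genuine errors that would prevent it from working as written.

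The central problem is in your first paragraph. A Whittaker character $\psi_V$ of $V$ is, by definition, a character of $V/[V,V]$; it is nontrivial exactly on the two simple root spaces $U_{\alpha}$ and $U_{\beta}$ and trivial on every other positive root space. In particular:
\begin{itemize}
\item The center $Z = U_{\beta_0}$ of $N$ is the highest root space, so $\psi_V|_Z$ is \emph{trivial}. Thus your ``$\Pi_{Z,\psi_Z}$'' is just the untwisted coinvariant $\Pi_Z$, not a twisted Jacquet module with respect to a nontrivial central character. The whole mechanism you invoke (open Levi-orbit on nontrivial central characters, irreducible Weil-type module for the stabiliser) does not apply.
\item The restriction $\psi_V|_N$ is a character of $N/Z \cong \Sym^3(F^2)\otimes \det^{-1}$ which is nonzero only on the $U_{\beta}$-component. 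As an element of the dual of $\Sym^3(F^2)$ this is a rank-one binary cubic, hence lies in the most degenerate nonzero $M$-orbit, corresponding to the non-\'etale algebra $F[t]/(t^3)$, not to $E=F\times F\times F$.
\end{itemize}
Because of this, the subsequent identification of the stabiliser as ``$\SL_3 \times \PGL_3$ inside an $E_6$-Levi'' and the appearance of ``the minimal representation of $E_6$'' are not grounded: the $E_6$-Levi of $E_7$ arises from the parabolic $\mathcal P_3$ associated to the Siegel parabolic $P_3 \subset \PGSp_6$, whereas the parabolic of $E_7$ attached to $P \subset G_2$ has Levi of type $D_6$ (the Heisenberg parabolic of $E_7$); you are mixing two different filtrations.

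If you want an honest two-step computation along these lines, you would need to work with a character of $N$ (or of $U$) that is actually the restriction of $\psi_V$, i.e.\ a degenerate one, and then perform a further twisted coinvariant along the remaining simple root group; the intermediate object is not a minimal representation of a smaller exceptional group. The actual argument in \cite{SWe} proceeds via an explicit model of $\Pi$ and a direct orbit computation rather than the reduction you outline.
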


\vskip 5pt

\subsection{\bf Shalika period of $\Pi$.}

We now consider the minimal representation $\Pi$ of the dual pair $G_2 \times \PGSp_6$ and determine its Shalika period $\Pi_{S, \psi_S}$ as a representation of $G_2(F)$. 
To describe the answer, we need to introduce some more notations.
\vskip 5pt

The group $\PGL_2$ acts by conjugation on the space $M_2(F)$ of $2 \times 2$ matrices, preserving the determinant (quadratic) form. As we saw in \S \ref{SS:ober},  there is a Weil representation of  $\PGL_2 \times \SL_2$ on $C_c(M_2(F))$ which decomposes as a tensor product 
\[ 
C_c(M_2(F))=C_c(M^{\circ}_2(F)) \otimes \rho_{\psi} 
\] 
where $M_2^{\circ}(F)$ is the space of trace zero matrices. We view $C_c(M^{\circ}_2(F)) \otimes \rho_{\psi} $ as a representation of the group $J = [L,L]U \subset G_2$ introduced in  \S \ref{SS:FJ}, where the first factor is a  representation of  $\widetilde \SL_2$ and 
$\rho_{\psi}$ is the irreducible representation of $\tilde J$ introduced in \S \ref{SS:FJ}. With the group $\PGL_2$  acting trivially on $\rho_{\psi}$, we see that $C_c(M_2(F))$ becomes a representation of $\PGL_2 \times J$.  

\vskip 5pt 

 We are now ready to compute $\Pi_{S,\psi_S}$.   Firstly, we have 
\[
\Pi_{N_2, \psi_2} \cong  {\rm ind}_{J} ^{G_2}  (C_c(M^{\circ}_2(F)) \otimes \rho_{\psi})
\] 
as $G_2 \times \PGL_2$-modules. It remains to compute the $\PGL_2$-coinvariant of the RHS. We need the following: 
\vskip 5pt 
\begin{lemma} \label{L:ind}
Let $H\subset G$ and $L$ be three $p$-adic groups. Let $W$ be a smooth $H\times L$-module, and $\tau$ an irreducible representation of $L$. 
Let $\Theta(\tau) \otimes \tau$ be the maximal $\tau$-isotypic quotient of $W$. 
If ${\rm Ext}^1_L(\tau,\tau)=0$ then 
\[ 
{\rm ind}_H^G \Theta (\tau) \otimes \tau  
\] 
is the maximal $\tau$-isotypic quotient of ${\rm ind}_H^GW$. Here ${\rm ind}$ stands for  induction with  compact support. 
\end{lemma}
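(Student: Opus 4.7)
\emph{Proof plan.} My plan is to form the canonical short exact sequence of smooth $H\times L$-modules
\[
0 \to K \to W \to \Theta(\tau) \otimes \tau \to 0,
\]
where $K$ is the intersection of the kernels of all $L$-equivariant maps $W \to \tau$, and then apply the functor ${\rm ind}_H^G$. Since compact induction is exact and the $H$- and $L$-actions on $W$ commute, this yields the exact sequence
\[
0 \to {\rm ind}_H^G K \to {\rm ind}_H^G W \to {\rm ind}_H^G(\Theta(\tau)) \otimes \tau \to 0
\]
of $L$-modules, whose third term is visibly $\tau$-isotypic. Thus the lemma will follow once I show that $\Hom_L({\rm ind}_H^G K, \tau) = 0$.

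The crux is to establish $\Hom_L(K,\tau) = 0$, which is where the hypothesis ${\rm Ext}^1_L(\tau,\tau) = 0$ enters. Applying $\Hom_L(-,\tau)$ to the original short exact sequence produces a long exact sequence whose relevant portion is
\[
\Hom_L(\Theta(\tau)\otimes\tau,\tau) \to \Hom_L(W,\tau) \to \Hom_L(K,\tau) \to {\rm Ext}^1_L(\Theta(\tau) \otimes \tau,\tau).
\]
By the universal property of the maximal $\tau$-isotypic quotient, the first arrow is an isomorphism (both sides naturally identify with the linear dual $\Theta(\tau)^*$). On the other hand, writing $\Theta(\tau) \otimes \tau$ as a direct sum of copies of $\tau$ indexed by a basis of $\Theta(\tau)$ and using that ${\rm Ext}^1$ converts coproducts in the first slot into products, the rightmost group becomes a product of copies of ${\rm Ext}^1_L(\tau,\tau) = 0$. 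Hence $\Hom_L(K,\tau) = 0$.

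To deduce the vanishing for ${\rm ind}_H^G K$, I will use the realisation ${\rm ind}_H^G K \cong C_c(G) \otimes_{\mathbb{C}[H]} K$, with $L$ acting on the right-hand tensor factor only. Any $L$-equivariant map $\alpha \colon {\rm ind}_H^G K \to \tau$ then restricts, for each fixed $\phi \in C_c(G)$, to an $L$-equivariant map $w \mapsto \alpha(\phi \otimes w)$ from $K$ to $\tau$; this must vanish by the previous step, and since such simple tensors generate the induction as $\phi$ varies, $\alpha = 0$. The only conceptually substantive step is the ${\rm Ext}^1$-vanishing computation: this is precisely the obstruction the hypothesis is designed to kill, since without it $K$ could acquire a $\tau$-quotient via a non-split self-extension of $\tau$ and the induction would faithfully propagate that quotient.
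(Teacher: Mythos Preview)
Your proof is correct and follows the same overall strategy as the paper: reduce to showing that the kernel $K$ of $W\twoheadrightarrow\Theta(\tau)\otimes\tau$ satisfies $\Hom_L(K,\tau)=0$ (this is where the ${\rm Ext}^1$-hypothesis is used, exactly as you argue via the long exact sequence), and then show that compact induction preserves the property ``no $\tau$-quotient''. The only difference is in how this last step is executed. The paper fixes a compact open $K_0\subset G$, decomposes $({\rm ind}_H^G K)^{K_0}$ as a direct sum of $K^{K_i}$ over double-coset representatives (each $K^{K_i}$ being an $L$-direct summand of $K$), and concludes; you instead use the tensor-product realisation and observe that for each $\phi$ the map $w\mapsto\phi\otimes w$ is $L$-equivariant. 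Both arguments encode the same fact that ${\rm ind}_H^G K$ is, as an $L$-module, built out of copies (or direct summands) of $K$. Your formulation is slightly slicker; just note that in the $p$-adic setting the tensor product is more properly taken over the Hecke algebra $\mathcal{H}(H)$ rather than $\mathbb{C}[H]$, though this does not affect the argument since on smooth modules the two give the same result.
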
 

\begin{proof} Since ${\rm Ext}^1_L(\tau,\tau)=0$, the kernel of the projection of $W$ on $\Theta(\tau) \otimes \tau$ does not have $\tau$ as a quotient. Thus, it 
suffices to prove  that if $\Hom_L(W,\tau)=0$, then $\Hom_L( {\rm ind}_H^G W, \tau)=0$.  We shall prove that 
\[ 
\Hom_L( ({\rm ind}_H^G W)^K, \tau)=0
\] 
for any open compact subgroup $K$ of $G$. Write $G=\cup_{i\in I} H g_i K$ where $I$ is an index set, and set $K_i = H\cap g_iKg_i^{-1}$ for every $i\in I$. 
Then, as an $L$-module,  $({\rm ind}_H^G W)^K$ is a direct sum of $W^{K_i}$. Since $W^{K_i}$ is a direct summand of the $L$-module $W$, it follows that $\Hom_L( W^{K_i}, \tau)=0$, and this proves the lemma. 
\end{proof}

\vskip 5pt 
We apply Lemma \ref{L:ind} taking  $H \subset G$ to be $J \subset G_2$ and $L = \PGL_2$.  
Since ${\rm Ext}^1_{\PGL_2} (1,1)=0$, the lemma implies that 
computing $\PGL_2$-coinvariants of $\Pi_{N_2, \psi_2}$ boils down to computing the $\PGL_2$-coinvariant of $C_c(M^{\circ}_2(F))$, where 
it is the full degenerate principal series $I_{\bar \psi}(1/2)$.   We have shown:
\vskip 5pt

\begin{prop}  \label{P:shalika}
As a representation of $G_2(F)$, one has:
\[ 
\Pi_{S,\psi_S}\cong (\Pi_{N_2, \psi_2})_{\PGL_2} 
\cong {\rm ind}_{J} ^{G_2}(I_{\bar\psi}(1/2)\otimes  \rho_{\psi}). 
\] 
\end{prop}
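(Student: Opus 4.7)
The plan is to compute the Shalika period in two stages: first the twisted Jacquet module $\Pi_{N_2,\psi_2}$ as a $G_2 \times \PGL_2$-module, then the $\PGL_2$-coinvariants. These two steps correspond to the semidirect product decomposition $S = \PGL_2^{\Delta}\ltimes N_2$ of the Shalika subgroup.

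For the first stage, I would compute $\Pi_{N_2,\psi_2}$ via a Mackey-orbit analysis of $\Pi$ along $N_2$. The stabilizer of the generic character $\psi_2$ in the Levi $M_2 = (\GL_2\times\GL_2)/\GL_1^{\Delta}$ is $\PGL_2^{\Delta}$, so the stabilizer in the ambient exceptional group of the pair $(\psi_2, e_2)$ (where $e_2$ is the minimal nilpotent supporting $\psi_2$) will produce the group $\PGL_2 \times J$, with $J = [L,L]\cdot U \subset G_2$ the Jacobi-type subgroup introduced in \S\ref{SS:FJ}. The expected output is
\[ \Pi_{N_2,\psi_2} \;\cong\; \mathrm{ind}_{J}^{G_2}\bigl(C_c(M_2^{\circ}(F)) \otimes \rho_{\psi}\bigr) \]
as a $G_2\times \PGL_2$-module, with $\PGL_2$ acting on $C_c(M_2^{\circ}(F))$ through the conjugation action on trace-zero matrices and $\rho_{\psi}$ the Schr\"odinger model of the Heisenberg representation of $\tilde J$ from \S\ref{SS:FJ}. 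The identification of the $\widetilde{\SL}_2$-factor as the Weil representation on $C_c(M_2^{\circ}(F))$ is precisely the decomposition $C_c(M_2(F)) = C_c(M_2^{\circ}(F)) \otimes \rho_{\psi}$ used in the $\PGL_3\rtimes \Z/2\Z$ case in \S\ref{SS:ober}; I expect the same octonionic model of $\Pi$ to produce this formula more or less directly.

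For the second stage, I would apply Lemma \ref{L:ind} with $H = J$, $G = G_2$, $L = \PGL_2$, $W = C_c(M_2^{\circ}(F)) \otimes \rho_{\psi}$ and $\tau$ the trivial representation of $\PGL_2$. Since $\PGL_2$ acts trivially on $\rho_{\psi}$, the $\PGL_2$-coinvariants commute with tensoring by $\rho_{\psi}$, and since $\mathrm{Ext}^1_{\PGL_2}(1,1)=0$ the lemma lets me commute the coinvariant functor past the compact induction. Thus
\[ \Pi_{S,\psi_S} \;\cong\; \bigl(\Pi_{N_2,\psi_2}\bigr)_{\PGL_2}
\;\cong\; \mathrm{ind}_J^{G_2}\!\Bigl(\bigl(C_c(M_2^{\circ}(F))\bigr)_{\PGL_2} \otimes \rho_{\psi}\Bigr). \]
It then remains to identify $\bigl(C_c(M_2^{\circ}(F))\bigr)_{\PGL_2}$, as an $\widetilde{\SL}_2$-module, with the degenerate principal series $I_{\bar\psi}(1/2)$. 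This is the analogue of the local Siegel--Weil identity for the dual pair $\O_3 \times \widetilde{\SL}_2$: the $\SO_3 \cong \PGL_2$-coinvariants of the Weil representation on the three-dimensional quadratic space $M_2^{\circ}(F)$ form the big theta lift of the trivial character to $\widetilde{\SL}_2$, and this lift is exactly $I_{\bar\psi}(1/2)$ (a standard computation, and indeed one already invoked implicitly in \S\ref{SS:ober}).

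The main obstacle is the first step, namely justifying the formula for $\Pi_{N_2,\psi_2}$: one has to identify the correct subgroup of the ambient $E_7$ stabilizing $\psi_2$, verify that its intersection with $G_2 \times \PGSp_6$ is $J \times \PGL_2^{\Delta}$ up to conjugation, and match the residual representation of that stabilizer with $C_c(M_2^{\circ}(F))\otimes \rho_{\psi}$. Once this Mackey computation is in hand, stages two and three are formal: Lemma \ref{L:ind} handles the coinvariants-vs.-induction interchange, and the Weil representation identification $(C_c(M_2^{\circ}))_{\PGL_2} \cong I_{\bar\psi}(1/2)$ is standard.
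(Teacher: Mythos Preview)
Your proposal is correct and follows essentially the same approach as the paper: the paper also states the isomorphism $\Pi_{N_2,\psi_2} \cong \mathrm{ind}_J^{G_2}(C_c(M_2^{\circ}(F)) \otimes \rho_{\psi})$ as the first step (asserted without detailed proof, just as you flag), then applies Lemma \ref{L:ind} with $\mathrm{Ext}^1_{\PGL_2}(1,1)=0$ to commute coinvariants past the induction, and finally identifies the $\PGL_2$-coinvariants of $C_c(M_2^{\circ}(F))$ with $I_{\bar\psi}(1/2)$. Your identification of the first step as the genuine obstacle is accurate; the paper simply asserts it.
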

\vskip 15pt

\section{\bf Howe Duality for $G_2 \times \PGSp_6$: Tempered Case} \label{S:howetemp}
After the preparation of the previous 3 sections, we are now in a position to begin our study of the theta correspondence for the dual pair $G_2 \times \PGSp_6$. 
In this section, we shall show the Howe duality theorem for tempered representations of $G_2$. The key is to show the analog of Propositions \ref{P:one-to-one I} and \ref{P:one-to-one II} for generic representations of $G_2$. This will rely on another curious chain of containment given in the following lemma, which comes from a consideration of a game of ping-pong with periods. 
\vskip 5pt

\begin{lemma}  \label{L:curious-2} Let $\Pi$ be the minimal representation of $E_7$.  
Let $\pi \in {\rm Irr}(G_2)$ be tempered and let $\psi_V: V \rightarrow \mathbb C^{\times}$ be  a Whittaker character  for $G_2$. 
% If $E$ is a field,  assume that $\pi$ is  different from the 3 exceptional representations in Proposition \ref{P:temp-sum}.
 Let   $H={\rm PGSp}_6$ and   $\tau \in {\rm Irr}(H)$ be  tempered such that 
\[  \Hom_{G_2 \times H }(\Pi,   \pi \boxtimes \tau) \ne 0. \]
Then we have the following natural inclusions 
 \begin{align} \label{E:chain}
 & \Hom_V(\pi, \psi_V)  \subseteq \Hom_V(\Theta(\tau), \psi_V)  \cong \Hom_{S}(\tau^{\vee},\bar\psi_S)   \subseteq
 \Hom_{S} (\Theta(\pi^{\vee}), \bar\psi_S) \cong   \Hom_{G_2}( {\rm ind}^{G_2}_{J}  I_{\psi}(1/2) \otimes \rho_{\bar\psi}, \pi^{\vee} ). \notag 
 \end{align}
 If $\pi$ is generic, then all of these spaces are one-dimensional. 
 \end{lemma}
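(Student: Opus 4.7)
The plan is to assemble the chain from the four pieces in order, modeled closely on the proof of Lemma \ref{L:curious}, and then to squeeze all intermediate spaces to dimension one using uniqueness at the two ends.

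The first inclusion $\Hom_V(\pi,\psi_V) \subseteq \Hom_V(\Theta(\tau),\psi_V)$ will come for free: the hypothesis $\Hom_{G_2\times H}(\Pi,\pi\boxtimes\tau)\neq 0$ is equivalent to the existence of a surjection $\Theta(\tau)\twoheadrightarrow \pi$ of $G_2$-modules, so pullback of Whittaker functionals gives the desired injection. For the first isomorphism I will compute
\[
\Hom_V(\Theta(\tau),\psi_V) \;=\; \Hom_H(\Pi_{V,\psi_V},\tau),
\]
using that $\Pi$ surjects onto its maximal $\tau$-isotypic quotient $\Theta(\tau)\boxtimes\tau$ and that the twisted coinvariant functor is exact. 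Substituting Proposition \ref{P:SWe-Shalika}, i.e.\ $\Pi_{V,\psi_V}\cong \mathrm{ind}_S^H\psi_S$, and applying Frobenius reciprocity produces $\Hom_S(\psi_S,\tau|_S)$, which I identify with $\Hom_S(\tau^\vee,\bar\psi_S)$ by smooth contragredient duality on the one-dimensional character $\psi_S$.

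For the second inclusion I will use that $\Pi$ is defined over $\mathbb{R}$, so $\overline{\Pi}\cong \Pi$, and that $\pi,\tau$ being tempered (hence unitary) gives $\overline{\pi}\cong\pi^\vee$ and $\overline{\tau}\cong\tau^\vee$. Conjugating the surjection $\Pi\twoheadrightarrow \pi\boxtimes\tau$ then shows that $\tau^\vee$ is a quotient of $\Theta(\pi^\vee)$, so pullback of Shalika functionals provides the containment $\Hom_S(\tau^\vee,\bar\psi_S)\subseteq \Hom_S(\Theta(\pi^\vee),\bar\psi_S)$. For the second isomorphism I will run the same adjunction argument with $(S,\bar\psi_S)$-coinvariants in place of $(V,\psi_V)$-coinvariants, obtaining
\[
\Hom_S(\Theta(\pi^\vee),\bar\psi_S) \;=\; \Hom_{G_2}(\Pi_{S,\bar\psi_S},\pi^\vee).
\]
The computation proving Proposition \ref{P:shalika} goes through verbatim with $\psi$ replaced by $\bar\psi$, yielding $\Pi_{S,\bar\psi_S} \cong \mathrm{ind}_J^{G_2}(I_\psi(1/2)\otimes\rho_{\bar\psi})$, and this completes the fourth identification.

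Finally, to get the dimension statement when $\pi$ is generic I will observe that the left-hand space $\Hom_V(\pi,\psi_V)$ is one-dimensional by uniqueness of the Whittaker model, while the right-hand space is one-dimensional by Lemma \ref{L:FJ} applied with parameter $s=-1/2$ (indeed $-1/2<1/2$, so the hypothesis of that lemma is satisfied and $I_\psi(-s)=I_\psi(1/2)$ matches exactly). The chain of inclusions and isomorphisms then forces every intermediate space to have dimension exactly one. The work here is entirely bookkeeping: there is no single hard step, but one must be careful about the $\psi$ versus $\bar\psi$ conventions when transporting Proposition \ref{P:shalika} to its conjugate, and about verifying that the parameter in Lemma \ref{L:FJ} falls strictly inside the range of validity rather than on the boundary.
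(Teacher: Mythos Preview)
Your proposal is correct and follows essentially the same approach as the paper's proof: the four steps (surjection $\Theta(\tau)\twoheadrightarrow\pi$, Proposition~\ref{P:SWe-Shalika} plus Frobenius reciprocity, complex conjugation using $\overline{\Pi}\cong\Pi$ and temperedness, and Proposition~\ref{P:shalika} with $\psi\mapsto\bar\psi$) match the paper's argument exactly, and the squeeze at the end via Whittaker uniqueness and Lemma~\ref{L:FJ} at $s=-1/2$ is precisely what the paper does.
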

 
 \begin{proof} 
 We examine each containment in turn:
 \vskip 5pt
 
 \begin{itemize}
 \item  The first inclusion arises from the surjection $\Theta(\tau) \twoheadrightarrow \pi$. 
 \vskip 5pt
 \item  The second follows from the identity 
 \[ 
  \Hom_V(\Theta(\tau), \psi_V)  \cong \Hom_{V \times H}(\Pi, \psi_V \boxtimes \tau) \cong  \Hom_{H}(\Pi_{V,\psi_V},  \tau) 
 \] 
 combined with  Proposition \ref{P:SWe-Shalika} (i.e. \cite[Lemma 4,.5]{SWe}):
 \[ 
 \Pi_{V,\psi_V} \cong \mathrm{ind}_{S}^{H} \psi_S 
 \] 
  and the Frobenius reciprocity. 
  \vskip 5pt
  
  \item  For the third, observe that $ \Theta(\bar \pi)$ is the complex conjugate of $\Theta(\pi)$. 
 Since $\bar\pi \cong \pi^{\vee}$ and $\bar \tau\cong \tau^{\vee}$ and we have 
$\Theta(\pi^{\vee}) \twoheadrightarrow \tau^{\vee}$. 
\vskip 5pt

\item The fourth follows from the  identity, 
\[  \Hom_{S}( \Theta(\pi^{\vee}) , \bar \psi_S) \cong \Hom_{S \times G_2}(\Pi, \bar \psi_S \boxtimes \pi^{\vee}) \cong \Hom_{G_2}(\Pi_{S, \bar \psi_S}, \pi^{\vee}) \]
combined  with Proposition \ref{P:shalika}:
\[  \Pi_{S,\bar\psi_S} \cong {\rm  ind}_J^{G_2} I_{\psi}(1/2) \otimes \rho_{\bar \psi} \]
and Frobenius reciprocity.
\end{itemize}
\vskip 5pt

 If $\pi$ is generic, then the first and the last spaces are one-dimensional, with the latter by Lemma \ref{L:FJ} applied to $s=-1/2 < 0$. Hence, all spaces in the chain
are one-dimensional. 
  \end{proof} 
  \vskip 10pt
  
  We can now obtain the following two propositions as consequences of Lemma \ref{L:curious-2}.
  \vskip 5pt
  
  \begin{prop} \label{P:one-to-one III} 
Let $\tau \in {\rm Irr}(\PGSp_6)$ be tempered. Then $\Theta(\tau)$ cannot have two irreducible tempered and generic quotients.  
\end{prop}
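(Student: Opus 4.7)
The statement is the $\PGSp_6$-analogue of Proposition \ref{P:one-to-one II}, with "non-generic" replaced by "generic" (since Lemma \ref{L:curious-2} is the generic analogue of Lemma \ref{L:curious}). Accordingly, I would follow the same dimension-counting template: assume the hypothesis fails, push the excess of Whittaker functionals through the chain of inclusions in Lemma \ref{L:curious-2}, and derive a numerical contradiction.

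First, I would assume for contradiction that there exist irreducible tempered generic representations $\pi_1, \pi_2$ of $G_2(F)$ and a surjection $\Theta(\tau) \twoheadrightarrow \pi_1 \oplus \pi_2$; here I allow $\pi_1 \cong \pi_2$, in which case the surjection comes from the cosocle having multiplicity $\geq 2$ on a single isomorphism class. For each $i$, the assumption $\pi_i \hookleftarrow$ a quotient of $\Theta(\tau)$ means $\pi_i \boxtimes \tau$ is a quotient of the minimal representation $\Pi$ of $E_7$, so the hypothesis of Lemma \ref{L:curious-2} is satisfied with $\pi = \pi_i$.

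Next, applying Lemma \ref{L:curious-2} to $\pi_1$ (say), since $\pi_1$ is generic the entire chain
\[
  \Hom_V(\pi_1, \psi_V) \subseteq \Hom_V(\Theta(\tau), \psi_V) \cong \Hom_S(\tau^{\vee}, \bar\psi_S) \subseteq \Hom_S(\Theta(\pi_1^{\vee}), \bar\psi_S) \cong \Hom_{G_2}\bigl(\mathrm{ind}_J^{G_2}(I_{\psi}(1/2) \otimes \rho_{\bar\psi}), \pi_1^{\vee}\bigr)
\]
collapses to a chain of one-dimensional spaces. In particular,
\[
  \dim \Hom_V(\Theta(\tau), \psi_V) = 1.
\]
On the other hand, the surjection $\Theta(\tau) \twoheadrightarrow \pi_1 \oplus \pi_2$ gives
\[
  \dim \Hom_V(\Theta(\tau), \psi_V) \geq \dim \Hom_V(\pi_1, \psi_V) + \dim \Hom_V(\pi_2, \psi_V) = 1 + 1 = 2,
\]
where each summand equals $1$ by uniqueness of the Whittaker model for generic representations of $G_2$. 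This is the sought contradiction.

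\textbf{Main obstacle.} There is no serious obstacle left: all the technical content is already packaged in Lemma \ref{L:curious-2}, whose proof in turn rests on the Whittaker-Shalika identity of Proposition \ref{P:SWe-Shalika}, the Shalika-period computation of Proposition \ref{P:shalika}, and the enhanced Fourier-Jacobi uniqueness of Lemma \ref{L:FJ}. Thus the proof of Proposition \ref{P:one-to-one III} reduces to one application of Lemma \ref{L:curious-2} together with the standard multiplicity-one property of the Whittaker functional on generic representations of $G_2$, exactly parallel to the proof of Proposition \ref{P:one-to-one II}.
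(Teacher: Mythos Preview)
Your proof is correct and matches the paper's argument essentially line for line: assume $\Theta(\tau)\twoheadrightarrow\pi_1\oplus\pi_2$ with both $\pi_i$ tempered generic, invoke Lemma~\ref{L:curious-2} to get $\dim\Hom_V(\Theta(\tau),\psi_V)=1$, and contradict this with $\dim\Hom_V(\Theta(\tau),\psi_V)\geq 2$ from the surjection. One small remark: the statement is really the generic analogue of Proposition~\ref{P:one-to-one I} (both concern $\Theta(\tau)$ for $\tau$ on the $H_J$-side), not of Proposition~\ref{P:one-to-one II}, though this does not affect your argument.
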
  
\begin{proof} Let $\pi_1, \pi_2  \in {\rm Irr}(G_2)$ be tempered and generic such that $\Theta(\tau) \twoheadrightarrow \pi_1 \oplus \pi_2$. 
Then 
\[ 
\dim \Theta(\tau)_{V,\psi_V} \geq 2.
\] 
 On the other hand, $\dim \Theta(\tau)_{V,\psi_V}=1$ by Lemma \ref{L:curious-2}, which is  a contradiction. 
\end{proof} 

\vskip 5pt 
\noindent 
{\bf Remark:} This proposition is proved in \cite{SWe} using the uniqueness of Shalika functional, however, the proof of this uniqueness is difficult.
 The proof here is based on  Lemma \ref{L:FJ}
 whose proof is very elementary,  based on the Frobenius reciprocity and the root exchange technique. 

\vskip 5pt

 \begin{prop} \label{P:one-to-one IV} 
Let $\pi \in {\rm Irr}(G_2)$ be tempered and generic. 
Then $\Theta(\pi)$  cannot have two tempered irreducible quotients. In particular, its cuspidal component $\Theta(\pi)_c$ is irreducible or $0$.
\end{prop}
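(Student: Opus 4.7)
The plan is to mirror the argument of Proposition \ref{P:one-to-one II}, replacing the cubic-algebra chain of Lemma \ref{L:curious} by the Whittaker/Shalika chain of Lemma \ref{L:curious-2}. The essential quantitative input is the rigidity statement in Lemma \ref{L:curious-2}: for $\pi$ generic and tempered, every space in the chain is one-dimensional. Combined with the semisimplicity of a hypothetical quotient of the form $\tau_1 \oplus \tau_2$, this forces an immediate $2 \leq 1$ contradiction. The main technical obstacle in this strategy has already been overcome in the proof of Lemma \ref{L:curious-2}: the enhanced uniqueness of the Fourier-Jacobi model (Lemma \ref{L:FJ}) and the Shalika-period identity (Proposition \ref{P:shalika}) are what pin down the one-dimensionality of the rightmost term in the chain, so the remaining argument is purely formal.

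Concretely, I would assume for contradiction that $\Theta(\pi)$ admits two nonisomorphic irreducible tempered quotients $\tau_1, \tau_2 \in {\rm Irr}(\PGSp_6)$, so that $\Theta(\pi) \twoheadrightarrow \tau_1 \oplus \tau_2$. Since $\pi$ is tempered and hence unitarizable, the canonical identification $\overline{\Theta(\pi)} \cong \Theta(\bar\pi) \cong \Theta(\pi^\vee)$ produces a surjection $\Theta(\pi^\vee) \twoheadrightarrow \tau_1^\vee \oplus \tau_2^\vee$ with $\tau_1^\vee \ncong \tau_2^\vee$. For each $i = 1, 2$, the fact that $\tau_i$ is a quotient of $\Theta(\pi)$ guarantees $\Hom_{G_2 \times \PGSp_6}(\Pi, \pi \boxtimes \tau_i) \ne 0$, so Lemma \ref{L:curious-2} applies to the pair $(\pi, \tau_i)$; genericity of $\pi$ then forces
\[
\dim \Hom_S(\tau_i^\vee, \bar\psi_S) = 1 = \dim \Hom_S(\Theta(\pi^\vee), \bar\psi_S), \quad i = 1, 2.
\]
However, the surjection $\Theta(\pi^\vee) \twoheadrightarrow \tau_1^\vee \oplus \tau_2^\vee$ produces an injection
\[
\Hom_S(\tau_1^\vee, \bar\psi_S) \oplus \Hom_S(\tau_2^\vee, \bar\psi_S) \hookrightarrow \Hom_S(\Theta(\pi^\vee), \bar\psi_S),
\]
forcing $2 \leq 1$, the desired contradiction.

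For the final assertion, write $\Theta(\pi) = \Theta(\pi)_c \oplus \Theta(\pi)_{nc}$ via the Bernstein decomposition. The cuspidal summand $\Theta(\pi)_c$ is semisimple, being a direct sum of irreducible supercuspidal (hence tempered) representations of $\PGSp_6$; if it had two distinct irreducible summands, those would give two distinct tempered irreducible quotients of $\Theta(\pi)$, contradicting what was just proved. Hence $\Theta(\pi)_c$ is irreducible or zero.
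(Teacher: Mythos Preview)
Your argument is essentially the same as the paper's: both use Lemma \ref{L:curious-2} to force $\dim \Hom_S(\tau_i^\vee,\bar\psi_S)=1=\dim \Hom_S(\Theta(\pi^\vee),\bar\psi_S)$ and then derive $2\le 1$ from the surjection $\Theta(\pi^\vee)\twoheadrightarrow \tau_1^\vee\oplus\tau_2^\vee$. One small point: you should drop the hypothesis $\tau_1\ncong\tau_2$, since the statement (and its corollary about $\Theta(\pi)_c$) must also exclude a quotient of the form $\tau\oplus\tau$; your injection argument nowhere uses nonisomorphism, so simply delete that clause and the proof goes through unchanged.
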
  
\begin{proof}  Let $\tau_1, \tau_2  \in {\rm Irr}(\PGSp_6)$ be irreducible tempered and  such that $\Theta(\pi) \twoheadrightarrow \tau_1 \oplus \tau_2$. 
 By Lemma \ref{L:curious-2}, applied to 
$\pi^{\vee}$, $\tau_1^{\vee}$ and then to $\pi^{\vee}$, $\tau_2^{\vee}$, one has:
\[ 
1= \dim \Hom_{S}(\tau_1, \psi_S) =\dim \Hom_S(\Theta(\pi), \psi_S) =\dim \Hom_{S}(\tau_2, \psi_S). 
\] 
 Since $\tau_1\oplus \tau_2$ is a quotient of $\Theta(\pi)$, 
\[ 
1= \dim \Hom_{S}(\Theta(\pi), \psi_S) \geq \dim \Hom_{S}(\tau_1, \psi_S) + \dim \Hom_{S}(\tau_2, \psi_S) =2, 
\] 
which is a contradiction. 
\end{proof} 

Combining Propositions \ref{P:one-to-one III} and \ref{P:one-to-one IV}  with  the results of \S \ref{S:dichotomy}, we can now show the Howe duality theorem for tempered representations of $G_2$: 
\vskip 5pt

\begin{theorem} \label{T:main}
Let $\pi \in {\rm Irr}(G_2)$ be tempered and consider its big theta lift $\Theta(\pi)$ on $\PGSp_6$. Then 
\vskip 5pt

(i)  $\Theta(\pi)$ has finite length and a unique irreducible quotient $\theta(\pi)$ (if nonzero), which is tempered. 
\vskip 5pt

(ii) Moereover, for tempered $\pi_1, \pi_2 \in {\rm Irr}(G_2)$,
\[ 0 \ne  \theta(\pi_1) \cong \theta(\pi_2) \Longrightarrow \pi_1 \cong \pi_2. \]
\end{theorem}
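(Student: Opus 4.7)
The plan is to split the argument by whether $\pi$ is generic and to lean on what has already been set up: the non-generic tempered case was handled in Corollary~\ref{C:howe} and Proposition~\ref{P:one-to-one I}, while the generic case requires the new inputs Propositions~\ref{P:one-to-one III} and~\ref{P:one-to-one IV}, which in turn rest on Lemma~\ref{L:curious-2}.

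For part (i), if $\pi$ is non-generic tempered I would invoke Corollary~\ref{C:howe} with $H_J=\PGSp_6$ directly, which delivers finite length of $\Theta(\pi)$ and, when nonzero, a unique tempered irreducible cosocle. For $\pi$ generic tempered, the dichotomy theorem (Theorem~\ref{T:dicho}) first guarantees $\Theta(\pi)\neq 0$. I would then decompose $\Theta(\pi)=\Theta(\pi)_c\oplus\Theta(\pi)_{nc}$: Lemma~\ref{L:basic}(i) gives finite length of the noncuspidal summand, Proposition~\ref{P:one-to-one IV} forces $\Theta(\pi)_c$ to be irreducible or $0$, and hence $\Theta(\pi)$ has finite length. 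Its cosocle, all of whose summands are tempered by Lemma~\ref{L:basic2}(i), must be a single irreducible by a second application of Proposition~\ref{P:one-to-one IV} (which rules out any quotient of the form $\tau_1\oplus\tau_2$, including the case $\tau_1\cong\tau_2$); this irreducible is $\theta(\pi)$.

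For part (ii), suppose $\tau:=\theta(\pi_1)\cong\theta(\pi_2)\neq 0$. The standard symmetry of the theta correspondence,
\[ \Hom_{\PGSp_6}(\Theta(\pi_i),\tau)\cong\Hom_{G_2\times \PGSp_6}(\Pi,\pi_i\boxtimes\tau)\cong\Hom_{G_2}(\Theta(\tau),\pi_i), \]
shows that both $\pi_1$ and $\pi_2$ are tempered irreducible quotients of $\Theta(\tau)$. If either $\pi_i$ is non-generic, Proposition~\ref{P:one-to-one I} says it is \emph{the} unique tempered irreducible quotient of $\Theta(\tau)$, forcing $\pi_1\cong\pi_2$ and simultaneously handling any mixed generic/non-generic scenario. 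If both $\pi_i$ are generic, Proposition~\ref{P:one-to-one III} gives the same conclusion. The principal obstacle has already been overcome in this section: Lemma~\ref{L:curious-2}, whose one-dimensionality depends on the Shalika-period computation of Proposition~\ref{P:shalika} combined with the enhanced Fourier--Jacobi uniqueness of Lemma~\ref{L:FJ}, is what makes Propositions~\ref{P:one-to-one III} and~\ref{P:one-to-one IV} effective for generic $\pi$; given those, the theorem is clean bookkeeping.
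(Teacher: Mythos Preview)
Your proposal is correct and follows essentially the same route as the paper's own proof: split into non-generic (handled by Corollary~\ref{C:howe}) and generic (handled via Lemma~\ref{L:basic}(i), Lemma~\ref{L:basic2}(i), and Proposition~\ref{P:one-to-one IV}) for part (i), and for part (ii) split according to whether at least one $\pi_i$ is non-generic (Proposition~\ref{P:one-to-one I}) or both are generic (Proposition~\ref{P:one-to-one III}). Your version is simply more explicit about the intermediate steps (the cuspidal/noncuspidal decomposition, the symmetry identity exhibiting $\pi_1,\pi_2$ as quotients of $\Theta(\tau)$), and your appeal to the dichotomy theorem for nonvanishing in the generic case is harmless but not needed, since the statement already carries the caveat ``if nonzero''.
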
 

\begin{proof}
\noindent (i) We have seen (i) for non-generic $\pi$ in Corollary \ref{C:howe}. The proof for generic $\pi$ is the same, using Lemmas \ref{L:basic}(i) and \ref{L:basic2}(ii), as well as Proposition \ref{P:one-to-one IV}. 
\vskip 5pt

\noindent  (ii) If one of $\pi_1$ or $\pi_2$ is nongeneric, then the desired result follows by Proposition \ref{P:one-to-one I}. If $\pi_1$ and $\pi_2$ are both generic, then the desired result follows by Proposition \ref{P:one-to-one III}.

\end{proof}

We also point out the following corollary: 

\begin{cor} Let $\pi \in {\rm Irr}(G_2)$ be generic, supercuspidal and not a theta lift from $\PGL_3$. Then 
$\Theta(\pi)$ is generic, supercuspidal and irreducible.  
\end{cor}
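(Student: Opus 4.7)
The plan is to combine the structural results already established in the paper with the Jacquet module computations carried out in \S\ref{S:jacquet}--\ref{S:conseq}.

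First I would use that $\pi$ is generic: Corollary \ref{C:GS-Whit}(i) gives $\Theta(\pi)\neq 0$, and since $\pi$ is tempered (being supercuspidal), Theorem \ref{T:main} produces a unique irreducible tempered quotient $\theta(\pi)$ of $\Theta(\pi)$. The one-dimensionality of the Whittaker space of $\Theta(\pi)$ from Corollary \ref{C:GS-Whit}(i) then forces $\theta(\pi)$ to absorb the unique Whittaker functional, so $\theta(\pi)$ is generic.

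Next I would decompose $\Theta(\pi) = \Theta(\pi)_c \oplus \Theta(\pi)_{nc}$ into its cuspidal and non-cuspidal parts. Since cuspidal smooth representations are semisimple and $\Theta(\pi)$ has a unique irreducible quotient, showing $\Theta(\pi)_{nc}=0$ will immediately give $\Theta(\pi) = \Theta(\pi)_c = \theta(\pi)$ irreducible and supercuspidal; so the heart of the argument is to prove this vanishing.

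Suppose for contradiction that $\Theta(\pi)_{nc}\neq 0$. By Lemma \ref{L:basic}(i) it has finite length and hence an irreducible quotient $\tau$, which by Lemma \ref{L:basic2}(i) is tempered. Being tempered and non-cuspidal, $\tau$ embeds in some standard module $I_{P'}(\sigma)$ with $P'\in\{P_1,P_2,P_3\}$ and $\sigma$ a discrete series of the Levi $M'$. The surjection $\Pi\twoheadrightarrow \pi\boxtimes \tau$ together with Frobenius reciprocity then yields a quotient
\[
\Pi_{N'} \twoheadrightarrow \pi\boxtimes \sigma
\]
of $G_2\times M'$-modules, in which $\pi$ is a supercuspidal quotient on the $G_2$-factor.

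The hard part, and where I would spend most of the effort, is the case analysis using the Jacquet module descriptions from \S\ref{S:jacquet}--\ref{S:conseq}. For $P'=P_3$ (Levi $M_3\cong \GL_3$), the successive constituents of $\Pi_{N_3}$ are, up to twists, pieces linked to the minimal representation of $E_6\rtimes \Z/2\Z$ implementing the $\PGL_3\rtimes \Z/2\Z \times G_2$ theta correspondence; the appearance of a supercuspidal quotient $\pi$ of $G_2$ on such a piece forces $\pi$ to be a theta lift from $\PGL_3\rtimes \Z/2\Z$, contradicting the hypothesis. For $P'=P_1$ or $P_2$, the constituents of $\Pi_{N'}$ from \S\ref{S:jacquet} are built from non-tempered or parabolically induced $G_2$-representations and so admit no supercuspidal quotient on the $G_2$-factor. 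The main obstacle is the clean verification of the $P_1$ and $P_2$ exclusions, which requires extracting the precise structure of $\Pi_{N_1}$ and $\Pi_{N_2}$ as $G_2\times M'$-modules from \S\ref{S:jacquet}; once this is in hand, all three cases are eliminated and the contradiction yields $\Theta(\pi)_{nc}=0$.
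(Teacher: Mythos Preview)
Your overall strategy is sound and mirrors what the paper does for irreducibility, but the paper's proof is much shorter: it simply cites \cite{SWe} for the fact that $\Theta(\pi)$ is already known to be generic and supercuspidal, and then invokes Proposition~\ref{P:one-to-one IV} (since $\Theta(\pi)$ is then semisimple and cannot have two tempered irreducible quotients) to conclude irreducibility. You are essentially trying to reprove the supercuspidality result of \cite{SWe} from the Jacquet module computations of \S\ref{S:jacquet}.

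There is a genuine gap in your treatment of the cases $P'=P_1$ and $P'=P_2$. Your assertion that the constituents of $r_{P_1}(\Pi)$ and $r_{P_2}(\Pi)$ ``are built from non-tempered or parabolically induced $G_2$-representations and so admit no supercuspidal quotient on the $G_2$-factor'' is false for the \emph{top} pieces of those filtrations. By Propositions~\ref{P:jf_p1} and~\ref{P:jf_p2}, the top pieces involve $\Pi_{D_6}$ and $\Pi_{D_5}$, the minimal representations of $\SO_{12}$ and $\SO_{10}$, restricted to $G_2$; these are precisely the theta correspondences $G_2\times\SO_5$ and $G_2\times\SO_3$ treated in Proposition~\ref{P:Hn}, and they \emph{do} admit supercuspidal $G_2$-quotients (namely the representations $\pi_{\rho}$ of Lemma~\ref{L:GG06}). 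To close your argument you would need to show that a \emph{generic} supercuspidal $\pi$ cannot occur there, i.e.\ that a generic supercuspidal $\pi$ has zero theta lift to $\widetilde{\SL}_2$ in the sense of \cite{GG06}. This is true, but it is exactly the kind of input that \cite{SWe} provides, so your longer route ultimately requires the same external ingredient that the paper simply cites.
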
 
\begin{proof} By \cite{SWe}, we have known that $\Theta(\pi)$ is generic and supercuspidal (hence tempered and semisimple), but now we know by Proposition  \ref{P:one-to-one IV} that  it is also irreducible. 
\end{proof} 

\vskip 15pt

\section{\bf Jacquet Modules} \label{S:jacquet} 
 The purpose of this section is to  compute the various Jacquet modules of the minimal representation of $E_7$ with respect to the maximal parabolic subgroups of $G_2$ and $\PGSp_6$. We note that the results of this section are entirely self-contained, and do not depend on any prior results in this paper. As consequences of the results here, we deduce Lemmas \ref{L:basic} and \ref{L:basic2} for the dual pair $G_2 \times \PGSp_6$. Indeed, we shall determine in Theorem \ref{T:nont}   the theta lifts of all non-tempered representations of $G_2$ and $\PGSp_6$ precisely.   
   \vskip 5pt
 
\subsection{\bf Jacquet functors for $G_2$}  \label{SS:jac}
Recall that  $P=MN$ and $Q=LU$ are the two maximal parabolic subgroups of $G_2$ as before,   in  standard position relative to a maximal split 
torus $T$ in $G_2$ and a choice of positive roots, so that $P \cap Q$ is a Borel subgroup. In particular, their Lie algebras arise from 
$\mathbb Z$-gradings given by two fundamental co-characters. 
Since $G_2$ is contained in $E_7$, as a memeber of the dual pair, the two co-characters give $\mathbb Z$-gradings of the Lie algebra of $E_7$, 
defining parabolic subgroups $\mathcal P=\mathcal M \mathcal N$ and $\mathcal Q=\mathcal L \mathcal U$ of $E_7$, whose intersections with $G_2$ are $P$ and $Q$, respectively. The Lie types of the Levi factors 
 $\mathcal M $ and $\mathcal L$ are $D_6$ and $A_1 \times A_5$, as explained in  \cite{GS99}.   In the rest of the paper, we shall fix the following: 
 
 \begin{itemize} 
\item The group $P$ acts on  $C_c(\GL_2)$  and $C_c(\GL_1)$ by left translation by $g$ and $\det(g)$, respectively,  via the identification $M\cong \GL_2$ .
\item The group $Q$ acts on  $C_c(\GL_2)$ and $C_c(\GL_1)$ by left translation by  $g$ and $\det(g)$, respectively,  via the  identification $L\cong \GL_2$.
\item Let $\bar B$ be the group of lower-triangular matrices in $\GL_2$. Then $\bar B$   acts on  $C_c(\GL_1)$ by right translation by the $(1,1)$ matrix entry of $g\in \bar B$.   
\end{itemize}

 \vskip 10 pt

We identify $M\cong \GL_2$ such that the action of $M$ on $N/[N,N]$ is the symmetric cube of the standard representation of $\GL_2$ twisted 
by determinant inverse. In particular, a scalar matrix $(z,z)$ in $\GL_2$ acts by $z$. We have \cite[Theorem 6.1]{MS},  
 
 \begin{prop}\label{P:jf_P}  Let $H=\PGSp_6$. As a $\GL_2\times H$-module, $r_{P}(\Pi)$ (the normalized Jacquet functor) has a filtration with three successive 
sub quotients (top to bottom): 
\begin{enumerate} 
\item $\delta_P^{-1/2}\cdot \Pi_{\mathcal N}=\Pi_{D_6} \cdot |\det|^{1/2}\oplus \Pi_{\emptyset}\cdot |\det|^{3/2}$. 
\item  $\Ind_{\bar B\times P_2}^{\GL_2\times H}(\delta \cdot C_c(\GL_1))$. 
\item  $\Ind_{P_{13}}^{H} C_c(\GL_2)$. 
\end{enumerate}  
Here, note that:
\vskip 5pt

\begin{itemize}
\item[-] In (1), the center of $M\cong \GL_2$ acts trivially on both $\Pi_{D_6}$ and $\Pi_{\emptyset}$, the minimal and the trivial representation of the Levi $\mathcal M$. 
\item[-] In (2), $\delta= |\cdot|^{-1/2} \times |\cdot |$ is a character of the group $\bar B$ of  lower triangular matrices in $\GL_2$.
\end{itemize} 
\end{prop}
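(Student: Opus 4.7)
The plan is to leverage the inclusion $N \subset \mathcal{N}$, where $\mathcal{N}$ is the nilradical of the parabolic $\mathcal{P} = \mathcal{M}\mathcal{N}$ of $E_7$ with Levi of type $D_6$, and whose intersection with $G_2$ is $P$. Computing $r_P(\Pi)$ then reduces to two tasks: computing $r_{\mathcal{P}}(\Pi)$, which will yield the top layer (1), and then analysing the ``contribution from $\mathcal{N}/N$'' via an orbit stratification on the remaining abelian steps.

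For the top layer, the key input is the standard description of the $\mathcal{N}$-coinvariants of the minimal representation of $E_7$: one has an $\mathcal{M}$-equivariant isomorphism $\Pi_{\mathcal{N}} \cong \Pi_{D_6} \oplus \Pi_{\emptyset}$, the direct sum of the minimal representation of $\mathcal{M}$ and the trivial representation, the two summands corresponding to the minimal and zero nilpotent orbit closures in $\mathcal{N}$. After restricting to $\GL_2 \times H \subset \mathcal{M}$ and tracking the action of the centre of $\GL_2$ together with the normalisation twist $\delta_P^{-1/2}$, one obtains the two summands of (1) with their asserted twists $|\det|^{1/2}$ and $|\det|^{3/2}$; the differing exponents come from the different weights by which the centre of $M$ acts on the two orbit closures.

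For the remaining two layers, the plan is to filter $\mathcal{N}$ by $P \times H$-stable subgroups coming from the root grading of $E_7$, and to analyse the successive twisted coinvariants on the abelian subquotients. By Mackey theory, each layer decomposes into summands indexed by $M \times H$-orbits of non-trivial characters of the relevant subquotient of $\mathcal{N}/N$. There are exactly two relevant orbits: an orbit whose stabiliser in $H$ is $P_{13}$ produces layer (3) as $\Ind_{P_{13}}^{H} C_c(\GL_2)$, while another orbit whose joint stabiliser in $M \times H$ is $\bar B \times P_2$ produces layer (2). The appearance of the regular representations $C_c(\GL_1)$ and $C_c(\GL_2)$ reflects the free transitive action of $\GL_1$ or $\GL_2$ on the orbit in question.

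The main technical obstacle will be the precise identification of the orbit stabilisers with the claimed parabolic subgroups $P_{13}$ and $\bar B \times P_2$, and the verification of the character $\delta = |\cdot|^{-1/2} \times |\cdot|$ on $\bar B$ (which encodes the combined effect of the torus weights on the orbit, on the induction twist, and on the character by which the Heisenberg centre enters). This requires a careful, if routine, root-theoretic book-keeping; the computation follows the approach of \cite[\S 6]{MS}, adapted to our conventions (in particular, our identification of $M$ with $\GL_2$ so that $N/[N,N] \cong \mathrm{Sym}^3(F^2) \otimes \det^{-1}$, and our realisation of $H = \PGSp_6$ inside $\mathcal{M}$ as the relevant factor of the classical dual pair $\SL_2 \times \Sp_6/\mu_2$ in $D_6$).
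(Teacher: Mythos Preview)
Your outline is correct in spirit and follows the standard approach to computing Jacquet modules of minimal representations via orbit stratification. Note, however, that the paper does not give a proof of this proposition at all: it simply cites \cite[Theorem 6.1]{MS} and states the result. Your sketch is essentially a summary of the method used in \cite{MS}, and you explicitly acknowledge this at the end. So there is no discrepancy in approach; you have supplied a proof outline where the paper supplies only a reference.

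One small point worth flagging: your description of the orbit analysis is slightly imprecise. The two lower layers do not arise from two separate orbits on a single abelian quotient of $\mathcal{N}/N$; rather, as in the analogous computations for $P_3$, $P_1$ and $P_2$ later in the paper, the relevant variety $\Omega$ (of null-subspaces in $\mathbb{O}_0$) has two $G_2 \times H$-orbits $\Omega_1$ and $\Omega_2$ according to the dimension of the null-space, and these give layers (2) and (3) respectively. The stabiliser identifications and the character $\delta$ are then read off from the explicit description of these orbits. This is a matter of wording rather than a genuine gap, since you correctly identify the endpoint and cite \cite[\S 6]{MS} for the details.
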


\vskip 5pt

 Let $W$ be the Weil representation for the similitude dual pair $\GL_2 \times \mathrm{GSO}_4$; see  \cite{R96} where theta correspondences for similitude groups are 
treated in detail.   Observe that $ \mathrm{GSO}_4\cong (\GL_2 \times \GL_2) / \GL_1^{\nabla}$, with the isomorphism realized by latter 
acting on the space $\mathbb M_2(F)$ of $2\times 2$ matrices by left and right multiplication and  the quadratic form given by the determinant.  We identify the first factor $\GL_2$ with $L$  so that the action of $L$ on $U/[U,U]$ is the standard representation of $\GL_2$. 
The irreducible quotients of $W$ are $\pi^{\vee}\otimes \pi \otimes \pi$, where $\pi$ is an irreducible representation of $\GL_2$.  We need a slight refinement of this 
to the big theta lifts. 

\begin{lemma}  \label{L:sim-big}
Consider the similitude theta correspondence for the dual pair $\GL_2 \times \mathrm{GSO}_4$ on $W$. Let $\pi$ be an irreducible generic representation of $\GL_2$. 
Then $\Theta(\pi^{\vee}) = \pi \otimes \pi$ and $ \Theta(\pi \otimes \pi)= \pi^{\vee}$.  
\end{lemma}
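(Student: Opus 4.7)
The plan is to compute the two big theta lifts by a direct Mackey--Peter--Weyl analysis of the Weil representation on $\mathcal{S}(M_2(F))$. First, I would recall that $W$ can be realized on $\mathcal{S}(M_2(F))$, with $(g,(g_1,g_2)) \in \GL_2 \times \mathrm{GSO}_4$ acting on $X \in M_2(F)$ via a combination of $g_1$-left and $\bar g_2$-right multiplication on $X$ (the transpose $\bar g_2$ making the action preserve $\det$) together with the similitude twist matching the action of $g$. Stratify $M_2(F)$ by rank: the rank-$2$ stratum is the open dense orbit $\GL_2(F)$ under the $\GL_2 \times \GL_2$-action, and its complement $\{\det = 0\}$ is a closed subvariety of smaller dimension. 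This gives a short exact sequence
\[
0 \longrightarrow \mathcal{S}(\{\det = 0\}) \longrightarrow W \longrightarrow \mathcal{S}(\GL_2(F)) \longrightarrow 0
\]
of $\GL_2 \times \mathrm{GSO}_4$-modules, splitting the computation into an open and a closed piece.

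Next, I would analyse the open-orbit quotient $\mathcal{S}(\GL_2(F))$, which is the regular $\GL_2 \times \GL_2$-bimodule under left and right translation (twisted by an explicit character of the similitude factor). The $p$-adic Peter--Weyl principle, implemented via the matrix-coefficient map $v \otimes \check v \mapsto (g \mapsto \langle \sigma(g)v,\check v\rangle)$, identifies the maximal $\sigma^{\vee}$-isotypic quotient of $\mathcal{S}(\GL_2(F))$ under left translation with $\sigma^{\vee} \boxtimes \sigma$ as a $\GL_2 \times \GL_2$-bimodule, for any irreducible admissible $\sigma$. Taking $\sigma = \pi^{\vee}$ and running the second factor back through the transpose (which interchanges a $\GL_2$-representation with its contragredient), this gives $\pi \otimes \pi$ as the $\mathrm{GSO}_4$-module obtained from the open orbit, matching the statement of the lemma.

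Finally, I would show that the closed-orbit contribution $\mathcal{S}(\{\det = 0\})$ carries no $\pi^{\vee}$-isotypic quotient when $\pi$ is generic. The set $\{\det = 0\} \setminus \{0\}$ is a single $\GL_2 \times \GL_2$-orbit whose stabiliser is a proper parabolic, so $\mathcal{S}(\{\det = 0\})$ is (up to an extension by the delta function at the origin) parabolically induced from a Levi factor of $\GL_2$; a standard central-character check then shows that $\pi^{\vee}$ cannot appear as a quotient for generic $\pi$. This yields $\Theta(\pi^{\vee}) = \pi \otimes \pi$ irreducibly, as desired. The reverse assertion $\Theta(\pi \otimes \pi) = \pi^{\vee}$ follows by running the same Peter--Weyl computation from the other side of the dual pair, or equivalently by Frobenius reciprocity applied to the quotient produced in the first direction.

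The main obstacle will be the careful bookkeeping of similitude twists and the transpose convention in the identification $\mathrm{GSO}_4 \cong (\GL_2 \times \GL_2)/\GL_1^{\nabla}$ used in the paper, so as to pin down the answer as $\pi \otimes \pi$ rather than its contragredient, and to justify irreducibility of the \emph{big} theta lift rather than merely its cosocle, which requires the vanishing on the singular stratum noted above. One may also invoke the similitude theta correspondence machinery of \cite{R96} as an alternative path to the irreducibility conclusion.
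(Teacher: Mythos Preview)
Your approach has a genuine gap. The rank stratification of $M_2(F)$ does \emph{not} yield a filtration of $W$ by $\GL_2 \times \mathrm{GSO}_4$-submodules. In the Schr\"odinger model $\mathcal{S}(M_2(F))$, only the orthogonal side $\mathrm{GSO}_4$ acts geometrically (by left and right multiplication); the symplectic $\GL_2$ acts through the Weil representation formulas, which involve Fourier transforms and do not preserve the locus $\{\det = 0\}$. Thus your short exact sequence is only $\mathrm{GSO}_4$-equivariant, and you cannot take the $\pi^{\vee}$-isotypic quotient (for the symplectic $\GL_2$) termwise. A secondary issue: even as a sequence of $\mathrm{GSO}_4$-modules the arrows go the other way --- functions supported on the open set $\GL_2(F)$ form the submodule, and restriction to the closed set is the quotient. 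Finally, the ``central-character check'' on the singular stratum is too vague: generic $\pi$ includes irreducible principal series, and the singular piece is itself parabolically induced, so a simple exponent comparison does not immediately kill it.

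The paper circumvents all of this by passing to Whittaker coinvariants: one has $W_{V,\psi} \cong C_c(\GL_2)$ as a module for the remaining two copies of $\GL_2$, where $V$ is the unipotent radical in \emph{any one} of the three $\GL_2$'s. Taking $V$ inside a $\mathrm{GSO}_4$-factor and using that $\pi$ is generic (so $\pi_{V,\psi}$ is one-dimensional) shows $\Theta(\pi \otimes \pi) \otimes \pi$ is a quotient of the regular representation, forcing $\Theta(\pi \otimes \pi) \cong \pi^{\vee}$. Taking $V$ inside the symplectic $\GL_2$ bounds $\Theta(\pi^{\vee})$ similarly; a residual case when $\pi$ is a twist of Steinberg (where $\Theta(\pi^{\vee})$ might a priori be a nontrivial extension of $\mathrm{st}\otimes\mathrm{st}$ by $1\otimes 1$) is ruled out by an $\mathrm{Ext}^1$ vanishing over $\SO_4$ together with a central-character argument. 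Your Peter--Weyl intuition is morally in the right place, but it only becomes rigorous after first killing the metaplectic action via the Whittaker functor.
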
 
\begin{proof} Let $V\cong F$ be a maximal unipotent subgroup of $\GL_2$ and $\psi$ a non-trivial character of $V$. We shall use that 
$W_{V,\psi} =C_c(\GL_2)$, the regular representation of $\GL_2$, where $V$ is in any of the three $\GL_2$.  

We have  $\Theta(\pi\otimes \pi)\otimes (\pi\otimes \pi)$  as a quotient of $W$. Apply the functor of $(V,\psi)$-coinvariants, with $V$ sitting in one of $\GL_2$ 
factors of $\mathrm{GSO}_4$, to conclude that $\Theta(\pi\otimes \pi)\otimes \pi$ is a quotient of the regular representation of $\GL_2$. This implies that 
$\Theta(\pi\otimes \pi) \cong \pi^{\vee}$, as desired.  In the other direction,  similar arguing shows that $\Theta(\pi^{\vee})$ cannot be an extension of $\pi\otimes \pi$ by 
$\pi\otimes \pi$. Thus the lemma holds except perhaps when $\pi$ is a character twist of the Steinberg representation ${\rm st}$. 
For example,  $\Theta({\rm st})$ could 
be a non trivial extension of ${\rm st}\otimes {\rm st}$ by $1 \otimes 1$.  But 
\[ 
{\rm Ext}^1_{\SO_4}( 1\otimes 1, {\rm st}\otimes {\rm st})=0 
\] 
thus one can have a non-trivial extension of these two representations of $\mathrm{GSO}_4$ only if the center of $\mathrm{GSO}_4$ does not act by scalars 
on $\Theta({\rm st})$. But it does, since the centers of the three $\GL_2$ are identified, the action on  $\Theta({\rm st})$  is equal to the action on 
${\rm st}$ where it is scalar action.

\end{proof} 
 
 \begin{prop}\label{P:jf_Q}  
 Let $H=\PGSp_6$. As a $\GL_2\times H$-module, $r_{Q}(\Pi)$ (the normalized Jacquet functor) has a  filtration with three successive 
sub quotients (top to bottom): 
\begin{enumerate} 
\item $\delta_Q^{-1/2}\cdot \Pi_{\mathcal U}=\Pi_{A_5} \cdot |\det|^{3/2}\oplus \Pi_{A_1}\cdot |\det|^2$. 
\item  $\Ind_{\bar B\times P_2}^{\GL_2\times H}(\delta \cdot C_c(\GL_1))$. 
\item  $\Ind_{P_2}^{H} W$. 
\end{enumerate}  
\vskip 5pt

\begin{itemize}
\item[-] In (1), the center of $L\cong \GL_2$ acts trivially on both $\Pi_{A_5}$ and $\Pi_{A_1}$, the minimal and a principal series 
 representation of the two factors of $\mathcal L$. 
\item[-] In (2) $\delta= |\cdot|^{1/2} \times |\cdot |$ is a character of the group $\bar B$ of  lower triangular matrices in $\GL_2$. 
\end{itemize} 
\end{prop}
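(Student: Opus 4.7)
The plan is to mimic Magaard--Savin's proof of Proposition~\ref{P:jf_P} (i.e.\ \cite[Theorem~6.1]{MS}), replacing the Heisenberg parabolic $P$ of $G_2$ by the three-step parabolic $Q = LU$, and the $D_6$-parabolic $\mathcal{P}=\mathcal{M}\mathcal{N}$ of $E_7$ by the $A_1\times A_5$-parabolic $\mathcal{Q}=\mathcal{L}\mathcal{U}$. The basic geometric input is that $L = G_2\cap\mathcal{L}$, $U = G_2\cap\mathcal{U}$, and $U$ is normal in $\mathcal{U}$ with $\mathcal{U}/U$ abelian (modulo the filtration by the $Q$-grading).

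First, I would compute $\Pi_{\mathcal{U}}$ as an $\mathcal{L}$-module using the standard description of the Jacquet module of the minimal representation of $E_7$ along $\mathcal{Q}$, as developed in \cite{GS99}. The result is a direct sum $\Pi_{A_5}\oplus\Pi_{A_1}$ (up to the $\delta_{\mathcal{Q}}^{1/2}$ twist), where $\Pi_{A_5}$ is the minimal representation of the $A_5$ factor and $\Pi_{A_1}$ is an explicit principal series of the $A_1$ factor; these are the data recorded in layer~(1). The discrepancy between $\delta_Q^{1/2}$ and $\delta_{\mathcal{Q}}^{1/2}$, combined with tracking how the center of $L\cong\GL_2$ embeds in the center of $\mathcal{L}$, yields the twists $|\det|^{3/2}$ and $|\det|^2$. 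This layer is a quotient of $\Pi_U$, corresponding to the trivial character of $\mathcal{U}/U$.

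Next, I would analyse the kernel of the natural surjection $\Pi_U\twoheadrightarrow\Pi_{\mathcal{U}}$ by a Mackey-style decomposition over the $L\times H$-orbits of nontrivial characters $\psi$ of $\mathcal{U}/U$. For each orbit representative $\psi$ with stabilizer $\mathrm{Stab}(\psi)\subset L\times H$, the contribution is $\ind_{\mathrm{Stab}(\psi)}^{L\times H}\Pi_{\mathcal{U},\psi}$. For $\psi$ in the open (generic) orbit, the stabilizer is $L\times P_2$; the crucial input here is the identification of the twisted Jacquet module $\Pi_{\mathcal{U},\psi}$ with the similitude Weil representation $W$ for $\GL_2\times\mathrm{GSO}_4$. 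Such an identification is standard for minimal representations and is in the same spirit as Lemma~\ref{L:sim-big}; it produces layer~(3). For $\psi$ in the intermediate (rank one) orbit the stabilizer in $L$ shrinks to a mirabolic-like subgroup (which becomes $\bar B$ after absorbing modulus shifts), the stabilizer in $H$ remains $P_2$, and the twisted Jacquet module is computed to be the character $\delta = |\cdot|^{1/2}\times |\cdot|$ twisted by the regular representation $C_c(\GL_1)$ coming from the one-parameter family of such $\psi$'s; this produces layer~(2).

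The main obstacle is the precise identification of the twisted Jacquet modules in the generic and intermediate strata: these have to match a Weil representation and the claimed one-dimensional twist of $C_c(\GL_1)$, respectively, and the modulus characters must be computed carefully because $\mathcal{U}$ is two-step while $U$ is three-step, so that the normalizations do not transport transparently from the $E_7$-picture to the $G_2$-picture. Once those identifications and all normalizations are pinned down, assembling the three layers is a formal consequence of the Mackey filtration on $\Pi_U$ corresponding to the partition of $\widehat{\mathcal{U}/U}$ into $L\times H$-orbits, exactly as in~\cite{MS}.
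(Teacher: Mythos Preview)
Your approach is correct and is essentially a reconstruction of what \cite[Proposition~6.8]{GS99} does in the non-split case. The paper's proof simply cites that result, noting that the argument carries over verbatim to the split $H=\PGSp_6$, so the three-layer filtration with pieces (1), (3) and the \emph{shape} of (2) is taken as known. You are proposing to rederive this, which is fine but redundant given the reference.

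Where the paper genuinely differs from your plan is in the determination of the character~$\delta$ in layer~(2). You propose to compute $\delta$ directly by tracking modulus characters through the Mackey argument, and you correctly flag this as the main obstacle (the mismatch between the two-step $\mathcal{U}$ and three-step $U$ makes the bookkeeping unpleasant). The paper sidesteps this entirely with a compatibility trick: take a generic one-dimensional character $\chi$ of $L\cong\GL_2$; from layer~(3) and the known quotients of the similitude Weil representation~$W$, one sees that $I_Q(\chi)\otimes I_2(\chi\otimes\chi)$ is an irreducible quotient of~$\Pi$. Applying $r_Q$ to this quotient and using the (elementary) computation of $r_Q(I_Q(\chi))$, one reads off which characters of~$\bar B$ must occur in layer~(2), and this pins down~$\delta$ without any direct analysis of twisted Jacquet modules or normalizations inside~$E_7$. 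Your route would certainly work, but the paper's method converts a delicate modulus computation into a routine Jacquet-module exercise for $G_2$ principal series.
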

\begin{proof} 
This proposition is entirely similar to Proposition 6.8 in  \cite{GS99}, which treated the case of non-split form of $H$,  
except the character $\delta$ was not determined there. This is done as follows. For a generic  character 
$\chi$ of $\GL_2$, representations $I_Q(\chi)$ and $I_2(\chi\otimes \chi)$ are both irreducible and $I_Q(\chi) \otimes I_2(\chi\otimes \chi)$ is a quotient of 
$\Pi$, this follows from the bottom factor (3) of the filtration. Hence $r_Q(I_Q(\chi))\otimes I_2(\chi\otimes \chi)$ is a quotient of 
$r_{P_2}(\Pi)$. Now determining $\delta$ is an easy exercise using $r_{Q}(I_Q(\chi))$.  
\end{proof}

\vskip 10pt

\subsection{\bf Non-tempered representations}  \label{SS:nt}
We enumerate the nontempered irreducible representations of $G_2$ using the discussion from Section 3. 
Let $P=MN$ and $Q=LU$ be the two maximal parabolic subgroups in $G_2$ as before. Their Levi groups are isomorphic to $\GL_2$. 
Let $\tau$ be a representation of $\GL_2$, and let 
$I_P(\tau)$ and $I_Q(\tau)$ be the corresponding normalized induced representations of $G_2$. Irreducible, non-tempered representations of $G_2$ 
are described as follows, where $\tau$ is irreducible, and $\omega_{\tau}$ is the central character of $\tau$.

\begin{itemize} 
\item[(a)]  Unique irreducible quotient of $I_Q(\tau)$ where $\tau$ is an unramified twist of a tempered representation such that $|\omega_{\tau}|=|\cdot |^s$ for some $s>0$. 
\item[(b)]  Unique irreducible quotient of $I_P(\tau)$ where $\tau$ is an unramified twist of a tempered representation such that $|\omega_{\tau}|=|\cdot |^s$ for some $s>0$. 
\item[(c)]  Unique quotient of $I_P(\tau)$ where $\tau$ is the unique quotient of a representation induced from an ordered pair of characters $\chi_1, \chi_2$ such that 
$|\chi_1|=|\cdot |^{s_1}$, $|\chi_2|=|\cdot |^{s_2}$ where $s_1> s_2 > 0$. 
\end{itemize}  
In (a) and (b),  $I_Q(\tau)$ and $I_P(\tau)$ are  standard modules, while in (c), $I_P(\tau)$ is a quotient of a standard module associated 
to the minimal parabolic $P\cap Q$.  In any case, each of these induced representations has a unique irreducible quotient which we denote by $J_Q(\tau)$ in (a) and by $J_P(\tau)$ in (b) and (c). These representations $J_Q(\tau)$ and $J_P(\tau)$ exhaust the irreducible nontempered representations of $G_2$. 

\vskip 5pt

 %The functoriality principle for the inclusion of dual groups $G_2(\mathbb C) \subset {\rm Spin}_7(\mathbb C)$ predicts where 
%non-tempered representations of $G_2$ should lift.  We summarize here, for more details see Proposition 1.1 and its proof in \cite{SWe}. 
We also enumerate some relevant nontempered representations of $\PGSp_6$.
 Let $P_i=M_i N_i$, $i=1,2,3$ be three maximal parabolic subgroups of $\PGSp_6$. 
 Let  $I_i(\sigma)$ denote the representation of $\PGSp_6$ obtained by 
 normalized parabolic induction from $P_i$, and let $I_{jk}(\sigma)$  denote the representation of $\PGSp_6$ obtained by 
 normalized parabolic induction from $P_j\cap P_k$.  We shall consider the following non tempered representations of $\PGSp_6$, corresponding to the cases (a), (b) and (c) above:
 \vskip 5pt
 
 \begin{itemize}
 \item[(a')]  If $\tau$ is an irreducible representation of $L=\GL_2$ satisfying the conditions of (a) above,   let $\sigma =\tau\otimes \tau$ be a representation of $M_2 \cong \GL_2 \times \GL_2 / \GL_1^{\nabla} \cong {\rm GSO}_4$. Then $I_2(\sigma)$ is a standard module, with unique irreducible quotient $J_2(\sigma) = J_2(\tau \otimes \tau)$. 
 \vskip 5pt
 
 \item[(b')]  If  $\tau$ is an irreducible representation of $M=\GL_2$, satisfying the conditions of (b) above,  let  $\sigma =\tau \otimes 1$ be a representation of $M_1\cap M_3 \cong \GL_2 \times \GL_1$.  Then $I_{13}(\sigma)$ is   a standard module with unique irreducible quotient $J_{13}(\sigma) = J_{13}(\tau \otimes 1)$.
 \vskip 5pt
 
 \item[(c')]  If  $\tau$ is an irreducible representation of $M=\GL_2$, satisfying the conditions of (c) above,  let  $\sigma =\tau \otimes 1$ be a representation of $M_1\cap M_3 \cong \GL_2 \times \GL_1$.  Then $I_{13}(\sigma)$ is  a quotient of a standard module associated to the 
 Borel subgroup, Hence, it has a unique irreducible quotient which we denote by $J_{13}(\sigma) = J_{13}(\tau \otimes 1)$.
 \end{itemize}
 
\vskip 5pt 
\subsection{\bf Theta lifts from $G_2$.}
Now the following lemma attempts to compute the theta lifts of the above non tempered representations of $G_2$ to $\PGSp_6$.
\vskip 5pt

\begin{lemma} \label{L:non_tempered} 
Let $\pi\in  {\rm Irr}(G_2)$ be non-tempered. 
\begin{itemize} 
\item If $\pi \subset I_Q( \tau^{\vee})$ where $\tau$ is as in (the first bullet (a) above, then $\Theta(\pi)$ is a quotient of $I_{2}(\tau\otimes\tau)$ and hence has finite length.
 Moreover, $\Theta(J_2(\tau\otimes\tau))\neq 0$ where $J_2(\tau\otimes\tau)$ is the unique irreducible quotient of $I_2(\tau\otimes\tau)$. 
 
\item If $\pi \subset I_P( \tau^{\vee})$ where $\tau$ is as in (the second bullet) (b) and (c)  above, then $\Theta(\pi)$ is a quotient of $I_{13}(\tau\otimes 1)$ and hence has finite length.
 Moreover, $\Theta(J_{13}(\tau\otimes 1))\neq 0$ where $J_{13}(\tau\otimes 1)$ is the unique irreducible quotient of $I_{13}(\tau\otimes 1)$. 
\end{itemize} 
\end{lemma}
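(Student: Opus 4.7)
The argument follows the Kudla--Rallis template: feed the assumption on $\pi$ into the Jacquet-module filtration of $\Pi$ and extract the theta lift from the Levi-level identity in Lemma~\ref{L:sim-big}. I would write out the first bullet in detail; the second is strictly parallel, with Proposition~\ref{P:jf_P} replacing Proposition~\ref{P:jf_Q} and with $C_c(\GL_2)$ (whose maximal $\tau^\vee$-quotient as an $M$-module is supplied by a Peter--Weyl-type computation) taking the place of the Weil representation $W$.

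The starting point is Frobenius reciprocity
\[
\Hom_{G_2}(\Pi,I_Q(\tau^\vee))\;\cong\;\Hom_L(r_Q(\Pi),\tau^\vee)
\]
as $H$-modules, and the plan is to compute the right-hand side using the three-step filtration of $r_Q(\Pi)$ of Proposition~\ref{P:jf_Q}. On the top piece $\Pi_{A_5}|\det|^{3/2}\oplus\Pi_{A_1}|\det|^2$, the scalar $(z,z)\in L$ acts through $|z|^3$ and $|z|^4$, while $|\omega_{\tau^\vee}|=|\cdot|^{-s}$ with $s>0$; both $\Hom_L$ and $\mathrm{Ext}^1_L$ into $\tau^\vee$ therefore vanish. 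On the middle piece $\Ind_{\bar B}^L(\delta\cdot C_c(\GL_1))$, a Langlands-classification check using the rigid second exponent $|\cdot|^{1}$ of $\delta$, combined with the strict positivity $s>0$, rules out $\tau^\vee$ as a Jordan--H\"older constituent, again with vanishing $\mathrm{Ext}^1_L$. Only the bottom piece $\Ind_{P_2}^H W$ then survives, and Lemma~\ref{L:sim-big} supplies $\Hom_L(W,\tau^\vee)\cong\tau^\vee\otimes\tau^\vee$ as an $M_2$-module; inducing to $H$ yields
\[
\Hom_L(r_Q(\Pi),\tau^\vee)\;\cong\;I_2(\tau^\vee\otimes\tau^\vee),
\]
or dually, a canonical surjection $r_Q(\Pi)\twoheadrightarrow\tau^\vee\otimes I_2(\tau\otimes\tau)$.

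Since $\pi\hookrightarrow I_Q(\tau^\vee)$, the identification $\Theta(\pi)^\vee\cong\Hom_{G_2}(\Pi,\pi)^{\mathrm{sm}}$ embeds $\Theta(\pi)^\vee$ into the admissible finite-length module $I_2(\tau^\vee\otimes\tau^\vee)$; dualizing produces the desired surjection $I_2(\tau\otimes\tau)\twoheadrightarrow\Theta(\pi)$, so that $\Theta(\pi)$ has finite length.  For the non-vanishing of $\Theta(J_2(\tau\otimes\tau))$, I would compose the surjection $r_Q(\Pi)\twoheadrightarrow\tau^\vee\otimes I_2(\tau\otimes\tau)$ with the Langlands projection $I_2(\tau\otimes\tau)\twoheadrightarrow J_2(\tau\otimes\tau)$ and translate back through Frobenius to obtain a nonzero $G_2\times H$-equivariant map $\Pi\to I_Q(\tau^\vee)\otimes J_2(\tau\otimes\tau)$.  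Since $I_Q(\tau^\vee)\otimes J_2(\tau\otimes\tau)$ has finite length, the image admits an irreducible quotient of the form $\pi'\otimes J_2(\tau\otimes\tau)$ for some irreducible subquotient $\pi'\subset I_Q(\tau^\vee)$, which is equivalent to $\Theta(J_2(\tau\otimes\tau))\twoheadrightarrow\pi'$ and hence $\Theta(J_2(\tau\otimes\tau))\ne 0$.

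The hard part will be the middle subquotient of the filtration: the module $\Ind_{\bar B}^L(\delta\cdot C_c(\GL_1))$ accesses many principal-series constituents of $L$, so a one-line central-character check is not enough, and one must marshal the full Jordan--H\"older content of $r_B(\tau^\vee)$ (including the twisted Steinberg and non-regular situations) together with the rigidity of $\delta$ and the hypothesis $s>0$ to exclude every potential coincidence, while simultaneously verifying vanishing of the corresponding $\mathrm{Ext}^1_L$ groups so that pointwise Hom-vanishing promotes to the filtration-level identification $\Hom_L(r_Q(\Pi),\tau^\vee)=\Hom_L(\Ind_{P_2}^H W,\tau^\vee)$.
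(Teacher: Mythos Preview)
Your approach is essentially the same as the paper's: Frobenius reciprocity, the filtration of $r_Q(\Pi)$ from Proposition~\ref{P:jf_Q}, central-character killing of the top piece, and Lemma~\ref{L:sim-big} on the bottom piece to identify $\Hom_L(r_Q(\Pi),\tau^\vee)$ with $I_2(\tau\otimes\tau)^*$, then pass to smooth vectors and dualize.

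Two remarks. First, you are overestimating the difficulty of the middle piece. The paper dispatches it in one line via second adjointness: since $\Pi_2$ is induced from $\bar B$, one has $\mathrm{Ext}^i_L(\Pi_2,\tau^\vee)\cong\mathrm{Ext}^i_T(\Ind_{P_2}^H(\delta\cdot C_c(\GL_1)),\,r_B(\tau^\vee))$, and on the left-hand module the second $\GL_1$ of $T$ acts rigidly by $|\cdot|^1$, whereas on every constituent of $r_B(\tau^\vee)$ it acts with exponent $\leq 0$ (since $\tau$ is tempered twisted by $|\det|^s$ with $s>0$). No Jordan--H\"older bookkeeping is needed. Second, when you write ``inducing to $H$ yields $\Hom_L(r_Q(\Pi),\tau^\vee)\cong I_2(\tau^\vee\otimes\tau^\vee)$'', you are silently commuting $\Ind_{P_2}^H$ with the maximal $\tau^\vee$-isotypic quotient functor on $W$; the paper invokes \cite[Lemma~9.4]{GG06} (the same mechanism as Lemma~\ref{L:ind}) for this, and you should too. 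Finally, your nonvanishing argument is correct but longer than necessary: once you have exhibited $\tau^\vee\otimes I_2(\tau\otimes\tau)$ as a quotient of $r_Q(\Pi)$, Frobenius already gives a nonzero map $\Pi\to I_Q(\tau^\vee)\otimes I_2(\tau\otimes\tau)$, hence $\Pi\twoheadrightarrow\,?\otimes J_2(\tau\otimes\tau)$ for some nonzero quotient of $I_Q(\tau^\vee)$.
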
 
\begin{proof} 
Let $\Pi$ be the minimal representation, and $\pi\in {\rm Irr}(G_2)$.  
We shall use the fact  that 
\[ 
\Theta(\pi)^{\ast}\cong \Hom_{G_2}(\Pi, \pi) 
\] 
as non-smooth $H=\PGSp_6$-modules, where former is the linear dual of $\Theta(\pi)$.  Assume that $\pi \subset I_Q( \tau^{\vee})$. Then 
\[ 
\Theta(\pi)^* = \Hom_{G_2}(\Pi, \pi) \subset \Hom_{G_2}(\Pi, I_Q (\tau^{\vee}))\cong \Hom_{L}(r_Q(\Pi), \tau^{\vee}).
\] 
Now we shall use the filtration of $r_Q(\Pi)$ from Proposition \ref{P:jf_Q}.
\vskip 5pt

 Let $\Pi_1$, $\Pi_2$ and $\Pi_3$ denote the three sub quotients in the same order. 
 Observe that $\mathrm {Ext}^i_{L}(\Pi_1, \tau^{\vee})$ are trivial from the central character considerations, since the central character of 
$\tau^{\vee}$ is a negative power of $|z|$. Hence we have a long exact sequence 
\[ 
0 \rightarrow \Hom_{L}(\Pi_2, \tau^{\vee})\rightarrow \Hom_{L}(r_Q(\Pi), \tau^{\vee}) \rightarrow \Hom_{L}(\Pi_3, \tau^{\vee})\rightarrow \mathrm{Ext}^1_{L}(\Pi_2, \tau^{\vee}) 
\] 
Since $\Pi_2$ is induced from $\bar B$, by the second adjointness,  
\[
\mathrm{Ext}^i_{L}(\Pi_2, \tau^{\vee}) \cong \mathrm{Ext}^i_{T}( \Ind_{P_2}^{H}(\delta \cdot C_c(\GL_1)), r_{B}(\tau^{\vee})) 
\] 
where $T=\GL_1\times \GL_1$, the maximal torus in $B$. 
 Observe that the action of the second $\GL_1$ on $\Ind_{P_2}^{H}(\delta \cdot C_c(\GL_1))$ is $|\cdot |$, and this is 
different from the action on $r_{B}(\tau^{\vee})$ by our assumption on $\tau$. Hence $\mathrm{Ext}^i_{L}(\Pi_2, \tau^{\vee})=0$ for all $i$, and we can conclude that 
\[ 
\Hom_{L}(r_Q(\Pi), \tau^{\vee})\cong \Hom_{L}(\Pi_3, \tau^{\vee}) \cong \Hom_{L}(\Ind_{P_2}^{H} W,  \tau^{\vee}), 
\] 
where, for the second isomorphism, we have simply substituted the explicit expression for $\Pi_3$ given in Proposition \ref{P:jf_Q}.  By  \cite[Lemma 9.4]{GG06}, the maximal $\tau^{\vee}$ isotypic 
quotient of $ \Ind_{P_2}^{H} W$ is $(\Ind_{P_2}^{H} \Theta(\tau^{\vee}) ) \otimes \tau^{\vee}$ where $\Theta(\tau^{\vee})$ is the big theta lift for 
the similitude theta correspondence on $W$. Since $\tau$ is generic, Lemma \ref{L:sim-big} shows that
 $\Theta(\tau^{\vee}) = \tau\otimes\tau$ and  it follows that 
\[ 
\Hom_{L}(\Pi_3, \tau^{\vee})\cong I_{2}(\tau\otimes\tau)^{\ast}. 
\] 
Hence $\Theta(\pi)^{\ast}\subset I_{2}(\tau\otimes\tau)^{\ast}$,  and  $\Theta(\pi)^{\vee} \subset  I_{2}(\tau\otimes\tau)^{\vee}$ by taking smooth vectors. Thus 
$\Theta(\pi)$ is a quotient of $I_{2}(\tau\otimes\tau)$.  Observe that we have proved in the process that $I_{2}(\tau\otimes\tau)$ is a quotient of $\Pi$, so that $\Theta(J_2(\tau \otimes \tau) \ne 0$. This proves the first bullet. The proof of the second is completely analogous. 
\end{proof} 
\vskip 5pt

\subsection{\bf Jacquet functors for $\PGSp_6$} 

Recall that in $\PGSp_6$, we have fixed three standard maximal parabolic subgroups $P_1$, $P_2$ and $P_3$. They correspond to 
$\mathbb Z$-gradings of the Lie algebra of $\PGSp_6$ given by three fundamental co-characters. The action of  each of these three co-characters gives 
a $\mathbb Z$-grading of the Lie algebra of $E_7$, and these gradings define three parabolic subgroups $\mathcal P_1$, $\mathcal P_2$ and $\mathcal P_3$ 
of $E_7$. To recognize these parabolic subgroups, perhaps it is easiest to proceed as follows. 
Observe that the $E_7$ Dynkin diagram contains  a unique $D_4$ subdiagram. We embed $G_2$ into $D_4$. The centralizer of $G_2$ in the split, adjoint $E_7$ is $\PGSp_6$. 
Let $\mathcal P$ be the parabolic subgroup of $E_7$, whose Levi factor has the type $D_4$.  
This parabolic is contained in precisely three maximal parabolic subgroups denoted by $\mathcal P_1$, $\mathcal P_2$ and $\mathcal P_3$, whose Levi factor types are,  
$D_6$, $A_1 \times D_5$ and $E_6$, respectively.  The intersection of $\mathcal P_i$ and 
$\PGSp_6$ is $P_i$, for each $i$. We write $\mathcal P_i= \mathcal M_i \mathcal N_i$ and $P_i=M_i N_i$ the Levi decompositions for these parabolic subgroups.

\vskip 10pt 
\noindent
\underline{Case $P_3$:} 
This is treated in \cite[\S 5]{MS}, and we summarize the results as follows. 
The unipotent subgroups of $P_3$ and $\mathcal P_3$ are abelian, $M_3\cong \GL_3$ and the modular character is 
\[ 
\delta_{P_3}(m)= |\det(m)|^2. 
\] 
Let $\mathbb O_0$ denote the space of trace 0 elements in the octonion algebra $\mathbb{O}$.  On the space $\mathbb O_0^3$, we have the natural diagonal action 
$(x,y,z) \mapsto (gx,gy,gz)$ of $g\in G_2$ and the  row-vector  action $(x,y,z) \mapsto (gx,gy,gz)m^{-1}$ of $m \in \GL_3$.   
Let $\Omega\subset \mathbb O_0^3$ be the set of all  $(x,y,z)$ such that the linear subspace 
$\langle x,y,z \rangle \subset \mathbb O_0$ is a null-space for octonion multiplication i.e. the product of any two elements in the space is 0. 
Such non-zero null-spaces in $\mathbb O_0$ are of dimension 1 or 2. 
\vskip 5pt

We have an exact sequence of $G_2 \times \GL_3$-modules 
\[ 
0 \rightarrow C_c(\Omega) \rightarrow \Pi_{N_3} \rightarrow \Pi_{\mathcal N_3} \rightarrow 0 
\] 
where $(g, m) \in G_2 \times \GL_3$ acts on $f\in C_c(\Omega)$ by 
\[ 
((g,m) \cdot f) (x,y,z) = |\det(m)|^2\cdot  f((g^{-1}x,g^{-1}y,g^{-1}z)m). 
\] 
The group $G_2 \times \GL_3$ acts on $\Omega$ with two orbits $\Omega_1$ and $\Omega_2$ where $\Omega_i$ is the subset of triples 
$(x,y,z)$ such that $\langle x,y,z \rangle$ has dimension $i$. Thus $C_c(\Omega)$ has a filtration with $C_c(\Omega_2)$  a submodule and 
$C_c(\Omega_1)$  a quotient. Each of these can be explicitly described as $G_2 \times \GL_3$-modules.  
\vskip 5pt

In order to state the result, let $Q_1$ and $Q_2$ be the maximal 
parabolic subgroups of $\GL_3$ stabilizing subspaces consisting of row vectors $(\ast, 0,0)$ and $(\ast, \ast,0)$, respectively.  Observe that these are block
lower-triangular groups with Levi factors isomorphic to $\GL_1 \times \GL_2$ and $\GL_2 \times \GL_1$, respectively. Their modular characters are 
\[ 
\delta_{Q_1} (g_1, g_2)= |g_1|^{-2} \cdot |\det(g_2)|  \quad \text{ and } \quad \delta_{Q_2} (g_2, g_1)= |\det(g_2)|^{-1} \cdot |g_1|^2. 
\] 
Recall that $r_{P_3}(\Pi)= \delta_{P_3}^{-1/2} \cdot \Pi_{N_3}$ is  the normalized Jacquet module. Then: 

\begin{prop}\label{P:jf_p3} As a $G_2\times \GL_3$-module, $r_{P_3}(\Pi) $ has a filtration with three successive 
subquotients (from top to bottom): 
\begin{enumerate} 
\item $\delta_{P_3}^{-1/2} \cdot \Pi_{\mathcal N_3} = \Pi_{E_6} \oplus \Pi_{\emptyset} \cdot |\det|$. 
\item  $\Ind_{Q\times Q_1}^{G_2\times \GL_3}(\delta \cdot C_c(\GL_1))$. 
\item  $\Ind_{P\times Q_2}^{G_2\times \GL_3}(C_c(\GL_2))$. 
\end{enumerate} 
Here, note that:
\vskip 5pt

\begin{itemize}
\item[-] In (1), the center of $M_3\cong \GL_3$ acts trivially on both $\Pi_{E_6}$ and $\Pi_{\emptyset}$, the minimal and the trivial representation of the Levi $\mathcal M_3$. 
\item[-] In (2), $\delta(g_1,g_2)= |g_1|^{-1/2} \times |\det(g_2)|^{1/2} $ is a character of $Q_1$. 
\item[-] For $i=1,2$, $Q_i$ acts on $C_c(\GL_i)$ by right  translations via the factor $\GL_i$ as described above in \S \ref{SS:jac}.
\end{itemize} 
\end{prop}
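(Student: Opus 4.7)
The plan is to work from the $G_2 \times \GL_3$-equivariant short exact sequence
\[ 0 \to C_c(\Omega) \to \Pi_{N_3} \to \Pi_{\mathcal{N}_3} \to 0 \]
recorded in \cite[\S 5]{MS} and compute the three successive subquotients separately. For the top piece $\delta_{P_3}^{-1/2}\cdot \Pi_{\mathcal N_3}$, the relevant Jacquet module of $\Pi$ along $\mathcal{P}_3$ (whose Levi is of type $E_6 \times \GL_1$) splits as a direct sum of the minimal representation $\Pi_{E_6}$ of $E_6$ and the trivial constituent $\Pi_{\emptyset}$, with specific central-character shifts; this is the same type of computation already carried out in \cite{MS, GS99} for the other maximal parabolics of $\PGSp_6$. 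The normalization $\delta_{P_3}^{-1/2} = |\det|^{-1}$ then produces the stated summands $\Pi_{E_6}$ and $\Pi_{\emptyset}\cdot|\det|$.

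For $C_c(\Omega)$, I would run an orbit analysis for the diagonal $G_2 \times \GL_3$-action. The decomposition $\Omega = \Omega_1 \sqcup \Omega_2$ by $\dim \langle x, y, z\rangle$ is $G_2 \times \GL_3$-stable, with $\Omega_1$ closed and $\Omega_2$ open, yielding the filtration
\[ 0 \to C_c(\Omega_2) \to C_c(\Omega) \to C_c(\Omega_1) \to 0. \]
Pick a representative $(x_0, y_0, 0) \in \Omega_2$ with $\{x_0, y_0\}$ spanning a $2$-dimensional null-subspace $V_2 \subset \mathbb{O}_0$. Then $P \times Q_2$ stabilizes setwise the pair (subspace $V_2$, row-subspace $(\ast,\ast,0)$), and inside this the residual Levi action $\GL_2 \times \GL_2$ on the respective bases is simply transitive, giving the regular representation $C_c(\GL_2)$; Mackey's formula then yields $C_c(\Omega_2) \cong \Ind_{P \times Q_2}^{G_2 \times \GL_3} C_c(\GL_2)$ with the claimed left/right translation actions. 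For the representative $(x_0, 0, 0) \in \Omega_1$ with $x_0$ a rank-$1$ vector, the stabilizer is $Q \times Q_1$ up to a residual $\GL_1$-scaling of the null-line $Fx_0$, yielding $C_c(\Omega_1) \cong \Ind_{Q \times Q_1}^{G_2 \times \GL_3}(\delta \cdot C_c(\GL_1))$ for some character $\delta$ of $Q \times Q_1$.

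The remaining, and main, technical task is to pin down the character $\delta$ on $Q_1$ (as well as the precise twists in the top piece). This entails carefully combining three inputs: (a) the modular character $\delta_{P_3}^{-1/2} = |\det|^{-1}$ from normalizing the Jacquet functor; (b) the action of $L \subset Q$ on the line $Fx_0$, factoring through a determinant-type character of $L \cong \GL_2$ dictated by the $3$-step structure of $U$; and (c) the action of the $\GL_1 \times \GL_2$ Levi of $Q_1$ on the first coordinate of the row subspace $(\ast, 0, 0)$, which contributes the $|g_1|^{-1/2}$ and $|\det(g_2)|^{1/2}$ factors after combining with the modular twist. Once this character bookkeeping is complete, one obtains $\delta(g_1, g_2) = |g_1|^{-1/2} \cdot |\det(g_2)|^{1/2}$ as stated, and an analogous computation identifies the $|\det|$ twist on $\Pi_\emptyset$ in the top piece. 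The orbit/stabilizer/Mackey arguments themselves are routine once set up; only this final bookkeeping is delicate, exactly paralleling the proofs of Propositions \ref{P:jf_P} and \ref{P:jf_Q}.
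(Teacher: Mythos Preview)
Your proposal is correct and follows the same approach as the paper, which in turn is summarizing \cite[\S 5]{MS}: one uses the short exact sequence $0 \to C_c(\Omega) \to \Pi_{N_3} \to \Pi_{\mathcal N_3} \to 0$, stratifies $\Omega = \Omega_1 \sqcup \Omega_2$ by $\dim\langle x,y,z\rangle$, and identifies each $C_c(\Omega_i)$ via Mackey theory using the stabilizers $Q \times Q_1$ and $P \times Q_2$ you describe.

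One small remark on the determination of $\delta$: while direct bookkeeping of the modular characters and stabilizer actions is in principle feasible, the paper's own practice in the parallel Propositions \ref{P:jf_Q} and \ref{P:jf_p2} is not to compute $\delta$ head-on but to pin it down \emph{a posteriori} by a compatibility check --- one knows from the bottom piece of the filtration that certain irreducible principal series pairs $I_Q(\chi) \otimes I_2(\chi\otimes\chi)$ (or the analogous ones here) must occur as quotients of $\Pi$, and then computing the relevant Jacquet module of one factor forces the shape of $\delta$. This indirect method is more robust and avoids the delicate sign-and-exponent tracking you flag as the main technical task; you may find it easier than the direct route you outline.
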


\vskip 10pt 
\noindent
\underline{Case $P_1$:} 
 This case is not in the literature; however, it is similar 
to the computation of the Jacquet module of the minimal representation of $E_8$ with respect to a maximal parabolic subgroup of $F_4$ in \cite[\S 5]{SWo}. 
The unipotent radical subgroups of $P_1$ and $\mathcal P_1$ are Heisenberg groups with
$M_1 \cong \GSp_4$. Let $\nu$ be the similitude character of $\GSp_6$.  The modulus character of $M_1$ is 
\[ 
\delta_{P_1}(m)= |\nu(m)|^3. 
\] 
Recall that $\mathbb O_0$ is the space of trace 0 octonions. On $\mathbb O_0^4$, we have 
 the  row-vector  action 
 \[  (x,y,x',y') \mapsto (x,y,x',y')m^{-1} \quad \text{ of $m \in \GSp_4$} \]
 preserving the form  $\mathbb O_0^4 \rightarrow \wedge^2 \mathbb O_0$  given by
 \[ 
 (x,y,x',y') \mapsto x\wedge x' + y\wedge y'. 
 \] 
 Let $\Omega\subset \mathbb O_0^4$ be the set of all  nonzero $(x,y,x',y')$ such that the linear subspace 
$\langle x,y,x',y' \rangle \subset \mathbb O_0$ is a null-space for octonion multiplication \underline{and} 
$ x\wedge x' + y\wedge y'=0$. 
We have an exact sequence of $G_2 \times \GSp_4$-modules 
\[ 
0 \rightarrow C_c(\Omega) \rightarrow \Pi_{N_1} \rightarrow \Pi_{\mathcal N_1} \rightarrow 0 
\] 
where $(g, m) \in G_2 \times \GSp_4$ acts on $f\in C_c(\Omega)$ by 
\[ 
((g,m) \cdot f) (x,y,x',y')) = |\nu(m)|^3 \cdot f((g^{-1}x,g^{-1}y,g^{-1}x',g^{-1}y')m). 
\] 
\vskip 5pt

Now the group $G_2 \times \GSp_4$ acts on $\Omega$ with two orbits $\Omega_1$ and $\Omega_2$, where $\Omega_i$ is the subset of quadruples
$(x,y,x',y')$ such that $\langle x,y,x',y' \rangle$ has dimension $i$. Thus $C_c(\Omega)$ has a filtration with $C_c(\Omega_2)$  as a submodule and 
$C_c(\Omega_1)$ as  a quotient. Each of these can be explicitly described as $G_2 \times \GSp_4$-modules.  

\vskip 5pt

In order to state the result, let $Q_1$ and $Q_2$ be the maximal 
parabolic subgroups of $\GSp_4$ stabilizing subspaces consisting of row vectors $(\ast, 0,0,0)$ and $(\ast, \ast,0,0)$, respectively.  
Let  $L_1\cong \GL_1\times \GL_2$ be the Levi subgroup of $Q_1$ such that $(g_1,g_2)\in \GL_1\times \GL_2$ acts on the quadruples, after rearranging the order, by 
\[ 
(x,x', y,y') \mapsto (xg_1^{-1}, x' g_1 \det(g_2)^{-1}, (y,y') g_2^{-1}). 
\] 
Let $L_2\cong \GL_2\times \GL_1$ be the Levi subgroup of $Q_2$ such that $(g_2,g_1)\in \GL_2\times \GL_1$ acts on the quadruples by 
\[ 
(x,y, x',y') \mapsto ((x,y) g_2^{-1}, (x',y') g_1^{-1}g_2^{\top}). 
\]   
The similitude character $\nu$, restricted to $L_1$ and $L_2$, is given by  
\[ \nu(g_1,g_2)=\det g_2 \quad \text{and } \quad  \nu(g_2, g_1)= g_1  \]
 respectively, and the modulus characters are 
\[ 
\delta_{Q_1} (g_1, g_2)= |g_1|^{-4} \cdot |\det(g_2)|^2 \quad \text{ and } \quad \delta_{Q_2} (g_2, g_1)= |\det(g_2)|^{-3} \cdot |g_1|^3. 
\] 
Recalling that $r_{P_1}(\Pi)= \delta_{P_1}^{-1/2} \cdot \Pi_{N_1}$ is a  normalized Jacquet module, we have: 

\begin{prop}\label{P:jf_p1} As a $G_2\times \GSp_4$-module, $r_{P_1}(\Pi) $ has a filtration with three successive 
subquotients (from top to bottom): 
\begin{enumerate} 
\item $\delta_{P_1}^{-1/2} \cdot \Pi_{\mathcal N_1} = \Pi_{D_6}\cdot |\nu|^{1/2}  \oplus \Pi_{\emptyset} \cdot |\nu|^{3/2}$ . 
\item  $\Ind_{Q\times Q_1}^{G_2\times \GSp_4}(\delta \cdot C_c(\GL_1))$. 
\item  $\Ind_{P\times Q_2}^{G_2\times \GSp_4}(C_c(\GL_2))$. 
\end{enumerate} 
Here, note that
\vskip 5pt

\begin{itemize}
\item[-] In (1), the center of $M_1\cong \GSp_4$ acts trivially on both $\Pi_{D_6}$ and $\Pi_{\emptyset}$, the minimal and the trivial representation of the Levi $\mathcal M_1$.
\item[-] In (2) $\delta(g_1,g_2)= |g_1|^{-1/2} \times |\det(g_2)|^{1/2} $, a character of $Q_1$. 
\item[-] For $i=1,2$, $Q_i$ acts on $C_c(\GL_i)$ by right translations via the factor $\GL_i$  as described above in \S \ref{SS:jac}.
\end{itemize} 
\end{prop}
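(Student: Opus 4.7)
The plan is to follow closely the approach used for $P_3$ in \cite[\S 5]{MS} and for the analogous $F_4 \subset E_8$ Jacquet module computation in \cite[\S 5]{SWo}, adapting the geometric stratification argument to the Heisenberg-type parabolic $\mathcal P_1 \subset E_7$ with $D_6$-Levi. The starting point is the short exact sequence
\[ 0 \to C_c(\Omega) \to \Pi_{N_1} \to \Pi_{\mathcal N_1} \to 0 \]
claimed in the statement.  This comes from the fact that $\mathcal N_1$ is a two-step (Heisenberg-type) unipotent group and $N_1$ is the Heisenberg subgroup induced by the embedding $\PGSp_6 \hookrightarrow E_7$. Taking first the $N_1$-Jacquet module, the remaining abelian quotient $\mathcal N_1/N_1$ acts on $\Pi_{N_1}$ through its Pontryagin dual, which, as a $G_2 \times \GSp_4$-representation, is identified with $\mathbb{O}_0^4$ equipped with the natural row-vector $\GSp_4$-action and the simultaneous diagonal $G_2$-action.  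The trivial character yields the quotient $\Pi_{\mathcal N_1}$, and the known description of the support of the minimal representation of $E_7$ on non-trivial characters of $\mathcal N_1/N_1$ (the rank-one tensors in $\mathbb{O}_0 \otimes F^4$ satisfying the symplectic constraint) gives exactly the locus $\Omega$. The top layer $\delta_{P_1}^{-1/2} \cdot \Pi_{\mathcal N_1}$ is then computed from the standard Jacquet module of the minimal representation of $E_7$ along $\mathcal P_1$: it is an extension of the trivial representation of the $D_6$-Levi by the minimal representation of $D_6$, with the central twist determined by the chosen normalization.

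Next I would analyse the $G_2\times \GSp_4$-orbit structure on $\Omega$.  The orbits $\Omega_1$ and $\Omega_2$ are distinguished by the dimension of the linear span $\langle x,y,x',y'\rangle \subset \mathbb{O}_0$. Since $G_2$ acts transitively on the varieties of one- and two-dimensional null-subspaces of $\mathbb{O}_0$, with stabilizers equal to the parabolic subgroups $Q$ and $P$ respectively, the problem reduces to computing the joint $\GSp_4$-stabilizer of a fixed point in each orbit.  For $\Omega_1$, a representative is a quadruple all of whose entries lie on a fixed null line; the joint stabilizer in $\GSp_4$ is the parabolic $Q_1$ with Levi $\GL_1 \times \GL_2$, where $\GL_1$ scales the fixed null line and $\GL_2$ acts on the coefficient space of the four row-vectors. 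For $\Omega_2$, the entries of a representative span a fixed null plane subject to the Heisenberg constraint $x\wedge x' + y\wedge y' = 0$; this constraint cuts the naive stabilizer (a larger parabolic that only preserves the plane) down to the Siegel-type parabolic $Q_2$ with Levi $\GL_2 \times \GL_1$ acting on the coefficients as described.

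Granted the orbit analysis, each subquotient $C_c(\Omega_i)$ is the compactly-induced representation from the stabilizer $Q \times Q_i$ (resp. $P \times Q_2$) to $G_2\times \GSp_4$, after which passing from unnormalized to normalized induction brings in $\delta_{Q \times Q_i}^{1/2}$.  Combining this with the twist $\delta_{P_1}^{-1/2} = |\nu|^{-3/2}$ and the raw action character $|\nu(m)|^3$ appearing on $C_c(\Omega)$, a careful bookkeeping yields items (2) and (3): in the middle layer the residue collapses to the character $\delta(g_1,g_2) = |g_1|^{-1/2} \times |\det g_2|^{1/2}$ on $Q_1$, while in the bottom layer the contributions from $\delta_{P_1}^{-1/2}$, $|\nu|^3$ and $\delta_{Q_2}^{1/2}$ cancel and the module appears unadorned as $C_c(\GL_2)$.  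The main obstacle will be the careful tracking of modulus characters to pin down the exponents in the middle-layer character; a convenient sanity check is to specialize to an unramified principal series induced from the Borel of $G_2$, compute the corresponding $\GSp_4$-module by Langlands' constant-term formula along $P_1$, and compare with the output of the proposed filtration (a comparable consistency check was effectively used in the non-split analogue \cite[Prop. 6.8]{GS99}, where the parameter $\delta$ was not determined, so the split case here is the genuinely new input).
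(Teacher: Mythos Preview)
Your proposal is correct and follows essentially the same approach as the paper: the paper likewise reduces to the short exact sequence $0 \to C_c(\Omega) \to \Pi_{N_1} \to \Pi_{\mathcal N_1} \to 0$, analyzes the two $G_2 \times \GSp_4$-orbits $\Omega_1, \Omega_2$ on $\Omega \subset \mathbb O_0^4$ (defined by the null-space and symplectic constraints), and identifies the stabilizers with $Q \times Q_1$ and $P \times Q_2$ exactly as you describe, citing \cite[\S 5]{SWo} as the model computation. Your suggested consistency check via unramified principal series to pin down the character $\delta$ is in the same spirit as the paper's determination of $\delta$ in Proposition~\ref{P:jf_Q}.
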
 

\vskip 10pt 
\noindent
\underline{Case $P_2$:} 
 A variant of this case can be found in \cite{GS99} for the non-split form of $\PGSp_6$. However, for the split case considered in this paper, the Jacquet module filtration contains an additional  ``middle" term. 
 \vskip 5pt
 
The unipotent radical subgroups $N_2\subset P_2$ and $\mathcal N_2\subset  \mathcal P_2$ are two-step nilpotent subgroups. Let $\mathcal Z_2 \subset \mathcal N_2$ be the 
center of $\mathcal N_2$.  We now explain how the kernel of the natural projection $\Pi_{\mathcal Z_2} \rightarrow \Pi_{\mathcal N_2}$ contributes to  
$\Pi_{N_2}$.  We have 
\[ 
0\rightarrow C_c(\omega) \rightarrow \Pi_{\mathcal Z_2} \rightarrow \Pi_{\mathcal N_2} \rightarrow 0
\]
where $\omega$ is the $\mathcal M_2$-highest weight orbit in 
\[ 
\bar{\mathcal N}_2/\bar{\mathcal Z}_2  \cong \mathbb O\otimes \mathbb M_2(F) =\mathbb M_2(\mathbb O)
\] 
 where $\bar{\mathcal N}_2$ is the unipotent group opposite to $\mathcal N_2$, and 
  $\mathbb M_2(F)$ is the set of two-by-two matrices.  (In the non-split case $\mathbb M_2(F)$ is replaced by a division algebra, 
so $\Omega$ is empty; see the discussion on \cite[Pg. 137]{GS99}.)  Recall that the type of $\mathcal M_2$ is $D_5 \times A_1$ and  
$\bar{\mathcal N}_2/\bar{\mathcal Z}_2 \cong F^{16} \otimes F^2$ where $F^{16}$ is a spin-module of $D_5$. In the above isomorphism we assume that $A_1$ acts from 
the right on $\mathbb M_2(\mathbb O)$,  and columns are vectors in the spin-module.  Thus $\omega$ is the set of non-zero matrices 
 \[ 
\left(\begin{array}{cc} 
x & x' \\ 
y & y' \end{array}  
\right) 
\] 
where the two columns are linearly dependent over $F$ and each column (if non-zero) is a highest weight vector in the spin-module.  
Let $\Omega$ be the subset of $\omega$ such that $x, x', y, y'$ are traceless octonions. 
We have an exact sequence of $G_2 \times M_2$-modules 
\[ 
0 \rightarrow C_c(\Omega) \rightarrow (\Pi_{\mathcal Z_2})_{N_2}  \rightarrow \Pi_{\mathcal N_2} \rightarrow 0 
\] 
where $(g, (\alpha,\beta)) \in G_2 \times (\GL_2 \times \GL_2) /\GL_1^{\nabla}$ acts on $f\in C_c(\Omega)$ by 
\[ 
((g,(\alpha,\beta))  \cdot f) \left( 
\left(\begin{array}{cc} 
x & x' \\ 
y & y' \end{array}  
\right) 
\right) =
 \delta'(\alpha\beta) \cdot  f 
 \left( \bar\alpha 
\left(\begin{array}{cc} 
g^{-1} x & g^{-1}x' \\ 
g^{-1}y & g^{-1}y' \end{array}  
\right) \beta 
\right) 
\] 
for some (unknown) character $\delta'$.  The highest weight orbit in the 16-dimensional spin module is described in 
\cite{MS}. That result, applied to each column of $\mathbb M_2(\mathbb O)$, implies that  $x, x', y, y'$ (of an element in $\Omega$) generate a nil-subalgebra. 
The group $G_2 \times M_2$ acts on $\Omega$  with two orbits $\Omega_1$ and $\Omega_2$, where 
$\Omega_i$ consists of elements such that $\langle x,x',y,y'\rangle$ has dimension $i$. Thus $C_c(\Omega)$, as a $G_2\times M_2$-module, has 
 $C_c(\Omega_2)$ as a submodule and  $C_c(\Omega_1)$ as quotient. 

\begin{prop}\label{P:jf_p2} 
As a $G_2\times (\GL_2 \times \GL_2)/\GL_1^{\nabla}$-module, $r_{P_2}(\Pi) $ has a filtration with four successive 
sub quotients: 
\begin{enumerate} 
\item $\delta_{P_2}^{-1/2} \cdot \Pi_{\mathcal N_2} = \Pi_{D_5}\cdot |\det|^{1/2}  \oplus \Pi_{A_1} \cdot |\det |^{3/2}$ . 
\item  $\Ind_{Q\times (\bar B\times \bar B )/ \GL_1^{\nabla}}^{G_2\times (\GL_2 \times\GL_2)/\GL_1^{\nabla} }(\delta \cdot C_c(\GL_1))$. 
\item $\Ind_{P\times \bar B }^{G_2\times \GL_2 }(C_c(\GL_2))$.
\item $\Ind_{Q}^{G_2} W$. 
\end{enumerate} 
Here, note that:
\vskip 5pt

\begin{itemize}
\item[-] In (1), the second $\SL_2 \subset M_2$ acts trivially on the first summand, and the first $\SL_2 \subset M_2$ acts trivially on the second summand. 
The center of $M_2$ acts trivially on both $\Pi_{D_5}$ and $\Pi_{A_1}$, the minimal and a principal series representation of the two factors of $\mathcal M_3$. 
\item[-]  In (2) $\delta=|\cdot |^{1/2}\times |\cdot |$ on each $\bar B$.
\item[-]  In (3), $\bar B$ is the subgroup of the second factor $\GL_2$ of $M_2$. It acts on $C_c(\GL_2)$ by 
right translation by the scalar given by the $(1,1)$ matrix entry. The first factor $\GL_2$ of $M_2$ acts by right translations on $C_c(\GL_2)$. 
\item[-] In (4), $W$ is the Weil representation of $\GL_2 \times (\GL_2 \times \GL_2)/\GL_1^{\nabla} \cong \GL_2 \times {\rm GSO}_4$. 
\end{itemize}
\end{prop}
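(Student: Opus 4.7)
The plan is a two-stage Jacquet-module computation of the standard type (compare \cite{GS99, MS}). First, using that $\Pi$ is the minimal representation of $E_7$ and that $\mathcal Z_2$ is the center of $\mathcal N_2$, one has the short exact sequence of $\mathcal M_2$-modules
$$0 \to C_c(\omega) \to \Pi_{\mathcal Z_2} \to \Pi_{\mathcal N_2} \to 0,$$
where $\omega$ is the $\mathcal M_2$-highest-weight orbit in $\bar{\mathcal N}_2/\bar{\mathcal Z}_2 \cong \mathbb M_2(\mathbb O)$. Next, take further $N_2$-coinvariants and restrict to $G_2 \times M_2$-equivariance; the trace-zero constraint on the octonionic entries imposed by $G_2$-equivariance cuts the support of $C_c(\omega)$ down to $\Omega$, yielding the sequence
$$0 \to C_c(\Omega) \to (\Pi_{\mathcal Z_2})_{N_2} \to \Pi_{\mathcal N_2} \to 0.$$
Normalizing by $\delta_{P_2}^{-1/2}$ gives $r_{P_2}(\Pi)$, whose top layer is (1) and whose lower three layers come from $C_c(\Omega)$.

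For (1), since $\mathcal M_2$ has type $D_5 \times A_1$, structural results for the minimal representation of $E_7$ identify $\Pi_{\mathcal N_2}$ as a direct sum of the minimal representation of the $D_5$-factor and a principal series of the $A_1$-factor, each with the complementary simple factor acting trivially. The exponents $|\det|^{1/2}$ and $|\det|^{3/2}$ are determined by tracking the central character of $M_2 \subset \mathcal M_2$ on each summand and combining with $\delta_{P_2}^{-1/2}$.

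For (2), (3), (4), I stratify $C_c(\Omega)$ by the two $G_2 \times M_2$-orbits $\Omega_1$ (one-dimensional span) and $\Omega_2$ (two-dimensional span), further refining one of the two pieces by the subsidiary condition governing whether a column of the base matrix vanishes or is a nontrivial scalar multiple of the other. Each stratum is realized as a compactly induced module via Frobenius reciprocity; its stabilizer in $G_2 \times M_2$ is computed using the description of highest-weight vectors in the spin representation of $D_5$ from \cite{MS} together with the octonion multiplication rules determining when the span generates a nil-subalgebra. In (4), the fiber over the base point of the appropriate orbit carries a $\GL_2 \times \mathrm{GSO}_4$-action on $C_c(\mathbb M_2(F))$ that is precisely the similitude Weil representation $W$; this is consistent with the non-split analog of this computation \cite[Prop.\ 6.8]{GS99}, in which the extra split-case stratum is empty and only the $W$-piece persists.

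The main obstacle is pinning down the precise twisting characters: the $|\det|^{1/2}$ and $|\det|^{3/2}$ in (1), the character $\delta = |\cdot|^{1/2} \times |\cdot|$ in (2), and the unnormalized character $\delta'$ governing the $M_2$-action on $C_c(\Omega)$, together with the identification of $W$ in (4). As in the proof of Proposition \ref{P:jf_Q}, these are all pinned down by the test-against-generic-principal-series bookkeeping: for $\chi$ generic enough that both $I_Q(\chi)$ and $I_2(\chi \otimes \chi)$ are irreducible, the bottom layer (4) of the filtration forces $I_Q(\chi) \otimes I_2(\chi \otimes \chi)$ to be a quotient of $\Pi$; comparing Jacquet modules with the four-step filtration above then forces all exponents and the identification of $W$ to take the stated values.
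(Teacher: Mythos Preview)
There is a genuine gap in your argument at the step ``Normalizing by $\delta_{P_2}^{-1/2}$ gives $r_{P_2}(\Pi)$.'' The module $(\Pi_{\mathcal Z_2})_{N_2}$ is only a \emph{proper quotient} of $\Pi_{N_2}$, not all of it: since $\mathcal Z_2 \not\subset N_2$, passing from $\Pi_{N_2}$ to $(\Pi_{\mathcal Z_2})_{N_2} = \Pi_{N_2\cdot \mathcal Z_2}$ kills the part of $\Pi_{N_2}$ on which $\mathcal Z_2$ still acts nontrivially. That kernel is exactly the bottom piece (4), $\Ind_Q^{G_2} W$; it is computed in \cite[Proposition~8.1]{GS99} (not Prop.~6.8, which concerns the $Q$-Jacquet module). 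Indeed, in the non-split case treated there one has $\Omega = \emptyset$, so $C_c(\Omega)=0$ and the filtration collapses to just pieces (1) and (4); the Weil-representation layer is therefore \emph{not} a stratum of $C_c(\Omega)$ at all but the genuinely new submodule that your short exact sequence discards.

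Consequently $C_c(\Omega)$ contributes only the two pieces (2) and (3), one for each of the two $G_2\times M_2$-orbits $\Omega_1$ and $\Omega_2$. Your attempted further refinement ``whether a column of the base matrix vanishes or is a nontrivial scalar multiple of the other'' is not $M_2$-invariant: the action $(\alpha,\beta)\cdot X = \bar\alpha X\beta$ has the right $\beta$-factor mixing the two columns, so the locus where one column vanishes is not stable. There is thus no way to extract a third $G_2\times M_2$-subquotient from $C_c(\Omega)$. Your strategy for pinning down $\delta'$ (testing against generic principal series) is the same as the paper's, which uses piece~(3) and compatibility with Lemma~\ref{L:non_tempered}; once the source of piece~(4) is corrected, the rest of your outline matches the paper.
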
 

This proposition is a combination of \cite[Proposition 8.1]{GS99}, which accounts for the bottom piece of the filtration (4),  and the above discussion. The pieces 
(2) and (3) are the spaces of functions $C_c(\Omega_1)$ and $C_c(\Omega_2)$, respectively. This also assumes that we have 
explicated the character $\delta'$ appearing in the action on $C_c(\Omega)$. 
To that end,  observe that (3) (or any unknown twist) gives a correspondence of generic principal series representations of $G_2$ and $\PGSp_6$ that has to be 
compatible with the one in Lemma  \ref{L:non_tempered}, and this determines $\delta'$ uniquely. 

\vskip 5pt

 \subsection{\bf Theta lifts from $\PGSp_6$.}
Using Propositions  \ref{P:jf_p3}, \ref{P:jf_p1} and \ref{P:jf_p2}, we can now prove the following analog of Lemma  \ref{L:non_tempered}.   
\vskip 5pt
 
\begin{lemma} \label{L:non_tempered2} 
Let $\sigma\in  {\rm Irr}({\rm PGSp}_6)$ be non-tempered. 
Then $\Theta(\sigma)=0$ unless $\sigma$ is as described in Lemma  \ref{L:non_tempered}. More precisely, 
\begin{itemize} 
\item If $\sigma \subset I_2( \tau^{\vee}\otimes \tau^{\vee})$,  then $\Theta(\sigma)$ is a quotient of $I_{Q}(\tau)$ and hence has finite length. Moreover, 
$\Theta(J_Q(\tau))\neq 0$ where $J_Q(\tau)$ is the unique irreducible quotient of $I_Q(\tau)$. 
\item If $\sigma \subset I_{13}( \tau^{\vee}\otimes 1)$,  then $\Theta(\sigma)$ is a quotient of $I_{P}(\tau)$,  and hence has finite length. Moreover,
 $\Theta(J_{P}(\tau))\neq 0$ where $J_{P}(\tau)$ is the unique irreducible quotient of $I_{P}(\tau)$. 
\end{itemize} 
\end{lemma}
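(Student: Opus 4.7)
The argument is a transposition of the proof of Lemma~\ref{L:non_tempered}, with the roles of $G_2$ and $\PGSp_6$ interchanged and with Propositions~\ref{P:jf_p2} and~\ref{P:jf_p3} replacing Propositions~\ref{P:jf_P} and~\ref{P:jf_Q}. Using $\Theta(\sigma)^{*}\cong \Hom_{\PGSp_6}(\Pi,\sigma)$ as non-smooth $G_2$-modules, the hypothesised inclusions $\sigma\hookrightarrow I_2(\tau^\vee\otimes\tau^\vee)$ (resp.\ $\sigma\hookrightarrow I_{13}(\tau^\vee\otimes 1)$) yield, after Frobenius reciprocity, embeddings
\[
\Theta(\sigma)^{*}\hookrightarrow \Hom_{M_2}(r_{P_2}(\Pi),\,\tau^\vee\otimes\tau^\vee)\quad\text{or}\quad \Theta(\sigma)^{*}\hookrightarrow \Hom_{M_3}\!\left(r_{P_3}(\Pi),\,\Ind_{Q_2}^{\GL_3}(\tau^\vee\otimes 1)\right).
\]
In the second case, we have used that $P_{13}$ is the preimage of $Q_2\subset \GL_3$ under $P_3\twoheadrightarrow M_3$, so that $I_{13}(\tau^\vee\otimes 1)=I_3(\Ind_{Q_2}^{\GL_3}(\tau^\vee\otimes 1))$.

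For the first bullet, I will apply the four-step filtration of Proposition~\ref{P:jf_p2}. The top piece $\Pi_{D_5}\cdot|\det|^{1/2}\oplus \Pi_{A_1}\cdot|\det|^{3/2}$ acts by a strictly positive power of $|\cdot|$ on the centre of $M_2$, whereas $\tau^\vee\otimes\tau^\vee$ acts by a strictly negative power under the hypothesis $|\omega_\tau|=|\cdot|^s$ with $s>0$; both $\Hom$ and $\mathrm{Ext}^1$ vanish. The intermediate pieces (2) and (3), being parabolically induced from $\bar B\times\bar B/\GL_1^{\nabla}$ and from $\bar B$, reduce by second adjointness to a torus-level calculation in which the explicit $\delta$'s appearing in Proposition~\ref{P:jf_p2} are incompatible with the twists of $\tau^\vee\otimes\tau^\vee$, giving the vanishing of $\Hom$ and $\mathrm{Ext}^1$ as well. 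The only surviving contribution is from the bottom piece $\Ind_Q^{G_2}W$. By \cite[Lemma~9.4]{GG06} and Lemma~\ref{L:sim-big}, the maximal $\tau^\vee\otimes\tau^\vee$-isotypic quotient of $W$ is $\tau\otimes(\tau^\vee\otimes\tau^\vee)$, so that
\[
\Hom_{M_2}\!\left(\Ind_Q^{G_2}W,\,\tau^\vee\otimes\tau^\vee\right) \;\cong\; I_Q(\tau)^{*}.
\]
Taking smooth vectors in $\Theta(\sigma)^{*}\hookrightarrow I_Q(\tau)^{*}$ and dualising produces the required surjection $I_Q(\tau)\twoheadrightarrow\Theta(\sigma)$, whence $\Theta(\sigma)$ has finite length.

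The second bullet is handled in exactly the same way using Proposition~\ref{P:jf_p3}. The top piece dies by the same central-character comparison; the middle piece (involving $Q_1\neq Q_2$) dies after matching the character $\delta$ on $Q_1$ against $\tau^\vee\otimes 1$ on $Q_2$ at the torus level; and the bottom piece $\Ind_{P\times Q_2}^{G_2\times\GL_3}(C_c(\GL_2))$ delivers $I_P(\tau)^{*}$ since $C_c(\GL_2)$ is the regular representation and its maximal $\tau^\vee$-isotypic quotient is $\tau\otimes\tau^\vee$. This yields the surjection $I_P(\tau)\twoheadrightarrow\Theta(\sigma)$. For the non-vanishing assertions, the chain of identifications above shows that the bimodule $I_Q(\tau)\boxtimes I_2(\tau^\vee\otimes\tau^\vee)$ (and likewise $I_P(\tau)\boxtimes I_{13}(\tau^\vee\otimes 1)$) is a $G_2\times \PGSp_6$-quotient of $\Pi$; projecting the $G_2$-factor onto its unique irreducible quotient $J_Q(\tau)$ (resp.\ $J_P(\tau)$) still produces a non-zero quotient of $\Pi$, forcing $\Theta(J_Q(\tau))\neq 0$ (resp.\ $\Theta(J_P(\tau))\neq 0$).

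The main obstacle will be controlling the intermediate pieces of the filtrations in Propositions~\ref{P:jf_p2} and~\ref{P:jf_p3}, which have no counterpart in the three-step filtration used for Lemma~\ref{L:non_tempered}: one must track the modulus twists through several applications of Frobenius reciprocity and, more importantly, verify the vanishing of $\mathrm{Ext}^1$ (and not merely of $\Hom$) so that the long exact sequences collapse to the desired isomorphism, with the delicate case being when the exponent $s$ controlling the non-temperedness of $\tau$ lies at an accidental coincidence with the characters $\delta$ of the pieces (2) and (3).
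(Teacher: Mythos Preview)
Your treatment of the two bulleted assertions is essentially the paper's argument: you use the same Jacquet-module filtrations (Propositions~\ref{P:jf_p2} and~\ref{P:jf_p3}), kill the top and intermediate pieces by central-character and torus-level incompatibilities, and extract $I_Q(\tau)^*$ or $I_P(\tau)^*$ from the bottom piece via Lemma~\ref{L:sim-big} or the regular representation. That part is fine.

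The genuine gap is that you have not proved the first sentence of the lemma: \emph{$\Theta(\sigma)=0$ unless $\sigma$ is of one of the two special forms}. This is where most of the paper's proof is spent. A general non-tempered $\sigma\in{\rm Irr}(\PGSp_6)$ is the Langlands quotient of a standard module attached to $P_2$, $P_{12}$, $P_{23}$, $P_{13}$, $P_{123}$, $P_1$ or $P_3$, and in each case one must either rule out $\Theta(\sigma)\neq 0$ or reduce to one of the two bullets. For instance, for standard modules attached to $P_2$ with inducing data $\tau_1\otimes\tau_2$ (both positive twists of tempered), one runs the filtration of $r_{P_2}(\Pi)$ exactly as you did and finds that the bottom Weil-representation piece forces $\tau_1\cong\tau_2$; otherwise $\Theta(\sigma)=0$. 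The $P_{23}$ case is handled by induction in stages through $P_2$ and the same observation. The $P_{12}$ case is more delicate: here only $\tau_2$ carries a positive twist, and piece~(3) of the filtration does \emph{not} vanish automatically; instead one computes that any surviving $\Theta(\sigma)$ would be a quotient of $I_P(\tau_2)$, hence $J_P(\tau_2)\otimes\sigma$ would be a quotient of $\Pi$, contradicting Lemma~\ref{L:non_tempered}. The remaining cases ($P_1$, $P_3$, $P_{13}$, $P_{123}$) are treated via $r_{P_1}(\Pi)$ and $r_{P_3}(\Pi)$. Without this case analysis the lemma is not established: you have only shown what $\Theta(\sigma)$ looks like \emph{when} $\sigma$ already embeds in one of the two distinguished induced modules, not that these are the only non-tempered $\sigma$ with nonzero lift.
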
 
\begin{proof}  We set $H=\PGSp_6$. 
Assume that $\sigma$ is a Langlands quotient of a standard module for the maximal parabolic $P_2$. 
Then $\sigma \subseteq I_2(\tau_1^{\vee} \otimes \tau_2^{\vee})$ where $\tau_1$ and $\tau_2$ have the same central character and are both 
tempered representations of $\GL_2$ twisted by a positive power of $|\det|$.  Then 
\[ 
\Hom_H(\Pi, \sigma) \subseteq \Hom_H(\Pi, I_2(\tau_1^{\vee} \otimes \tau_2^{\vee})\cong \Hom_{M_2}( r_{P_2}(\Pi), \tau_1^{\vee} \otimes \tau_2^{\vee}). 
\] 
Let $\Pi_i$, $i=1,2,3,4$ be the sub quotients of $r_{P_2}(\Pi)$ as in Proposition \ref{P:jf_p2}, in the same order. We clam that 
$\Hom_{M_2}( r_{P_2}(\Pi), \tau_1^{\vee} \otimes \tau_2^{\vee})\cong \Hom_{M_2}( \Pi_4, \tau_1^{\vee} \otimes \tau_2^{\vee})$.  Assume this claim for a moment. 
Then 
\[ 
\Hom_{M_2}( r_{P_2}(\Pi), \tau_1^{\vee} \otimes \tau_2^{\vee})\cong \Hom_{M_2}( \Pi_4, \tau_1^{\vee} \otimes \tau_2^{\vee}) 
\cong \Hom_{M_2}( \Ind_Q^{G_2} W, \tau_1^{\vee} \otimes \tau_2^{\vee}) 
\] 
where $W$ is the Weil representation of $\GL_2^3$.  This implies that $\Theta(\sigma)=0$ unless $\tau_1\cong \tau_2$, and if we denote this 
representation as $\tau$, then $\Theta(\sigma)$ is a non-zero quotient of the standard module $I_Q(\tau)$. In order to prove the claim, we need to show that 
${\rm Ext}^n_{M_2}( \Pi_i, \tau_1^{\vee} \otimes \tau_2^{\vee})=0$ for all $n$ and $i<4$.  Consider $i=3$. Then, using the (second) Frobenius reciprocity for 
induction from $\bar B$ to $\GL_2$, the first factor of $M_2$, we have 
\[ 
 {\rm Ext}^i_{M_2}( \Ind_{P\times \bar B }^{G_2\times \GL_2 }(C_c(\GL_2)) , \tau_1^{\vee} \otimes \tau_2^{\vee}) \cong 
{\rm Ext}^i_{T \times \GL_2 }( \Ind_{P}^{G_2 }(C_c(\GL_2)),  r_B(\tau_1^{\vee}) \otimes \tau_2^{\vee}) 
\] 
where $T\cong \GL_1 \times \GL_1$ is the torus of diagonal matrices in $\GL_2$. Now recall that the second $\GL_1$ acts trivially on 
$\Ind_{P}^{G_2 }(C_c(\GL_2))$. On the other hand, since $\tau_1$ is a tempered with a positive twist of $|\det|$, the second 
$\GL_1$ acts on $r_B(\tau_1^{\vee})$ with characters $\chi$ such that $|\chi|$ is a negative power of absolute value. This prove the vanishing for $i=3$. The other two 
cases are just as easy or even easier: for $i=1$ vanishing follows from central character considerations, and for $i=2$  using Frobenius reciprocity where it 
suffices that either $\tau_1$ or $\tau_2$ is twist of a tempered representation by a positive power of $|\det|$.

Now assume that $\sigma$ is a Langlands quotient of a standard module for the parabolic $P_{23}=P_2\cap P_3$. 
Then, by induction in stages, we get that $\sigma \subseteq I_2(\tau_1^{\vee} \otimes \tau_2^{\vee})$ where $\tau_1$ is a twist of a tempered 
representation by a positive power of $|\det|$.  This is enough to show that $\Hom_{M_2} (\Pi_i, \tau_1^{\vee} \otimes \tau_2^{\vee}) =0$ for 
$i<4$. Thus, if $\Theta(\sigma)\neq 0$ then $\Hom_{M_2} (\Pi_4, \tau_1^{\vee} \otimes \tau_2^{\vee}) \neq 0$.  This implies that 
 $\tau_1\cong \tau_2$, contradicting that $\sigma$ is a Langlands quotient of a standard module for the parabolic $P_{23}$.  
 Hence $\Theta(\sigma)=0$.  
 
 If $\sigma$ is a Langlands quotient of a standard module for the parabolic $P_{12}=P_1\cap P_2$ 
then, by induction in stages, we get that $\sigma \subseteq I_2(\tau_1^{\vee} \otimes \tau_2^{\vee})$ where now $\tau_2$ is a twist of a tempered 
representation by a positive power of $|\det|$. In this case $\Hom_{M_2} ( \Pi_i, \tau_1^{\vee} \otimes \tau_2^{\vee})=0$ for $i\neq 3$ 
by repeating the above arguments. For $i=3$ we have 
\[ 
 \Hom_{M_2}( \Ind_{P\times \bar B }^{G_2\times \GL_2 }(C_c(\GL_2)) , \tau_1^{\vee} \otimes \tau_2^{\vee})\cong 
\Hom_{T \times \GL_2 }( \Ind_{P}^{G_2 }(C_c(\GL_2)),  r_B(\tau_1^{\vee}) \otimes \tau_2^{\vee}) 
\] 
and the last space is isomorphic to 
\[ 
 \Hom_{T \times \GL_2 }( \Ind_{P}^{G_2 }(\tau_2) \otimes \tau_2^{\vee} ,  r_B(\tau_1^{\vee}) \otimes \tau_2^{\vee}).
\] 
Recall that $T=\GL_1 \times \GL_1$ and the second $\GL_1$ acts trivially on $\Ind_{P}^{G_2 }(C_c(\GL_2))$ and hence 
on its quotient $\Ind_{P}^{G_2 }(\tau_2) \otimes \tau_2^{\vee}$. The first $\GL_1$ acts on this space by the central character of $\tau_2$, which 
is equal to the central character of $\tau_1$,  hence it is a nontrivial character, say $\chi$. Hence the above $\Hom$ space, if non-zero, is non-trivial 
if and only if $\chi\otimes 1$ is an exponent of $\tau_1$, and  then it is isomorphic to 
\[ 
\Hom_{ \GL_2 }( \Ind_{P}^{G_2 }(\tau_2) \otimes \tau_2^{\vee} ,  \tau_2^{\vee})\cong  \Hom( \Ind_{P}^{G_2 }(\tau_2), \mathbb C)=I_P(\tau_2)^*. 
\] 
Summarizing, $\Theta(\sigma)\neq 0$ implies that $\Theta(\sigma)$ is a quotient of $I_P(\tau_2)$. It follows that $J_P(\tau_1) \otimes \sigma$ is 
a quotient of $\Pi$, where $J_P(\tau_2)$ is the unique irreducible quotient of $I_P(\tau_2)$. 
  But, by Lemma  \ref{L:non_tempered},  $J_P(\tau_2)$ does not lift to $\sigma$. 
 This is a contradiction, hence $\Theta(\sigma)=0$. 

\vskip 5pt 
The remaining non-tempered representations of $H$  (associated to standard modules induced from $P_{123}$, $P_{13}$, $P_1$ or $P_3$) are easily dealt with using $r_{P_1}(\Pi)$ and $r_{P_3}(\Pi)$. We leave details to the reader.

\end{proof} 

\vskip 5pt 

\section{\bf Consequences of Jacquet Module Computations}  \label{S:conseq}

We can now draw some definitive consequences of  the Jacquet module computations of the previous section. In particular, we shall determine the theta lift of nontempered representations explicitly, and also complete the proofs of Lemmas \ref{L:basic} and  \ref{L:basic2} for the dual pair $G_2 \times \PGSp_6$.
\vskip 5pt

\subsection{\bf Lift of nontempered representations.}

Taken together, Lemmas \ref{L:non_tempered}  and \ref{L:non_tempered2}  allow us to determine the theta lift of nontempered representations explicitly:
\vskip 5pt

\begin{theorem}  \label{T:nont}
We have:
\begin{itemize}
\item[(a)] $\Theta(J_Q(\tau))$  is a nonzero quotient of $I_2(\tau \otimes \tau )$ and hence has finite length with unique irreducible quotient 
$J_2(\tau \otimes \tau)$. Likewise, $\Theta(J_2(\tau \otimes \tau))$ is a nonzero quotient of $I_Q(\tau)$ and hence has finite length with unique irreducible quotient $J_Q(\tau)$.
\vskip 5pt

\item[(b)] $\Theta(J_P(\tau))$ is a nonzero quotient of $I_{13}(\tau \otimes 1)$ and hence has finite length with unique irreducible quotient $J_{13}(\tau \otimes 1)$. Likewise, 
$\Theta(J_{13}(\tau \otimes 1))$ is a nonzero quotient of $I_P(\tau)$ and hence has finite length with unique irreducible quotient $J_P(\tau)$.
 \vskip 5pt
 
 \item[(c)] For all other nontempered $\sigma \in {\rm Irr}(\PGSp_6)$ different from those in (a) and (b), $\Theta(\sigma) = 0$.
\end{itemize} 
In particular, if $\pi \otimes \sigma \in {\rm Irr}(G_2 \times \PGSp_6)$ is such that $\pi \otimes \tau$ is a quotient of the minimal representation $\Pi$, then
\[  \text{$\pi$ nontempered} \Longleftrightarrow \text{$\sigma$ nontempered}. \]
Hence, we have shown  Lemma \ref{L:basic} for nontempered representations and also Lemma \ref{L:basic2}.
 \end{theorem}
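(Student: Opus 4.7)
The plan is to obtain Theorem \ref{T:nont} by directly assembling Lemmas \ref{L:non_tempered} and \ref{L:non_tempered2}, which together furnish a complete matching of the two ``halves'' of each statement. For parts (a) and (b), each claim pairs an assertion of the form ``$\Theta$ of a non-tempered $G_2$-representation is a quotient of a standard module on $\PGSp_6$'' (Lemma \ref{L:non_tempered}) with its mirror ``$\Theta$ of the corresponding non-tempered $\PGSp_6$-representation is a quotient of a standard module on $G_2$'' (Lemma \ref{L:non_tempered2}); between them, the two lemmas supply both the ``is a quotient'' assertion and the requisite non-vanishing on each side.

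Concretely, for (a), I would realize $J_Q(\tau)$ as the Langlands submodule of $I_Q(\tau^\vee)$, so that Lemma \ref{L:non_tempered} exhibits $\Theta(J_Q(\tau))$ as a quotient of $I_2(\tau \otimes \tau)$, while the ``moreover'' clause of the first bullet of Lemma \ref{L:non_tempered2} gives $\Theta(J_Q(\tau)) \ne 0$. Since $I_2(\tau \otimes \tau)$ is a standard module for the Langlands quotient $J_2(\tau \otimes \tau)$ in this parameter range, it has finite length and $J_2(\tau \otimes \tau)$ as unique irreducible quotient; these properties descend to any nonzero quotient, in particular to $\Theta(J_Q(\tau))$. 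The symmetric assertion about $\Theta(J_2(\tau \otimes \tau))$ is obtained by swapping the roles of the two lemmas: non-vanishing is supplied by Lemma \ref{L:non_tempered}, while Lemma \ref{L:non_tempered2} expresses $\Theta(J_2(\tau \otimes \tau))$ as a quotient of the standard module $I_Q(\tau)$. Part (b) is strictly parallel, replacing the pair $(I_Q(\tau), I_2(\tau \otimes \tau))$ by $(I_P(\tau), I_{13}(\tau \otimes 1))$.

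Part (c) is a restatement of the opening sentence of Lemma \ref{L:non_tempered2}, which asserts that for any other non-tempered $\sigma$, the lift $\Theta(\sigma)$ vanishes. The concluding biconditional then follows formally from (a)--(c): if $\pi \otimes \sigma$ is an irreducible quotient of $\Pi$ with $\pi$ non-tempered, then $\pi \in \{J_Q(\tau), J_P(\tau)\}$, and the unique irreducible quotient of $\Theta(\pi)$ described in (a)/(b) is $J_2(\tau \otimes \tau)$ or $J_{13}(\tau \otimes 1)$, so $\sigma$ must be that (non-tempered) representation; conversely, if $\sigma$ is non-tempered, (c) forces $\sigma$ to be of one of the two paired types, and the corresponding $\pi$ is $J_Q(\tau)$ or $J_P(\tau)$, again non-tempered.

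The main technical obstacle has already been overcome in \S \ref{S:jacquet}, where the explicit filtrations of $r_{P_i}(\Pi)$ and $r_Q(\Pi)$ are established; granted those, the theorem itself is essentially bookkeeping. The only small point requiring care at the write-up stage is checking that the induced modules $I_Q(\tau)$, $I_P(\tau)$, $I_2(\tau \otimes \tau)$ and $I_{13}(\tau \otimes 1)$ are genuine standard modules for the Langlands data coming from the parameter conventions of \S \ref{SS:nt}, so that ``unique irreducible quotient'' is unambiguous; this reduces to a positivity check on the exponents in each case.
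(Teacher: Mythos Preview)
Your proposal is correct and follows the same approach as the paper: Theorem~\ref{T:nont} is obtained by combining Lemmas~\ref{L:non_tempered} and~\ref{L:non_tempered2}, with each lemma supplying the ``quotient of a standard module'' assertion in one direction and the non-vanishing (via its ``Moreover'' clause) in the other. Your write-up of how the two lemmas interlock, and of the concluding biconditional, is accurate. One small correction to your final remark: in case (c) of \S\ref{SS:nt} the modules $I_P(\tau)$ and $I_{13}(\tau\otimes 1)$ are \emph{not} standard modules but quotients of standard modules induced from the Borel; the paper already notes this and observes that they nonetheless have unique irreducible quotients, so no further positivity check is needed there.
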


 \vskip 15pt

\subsection{\bf Finiteness of $\Theta(\pi)_{nc}$}  \label{S:finite} 
To complete the proof of Lemma \ref{L:basic}, we need to show that for  tempered  $\pi \in {\rm Irr}(G_2)$ and $\sigma \in {\rm Irr}(\PGSp_6)$, the noncuspidal components $\Theta(\pi)_{nc}$ and $\Theta(\sigma)_{nc}$ are of finite length.     

\vskip 5pt

To show that $\Theta(\pi)_{nc}$ has finite length, it suffices to show that for each maximal parabolic subgroup $P_i = M_i N_i$ (with $1 \leq i \leq 3$) of $\PGSp_6$,   the Jacquet module $J_{P_i}(\Theta(\pi))$ has finite length as an $M_i$-module. In other words, we need to show that the multiplicity space of the maximal $\pi$-isotypic quotient of $r_{P_i}(\Pi)$ has 
finite length as an $M_i$-module.
\vskip 5pt

We have described in Propositions \ref{P:jf_p3}, \ref{P:jf_p1} and \ref{P:jf_p2} an equivariant filtration of $r_{P_i}(\Pi)$ as an $G_2 \times M_i$-module and described the successive quotients. It suffices to show that, for each of these successive quotients $\Sigma$, the multiplicity space of the $\pi$-isotypic quotient of $\Sigma$ has finite length. We shall explain how this can be shown, depending on whether $\Sigma$ is a top piece of the filtration or not. The difference lies in the fact that the top piece of the filtration involves a minimal representation of a smaller group $\mathcal{M}_i$ and hence one needs to consider theta correspondence in lower rank situations.  When $\Sigma$ is not the top piece of the filtration, the finite length of the multiplicity space of the maximal $\pi$-isotypic quotient of $\Sigma$ as an $M_i$-module follows readily from the explicit description of $\Sigma$. We give two examples as illustration:

\vskip 5pt

\begin{itemize}
\item Consider the case of $P_3 = \GL_3 \cdot N_3$. The bottom piece of the filtration in Proposition \ref{P:jf_p3} is
\[  \Sigma =  {\rm Ind}_{P \times Q_2}^{G_2 \times \GL_3} C_c(\GL_2). \]
Then for $\pi \in {\rm Irr}(G_2)$,  
\[  \Theta_{\Sigma}(\pi)^* := \Hom_{G_2}(\Sigma, \pi) \cong \Hom_M \left( {\rm Ind}^{M \times \GL_3}_{M \times Q_2} C_c(\GL_2), r_{\bar{P} }(\pi) \right) \]
where $M \cong \GL_2$. Now $r_{\bar{P}}(\pi)$ is a finite length $M$-module  and for any of its irreducible subquotient $\sigma$,
\[  
\Hom_M \left( {\rm Ind}^{M \times \GL_3}_{M \times Q_2} C_c(\GL_2), \sigma\right) \cong   \left( {\rm Ind}^{\GL_3}_{Q_2} \sigma^{\vee}
\right)^* \]
using the fact that the maximal $\sigma$-isotypic quotient of the regular representation $C_c(\GL_2)$ is of the form $\sigma^{\vee} \otimes \sigma$. 
On taking smooth vectors (which is a left exact functoir), we see that $\Theta_{\Sigma}(\pi)^{\vee}$ has a finite filtration whose successive quotients are submodules of ${\rm Ind}^{\GL_3}_{Q_2} \sigma^{\vee}$ for some irreducible $\sigma$. In particular, $\Theta_{\Sigma}(\pi)$ has finite length.
\vskip 5pt

\item Consider the case of $P_2 = M_2 \cdot N_2$ with $M_2 =  (\GL_2 \times \GL_2) / \GL_1^{\nabla} \cong {\rm GSO}_4$. The bottom piece of the filtration in Proposition \ref{P:jf_p2} is
\[  \Sigma =  {\rm Ind}_{Q \times M_2}^{G_2 \times M_2} W \]
where $W$ is the Weil representation for $\GL_2 \times {\rm GSO}_4$. Then for $\pi \in {\rm Irr}(G_2)$, 
\[   \Theta_{\Sigma}(\pi)^*  := \Hom_{G_2}(\Sigma, \pi) \cong \Hom_{L \times M_2}(W, r_{\overline{Q}}(\pi)) \]
Now $r_{\overline{Q}}(\pi)$ has finite length as $L$-module (where $L \cong \GL_2$) and if $\sigma$ is an irreducible subquotient, 
$\Hom_{L \times M_2}(W, \sigma) = \Theta_W(\sigma)^*$ where $\Theta_W(\sigma)$ is the big theta lift of $\sigma \in {\rm Irr}(\GL_2)$ to ${\rm GSO}_4$, which has finite length by the Howe duality theorem for classical (similitude) theta correspondence. From this, one deduces as above that $\Theta_{\Sigma}(\pi)$ has finite length as an $M_2$-module.
\end{itemize}

\vskip 5pt

Now let's consider the case when $\Sigma$ is the top piece of the filtration. From Propositions \ref{P:jf_p3}, \ref{P:jf_p1} and \ref{P:jf_p2}, we see that we need to consider the following theta correspondences in lower rank:
\vskip 5pt

\begin{itemize}
\item  $G_2 \times \PGL_3$ in $E_6$: for this case, the finite length of the big theta lift has been verified in Theorem \ref{T:PGL3}. 
\vskip 5pt

\item $G_2 \times \SO_3 \subset \SO_{10}$ or $G_2 \times \SO_5 \subset \SO_{12}$; we shall now treat these two cases together in the following proposition. 
\end{itemize}

\begin{prop}  \label{P:Hn}
Let $\Pi_n$ be the minimal representation of $\SO(2n)$ for  $n=5$ or $6$. Then for tempered $\pi \in {\rm Irr}(G_2)$, $\Theta_n(\pi)$ is a finite length $H_n$-module where 
$H_n= \SO_{2n-7}$. 
\end{prop}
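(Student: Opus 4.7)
The plan is to adapt verbatim the approach sketched in the preceding subsection for the $G_2 \times \PGSp_6$ case. To show $\Theta_n(\pi)$ has finite length, I would verify that for each maximal parabolic subgroup $P = MN$ of $H_n = \SO_{2n-7}$, the Jacquet module $r_P(\Theta_n(\pi))$ has finite length as an $M$-module. This reduces to showing that the multiplicity space of the maximal $\pi$-isotypic quotient of $r_P(\Pi_n)$ is of finite length as an $M$-module.

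The first task is to compute $r_P(\Pi_n)$ as a $G_2 \times M$-module by analysing the $G_2 \times M$-orbits on (a suitable subvariety of) the opposite unipotent radical, in direct analogy with Propositions~\ref{P:jf_p3}, \ref{P:jf_p1}, and \ref{P:jf_p2}. This should yield a finite equivariant filtration whose successive quotients fall into two types: (a) a \emph{top} piece, which is a character twist of (a direct sum of constituents of) the minimal representation $\Pi_{n-1}$ of $\SO_{2n-2}$, viewed as a $G_2 \times M$-module via the smaller dual pair $G_2 \times H_{n-1} \subset \SO_{2n-2}$; and (b) \emph{open-orbit} pieces of the form $\mathrm{Ind}^{G_2 \times M}_{P' \times Q}(\Sigma)$, where $P' \subset G_2$ and $Q \subset M$ are proper parabolic subgroups and $\Sigma$ is either a regular representation $C_c(\GL_k)$ or a Weil representation for a classical similitude dual pair of smaller rank.

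For the open-orbit pieces, the argument proceeds exactly as in the two worked examples in Section~\ref{S:finite}: applying $\mathrm{Hom}_{G_2}(-,\pi)$ and invoking Frobenius reciprocity together with the second adjointness theorem, the finite-length question reduces to the finite length of the Jacquet modules $r_{\overline{P'}}(\pi)$ of $\pi$ along maximal parabolics of $G_2$, combined with Kudla's Howe-finiteness theorem for classical similitude theta correspondences in smaller rank. Both inputs are available, so these pieces cause no difficulty.

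For the top piece, the multiplicity space of its $\pi$-isotypic quotient is precisely a theta lift of $\pi$ in the smaller dual pair $G_2 \times H_{n-1} \subset \SO_{2n-2}$, to which one applies induction on $n$. For $n=5$ one has $H_4 = \SO_1 = \{1\}$, and the top piece contributes only trivial (one-dimensional) multiplicity spaces, so finiteness is immediate; for $n=6$, the induction reduces to the $n=5$ case, which has just been handled. The main technical obstacle I anticipate is the orbit-analysis step itself, and in particular identifying the precise twist characters on each successive quotient and confirming that the open-orbit pieces really take the claimed shape; however, this is a bookkeeping exercise of exactly the same type carried out in Propositions~\ref{P:jf_p3}--\ref{P:jf_p2}, so I expect no essentially new difficulty to appear.
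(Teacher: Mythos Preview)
Your approach has a genuine gap: verifying that $r_P(\Theta_n(\pi))$ has finite length for every maximal parabolic $P$ of $H_n$ only shows that the \emph{non-cuspidal} component $\Theta_n(\pi)_{nc}$ has finite length. It says nothing about the cuspidal part $\Theta_n(\pi)_c$, which could a priori be an infinite direct sum of supercuspidals of $H_n = \SO_3$ or $\SO_5$. This is exactly the distinction made in \S\ref{S:finite} of the paper, where the Jacquet-module method is explicitly invoked only to control $\Theta(\pi)_{nc}$, with $\Theta(\pi)_c$ handled by a separate (ping-pong) argument. You have not proposed any analogue of that second step here.

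The paper's proof avoids this difficulty entirely by a different reduction. It uses the fact that $\Pi_n$ is the big theta lift of the trivial representation of $\SL_2$ to $\SO_{2n}$, and applies the seesaw
\[
\xymatrix{
\widetilde{\SL}_2 \times \widetilde{\SL}_2 \ar@{-}[dr]\ar@{-}[d] & \SO_{2n} \ar@{-}[d]\\
\SL_2 \ar@{-}[ur] & G_2 \times \SO_{2n-7}
}
\]
to obtain $\Theta_n(\pi)^* \cong \Hom_{\widetilde{\SL}_2}(\Omega_{2n-7,\psi}, \widetilde{\Theta}_\psi(\pi))$, where $\widetilde{\Theta}_\psi(\pi)$ is the big theta lift of $\pi$ to $\widetilde{\SL}_2$. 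The finite length of $\widetilde{\Theta}_\psi(\pi)$ is then extracted from the results of \cite{GG06} on the $\widetilde{\SL}_2 \times G_2$ correspondence (including the cuspidal part, where injectivity of the lift from supercuspidals of $\widetilde{\SL}_2$ forces $\widetilde{\Theta}_\psi(\pi)_c$ to be irreducible or zero). Classical Howe duality for $\widetilde{\SL}_2 \times H_n$ then gives finite length of $\Theta_n(\pi)$ in one stroke, cuspidal part included. Your Jacquet-module-and-induction scheme, even if the orbit analysis went through, would still need an independent argument for $\Theta_n(\pi)_c$.
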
 

\begin{proof} 
 We shall use the fact that the minimal representation of $\SO_{2n}$ ($n = 5 $ or $6$) is the big theta lift of the trivial representation of $\SL_2$ (see \cite[Prop. 8.4]{Y} for the irreducibility of this big theta lift)  and then appeal to the seesaw identity arising from the seesaw diagram:
 \[
 \xymatrix{
 \tilde{SL}_2 \times \tilde{SL}_2  \ar@{-}[dr] \ar@{-}[d] & \SO_{2n}
     \ar@{-}[d] \\
  \SL_2 \ar@{-}[ur] &   G_2 \times \SO_{2n-7}}
\]
From this, we see that
\begin{equation} \label{E:ssid}
  \Theta_n(\pi)^* = \Hom_{G_2}(\Pi_n, \pi) = \Hom_{\SL_2}(\Omega_{2n-7, \psi} \otimes  \tilde{\Theta}_{\bar{\psi}}(\pi),\mathbb{C})\cong \Hom_{\tilde{SL}_2}(\Omega_{2n-7,\psi}, \tilde{\Theta}_{\psi}(\pi)) \end{equation}
as $H_n$-modules, where 
\begin{itemize}
\item $\Omega_{2n-7,\psi}$ is the Weil representation of $\tilde{\SL}_2 \times \SO_{2n-7}$ (with respect to a nontrivial additive character $\psi$ of $F$);
\item  $\tilde{\Theta}_{\psi}(\pi)$ denotes the big $\psi$-theta lift of $\pi$ to $\tilde{\SL}_2$, with respect to the Weil representation 
$\Omega_{\psi}$ of $\tilde{SL}_2 \times \SO_7 \supset \tilde{\SL}_2 \times G_2$. 
\end{itemize}
We see in particular that if $\Theta_n(\pi)$ is nonzero, then $\pi$ has nonzero $\psi$-theta lift to $\tilde{\SL}_2$. 
 Moreover, it remains now to show that $\tilde{\Theta}_{\psi}(\pi)$ has finite length as an $\tilde{\SL}_2$-module; the desired result would then follow from this and the Howe duality theorem for $\tilde{\SL}_2 \times H_n$.
 \vskip 5pt
 
 Now the theta correspondence for $\tilde{\SL}_2\times G_2$ has been completely determined in \cite{GG06}, though the finiteness of $\tilde{\Theta}_{\psi}(\pi)$ was not formally stated there.   Let us see how this finiteness can be deduced from \cite{GG06}. 
 \vskip 5pt
 
 As before, let us write $\tilde{\Theta}_{\psi}(\pi) = \tilde{\Theta}(\pi)_c \oplus \tilde{\Theta}(\pi)_{nc}$ as a sum of its cuspidal and noncuspidal component. To show that $\tilde{\Theta}(\pi)_{nc}$ has finite length as a $\tilde{SL}_2$-module, it suffices to show that  the Jacquet module of $\tilde{\Theta}(\pi)$ with respect to a Borel subgroup $\tilde{B} = \tilde{T} \cdot N$ of $\tilde{\SL}_2$ has finite length as a $\tilde{T}$-module. Now \cite[Prop. 8.1]{GG06} gives a short exact sequence of $G_2$-modules:
 \[ \begin{CD}
 0 @>>>   {\rm Ind}_Q^{G_2} C_c(\GL_1) @>>> \Omega_N @>>> \mathbb{C} @>>> 0 
 \end{CD} \]
 where the action of $L \cong \GL_2$ on $\GL_1$ is via $\det$.  The finite length of $\Theta(\pi)_N$ follows from this via a similar argument as above, by examining $\Hom_{G_2}(\Omega_N, \pi)$. 
 \vskip 5pt

 It remains to show that $\tilde{\Theta}(\pi)_c$ has finite length. In fact, it was shown in \cite[Thm. 9.1(c) and (d)]{GG06} that for genuine supercuspidal representations $\sigma_1 \ncong \sigma_2$ of $\tilde{\SL}_2$, one has $\tilde{\theta}_{\psi}(\sigma_1) \ncong \tilde{\theta}_{\psi}(\sigma_2)$.
  In other words,   $\tilde{\Theta}(\pi)_c$ is irreducible or $0$. This shows that $\tilde{\Theta}_{\psi}(\pi)$ has finite length.

\end{proof}

 \vskip 5pt
 
 \subsection{\bf Finiteness of $\Theta(\sigma)_{nc}$.}  \label{S:finite2}
 For tempered $\sigma \in {\rm Irr}(\PGSp_6)$,  the finite length of $\Theta(\sigma)_{nc}$ as a $G_2$-module is shown in the same way, using Propositions \ref{P:jf_P} and \ref{P:jf_Q}.
 We leave the details to the reader and only consider the top pieces in the filtration of the two Jacquet modules. 
 \vskip 5pt
 
 \begin{itemize}
 \item For the maximal parabolic subgroup $P$, we have to consider the theta correspondence for $\PGSp_6 \times \PGL_2$ with respect to the minimal representation $\Pi_6$ of $\PGSO_{12}$.  For the purpose of showing finiteness, there is no harm in working with $\Sp_6 \times \SL_2$. Hence, the theta correspondence in question arises as follows. If $V_2$ and $V_6$ denote the 2-dimensional and 6-dimensional symplectic vector spaces, then we are considering the map
 \[  \Sp(V_2) \times \Sp(V_6) \longrightarrow \SO(V_2 \otimes V_6) \]
 and pulling back the minimal representation $\Pi_6$ of $\SO(V_2 \otimes V_6)$. As before, we shall use the fact that this minimal representation is the big theta lift of the trivial representation of $\SL_2$. More precisely, let $V_2'$ be another symplectic space of dimension $2$, then we have the map
 \[  \Sp(V'_2) \times \Sp(V_2) \times \Sp(V_6) \longrightarrow \Sp(V'_2) \otimes \SO(V_2 \otimes V_6)  \longrightarrow \Sp(V_2' \otimes V_2 \otimes V_6). \]
 Given the Weil representation $\Omega$ of $\Sp(V'_2) \times \SO(V_2 \otimes V_6)$ and $\sigma \in {\rm Irr}(\Sp(V_6))$, we have
 \[  \Theta(\sigma)^* \cong \Hom_{\Sp(V_6)}( \Pi_6 , \sigma) \cong \Hom_{\Sp(V'_2) \times \Sp(V_6)} (\Omega, 1_{\Sp(V'_2)} \otimes \sigma) \cong  \Hom(\Theta'(\sigma), 1_{\Sp(V'_2)}) \]
 where $\Theta'(\sigma)$ is the big theta lift of $\sigma$ to $\SO(V_2' \otimes V_2)$. Note that there is a natural isogeny 
 \[  \Sp(V'_2) \times \Sp(V_2) \longrightarrow  \SO(V'_2 \otimes V_2) \]
 whose image is of finite index. Hence, by the classical Howe duality theorem, $\Theta'(\sigma)$ is a finite length representation of $\Sp(V'_2) \times \Sp(V_2)$. This implies that 
 $\Theta(\sigma)$ has finite length.
  
 \item For the maximal parabolic subgroup $Q$, we need to consider the restriction of $\Pi_{A_5}$, a minimal representation of $\SL_6$ to $\Sp_6$. Note that $\Pi_{A_5}$ is a degenerate principal series representation induced from a maximal parabolic subgroup which stabilizes a line in the standard representation. Since $\Sp_6$ acts transitively on such lines, we see that the restriction of $\Pi_{A_5}$ to $\Sp_6$ is simply a degenerate principal series representation of $\Sp_6$. This implies the desired finiteness. 
\end{itemize}

\vskip 5pt

We have thus completed the proofs of Lemmas \ref{L:basic} and \ref{L:basic2}.

 \vskip 15pt

\section{\bf Howe Duality for $G_2 \times \PGSp_6$: General Case}  \label{S:howegeneral}

 Finally,  by combining  Theorem \ref{T:main} and Theorem \ref{T:nont}, we can establish the Howe duality theorem for $G_2 \times \PGSp_6$.
   \vskip 5pt
   
\begin{theorem}\label{T:main2}
Let $\pi \in {\rm Irr}(G_2$).

(i) $\Theta(\pi)$ is nonzero if and only if $\pi$ has zero theta lift to $PD^{\times}$.
\vskip 5pt

(ii) If $\Theta(\pi) \ne 0$, then $\Theta(\pi)$ is a finite length representation of $\PGSp_6$ with a unique irreducible quotient $\theta(\pi)$.
\vskip 5pt

(iii) For $\pi_1, \pi_2 \in {\rm Irr}(G_2)$, 
\[  \theta(\pi_1) \cong \theta(\pi_2) \ne 0 \Longrightarrow \pi_1 \cong \pi_2. \]
\vskip 5pt

(iv) If $\Theta(\pi) \ne 0$, then $\theta(\pi)$ is tempered if and only if $\pi$ is tempered.
 \vskip 5pt
 
 (v) If $\pi$ is non-tempered, then $\theta(\pi)$ is nonzero and the L-parameter of $\theta(\pi)$ is obtained from that of $\pi$ by composing with the natural inclusion
  $G_2(\mathbb C) \subset {\rm Spin}_7(\mathbb C)$. 
  
 \end{theorem}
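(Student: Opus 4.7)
The plan is to derive this theorem by assembling results already established: the dichotomy theorem (Theorem~\ref{T:dicho}), the Howe duality theorem in the tempered case (Theorem~\ref{T:main}), the explicit determination of the theta lift for nontempered representations (Theorem~\ref{T:nont}), and the temperedness-preservation statement of Lemma~\ref{L:basic2}(i). Part (i) is an immediate reformulation of Theorem~\ref{T:dicho}: the dichotomy asserts that $\pi$ has nonzero theta lift to exactly one of $PD^{\times}$ or $\PGSp_6$, so $\Theta(\pi) \ne 0$ on $\PGSp_6$ is equivalent to the vanishing of the theta lift to $PD^{\times}$.

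For (ii), I would split on whether $\pi$ is tempered. The tempered case is precisely Theorem~\ref{T:main}(i). For nontempered $\pi$, the enumeration in \S\ref{SS:nt} realizes $\pi$ as either $J_Q(\tau)$ or $J_P(\tau)$ for a suitable representation $\tau$ of $\GL_2$; Theorem~\ref{T:nont}(a),(b) then shows that $\Theta(\pi)$ is a nonzero finite-length quotient of $I_2(\tau \otimes \tau)$ or $I_{13}(\tau \otimes 1)$ with unique irreducible quotient $J_2(\tau \otimes \tau)$ or $J_{13}(\tau \otimes 1)$. Part (iv) is immediate from Lemma~\ref{L:basic2}(i). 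Using (iv), part (iii) reduces to two subcases: in the tempered subcase, Theorem~\ref{T:main}(ii) applies directly; in the nontempered subcase, the explicit form of $\theta(\pi)$ from (ii), combined with a routine Jacquet module computation along $P_2$ or $P_{13}$ of $\PGSp_6$, recovers $\tau$ (and its associated parabolic), so $\pi$ itself is recovered. The two families $\{J_Q(\tau)\}$ and $\{J_P(\tau)\}$ do not collide because their theta lifts are associated with distinct maximal parabolic subgroups of $\PGSp_6$.

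Part (v) is the only piece requiring genuine new verification. Since Theorem~\ref{T:nont} has pinned down $\theta(\pi)$ explicitly, it suffices to check that the L-parameter of the lift agrees with the composition of $\varphi_{\pi}$ with the natural inclusion $G_2(\C) \hookrightarrow \Spin_7(\C)$. This reduces to a comparison on Levi subgroups of the dual groups: for $\pi = J_Q(\tau)$, the parameter $\varphi_{\pi}$ factors through the short-root Levi $L^{\vee} \cong \GL_2(\C) \subset G_2(\C)$, while $\theta(\pi) = J_2(\tau \otimes \tau)$ has parameter factoring through $M_2^{\vee} \cong (\GL_2 \times \GL_2)/\GL_1^{\nabla} \subset \Spin_7(\C)$; the diagonal embedding realizes the restriction to $G_2$ of the standard inclusion $G_2 \hookrightarrow \Spin_7$, matching the Langlands data $\tau \mapsto \tau \otimes \tau$. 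The case $\pi = J_P(\tau)$ is analogous, with the long-root Levi mapping into $M_{13}^{\vee} \cong \GL_2 \times \GL_1$ and producing Langlands data $\tau \otimes 1$. The hard part will be nothing more than carefully bookkeeping this Levi embedding compatibility (via the action of $G_2$ on the $7$-dimensional representation $\mathbb{O}_0$); the remainder of the theorem is a pure synthesis of the preceding results.
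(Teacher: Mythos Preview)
Your proposal is correct and follows essentially the same synthesis as the paper, which simply states that Theorem~\ref{T:main2} is obtained ``by combining Theorem~\ref{T:main} and Theorem~\ref{T:nont}'' without spelling out details. One small simplification: for (iii) in the nontempered case you do not need a separate Jacquet module computation to recover $\tau$ --- Theorem~\ref{T:nont} already records that $\Theta(J_2(\tau\otimes\tau))$ and $\Theta(J_{13}(\tau\otimes 1))$ each have a \emph{unique} irreducible quotient (namely $J_Q(\tau)$, resp.\ $J_P(\tau)$), so if $\theta(\pi_1)\cong\theta(\pi_2)=\sigma$ is nontempered then $\pi_1,\pi_2$ are both irreducible quotients of $\Theta(\sigma)$ and hence coincide.
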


\vskip 10pt

\subsection{\bf Explicit correspondence.}

\vskip 10pt

We  can in fact  determine the theta lift $\theta(\pi)$ explicitly if $\pi$ is tempered and noncuspidal. Indeed, we may also determine $\theta(\pi)$ for those tempered $\pi$ which has nonzero theta lift to $\PGL_3$. To achieve this, we shall use the following four facts:

\vskip 5pt

\begin{itemize} 
\item If $\pi$ does not appear in the correspondence with $PD^{\times}$, then $\theta(\pi)\neq 0$ (Theorem \ref{T:main2}(i)).  
\vskip 5pt 
\item If $\pi$ is tempered and $\theta(\pi) \ne 0$, then $\theta(\pi)$ is  irreducible and tempered (Theorem \ref{T:main2}(v)). 
\vskip 5pt 
\item  If $\pi$ is nongeneric, then $\theta(\pi)$ is nongeneric (Corollary \ref{C:GS-Whit}(ii)). 
\vskip 5pt 
\item The cuspidal support of $\theta(\pi)$ can be computed (from the Jacquet module computations of \S \ref{S:jacquet}). 
\end{itemize} 

 More precisely we have: 

\begin{theorem} \label{T:pgl3}
 Let $\pi$ be an irreducible tempered representation of $G_2$. Assume that $\pi$ is a lift of a (necessarily tempered) representation $\tau$ 
of $\PGL_3$, that is, $\pi=\theta_B(\tau^{\epsilon})$ for some $\epsilon = \pm$. Then we have the following: 

\vskip 5pt 
\noindent (i) If $\tau \ncong \tau^{\vee}$ then $\theta(\pi) \cong I_3(\tau)\cong I_3(\tau^{\vee}) \in {\rm Irr}(\PGSp_6)$. 

\vskip 5pt 

\noindent (ii) If $\tau \cong \tau^{\vee}$ and the parameter of $\tau$  contains a trivial summand,  then $\theta(\pi) \cong I_3(\tau)$.  

\vskip 5pt 

\noindent (iii) If $\tau \cong \tau^{\vee}$ and the parameter of $\tau$ does not contain a trivial summand, then $\pi$ is one of the two representations 
$\pi_{\rm gen}= \theta_B(\tau^+)$ and $\pi_{\rm deg}= \theta_B(\tau^-)$. In this case $I_3(\tau)=I_3(\tau)_{\rm gen} \oplus I_3(\tau)_{\rm deg}$, 
\[ 
\theta(\pi_{\rm gen})= I_3(\tau)_{\rm gen} \text{ and } \theta(\pi_{\rm deg})= I_3(\tau)_{\rm deg}. 
\] 

\end{theorem}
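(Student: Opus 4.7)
\textbf{Proof plan for Theorem \ref{T:pgl3}.} Since $\pi = \theta_B(\tau^{\epsilon})$ for some $\epsilon$, the dichotomy theorem (Theorem \ref{T:dicho}) ensures $\Theta_D(\pi) = 0$, and Theorem \ref{T:main2} then yields that $\theta(\pi)$ is a nonzero irreducible tempered representation of $\PGSp_6$; the task is to identify it explicitly in terms of $\tau$. The strategy is to pin down the cuspidal support of $\theta(\pi)$ relative to the Siegel parabolic $P_3 = M_3 N_3$ of $\PGSp_6$ (with $M_3 \cong \GL_3$), and then invoke Proposition \ref{P:P3} describing the structure of $I_3(\tau)$.

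To compute this cuspidal support, I will analyze the Jacquet module $r_{P_3}(\theta(\pi))$ via the three-step filtration of $r_{P_3}(\Pi)$ given by Proposition \ref{P:jf_p3}. The top graded piece is $\Pi_{E_6} \oplus \Pi_\emptyset \cdot |\det|$, where $\Pi_{E_6}$ is the minimal representation of $E_6$ that governs the correspondence for the dual pair $G_2 \times \PGL_3 \subset E_6$. The summand $\Pi_\emptyset \cdot |\det|$ acts trivially on $G_2$ and hence does not pair with the infinite-dimensional $\pi$. Since $\pi = \theta_B(\tau^{\epsilon})$ and the minimal representation of $E_6 \rtimes \Z/2\Z$ restricts on $\PGL_3$ to $\Pi_{E_6}$, the maximal $\pi$-isotypic quotient of $\Pi_{E_6}$ contains $\tau$ (when $\tau \cong \tau^{\vee}$) or $\tau \oplus \tau^{\vee}$ (when $\tau \ncong \tau^{\vee}$). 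By Bernstein's adjunction, $\tau$ (resp. $\tau \oplus \tau^{\vee}$) then appears in $r_{P_3}(\theta(\pi))$. Since $\theta(\pi)$ is irreducible tempered and $\tau$ is tempered on $\GL_3$, this yields a nonzero embedding $\theta(\pi) \hookrightarrow I_3(\tau)$, realized as a direct summand because tempered parabolic induction of a tempered representation is a direct sum of tempered irreducibles.

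The identification now proceeds case by case using Proposition \ref{P:P3}. In case (i), $I_3(\tau) \cong I_3(\tau^{\vee})$ is irreducible, so $\theta(\pi) \cong I_3(\tau)$. In case (ii), since the parameter of $\tau$ contains a trivial summand, $\tau$ is tempered but non-discrete-series; Proposition \ref{P:P3}(ii) (when $\tau$ is principal series from the Borel) together with standard temperedness criteria (when $\tau$ is induced from a self-dual cuspidal of $\GL_2 \times \GL_1$) shows that $I_3(\tau)$ is irreducible, so $\theta(\pi) \cong I_3(\tau)$. In case (iii), Proposition \ref{P:P3} gives the splitting $I_3(\tau) = I_3(\tau)_{\rm gen} \oplus I_3(\tau)_{\rm deg}$. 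Theorem \ref{T:PGL3} tells us $\pi_{\rm gen} = \theta_B(\tau^+)$ is generic and $\pi_{\rm deg} = \theta_B(\tau^-)$ is non-generic, while Corollary \ref{C:GS-Whit} shows that $\theta$ preserves (non-)genericity; combined with the injectivity in Theorem \ref{T:main2}(iii), this forces $\theta(\pi_{\rm gen}) = I_3(\tau)_{\rm gen}$ and $\theta(\pi_{\rm deg}) = I_3(\tau)_{\rm deg}$.

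The main obstacle in this plan lies in verifying that the middle and bottom graded pieces of $r_{P_3}(\Pi)$ do not contribute to $r_{P_3}(\theta(\pi))$ in a way that would conflict with the identification $\theta(\pi) \hookrightarrow I_3(\tau)$ as a direct summand—for instance by producing a distinct tempered cuspidal support incompatible with irreducibility. This requires an exponent analysis along the lines of Lemma \ref{L:non_tempered2}: the centre of $\GL_3$ acts on the middle and bottom pieces through non-trivial powers of the determinant, whereas $\tau$ (being a representation of $\PGL_3$) has trivial central character, so these pieces can only contribute subquotients with non-unitary central character, which are ruled out in $r_{P_3}(\theta(\pi))$ by the temperedness of $\theta(\pi)$. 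Once this verification is in place, the identification of $\theta(\pi)$ as a summand of $I_3(\tau)$ is forced by the preceding steps.
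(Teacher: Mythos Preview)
Your approach matches the paper's strategy (the four bullet points preceding Theorem \ref{T:pgl3}). Two corrections, though neither is fatal. First, your opening appeal to dichotomy is inverted: Theorem \ref{T:dicho} concerns $PD^\times$ versus $\PGSp_6$, so knowing $\pi$ comes from $\PGL_3$ does not by itself give $\Theta_D(\pi)=0$; instead, your own Jacquet-module step exhibits $\Theta(\pi)\neq 0$ on $\PGSp_6$ directly, and $\Theta_D(\pi)=0$ then follows \emph{from} dichotomy. Second, your ``main obstacle'' evaporates once you note that $\Pi_{E_6}$ sits at the \emph{top} of the filtration in Proposition \ref{P:jf_p3}: the surjection $r_{P_3}(\Pi)\twoheadrightarrow\Pi_{E_6}\twoheadrightarrow\pi\otimes\tau$ is immediate, and second adjointness then gives a nonzero map $\Theta(\pi)\to I_3(\tau)$. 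Since $I_3(\tau)$ is semisimple and $\theta(\pi)$ is the unique irreducible quotient of $\Theta(\pi)$, this already forces $\theta(\pi)$ to be a summand of $I_3(\tau)$ without any analysis of the lower graded pieces (and your proposed central-character argument for those pieces is in any case not quite right, since the center of $\GL_3$ does not act on them by a single character).
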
 

We now deal with the remaining tempered representations of $G_2$. Non-supercuspidal representations are mostly 
constituents of the principal series $I_Q(\tau)$ where 
$\tau$ is a discrete series representation. These representations lift to constituents of  the principal series $I_2(\tau\otimes\tau)$. More precisely, 
we have:

\begin{theorem} Let $\pi$ be an irreducible tempered representation of $G_2$ which is not a lift from 
 $\PGL_3$. Then we have the following: 

\vskip 5pt 
\noindent (i) Let $\tau$ be a unitary discrete series representation of $\GL_2$. Then $I_Q(\tau)$ is irreducible if and only if 
$I_2(\tau\otimes\tau)$ is irreducible. We have: 
\begin{itemize} 
\item If $I_Q(\tau)$ is irreducible  then 
\[ 
\theta(I_Q(\tau)) \cong I_2(\tau\otimes\tau). 
\] 
\vskip 5pt 
\item If $I_Q(\tau)$ is reducible  then  
\[ 
\theta(I_Q(\tau)_{\rm gen} ) \cong I_2(\tau\otimes\tau)_{\rm gen} \text { and } \theta(I_Q(\tau)_{\rm deg} ) \cong I_2(\tau\otimes\tau)_{\rm deg} . 
\] 
\end{itemize}

\vskip 5pt 

\noindent (ii) Assume that $\tau \cong \tau^{\vee}$ is a supercuspidal representation of $\GL_2$ with the trivial central character. 
Let $\delta_Q(\tau)$ and $\delta_P(\tau)$ be the square integrable constituents of $I_Q(1/2,\tau)$ and $I_P(1/2,\tau)$. Then 
\[ 
\theta(\delta_Q(\tau)) \cong \delta_2(\tau) \text { and } \theta(\delta_P(\tau)) \cong  \delta_{13}(\tau) 
\]
where $\delta_2(\tau)$ and $\delta_{13}(\tau)$ are the square integrable constituents of $I_2(1/2,\tau\otimes\tau)$ and $I_{13}(1/2,\tau\otimes 1)$.

\vskip 5pt 

\noindent (iii) Assume that $\tau \cong \tau^{\vee}$ is a supercuspidal representation of $\GL_2$ whose Langlands parameter has 
the image $S_3$. Recall that $I_Q(1,\tau)$ has a square integrable constituent denoted by $\pi_{\rm gen}[\tau]$. 
 Then 
\[ 
\theta(\pi_{\rm gen}[\tau]) \cong \sigma_{\rm gen}[\tau] 
\]
where $\sigma_{\rm gen}[\tau]$ is the square integrable constituent of $I_2(1,\tau\otimes\tau)$. 

\vskip 5pt 

\noindent (iv) Assume that $\chi^2=1$ and $\chi\neq 1$.  Recall that $I_Q(1/2,{\rm st}_{\chi})$ has a square integrable constituent denoted by $\pi_{\rm gen}[\chi]$. 
 Then 
\[ 
\theta(\pi_{\rm gen}[\chi]) \cong \sigma_{\rm gen}[\chi] 
\]
where $\sigma_{\rm gen}[\chi]$ is the square integrable constituent of $I_2(1/2,{\rm st}_{\chi}\otimes {\rm st}_{\chi})$. 

\vskip 5pt

\noindent (v) Steinberg lifts to Steinberg: 
\[ \theta({\rm St}_{G_2}) = {\rm St}_{{\rm PGSp}_6}. \]  

\end{theorem}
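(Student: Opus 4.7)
The strategy is to combine the dichotomy theorem with an analysis of the parallel standard-module realizations of both Steinberg representations, reducing everything to Jacquet module computations that are already encoded in the paper.

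For nonvanishing, I would appeal to the dichotomy theorem (Theorem \ref{T:dicho}): $\mathrm{St}_{G_2}$ has nonzero theta lift to exactly one of $PD^{\times}$ or $\PGSp_6$. The image of the theta correspondence from $PD^{\times}$ consists only of $\pi_{\rm deg}[1]$ (the lift of the trivial representation) and of nongeneric supercuspidal representations, and $\mathrm{St}_{G_2}$ is neither. Hence $\Theta(\mathrm{St}_{G_2}) \ne 0$ on $\PGSp_6$, and by Theorem \ref{T:main} its cosocle $\theta(\mathrm{St}_{G_2})$ is an irreducible tempered representation of $\PGSp_6$.

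To identify $\theta(\mathrm{St}_{G_2})$ with $\mathrm{St}_{\PGSp_6}$, I would use the embedding from Proposition \ref{P:PSQ}(ii),
\[
0 \to \mathrm{St}_{G_2} \to I_Q(5/2, \mathrm{st}) \to J_Q(5/2, \mathrm{st}) \to 0,
\]
together with the parallel embedding $0 \to \mathrm{St}_{\PGSp_6} \to I_2(5/2, \mathrm{st}\otimes \mathrm{st}) \to J_2(5/2, \mathrm{st}\otimes \mathrm{st}) \to 0$ from Proposition \ref{P:P2}(ii). Theorem \ref{T:nont} already identifies $\theta(J_Q(5/2, \mathrm{st})) = J_2(5/2, \mathrm{st}\otimes \mathrm{st})$, with $\Theta(J_Q(5/2,\mathrm{st}))$ a quotient of $I_2(5/2,\mathrm{st}\otimes\mathrm{st})$. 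I would compute $\Theta(I_Q(5/2, \mathrm{st}))$ analogously by applying $\Hom_{G_2}(\Pi, -)$ and using the filtration of $r_{\bar Q}(\Pi)$ from Proposition \ref{P:jf_Q}. The top piece contributes nothing because its central characters on the center of $L \cong \GL_2$ are $|z|^3$ and $|z|^4$, whereas $|\det|^{5/2}\mathrm{st}$ has central character $|z|^5$; the bottom piece, via Lemma \ref{L:sim-big} applied to the twisted Steinberg of $\GL_2$, contributes $I_2(5/2, \mathrm{st}\otimes\mathrm{st})$ (using that $\Theta^{\mathrm{sim}}(|\det|^{5/2}\mathrm{st}) \cong |\det|^{5/2}\mathrm{st}\otimes |\det|^{5/2}\mathrm{st}$); the middle piece is controlled by its compatibility with the already-known lift of $J_Q(5/2,\mathrm{st})$. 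Putting this together, $\Theta(\mathrm{St}_{G_2})$ must account precisely for the kernel piece, which is the unique submodule $\mathrm{St}_{\PGSp_6}$ of $I_2(5/2,\mathrm{st}\otimes\mathrm{st})$, so the irreducible quotient $\theta(\mathrm{St}_{G_2})$ is $\mathrm{St}_{\PGSp_6}$.

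The main obstacle will be the middle step of the Jacquet module analysis, namely controlling the contribution of the piece $\mathrm{Ind}_{\bar B\times P_2}^{\GL_2\times H}(\delta\cdot C_c(\GL_1))$ after taking $\Hom$ to $|\det|^{5/2}\mathrm{st}$, and then converting the resulting $\Theta$-theoretic information from a quotient-style statement into the identification of $\theta(\mathrm{St}_{G_2})$ with the \emph{submodule} $\mathrm{St}_{\PGSp_6}$ of $I_2(5/2,\mathrm{st}\otimes\mathrm{st})$. This last step should follow by dualizing the exact sequence and using the self-duality of tempered representations, together with the fact that $\theta(\mathrm{St}_{G_2})$ is irreducible by Theorem \ref{T:main}, so the precise submodule versus quotient structure is forced by the short exact sequence and the computed composition factors.
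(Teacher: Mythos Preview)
Your overall strategy for part (v) matches the paper's: the paper states (just before Theorem~\ref{T:pgl3}) that all of these explicit determinations follow from four facts --- nonvanishing via dichotomy, irreducibility and temperedness of $\theta(\pi)$, the generic/nongeneric compatibility, and the determination of cuspidal support from the Jacquet module computations of \S\ref{S:jacquet}. Your argument is exactly an unwinding of the fourth fact in the Steinberg case, using the embedding $\mathrm{St}_{G_2}\hookrightarrow I_Q(5/2,\mathrm{st})$ together with Proposition~\ref{P:jf_Q} as in the proof of Lemma~\ref{L:non_tempered}.

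However, there is a concrete sign error that creates your ``main obstacle'' artificially. Lemma~\ref{L:sim-big} says $\Theta^{\mathrm{sim}}(\pi^{\vee})=\pi\otimes\pi$; with $\pi^{\vee}=|\det|^{5/2}\mathrm{st}$ one has $\pi=|\det|^{-5/2}\mathrm{st}$, so the bottom piece of the filtration contributes $I_2(-5/2,\mathrm{st}\otimes\mathrm{st})$, not $I_2(5/2,\mathrm{st}\otimes\mathrm{st})$. (The middle piece in fact vanishes outright: by second adjointness one compares the character $|\cdot|$ of the second $\GL_1$ against $r_B(|\det|^{5/2}\mathrm{st})=|\cdot|^3\times|\cdot|^2$, and these differ, so both $\Hom$ and $\mathrm{Ext}^1$ are zero --- no ``compatibility'' argument is needed.) Once the sign is corrected, the argument finishes in one line: $\Theta(\mathrm{St}_{G_2})$ is a nonzero quotient of $I_2(-5/2,\mathrm{st}\otimes\mathrm{st})$, whose unique irreducible quotient is $\mathrm{St}_{\PGSp_6}$ by the dual of the exact sequence in Proposition~\ref{P:P2}(ii). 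There is no submodule-versus-quotient issue to resolve, and no need to invoke self-duality of tempered representations or to track $J_Q(5/2,\mathrm{st})$ separately.

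Equivalently, and this is closer to how the paper phrases it: once you know $\theta(\mathrm{St}_{G_2})$ is irreducible tempered and has the cuspidal support of $I_2(\pm 5/2,\mathrm{st}\otimes\mathrm{st})$, you are done, since $\mathrm{St}_{\PGSp_6}$ is the unique tempered constituent there.
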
 

Finally we need to deal with supercuspidal representations. In view of Theorem \ref{T:main2}(i)  and Theorem \ref{T:pgl3}, we only need to consider those supercuspidal representations which do not lift to $\PGL_3$ or $PD^{\times}$. We first introduce a thin family of supercuspidal representatons of $G_2$, namely those which participate in the theta correspondence for $\tilde{\SL}_2 \times G_2$. We have already encountered this theta correspondence in the proof of Proposition \ref{P:Hn}. As mentioned there, this theta correspondence has been studied in detailed in \cite{GG06}. 

\vskip 5pt

We first introduce some notation. For each  cuspidal representation $\rho$ of $\PGL_2 \cong \SO_3$, let $JL(\rho)$ be its Jacquet-Langlands lift to the an isotropic inner form $PB^{\times} = \SO_3^*$ (where $B$ is the quaternion division algebra) and let $\sigma_{\rho}$ be the $\psi$-theta lift of $JL(\tau)$ to $\tilde{\SL}_2$ (where $\psi$ is a fixed nontrivial additive character of $F$). Then $\sigma_{\rho}$ is an irreducible supercuspidal genuine representation of $\tilde{\SL}_2$. Consider now the $\psi$-theta lift 
\[  \pi_{\rho} := \theta(\sigma_{\rho}) \in {\rm Irr}(G_2) \]
of $\sigma_{\rho}$ from $\tilde{SL}_2$ to $G_2$. Now we  recall some results from \cite[Thm. 9.1]{GG06}:
\vskip 5pt

\begin{lemma} \label{L:GG06}
With the above notations, we have:
\vskip 5pt

\noindent (i) The representation $\pi_{\rho}$ is nonzero irreducible supercuspidal. Moreover, $\Theta(\pi_{\rho}) = \sigma_{\rho}$ under the theta correspondence for $\tilde{\SL}_2 \times G_2$. 
\vskip 5pt

\noindent (ii) The map $\rho \mapsto \pi_{\rho}$ is an injective map from the set ${\rm Irr}_{sc}(\PGL_2)$ of supercuspidal representations of $\PGL_2$ to ${\rm Irr}_{sc}(G_2)$. 

\vskip 5pt

\noindent (iii) Any $\pi \in {\rm Irr}_{sc}(G_2)$ which lifts to $\tilde{SL}_2$ but not $\PGL_3$ or $PD^{\times}$ is of the form $\pi_{\rho}$ for some $\rho \in {\rm Irr}_{sc}(\PGL_2)$.
 
\end{lemma}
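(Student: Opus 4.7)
The plan is to establish Lemma \ref{L:GG06} by combining (a) a seesaw identity coming from the inclusion $\tilde{\SL}_2 \times G_2 \subset \tilde{\SL}_2 \times \SO_7$ and the classical theta correspondence for the pair $(\tilde{\SL}_2, \O_3)$; (b) Howe duality for the dual pair $(\tilde{\SL}_2, G_2)$; and (c) the Jacquet module filtration of the Weil representation $\Omega_\psi$ along the maximal parabolic subgroups of $G_2$. These are precisely the ingredients assembled in \cite{GG06}, and we outline how each part follows.

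For (i), the nonvanishing of $\pi_\rho = \theta(\sigma_\rho)$ follows from a seesaw inside the metaplectic cover of $\Sp(F^2 \otimes V_7)$, where $V_7 = V_3^* \oplus V_4$ is an orthogonal decomposition with $V_3^*$ anisotropic of dimension three. From the fact that $\sigma_\rho$ is, by definition, the $\psi$-theta lift of $JL(\rho)$ from $\SO_3^*$, one deduces that $\sigma_\rho$ has nonzero theta lift to $\SO_7$; restricting to $G_2$ yields $\pi_\rho \ne 0$. The supercuspidality of $\pi_\rho$ follows from the filtration of $\Omega_\psi$ as a $G_2$-module along $P = MN$ and $Q = LU$, exactly as in the argument for Proposition \ref{P:Hn}: any non-cuspidal constituent of $\theta(\sigma_\rho)$ would produce a non-trivial Jacquet module of $\sigma_\rho$ along the Borel of $\tilde{\SL}_2$, contradicting the supercuspidality of $\sigma_\rho$. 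Irreducibility of $\pi_\rho$, along with the reciprocal identity $\Theta(\pi_\rho) = \sigma_\rho$, is the Howe duality theorem for $(\tilde{\SL}_2, G_2)$, which is proved in \cite{GG06} by the same kind of period/ping-pong argument deployed in \S \ref{S:dichotomy} and \S \ref{S:howetemp} of the present paper.

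For (ii), suppose $\pi_{\rho_1} \cong \pi_{\rho_2}$. Applying $\Theta$ and using the identity $\Theta(\pi_\rho) = \sigma_\rho$ from (i) yields $\sigma_{\rho_1} \cong \sigma_{\rho_2}$. Since $\sigma_\rho = \theta_\psi(JL(\rho))$ and Howe duality holds for the classical pair $(\tilde{\SL}_2, \SO_3^*)$, we conclude that $JL(\rho_1) \cong JL(\rho_2)$, and hence $\rho_1 \cong \rho_2$ by injectivity of the Jacquet--Langlands correspondence.

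For (iii), let $\pi \in \mathrm{Irr}_{sc}(G_2)$ have nonzero lift to $\tilde{\SL}_2$ but none to $\PGL_3$ or $PD^\times$. Put $\sigma = \theta_\psi(\pi)$; by Howe duality and the Jacquet module argument of (i), $\sigma$ is irreducible and supercuspidal. By the dichotomy/conservation theorem for $(\tilde{\SL}_2, \O_3)$, $\sigma$ is the $\psi$-theta lift of exactly one cuspidal representation $\rho'$ of $\SO_3 \cong \PGL_2$ or $\SO_3^* \cong PB^\times$. If $\rho' = JL(\rho)$ lives on $PB^\times$ with $\rho \in \mathrm{Irr}_{sc}(\PGL_2)$, then $\sigma = \sigma_\rho$ and the reciprocal identity of (i) gives $\pi = \pi_\rho$, as required. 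The main obstacle is to rule out the remaining case, namely that $\rho'$ is itself a cuspidal representation of $\PGL_2 \cong \SO_3$. The plan is to exploit the chain $\SO_3 \subset \SL_3 \subset G_2 \subset \SO_7$ (where $\SO_3$ is the fixed-point subgroup of the pinned involution on $\SL_3$) to match a nonzero $\psi$-theta lift of $\rho'$ under the classical pair $(\tilde{\SL}_2, \SO_3 \subset G_2)$ with a nonzero exceptional lift of $\rho'$ (extended to $\PGL_3 \rtimes \Z/2\Z$) to $\pi$ under the pair $(\PGL_3 \rtimes \Z/2\Z, G_2)$, contradicting the assumption that $\pi$ does not lift to $\PGL_3$. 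The hard part will be justifying this compatibility rigorously: the two theta correspondences in question sit inside different ambient groups ($\Sp_{14}$ versus $E_6 \rtimes \Z/2\Z$), so rather than a literal seesaw identity, one expects to compare Fourier coefficients of both $\Omega_\psi$ and the $E_6$-minimal representation along the unipotent radical of $P \subset G_2$ and match the resulting modules for the common stabilizer.
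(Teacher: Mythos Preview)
The paper does not prove this lemma at all: it is introduced with ``we recall some results from \cite[Thm.~9.1]{GG06}'' and no proof follows. So there is no argument in the paper to compare against; the lemma is a black-box citation. Your proposal is thus an attempted reconstruction of what \cite{GG06} does, and it should be evaluated on that basis.

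Parts (i) and (ii) are broadly in the right spirit, but your attribution in (i) is anachronistic. You write that irreducibility and the identity $\Theta(\pi_\rho)=\sigma_\rho$ are obtained in \cite{GG06} ``by the same kind of period/ping-pong argument deployed in \S\ref{S:dichotomy} and \S\ref{S:howetemp} of the present paper.'' That is not so: the ping-pong technique is precisely the new contribution of the present paper (as the introduction emphasizes). In \cite{GG06} the Howe duality for $\tilde{\SL}_2\times G_2$ is established by direct Jacquet-module computations for the Weil representation $\Omega_\psi$ together with the tower/seesaw relation to $\tilde{\SL}_2\times\SO_7$, not by iterating period identities.

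Part (iii) has a genuine gap, which you yourself flag. Having reduced to the case where $\sigma=\theta_\psi(\pi)$ arises from split $\SO_3\cong\PGL_2$, you need to show that $\pi$ then participates in the correspondence with $\PGL_3\rtimes\Z/2\Z$. Your proposed mechanism---``compare Fourier coefficients of both $\Omega_\psi$ and the $E_6$-minimal representation along the unipotent radical of $P\subset G_2$ and match the resulting modules''---is a hope, not an argument: the two minimal representations live in unrelated ambient groups and there is no seesaw linking them. In \cite{GG06} this step is handled differently and more concretely: one computes the big theta lift $\tilde\Theta_\psi(\sigma)$ to $G_2$ for each Waldspurger type of supercuspidal $\sigma$ and identifies the resulting $G_2$-representation explicitly (in the split-$\SO_3$ case it is seen to coincide with a lift from $\PGL_3$), so that (iii) follows by exclusion. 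Absent that explicit identification, your outline for (iii) is incomplete.
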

\vskip 5pt

For $\sigma_{\rho} \in {\rm Irr}(\tilde{\SL}_2)$ as above, we may also consider its $\psi$-theta lift from $\tilde{\SL}_2$ to $\SO_5$ and set
\[  \tau_{\rho} = \Theta(\sigma_{\rho}) = \theta(\sigma_{\rho}) \in {\rm Irr}(\SO_5). \]
Then $\tau_{\rho}$ is a nongeneric supercuspidal representation of $\SO_5$ belonging to a so-called Saito-Kurokawa A-packet. The representations $\pi_{\rho}$ and $\tau_{\rho}$ are related as follows:
\vskip 5pt

\begin{lemma} \label{L:GG062}
Consider the restriction of the minimal representation of $\SO_{12}$ to $G_2 \times \SO_5$. Then for $\rho \in {\rm Irr}_{sc}(\PGL_2)$, 
\[  \Theta(\pi_{\rho}) = \tau_{\rho}. \]
\end{lemma}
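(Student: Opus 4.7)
The approach is to deploy the seesaw identity already established in the proof of Proposition \ref{P:Hn}, specialized to $n = 6$. That proof produced, for any $\pi \in {\rm Irr}(G_2)$, an $\SO_5$-equivariant isomorphism
\[
  \Theta(\pi)^{*} \;\cong\; \Hom_{\widetilde{\SL}_2}\!\bigl(\Omega_{5,\psi},\, \widetilde{\Theta}_{\psi}(\pi)\bigr),
\]
where $\widetilde{\Theta}_{\psi}(\pi)$ denotes the big $\psi$-theta lift of $\pi$ from $G_2$ to $\widetilde{\SL}_2$ (via the Weil representation of $\widetilde{\SL}_2 \times \SO_7 \supset \widetilde{\SL}_2 \times G_2$), and $\Omega_{5,\psi}$ is the Weil representation of $\widetilde{\SL}_2 \times \SO_5$.

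Setting $\pi = \pi_\rho$ and using Lemma \ref{L:GG06}(i) to substitute $\widetilde{\Theta}_{\psi}(\pi_\rho) = \sigma_\rho$ will transform the above into
\[
  \Theta(\pi_\rho)^{*} \;\cong\; \Hom_{\widetilde{\SL}_2}\!\bigl(\Omega_{5,\psi},\, \sigma_\rho\bigr).
\]
The right-hand side is, by the very definition of the big theta lift for the dual pair $\widetilde{\SL}_2 \times \SO_5$, the algebraic dual of $\Theta(\sigma_\rho)$. Since the passage preceding the statement of Lemma \ref{L:GG062} records $\Theta(\sigma_\rho) = \theta(\sigma_\rho) = \tau_\rho$ (with big theta equal to small theta, so $\tau_\rho$ is irreducible supercuspidal and in particular admissible), this will give $\Theta(\pi_\rho)^{*} \cong \tau_\rho^{*}$ as $\SO_5$-modules.

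To finish, I upgrade this identification of algebraic duals to the asserted equality of smooth representations. Because Proposition \ref{P:Hn} guarantees that $\Theta(\pi_\rho)$ is of finite length and $\tau_\rho$ is admissible, passage to smooth vectors on both sides of $\Theta(\pi_\rho)^{*} \cong \tau_\rho^{*}$ yields $\Theta(\pi_\rho)^{\vee} \cong \tau_\rho^{\vee}$, whence $\Theta(\pi_\rho) \cong \tau_\rho$ after one further smooth contragredient. The plan therefore reduces to a single seesaw application plus a citation of Lemma \ref{L:GG06}; the only point requiring care is the routine smooth-versus-algebraic-dual bookkeeping, and all the nontrivial inputs (the seesaw set-up and the finite-length statement for $\Theta(\pi_\rho)$ on one side, the identification $\widetilde{\Theta}_{\psi}(\pi_\rho) = \sigma_\rho$ on the other) are already in place.
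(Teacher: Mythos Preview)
Your proof is correct and follows essentially the same approach as the paper's: both apply the seesaw identity (\ref{E:ssid}) from Proposition \ref{P:Hn} with $n=6$, invoke Lemma \ref{L:GG06}(i) to identify $\widetilde{\Theta}_{\psi}(\pi_\rho) = \sigma_\rho$, and recognize $\Hom_{\widetilde{\SL}_2}(\Omega_{5,\psi},\sigma_\rho) = \tau_\rho^*$. You have simply made explicit the smooth-dual bookkeeping that the paper leaves implicit in its final ``Hence $\Theta(\pi_\rho) = \tau_\rho$.''
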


\begin{proof}
We shall use the seesaw diagram in the proof of Proposition \ref{P:Hn}. The ensuing seesaw identity (\ref{E:ssid})  and Lemma  \ref{L:GG06}(i) give:
\[ \Theta(\pi_{\rho})^* \cong \Hom_{\tilde{\SL}_2}(\Omega_5, \sigma_{\rho})  = \tau_{\rho}^*. \]
Hence $\Theta(\pi_{\rho}) = \tau_{\rho}$. 
\end{proof}
\vskip 5pt
Now we have:
\vskip 5pt

\begin{prop} Let $\pi$ be an irreducible  supercuspidal representation of $G_2$ that is not a lift from $\PGL_3$ or $PD^{\times}$. Then we have the following two possibilities:
\vskip 5pt

\begin{itemize}
\item If $\pi = \pi_{\rho}$ for some $\rho \in {\rm Irr}_{sc}(\PGL_2)$ (as in Lemma \ref{L:GG06}),  then
\[ 
\theta(\pi_{\rho})=\delta_1(\tau_{\rho}) 
\] 
where $\delta_1(\tau_{\rho})$ is the square integrable subquotient of  $I_1(1/2,\tau_{\rho})$ given in Proposition \ref{P:P1}.

\vskip 5pt

\item  If $\pi$ is not of the above form, then $\theta(\pi)$ is supercuspidal.
\end{itemize}
\end{prop}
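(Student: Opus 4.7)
The plan is to compute the Jacquet modules $r_{P_i}(\theta(\pi))$ of the theta lift along each maximal parabolic $P_i = M_i N_i$ of $\PGSp_6$ by reading off the $\pi$-isotypic quotients of the filtrations of $r_{P_i}(\Pi)$ given in Propositions \ref{P:jf_p3}, \ref{P:jf_p1}, and \ref{P:jf_p2}. Since $\pi$ is supercuspidal, $r_{\bar P}(\pi) = r_{\bar Q}(\pi) = 0$, so any piece of these filtrations that is induced on the $G_2$ side from $P$ or $Q$ (the middle and bottom pieces in each proposition) contributes nothing to a $\pi$-isotypic quotient. Only the top pieces survive; from these we further discard $\Pi_{\emptyset}$ (trivial $G_2$-action) and $\Pi_{A_1}$ in the $P_2$-filtration, on which $G_2$ acts trivially because $G_2 \subset \mathcal{M}_{D_5}$ commutes with the $A_1$-factor of the Levi $\mathcal{M}_2$.

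What remains are the contributions of the three minimal representations $\Pi_{E_6}$, $\Pi_{D_5}$, and $\Pi_{D_6}$. The $\Pi_{E_6}$-contribution at $P_3$ realizes the $G_2 \times (\PGL_3 \rtimes \Z/2\Z)$ theta correspondence and vanishes by hypothesis. The $\Pi_{D_5}$- and $\Pi_{D_6}$-contributions at $P_2$ and $P_1$ arise from the dual pairs $G_2 \times \SO_3 \subset \Spin_{10}$ and $G_2 \times \Sp_4 \subset \Spin_{12}$ respectively, and the seesaw argument of Proposition \ref{P:Hn} applies verbatim (with $n = 5$ or $n = 6$) to show that $\pi$ occurs in either minimal representation only if $\pi$ has nonzero $\psi$-theta lift to $\tilde{\SL}_2$. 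Combined with Lemma \ref{L:GG06}(iii) and the standing hypothesis that $\pi$ does not lift to $\PGL_3$ or $PD^{\times}$, this forces $\pi = \pi_{\rho}$ for some $\rho \in {\rm Irr}_{\rm sc}(\PGL_2)$. Moreover, when $\pi = \pi_{\rho}$, Lemma \ref{L:GG062} identifies the theta lift of $\pi_{\rho}$ via $\Pi_{D_6}$ with $\tau_{\rho}$, and the central character twist in Proposition \ref{P:jf_p1} yields that the maximal $\pi_{\rho}$-isotypic quotient of the top piece of $r_{P_1}(\Pi)$ is precisely $\pi_{\rho} \otimes \bigl(\tau_{\rho} \otimes |\nu|^{1/2}\bigr)$.

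Putting everything together: if $\pi \ne \pi_{\rho}$ for all $\rho$, then $r_{P_i}(\theta(\pi)) = 0$ for $i = 1, 2, 3$, so $\theta(\pi)$ has no cuspidal support along any proper parabolic and is therefore supercuspidal. If $\pi = \pi_{\rho}$, then $\theta(\pi_{\rho})$ is tempered by Theorem \ref{T:main2}(iv) with $r_{P_1}(\theta(\pi_{\rho})) \cong \tau_{\rho} \otimes |\nu|^{1/2}$; since $\tau_{\rho}$ belongs to a Saito-Kurokawa A-packet, its L-parameter contains a trivial summand, and Proposition \ref{P:P1} then identifies the unique tempered subquotient of $I_1(1/2, \tau_{\rho})$ as the discrete series $\delta_1(\tau_{\rho})$, giving $\theta(\pi_{\rho}) = \delta_1(\tau_{\rho})$. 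The main obstacle is tracking the normalizations carefully: one must verify that the twist $|\nu|^{1/2}$ appearing in the Jacquet module of Proposition \ref{P:jf_p1} matches the critical reducibility point $s = 1/2$ of Proposition \ref{P:P1}, and that the 5-dimensional standard representation composed with the L-parameter of $\tau_{\rho}$ does genuinely contain the trivial representation so that Proposition \ref{P:P1} is applicable.
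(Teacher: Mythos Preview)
Your proof is correct and follows essentially the same strategy as the paper's: use the filtrations of $r_{P_i}(\Pi)$ from Propositions \ref{P:jf_p3}, \ref{P:jf_p1}, \ref{P:jf_p2}, kill the induced pieces by supercuspidality of $\pi$, and reduce the top pieces to the lower-rank correspondences via Proposition \ref{P:Hn} and Lemma \ref{L:GG062}, then finish with temperedness and Frobenius reciprocity. The one place you are actually more careful than the paper is at $P_2$: the paper simply asserts that $r_{P_2}(\theta(\pi))=0$ follows from $\pi$ not lifting to $\PGL_3$, whereas you unpack the $\Pi_{D_5}$ contribution explicitly via the $G_2\times\SO_3\subset\SO_{10}$ seesaw (the case $n=5$ of Proposition \ref{P:Hn}), which is the cleaner justification. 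Your final caveat about verifying that ${\rm std}\circ\varphi_{\tau_\rho}$ contains the trivial representation is well-placed; the paper does not verify this either, but note that once you know $\Theta(\pi_\rho)\hookrightarrow I_1(1/2,\tau_\rho)$ is nonzero and $\theta(\pi_\rho)$ is tempered, reducibility of $I_1(1/2,\tau_\rho)$ is forced, so the hypothesis of Proposition \ref{P:P1} is in fact a consequence rather than an input.
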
 

\begin{proof} Let $\Pi$ be the minimal representation of $E_7$. Recall that $r_{P_i}$ is the normalized Jacquet functor with respect to the maximal 
parabolic $P_i$ in $\PGSp_6$. Then $\pi\otimes r_{P_i}(\theta(\pi))$ is a quotient of $r_{P_i}(\Pi)$. By the assumption that $\pi$ does not lift to $\PGL_3$, it follows that 
$r_{P_i}(\theta(\pi))=0$ for $i=2,3$. Thus either $r_{P_1}(\theta(\pi)) =0$, in which case $\theta(\pi)$ is supercuspidal, or $r_{P_1}(\theta(\pi))$ is a supercuspidal 
representation of the Levi factor $L_1=\GSp_4$. In fact, from Proposition \ref{P:jf_p1}, it follows that $r_{P_1}(\theta(\pi))= \tau\otimes | \nu|^{1/2}$ where 
$\tau$ is a (possibly reducible) supercuspidal representation of $\PGSp_4\cong \SO_5$ such that $\pi \otimes \tau$ appears as a quotient of the minimal representation of $\SO_{12}$.   
By the seesaw in the proof of Proposition \ref{P:Hn}, we see that $\pi$ must have nonzero theta lift to $\tilde{\SL}_2$ and hence is of the form  $\pi_{\rho}$ for some $\rho \in {\rm Irr}(\PGL_2)$ by Lemma \ref{L:GG06}(iii). Then Lemma  \ref{L:GG062} implies that $\tau = \tau_{\rho}$.  By Frobenius reciprocity and the fact that $\theta(\pi_{\rho})$ is tempered, we see that $\theta(\pi_{\rho}) = \delta_1(\tau_{\rho})$, as desired.
\end{proof} 
\vskip 5pt

As a consequence of the explicit results in this section, we have:
\vskip 5pt

\begin{cor}
If $\pi \in {\rm Irr}(G_2)$ is a discrete series representation which does not lift to $\PGL_3$ or $PD^{\times}$, then $\theta(\pi)$ is an irreducible discrete series representation of $\PGSp_6$. As a result, any discrete series representation of $G_2$ lifts to a discrete series of exactly one of $PD^{\times}$, $\PGL_3$ or $\PGSp_6$. That lift is Whittaker generic iff and only if $\pi$ is. 
\end{cor}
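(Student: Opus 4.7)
\noindent
My plan is to prove the first sentence of the corollary by case analysis based on the classification in \S \ref{SS:partial-LLC}, then deduce the dichotomy-like second sentence and the Whittaker statement as formal consequences. The hypothesis ``$\pi$ does not lift to $\PGL_3$ or $PD^{\times}$'' together with the explicit descriptions available from the preceding results should leave very little to check.

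\medskip
\noindent
First, I would split $\pi$ into the supercuspidal and non-supercuspidal cases. If $\pi$ is supercuspidal and does not lift to $\PGL_3$ or $PD^{\times}$, then the proposition immediately preceding this corollary says that $\theta(\pi)$ is either supercuspidal or equals $\delta_1(\tau_\rho)$, and both of these are discrete series of $\PGSp_6$. For the non-supercuspidal case, I would enumerate the discrete series $\pi$ of $G_2$ using \S \ref{SS:partial-LLC}: one has $\mathrm{St}_{G_2}$, the representations $\pi_{\rm gen}[\chi]$, $\pi_{\rm gen}[\tau]$, $\delta_P(\tau)$, $\delta_Q(\tau)$, and finally the members $\pi_{\rm gen}[1]$, $\pi_{\rm deg}[1]$ of the sub-regular packet with trivial $W_F$-part. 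Using the centralizers recalled in \S \ref{S:G2} (the centralizer of $\SL_3$ in $G_2$ is $\mu_3 \subsetneq S_3$), one checks that precisely the following representations have L-parameter factoring through $\SL_3 \subset G_2$, and hence are images under $\theta_B$: $\pi_{\rm gen}[1]$ (lifted from $\mathrm{St}^+$), $\pi_{\rm gen}[\chi]$ for $\chi$ a nontrivial cubic character (lifted from $\mathrm{St}_\chi^+$), and $\pi_{\rm gen}[\tau]$ when $\tau$ is a \emph{non-dihedral} twist; and $\pi_{\rm deg}[1]$ is the theta lift of the trivial representation of $PD^\times$. The remaining non-supercuspidal discrete series not lifting to $\PGL_3$ or $PD^\times$ are then exactly
\[
\mathrm{St}_{G_2},\quad \pi_{\rm gen}[\chi]\ (\chi^2=1,\chi\ne 1),\quad \pi_{\rm gen}[\tau]\ (\mathrm{Im}(\phi_\tau)=S_3),\quad \delta_P(\tau),\quad \delta_Q(\tau),
\]
and for each of these the explicit theorems above in this section (parts (i)--(v)) compute $\theta(\pi)$ and show it is the discrete series constituent $\mathrm{St}_{\PGSp_6}$, $\sigma_{\rm gen}[\chi]$, $\sigma_{\rm gen}[\tau]$, $\delta_{13}(\tau)$, or $\delta_2(\tau)$ of the relevant (generalized) principal series in Propositions \ref{P:P2}, \ref{P:P13}, \ref{P:P1}.

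\medskip
\noindent
For the second sentence, the existence of a discrete series lift to at least one of the three groups follows from the dichotomy theorem (Theorem \ref{T:dicho}) combined with: the fact that every irreducible representation of $PD^\times$ is a discrete series (since $PD^\times$ is compact modulo center); Theorem \ref{T:PGL3}(iv), which shows that theta correspondence for $(\PGL_3 \rtimes \Z/2\Z) \times G_2$ preserves the discrete series property; and the first sentence we have just verified. For \emph{uniqueness} of the group, the dichotomy theorem already excludes simultaneous lifts to $PD^\times$ and $\PGSp_6$. If $\pi$ is a $\PGL_3$-lift, then Theorem \ref{T:pgl3} shows $\theta(\pi)$ is of the form $I_3(\tau)$ or one of its direct summands, hence tempered but never discrete series; so a $\PGL_3$-lifted discrete series does not also give a discrete series of $\PGSp_6$. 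Similarly, a $\PGL_3$-lift has nonzero $\PGSp_6$-lift, so by dichotomy it cannot also be a $PD^\times$-lift. This gives exactly one group. Finally the Whittaker statement splits into: the $PD^\times$-case is vacuous/consistent (no generic representations on $PD^\times$ and by dichotomy such $\pi$ is non-generic), the $\PGL_3$-case is Theorem \ref{T:PGL3}(iii), and the $\PGSp_6$-case is Corollary \ref{C:GS-Whit}.

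\medskip
\noindent
The main obstacle, and really the only subtle point, is the bookkeeping in the case analysis—specifically, correctly identifying which discrete series of $G_2$ in the sub-regular packets arise as $\theta_B$ of some $\tau^\epsilon \in \mathrm{Irr}(\PGL_3\rtimes \Z/2\Z)$. This requires tracking the central character constraint on $\PGL_3$ (e.g.\ $\mathrm{St}_\chi$ descends to $\PGL_3$ only when $\chi^3=1$) and the position of $\phi(W_F)$ inside the centralizer $S_3$ of $\SO_3$ relative to the subgroup $\mu_3 = Z_{G_2}(\SL_3)$. Once this is done carefully, everything else is a direct invocation of results already proved.
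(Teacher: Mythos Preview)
Your approach is correct and matches the paper's, which offers no proof and simply records the corollary ``as a consequence of the explicit results in this section''; you are just spelling out that deduction. Two small points to clean up: (a) the aside that ``$\pi_{\rm gen}[\tau]$ when $\tau$ is a non-dihedral twist'' is a $\theta_B$-image is spurious---no such representation occurs in the classification of \S\ref{SS:partial-LLC}, and in any case your final list of non-$\PGL_3$-lifts is already correct without it; (b) for the Whittaker claim in the $\PGSp_6$ direction, Corollary~\ref{C:GS-Whit}(i) only says $\Theta(\pi)$ carries a Whittaker functional, not that the unique irreducible quotient $\theta(\pi)$ does, so for ``$\pi$ generic $\Rightarrow \theta(\pi)$ generic'' you should instead point to the explicit identifications you already invoked (parts (ii)--(v) of the second theorem and, in the supercuspidal case, Corollary~12.3), which show directly that each listed $\theta(\pi)$ is generic.
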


%At this point it is natural to ask if the situation in the previous proposition occurs, that is, are there $\SO_5$-supercuspidal quotients of the 
%minimal representation $\Pi_{D_6}$ of $\SO_{12}$? The answer is yes, in fact, this can be made very precise. To that end, let $K$ be a quadratic 
 %etale algebra over $F$. Let $E=F+K$ be a cubic etale algebra and $\psi_E$ the corresponding character of $N$, the unipotent radical of the maximal 
 %parabolic $P\subset G_2$. 
 %Then 
 %\[ 
 %(\Pi_{D_6})_{N, \psi_E}\cong C_c^{\infty}(\SO_5/\SO_4^K) 
 %\] 
% where $\SO_4^K=\SL_2(K)/\mu_2$.  But representations of split $\SO_5$ with the $\SO_4^K$-period appear in the theta correspondence 
 %for the dual pair $\SO_5 \times \widetilde{\SL}_2$. From the theory of classical theta correspondences, an irreducible representation of 
 %$\widetilde{\SL}_2$ lifts to a supercuspidal representation of $\SO_5$ precisely when the lift to the split $\SO_3$ vanishes, and such representations of 
 %$ \widetilde{\SL}_2$ are in correspondence with irreducible representations of $\SO_3^*$, the anisotropic form of $\SO_3$. 
 
  \vskip 15pt

\noindent{\bf Acknowledgments:} 
The authors would like to thank MFI in Oberwolfach for hospitality during a conference in October of 2019 when some of the ideas needed to finish this work emerged. 
Thanks are due to Petar Baki\'c, Baiying Liu and Yiannis Sakellaridis for help with some finer points. 
W.T. Gan is partially supported by an MOE Tier 1 grant 
R-146-000-320-114.  G. Savin is partially supported by 
 a   National Science Foundation grant DMS-1901745.

 \vskip 15pt


\begin{thebibliography}{KMRT}


\bibitem[G99]{G}  W. T. Gan, 
{\em Exceptional Howe correspondences over finite fields.}  Compositio Math. 118 (1999), no. 3, 323--344. 
  
  
  
\bibitem[GG06]{GG06} W.T. Gan and N. Gurevich, {\em Nontempered $A$-packets of $G_2$: Liftings from $\widetilde{\SL}_2$.}  American J. Math. 170 (2006), 1105--1185.

\bibitem[GG09]{GG09} W.T. Gan and N. Gurevich, {\em CAP representations of $G_2$ and the Spin L-function of $\PGSp_6$.} Israel J. Math. 170 (2009), 1-52.




%\bibitem[K]{GRS} D. Ginzburg, S. Rallis and D. Soudry Duke Math, J. 

\bibitem[GS99]{GS99} W. T. Gan and G. Savin, 
{\em The Dual Pair $G_2 \times \mathrm {PU}_3(D)$ ($p$-Adic Case).}  Canad. J. Math. 51 (1999), 130--146. 

\bibitem[GS04]{GS04} W. T. Gan and G. Savin, {\em Endoscopic lifts from $PGL_3$ to $G_2$.}  Compositio Math. 140, No. 3 (2004), 793--808. 


\bibitem[GS05]{GS05} W. T. Gan and G. Savin, 
{\em On minimal representations definitions and properties.} Represent. Theory 9 (2005), 46--93. 

\bibitem[GS14]{GS14} W. T. Gan and G. Savin, {\em Twisted Bhargava cubes.} Algebra Number Theory 8 (2014), no. 8, 1913--1957.

\bibitem[GS15]{GS15} W. T. Gan and G. Savin, {\em A family of Arthur packets of triality $\Spin(8)$. } Preprint 2015.  

\bibitem[GT]{GT} W. T. Gan and S. Takeda, {\em The local Langlands conjecture for $GSp(4)$.}  Ann. of Math. (2) 173 (2011), no. 3, 1841--1882. 

\bibitem[GJ]{GJ} D. Ginzburg and D. Jiang, {\em Periods and lifting from $G_2$ to $C_3$.}  Israel J. Math. 123 (2001), 29-59.



\bibitem[GrS]{GrS} B. H. Gross and G. Savin, 
{\em Motives with Galois group of type $G_2$: an exceptional theta-correspondence.}  Compositio Math. 114 (1998), no. 2, 153--217

\bibitem[HKT]{HKT} M. Harris, C. Khare and J. Thorne, {\em A local Langlands parameterization for generic supercuspidal representations of $p$-adic $G_2$.} 
arXiv :1909.05933 

\bibitem[HMS]{HMS} J. S. Huang, K. Magaard and G. Savin, 
{\em Unipotent representations of $G_2$ arising from the minimal representation of $D_4^E$.}  J. Reine Angew. Math. 500 (1998), 65--81.

\bibitem[K]{K}  S. Kudla, {\em On the local theta-correspondence},
Invent. Math. 83 (1986), 229--255.

\bibitem[KMRT]{KMRT} M.-A. Knus, A. Merkurjev, M. Rost, and J.-P. Tignol, {\em The book of involutions. } 
 American Mathematical Society Colloquium Publications 44, Amer. Math. Soc., Providence, RI, 1998. 

%\bibitem[KLS10]{KLS10} C. Khare, M. Larsen and G. Savin. 
%{\em Functoriality and the inverse Galois problem II: Groups of type $B_n$ and $G_2$.}  



\bibitem[LS]{LS} H. Y. Loke and G. Savin, {\em On minimal representations of Chevalley groups of type $D_n$, $E_n$ and $G_2$.} 
Math. Ann. 340 (2008), no. 1, 195--208.

\bibitem[LT]{LT} S. Lonka and R. Tandon. 
{\em Zero weight space for tori inside a division algebra.} 
 J. Ramanujan Math. Soc. 33 (2018), no. 4, 435--454.
 

 
 \bibitem[MS]{MS} K. Magaard and G. Savin,
{\em Exceptional $\Theta$-correspondences. I.} Compositio Math. 107 (1997), no. 1, 89--123. 

\bibitem[MVW]{MVW}  C. Moeglin, M.-F. Vigneras and J.-L. Waldspurger, {\em Correspondances de Howe sur un corps $p$-adiques.} 
 Lecture Notes in Mathematics, vol 1291 (Springer, 1987).  

\bibitem[MW]{MW}  C. Moeglin and J.-L. Waldspurger, {\em Mod\`eles de Whittaker d\'eg\'en\'er\'es pour des groupes p-adiques.} 
 Math. Z. 196 (1987), 427--452. 


\bibitem[Mu]{Mu} G. Mui\'c, {\em The unitary dual of $p$-adic $G_2$.} Duke Math. J. 90 (1997), 465--493.

\bibitem[Ro]{R96} B. Roberts, {\em The Theta Correspondences for Similitudes.}  Israel J. Math. 94 (1996), 285--317. 



%\bibitem[SV]{SV} Y. Sakellaridis and A. Venkatesh, {\em Periods and harmonic analysis on spherical varieties.} Ast\'erisque 396, 2017. 


%{\em The degenerate Eisenstein series attached to the Heisenberg parabolic subgroups of quasi-split forms of ${\rm Spin}_8$.}, Trans. Amer. Math. Soc. 370 (2018), no. 8, 5983-6039.

\bibitem[Sa]{Sa} G. Savin, {\em A class of supercuspidal representations of $G_2(k)$}, Canad. Math. Bulletin, CMB (1999), vol. 42, no. 3, 393--400.

\bibitem[SWe]{SWe} G. Savin and M. Weissman, {\em Dichotomy for generic supercuspidal representations of $G_2$.} Compositio Math. 
147 (2011), 735--783.

\bibitem[SWo]{SWo}  G. Savin and M. Woodbury,
{\em Matching of Hecke operators for exceptional dual pair correspondences.} J. Number Theory 146 (2015), 534--556.

\bibitem[S]{S} A. Segal, 
{\em The degenerate residual spectrum of quasi-split forms of ${\rm Spin}_8$  associated to the Heisenberg parabolic subgroup}, Trans. Amer. Math. Soc. 372 (2019), no. 9, 6703-6754.


 \bibitem[Sh]{Sh} F. Shahidi, {\em A proof of Langlands' conjecture on Plancherel measures; complementary series for p-adic groups.}  Ann. of Math. 132 (1990), 273--330. 

\bibitem[Ta]{Ta} M. Tadi\'c, {\em Representations of $p$-adic symplectic groups.}  Compositio Math. 90 (1994), 123--181. 

\bibitem[Va]{Va} S. Varma, {\em On a result of Moeglin and Waldspurger in residual characteristic 2.} 
Math. Z. 277 (2014), no. 3-4, 1027--1048.
  
  \bibitem[Y]{Y} S. Yamana, {\em Degenerate principal series representations for quaternionic unitary groups},  Israel J. Math. 185 (2011), 77-124. 

%\bibitem[We] {We} M. Weissman. {\em } Representation Theory (2003). 









\end{thebibliography}
\end{document}